\theoremstyle{plain}                    
\newtheorem{theorem}{Theorem}[section]
\newtheorem{lemma}[theorem]{Lemma}
\newtheorem{proposition}[theorem]{Proposition}
\newtheorem{corollary}[theorem]{Corollary}
\theoremstyle{definition}
\newtheorem{example}[theorem]{Example}
\theoremstyle{remark}
\newtheorem{remark}[theorem]{Remark}
\numberwithin{equation}{section}
\newcommand{\zz}{\mathbb Z}
\newcommand{\qq}{\mathbb Q}
\newcommand{\rr}{\mathbb R}
\newcommand{\hh}{\mathbb H}
\newcommand{\mink}{\rr^{n,1}}
\newcommand{\so}{\mathrm{SO}_0(n,1)}
\newcommand{\oofull}{\mathrm{O}(n,1)}
\newcommand{\sofull}{\mathrm{SO}(n,1)}
\newcommand{\stab}[2]{\operatorname{Stab}_{#1}({#2})}
\newcommand{\cat}[1]{\operatorname{CAT}(#1)}
\newcommand{\barsub}[1]{\mathcal B (#1)}
\newcommand{\gromov}[1]{\mathcal G (#1)}
\newcommand{\igq}{B_n}
\newcommand{\hyperbolize}[1]{\mathcal H(#1)}
\newcommand{\hm}{M_\Gamma} 
\newcommand{\hc}{X_\Gamma} 
\newcommand{\hcalt}[1]{{#1}_\Gamma} 
\newcommand{\uchc}{\widetilde{\hc}} 
\newcommand{\ichc}{\hc '} 
\newcommand{\hq}{\square^n_\Gamma} 
\newcommand{\hqk}{\square^k_\Gamma} 
\newcommand{\uchq}{\widetilde{\hq}} 
\newcommand{\hg}{\Gamma_X} 
\newcommand{\hgq}{\Gamma_{\square^n}} 
\newcommand{\cd}{g} 
\newcommand{\cdX}{\cd_X} 
\newcommand{\cdXi}{\cdX '} 
\newcommand{\cdXu}{\widetilde \cdX} 
\newcommand{\fold}{f} 
\newcommand{\foldc}{\fold_{\Gamma}} 
\newcommand{\foldic}{\fold_{\Gamma} '} 
\newcommand{\foldutocell}{\widetilde{\foldc}} 
\newcommand{\foldutosquare}{\widetilde \fold} 
\newcommand{\piuchc}{\pi}
\newcommand{\pifromichc}{\piuchc '} 
\newcommand{\pitoichc}{\piuchc ''}
\newcommand{\pisquare}{\pi_\square}
\newcommand{\dcc}[1]{\mathcal C(#1)} 
\newcommand{\dccx}{\mathcal C (\uchc)} 
\newcommand{\mirrors}{\mathcal M} 
\newcommand{\components}{\mathcal C} 
\newcommand{\edgespace}{E_{M,C}^\varepsilon}
\newcommand{\edgespacecpt}[1]{E_{M,C_{#1}}^\varepsilon}
\newcommand{\dccm}{\mathcal C (\uchc,\mirrors)} 
\newcommand{\height}[1]{\operatorname{h}(#1)}
\newcommand{\length}[1]{\ell(#1)}
\newcommand{\lk}[1]{\operatorname{lk}\left(#1\right)}
\newcommand{\lku}[1]{\operatorname{lk}^\uparrow\left(#1\right)}
\newcommand{\lkd}[1]{\operatorname{lk}_\downarrow\left(#1\right)}
\newcommand{\shadow}[1]{\zeta_{#1}}
\newcommand{\mirrorcomplexity}[1]{\mathfrak m (#1)}
\newcommand{\mirrorcomplexitymirror}[2]{\mathfrak m (#1,#2)}
\newcommand{\tangent}[2]{\operatorname{T}_{#1}(#2)}
\begin{document}
\title[Special cubulation of strict hyperbolization]{Special cubulation of strict hyperbolization}
\author{Jean-Fran\c{c}ois Lafont}
\address{Department of Mathematics - The Ohio State University, 231 West 18th Avenue, Columbus, OH 43210-1174, USA}
\email{jlafont@math.ohio-state.edu}
\author{Lorenzo Ruffoni}
\address{Department of Mathematics - Tufts University, 177 College Avenue, Medford, MA 02155, USA}
\email{lorenzo.ruffoni2@gmail.com}
\date{\today}
 \subjclass[2020]{20F67, 53C23, 20E26, 57Q05}
 \keywords{Strict hyperbolization; hyperbolization of polyhedra; negatively curved manifolds; special cube complexes; CAT(0) cube complexes; foldable complexes;  residually finite group; virtual algebraic fibering.}

\begin{abstract}
We prove that  the Gromov hyperbolic groups obtained by the strict hyperbolization procedure of Charney and Davis are virtually compact special, hence linear and residually finite. 
Our strategy consists in constructing an  action of a hyperbolized group on a certain dual $\cat 0$ cubical complex.
As a result, all the common applications of strict hyperbolization are shown to provide manifolds with virtually compact special fundamental group.
In particular, we obtain examples of closed negatively curved Riemannian manifolds whose fundamental groups are linear and virtually algebraically fiber.
\end{abstract}

\maketitle
\tableofcontents

\section{Introduction}\label{sec:intro}
\addtocontents{toc}{\protect\setcounter{tocdepth}{2}}

Closed aspherical manifolds occupy a central place in manifold topology. For this class of manifolds, the Borel Conjecture predicts that two such manifolds are homeomorphic if and only if they have isomorphic fundamental groups -- in other words, that the topology
is entirely encoded in the fundamental group. A challenging
problem is the question of examples. The fundamental group will always satisfy Poincar\'e Duality over $\zz$ (i.e.\ they are $PD_n$ groups), and the Wall Conjecture predicts that conversely, any $PD_n$ group is the fundamental group of an aspherical manifold. Classically, there were two sources of examples of aspherical manifolds: they either arose from Lie theory, as quotients of contractible Lie groups by discrete subgroups, or from differential geometry, as non-positively curved manifolds.

In the late 1970's, Gromov introduced two metric versions of non-positive curvature, $\cat 0$ spaces and Gromov hyperbolic spaces. Simply connected, complete, locally $\cat 0$ spaces are automatically contractible. In dimensions $\geq 4$, manifolds that support locally $\cat 0$ metrics form a new source of aspherical manifolds. Moreover, it is easy to produce such manifolds, through a process known as hyperbolization. 
This was originally outlined by Gromov in \cite{G87}, and subsequently developed by Davis and Januszkiewicz in \cite{DJ91}. 
Hyperbolization is a functorial procedure, which inputs a simplicial complex, and outputs a locally $\cat 0$ space.
In a later refinement, Charney and Davis in \cite{CD95} developed a strict hyperbolization procedure, where the output is locally $\cat{-1}$, i.e.\ admits a metric of negative curvature (as opposed to just  non-positive curvature). The hyperbolization procedures have been used to produce examples of aspherical manifolds with various unexpected properties. 
In this work we show that one can construct hyperbolizations that have some additional algebraic regularity.

\begin{theorem}\label{mainthm1}
Given a dimension $n>0$, there exists a strict hyperbolization procedure $\mathcal H$ with the following property. 
Let $K$ be any $n$-dimensional simplicial complex, which is  compact, homogeneous, and without boundary. Then the resulting hyperbolized space $\hyperbolize K$ has fundamental group $G=\pi_1(\hyperbolize K)$ which acts cocompactly on a $\cat 0$ cubical complex by cubical isometries.
\end{theorem}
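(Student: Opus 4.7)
The plan is to realize $\hyperbolize K$ as a space built by gluing copies of a carefully chosen hyperbolization piece $X$ endowed with a mirror structure, and then apply Sageev's dual construction to the resulting family of walls in the universal cover, thereby producing the desired $\cat 0$ cube complex together with a natural group action.

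First, I would specify the Charney-Davis piece: $X$ should be a compact hyperbolic manifold with corners whose boundary decomposes as a collection $\mathcal M$ of totally geodesic codimension-one submanifolds (the mirrors), chosen so as to admit a combinatorial ``folding map'' $X \to \square^n$ onto an $n$-cube that sends mirrors to coordinate faces. Gluing copies of $X$ along matching mirrors according to the combinatorics of $K$ then yields $\hyperbolize K$, together with a globally defined, locally finite family of totally geodesic codimension-one hypersurfaces coming from the mirrors. In the universal cover $\widetilde{\hyperbolize K}$, which is $\cat{-1}$, every lift of such a hypersurface is a complete, embedded, two-sided, totally geodesic hyperplane that separates the cover into two half-spaces. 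This produces a $G$-invariant wall space in the sense of Haglund-Paulin and Sageev; applying Sageev's dual cubulation yields a $\cat 0$ cube complex $\mathcal C$ equipped with a $G$-action by cubical isometries.

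The main obstacle will be verifying that this action is \emph{cocompact}, since Sageev's construction is purely combinatorial and does not automatically guarantee this. The strategy would be to show, using the $\cat{-1}$ geometry together with the compactness of $K$ and the finiteness of the piece $X$, that there are only finitely many $G$-orbits of walls, that only finitely many walls meet any given compact set, and, most delicately, that the number of pairwise-crossing walls through any point is uniformly bounded. This last bound should be controlled by the observation that, locally, the mirrors are arranged according to the combinatorics of a single $n$-cube via the folding map, so that any configuration of pairwise-crossing walls in $\widetilde{\hyperbolize K}$ ultimately corresponds to a configuration of pairwise-intersecting faces of an $n$-cube, whose cardinality is bounded by $n$.
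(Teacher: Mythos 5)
Your proposal is recognizably in the same spirit as the paper (identify a codimension-one ``mirror'' structure coming from foldability, lift it to the universal cover, and build a dual cube complex), but the specific construction you propose --- Sageev's dual cubulation of a wallspace --- is precisely the construction that the paper considers and then sets aside; see Remark~\ref{rem:wallspaces}. The paper instead builds $\dccx$ directly from the stratification of $\uchc$: vertices are cells, edges record codimension-one inclusion, and $\cat 0$ is verified by checking that links are flag (Proposition~\ref{prop:flag_links}, which needs both non-positive curvature of $X$ for the upward link and a Helly property for orthogonal hyperplanes in $\hh^n$ for the downward link) and that $\dccx$ is simply connected (Theorem~\ref{thm:dual cubical complex is CAT(0)}, via the mirror-complexity surgery). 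This yields a complex of dimension exactly $n$ on which $\hg$ acts cocompactly for transparent reasons.

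There are two genuine gaps in your route. First, the claim that every lift of a mirror to $\uchc$ is ``a complete, embedded, two-sided, totally geodesic hyperplane that separates the cover into two half-spaces'' is false in general. Mirrors of $\uchc$ are convex and separating (Propositions~\ref{prop:mirrors convex} and~\ref{prop:mirrors separate}), but they need not be manifolds: they can branch (see the edge-space discussion and Figure~\ref{fig:mirror_branch}), and a single mirror may have arbitrarily many complementary components (see Figures~\ref{fig:mirrors},~\ref{fig:dccx}). In Sageev's framework a wall is a two-part partition of the complement, so a mirror with $k>2$ complementary components generates exponentially many walls, and the dual complex $\dccm$ then has dimension strictly greater than $n$; indeed this is exactly why the paper does not use the wallspace dual. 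Second, your cocompactness argument is underdeveloped: bounding the number of pairwise-crossing walls controls the \emph{dimension} of the dual, but cocompactness of Sageev's dual additionally requires controlling orbits of maximal cubes and ruling out (or accounting for) non-canonical vertices, and this is precisely what becomes awkward once mirrors are not two-sided. Finally, a smaller point: the Charney--Davis procedure replaces $n$-cubes, not $n$-simplices, so to hyperbolize a simplicial complex $K$ one must first run Gromov's cylinder construction to obtain a foldable, non-positively curved cubical complex $X=\gromov K$ (Proposition~\ref{prop:gromov_cylinder}); your outline elides this step and glues the hyperbolizing piece directly ``according to the combinatorics of $K$,'' which does not typecheck.

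The upshot is that your route, even if it could be pushed through, would lose the key structural advantage of the paper's complex $\dccx$: its cube stabilizers coincide with cell stabilizers of $\uchc$ (Lemmas~\ref{lem:vertex stab equals cell stab},~\ref{lem:cube stab equals min vertex stab}), which is exactly what makes the subsequent Groves--Manning argument for Theorem~\ref{mainthm2} tractable.
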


Most of the paper is concerned with the proof of this result (see \S\ref{sec:strict hyperbolization} and \S \ref{sec:dual cubical complex}).
The cubical complex in this statement is $n$-dimensional, but it is not locally compact and the action is not proper. 
Nevertheless, since the hyperbolization procedure is strict (i.e.\ $\hyperbolize K$ is locally $\cat{-1}$), the fundamental group $G$ is Gromov hyperbolic.
Therefore the work of Agol, Haglund--Wise, and Groves--Manning about special cube complexes (see \cite{AG13,HW08,GM18}) can be used to extract information about $G$.
A cubical complex is \textit{special} if it admits a local isometry to the Salvetti complex of a right-angled Artin group (RAAG) (see \cite{HW08}).
A group is \textit{virtually compact special} if it has a finite index subgroup which is the fundamental group of a compact special cubical complex.

\begin{theorem}\label{mainthm2}
Given a dimension $n>0$, there exists a strict hyperbolization procedure $\mathcal H$ with the following property. 
Let $K$ be any $n$-dimensional compact simplicial complex.
Then the resulting hyperbolized space $\hyperbolize K$ has fundamental group $G=\pi_1(\hyperbolize K)$ which 
is  Gromov hyperbolic and virtually compact special.
In particular, $G$ enjoys the following properties.
\begin{enumerate}
    \item \label{item:first} $G$ virtually embeds in a right--angled Artin group (RAAG) (see \cite{HW08}).
    
    \item $G$ is linear over $\zz$ (see \cite{DJ00,HW08}), hence is residually finite.
    
    \item $G$ has separable quasiconvex subgroups (see \cite{HW08}).
    
    \item $G$ is virtually residually finite rationally solvable (RFRS) (see \cite{AG08}).
    

    \item \label{item:property T} $G$ has the Haagerup property, hence does not have property (T) (see \cite{NR97,CMV04}). 

    \item $G$ satisfies the strong Atiyah conjecture (see \cite{SC14}).
    
    \item $G$ is virtually bi-orderable (see \cite{DK92}).

    
    \item   $G$ virtually embeds in the mapping class group of a closed  surface, in a braid group, and in the group of diffeomorphisms of $\rr$ (see \cite{KO12,KK15,BKK16}).
    
    \item  $G$  admits a proper  affine action on $\rr^n$ for some $n\geq 1$ (see \cite{DGK20}).

    \item \label{item:last} $G$ admits  Anosov representations (see \cite{DFWZ23}).
    
\end{enumerate}
\end{theorem}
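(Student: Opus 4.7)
The plan is to reduce \textbf{Theorem \ref{mainthm2}} to \textbf{Theorem \ref{mainthm1}} and then invoke the general cubulation machinery for hyperbolic groups.

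First I would remove the homogeneity and no-boundary hypotheses by an embedding argument. Starting from an arbitrary compact $n$-dimensional simplicial complex $K$, I would produce a compact homogeneous $n$-dimensional simplicial complex $K'$ without boundary that contains $K$ as a subcomplex: first attach top-dimensional simplices to cap off any maximal face of dimension $<n$, then double the resulting complex along its PL boundary. Since the strict hyperbolization procedure $\mathcal H$ is functorial and sends subcomplex inclusions to locally isometric, hence $\pi_1$-injective and quasi-convex, embeddings, the inclusion $K\hookrightarrow K'$ yields a quasi-convex embedding $G=\pi_1(\hyperbolize{K})\hookrightarrow G':=\pi_1(\hyperbolize{K'})$.

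Next I would apply \textbf{Theorem \ref{mainthm1}} to $K'$ to obtain a cocompact action of $G'$ on a $\cat 0$ cube complex $X$ by cubical isometries. Since this action is not a priori proper, I would upgrade it via a Sageev-style construction: the $G'$-stabilizers of hyperplanes of $X$ form a family of codimension-one subgroups, and I would verify both that they are quasi-convex in $G'$ and that their translates separate every pair of distinct points of the Gromov boundary $\partial G'$. The Bergeron-Wise cubulation criterion then yields a new, proper cocompact action of $G'$ on a locally finite $\cat 0$ cube complex. The strict hyperbolization hypothesis ensures that $\hyperbolize{K'}$ is locally $\cat{-1}$, so $G'$ is Gromov hyperbolic, and Agol's theorem applies to conclude that $G'$ is virtually compact special.

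Finally, since $G$ is a quasi-convex subgroup of the virtually compact special hyperbolic group $G'$, the Haglund-Wise theorem implies that $G$ itself is virtually compact special; hyperbolicity of $G$ follows from the strict hyperbolization of $K$. Properties (\ref{item:first})--(\ref{item:last}) are then obtained by directly quoting the references already attached to each item in the statement. The main obstacle I anticipate is the cubulation upgrade from cocompact to proper cocompact: one must use the specific structure of the dual complex $X$ constructed for Theorem \ref{mainthm1} to produce enough quasi-convex codimension-one subgroups of $G'$ and to verify the boundary-separation hypothesis needed to feed into Bergeron-Wise, which is where the combinatorial geometry of the strict hyperbolization will have to enter in an essential way.
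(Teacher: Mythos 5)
Your overall skeleton (reduce the general case to the homogeneous boundaryless case by embedding $K\hookrightarrow K'$, then transfer virtual specialness from $G'=\pi_1(\mathcal H(K'))$ to the quasiconvex subgroup $G$) matches the paper exactly. But the central step — converting the cocompact but non-proper action of $G'$ on the dual complex into a virtually special cubulation — is where your proposal diverges, and where it has a genuine gap.

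You propose to upgrade the action by running a Sageev/Bergeron--Wise argument: extract codimension-one quasiconvex subgroups, verify boundary separation, cubulate properly, then invoke Agol. You yourself flag this as ``the main obstacle,'' and indeed it is unresolved in your proposal. The paper's Remark~\ref{rem:where are the hyperplanes} explicitly discusses this exact strategy and explains why it is delicate: there is no obvious supply of codimension-one subspaces that both extend nicely across tile boundaries and are plentiful enough to verify the Bergeron--Wise properness criterion; the mirror family itself is also noted not to give a proper action ``in the usual way.'' So the step you defer to the end of your proof is precisely the point that a prior argument must address, not assume. The paper sidesteps the entire issue by invoking Groves--Manning's Theorem~D from~\cite{GM18}, which applies to a \emph{non-proper} cocompact action of a hyperbolic group on a $\cat 0$ cube complex provided the cube stabilizers are quasiconvex and virtually compact special. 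The real work in the paper's \S\ref{sec:special cubulation} is then to identify cube stabilizers in $\dccx$ with cell stabilizers in $\uchc$ (Lemmas~\ref{lem:vertex stab equals cell stab} and~\ref{lem:cube stab equals min vertex stab}), show these are quasiconvex in $\hg$ and in the arithmetic lattice $\Gamma$ (\S\ref{subsec:quasiconvex and vcs}), and import virtual specialness of $\Gamma$ from~\cite{HW12}. None of that is present or replaceable in your proposed Bergeron--Wise route.

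A secondary concern: your construction of $K'$ (cap off low-dimensional maximal faces and then ``double along the PL boundary'') is not obviously a simplicial complex in the paper's sense — doubling can identify distinct simplices in a way that violates the embeddedness hypotheses used throughout. The paper's cleaner construction embeds $K$ into the full simplex on its vertex set and then into a triangulated sphere, which manifestly yields a compact homogeneous boundaryless complex. You would need to check your doubling construction produces a genuine simplicial complex, or else switch to the paper's construction.
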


This is achieved in \S \ref{sec:special cubulation} via a study of the cube stabilizers for the action from Theorem~\ref{mainthm1} and using a criterion for improper actions from \cite{GM18}. The special cubical complex in Theorem~\ref{mainthm2} comes from a geometric action on a $\cat 0$ cubical complex different from the one in Theorem~\ref{mainthm1}; its dimension is in general larger than $n$ and not easy to bound.
We note that the fact that hyperbolized groups do not have property (T) was already observed by Belegradek in \cite{BE07} without using cubical methods.

The use of strict hyperbolization (as opposed to non--strict hyperbolization procedures) is crucial here. Indeed, there are closed aspherical manifolds whose fundamental group is not Gromov hyperbolic and not residually finite (see \cite{ME90,BE06}).
A well-known question by Gromov asks whether all Gromov hyperbolic groups are residually finite. 
Theorem~\ref{mainthm2} implies that the strict hyperbolization procedure introduced by Charney and Davis in \cite{CD95} does not provide counterexamples to this question.

In \cite{LRGM23} we  obtained results analogous to Theorems~\ref{mainthm1} and \ref{mainthm2} for the relative strict hyperbolization procedure that was considered in \cite{DJW01,BE07} to construct aspherical manifolds with boundary and relatively hyperbolic groups.



\subsection{The main arguments}
The hyperbolization procedure in Theorems~\ref{mainthm1} and \ref{mainthm2} is the composition of two hyperbolization procedures. 
The first one is Gromov's cylinder construction, which turns the simplicial complex $K$ into a non--positively curved cubical complex $\gromov K$ (see \S\ref{sec:Gromov construction}, and \cite[\S3.4.A]{G87}).
The second one is the strict hyperbolization procedure of Charney and Davis, which turns a non-positively curved cubical complex $X$ into a locally $\cat{-1}$  piecewise hyperbolic polyhedron $\hc$ (see \S\ref{sec:strict hyperbolization}, and \cite{CD95}). 
Here $\Gamma$ is a certain uniform arithmetic lattice of simple type in $\so = \operatorname{Isom}^+(\hh^n)$, which needs to be chosen to define the strict hyperbolization procedure.
The hyperbolization procedure in Theorem~\ref{mainthm1} is then given by $\hyperbolize K=\left( \gromov {K} \right)_\Gamma$, i.e.\ by the composition
$$K \mapsto X= \gromov K \mapsto \hc =\left( \gromov {K} \right)_\Gamma , $$
and in this paper we are mostly concerned with the study of the second part, i.e.\ the strict hyperbolization of a cubical complex.
Sections \S\ref{sec:strict hyperbolization} and \S\ref{sec:dual cubical complex} lead to the following argument.

\begin{proof}[Proof of Theorem~\ref{mainthm1}]
Let $K$ be an $n$-dimensional  simplicial complex, which is compact, homogeneous, and without boundary.
Then the cubical complex $X=\gromov K$ is an $n$-dimensional  cubical complex, which is compact, homogeneous, and without boundary.
Moreover, up to a barycentric subdivision of $K$, the cubical complex $X=\gromov K$ can be assumed to be foldable, i.e.\ to admit a combinatorial map $f:X\to \square^n$ to the standard cube which is injective on each cube (see Proposition~\ref{prop:gromov_cylinder}).

Foldability provides a collection of subspaces of $X$ that we call mirrors. 
A mirror is defined as a connected component of the full preimage in $X$ of a codimension-1 face of the standard cube $\square^n$ under the folding $f:X\to \square^n$ (see \S\ref{subsec:mirrors convexity}).
Mirrors of $X$ give rise to nice locally convex codimension-1 subspaces of the hyperbolized complex $\hc$, which we still call mirrors.
Lifting the collection of mirrors of $\hc$ to the universal cover $\uchc$ of $\hc$ provides a stratification of $\uchc$: a point is in the $k$-stratum if it is contained in $n-k$ mirrors (where $n=\dim K =\dim X =\dim \hc$). 

We construct a dual cubical complex $\dccx$ in which vertices are given by cells in this stratification, and edges correspond to codimension-1 inclusion of cells (see \S\ref{sec:dual cubical complex}).
The complex $\dccx$ comes with a natural height function on its vertices, recording the dimension of the corresponding cell. 
In particular, the link of each vertex splits into an ascending sublink and a descending sublink. 
The former is flag because the cubical complex $X=\gromov K$ is non--positively curved, and the latter is flag because of a Helly property satisfied by collections of pairwise orthogonal hyperplanes in $\hh^n$ (see Lemma~\ref{lem:helly_hyperbolic_space}).
It follows that links of vertices in $\dccx$ are flag (see Proposition~\ref{prop:flag_links}), hence $\dccx$ is a non--positively curved cubical complex.
Moreover, the separation properties of the collection of mirrors (see \S\ref{subsec:separation}) imply that $\dccx$ is simply--connected, hence $\cat 0$ (see Theorem~\ref{thm:dual cubical complex is CAT(0)}).

Finally, note that the action of $G=\pi_1(\hc)=\pi_1(\hyperbolize K)$ on $\uchc$ by deck transformations induces an action of $G$ on $\dccx$, as desired (see Lemma~\ref{lem:dual action is cocompact}).
\end{proof}

The reader should note that the dimension of the dual cubical complex $\dccx$ in Theorem~\ref{mainthm1} is the same as the dimension of the input simplicial complex $K$.
Moreover, the Charney-Davis strict hyperbolization procedure from \cite{CD95} relies on the careful choice of a suitable arithmetic lattice $\Gamma$. Such a lattice can be chosen with a certain flexibility, and the proof of Theorem~\ref{mainthm1} works for any choice of $\Gamma$ for which the strict-hyperbolization procedure is defined.

The action from Theorem~\ref{mainthm1} is further studied in \S\ref{sec:special cubulation}.

\begin{proof}[Proof of Theorem~\ref{mainthm2}]
First of all, let us prove the theorem under the hypothesis and in the setting of Theorem~\ref{mainthm1}, i.e.\ with the additional assumption that $K$ is homogeneous and without boundary. 
Then the Gromov hyperbolic group $G=\pi_1(\hc)=\pi_1(\hyperbolize K)$ acts on the dual $\cat 0$ cubical complex $ \dccx$ cocompactly and by cubical isometries.

The action is not proper, but the cube stabilizers can be identified with suitable cell stabilizers for the action of $G$  by deck transformations on the universal cover $\uchc$ of $\mathcal H(K)$ (see \S\ref{subsec:action}).
These stabilizers are quasiconvex subgroups both of $G$ and of $\Gamma$ (see \S\ref{subsec:quasiconvex and vcs}).
Arithmetic lattices like $\Gamma$ are known to be virtually compact special by \cite{HW12}. In particular, we obtain that cell stabilizers for the action of $G$ on $\dccx$ are virtually compact special.
It then follows from \cite[Theorem D]{GM18} that $G$ itself is virtually compact special (see Theorem~\ref{thm:main_vcs}).

Finally, let us prove the theorem for an arbitrary compact simplicial complex $K$, without additional assumptions.
To this end, let $K'$ be a compact and homogeneous simplicial complex in which $K$ embeds as a subcomplex. (For instance, first embed $K$ in the complete simplex on its vertex set, and then embed this simplex in a triangulation $K'$ of a sphere.)
Since Gromov's cylinder construction maps subcomplexes to locally convex subcomplexes, and the Charney-Davis hyperbolization preserves local convexity, we have that $\mathcal H(K)$ is a locally convex subspace of $\mathcal H(K')$.
It follows that $G=\pi_1(\mathcal H(K))$ is a quasiconvex subgroup of $G'=\pi_1(\mathcal H(K'))$.
But $G'$ is hyperbolic and virtually compact special by the first part of this proof, hence $G$ is virtually compact special too by Lemma~\ref{lem:quasiconvex_in_special}.
\end{proof}

For the sake of clarity: the cubical complex that witnesses the specialness of $G$ is {\bf not} the cubical complex from Theorem~\ref{mainthm1}. It is obtained via the construction in  \cite{GM18}, and its dimension is in general higher than  $n=\dim K$.
One of the benefits of working with the dual $\cat 0$ cubical complex $\dccx$ (as opposed to other available $\cat 0$ cubical complexes, such as $\widetilde X$) is that the stabilizers for the action of $G$ on $\dccx$ can be related to the stabilizers for the action on $\uchc$, which are more geometric in nature and easier to understand.



\subsection{Classical applications of hyperbolization procedures}
The interest in hyperbolization procedures is that they can be used to construct closed aspherical manifolds with various interesting properties.
As a result of our Theorem~\ref{mainthm2}, many applications of the strict hyperbolization procedure  introduced by Charney and Davis in \cite{CD95} can be obtained with additional algebraic features (e.g.\ the properties \eqref{item:first}-\eqref{item:last} listed in Theorem~\ref{mainthm2}). 
We now collect some of these applications.

\subsubsection{Riemannian hyperbolization}\label{subsec:ontaneda}
The strict hyperbolization procedure introduced by Charney and Davis in \cite{CD95} outputs a space with a  metric which is locally $\cat{-1}$ and piecewise hyperbolic: the space is obtained by gluing together copies of the hyperbolizing cube $\hq$ (see \S\ref{sec:strict hyperbolization}).
When the cell complex $X$ used in the hyperbolization procedure is  homeomorphic to a smooth manifold,  the hyperbolized complex $\hc$ is homeomorphic to a manifold too, but the locally $\cat{-1}$ metric can a priori have singularities where the boundaries of different copies of the hyperbolizing cube $\hq$ meet.
It was recently shown by Ontaneda in \cite{O20} that the construction can be tweaked in such a way that the manifold $\hc$ supports a smooth Riemannian metric with strictly negative sectional curvatures (possibly with respect to a different smooth structure).

This was used in \cite[Corollary 5]{O20} to construct examples in any dimension $n\geq 4$ of closed Riemannian $n$--manifolds of pinched negative curvature which are ``new'' in the sense that they are not homeomorphic to any of the previously known examples of Riemannian manifold of negative curvature, such as closed real hyperbolic manifolds (or more generally locally symmetric spaces of rank $1$), or the Gromov-Thurston branched covers in \cite{GT87}, or the examples of Mostow-Siu in \cite{MS80} or Deraux in \cite{DE05}.
These manifolds are also distinct from the recent examples constructed by Stover--Toledo in \cite{ST21a,ST21b}, as the latter are K\"ahler, while the result of strict  hyperbolization cannot be K\"ahler by \cite[Theorem 1.8]{BE07}.
Our construction does not require the smoothness provided by Ontaneda's work, but it is compatible with it, so we get the following. 
\begin{corollary}\label{cor:ontaneda_special_manifolds}
For any $\varepsilon >0$ and $n\geq 4$ there are closed Riemannian $n$-manifolds with the following properties:
\begin{itemize}
    \item they have sectional curvatures in the interval $[-1-\varepsilon, -1]$;
    \item they are not homeomorphic to a locally symmetric space of rank 1, or one of the manifolds constructed by Gromov--Thurston, Mostow--Siu, Deraux, or Stover--Toledo;
    \item their fundamental groups are Gromov hyperbolic and virtually compact special (in particular, they satisfy properties  \eqref{item:first}-\eqref{item:last} in Theorem~\ref{mainthm2}).
\end{itemize} 
\end{corollary}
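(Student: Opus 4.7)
The plan is to assemble the corollary from four independent ingredients, each of which has already been established in the literature or in the main theorems of this paper. The fundamental observation is that Ontaneda's smoothing procedure in \cite{O20} is a refinement of the Charney--Davis strict hyperbolization scheme: at the combinatorial level, the output is still obtained by gluing copies of a hyperbolizing cube $\hq$ associated to a suitable arithmetic lattice $\Gamma$. In particular, Ontaneda's construction still falls within the framework to which Theorem~\ref{mainthm2} applies.

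First, I would choose any closed smooth $n$-manifold $K$ with $n \geq 4$, equipped with a smooth triangulation (so that $K$ is in particular a compact simplicial complex). Applying the Riemannian version of strict hyperbolization from \cite{O20} to $K$, one obtains a closed manifold $\hyperbolize K$ (homeomorphic to the Charney--Davis hyperbolization of $K$) that carries a smooth Riemannian metric whose sectional curvatures lie in $[-1-\varepsilon, -1]$, for $\varepsilon>0$ prescribed in advance.

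Next, I would invoke \cite[Corollary 5]{O20} to argue that, for an appropriate choice of $K$, the resulting manifold $\hyperbolize K$ is not homeomorphic to any closed real hyperbolic manifold (more generally, any rank-one locally symmetric space), nor to the examples of Gromov--Thurston, Mostow--Siu, or Deraux. To rule out homeomorphism with the K\"ahler examples of Stover--Toledo, I would appeal to \cite[Theorem~1.8]{BE07}, which asserts that strict hyperbolizations are never K\"ahler; since Ontaneda's construction is a strict hyperbolization, $\hyperbolize K$ cannot be homeomorphic to a K\"ahler manifold.

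Finally, the fundamental group $G=\pi_1(\hyperbolize K)$ is Gromov hyperbolic and virtually compact special directly by Theorem~\ref{mainthm2}, and hence satisfies properties \eqref{item:first}--\eqref{item:last} listed there. The only step that is not purely an appeal to previous results is checking that Theorem~\ref{mainthm2} does apply to the manifolds produced by \cite{O20}; this is straightforward because Ontaneda's modification affects only the Riemannian (smoothing) aspect of the construction and does not alter the underlying combinatorial strict hyperbolization to which our cubulation argument is sensitive. There is no substantial obstacle: the corollary is really a packaging statement, combining our new algebraic output with the geometric output of \cite{O20}.
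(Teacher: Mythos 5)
Your proposal is correct and essentially follows the same route as the paper's discussion: use \cite[Corollary~5]{O20} to produce Riemannian hyperbolized manifolds with pinched curvatures that are distinguished from rank-one locally symmetric, Gromov--Thurston, Mostow--Siu, and Deraux examples; invoke \cite[Theorem~1.8]{BE07} together with the K\"ahler property of the Stover--Toledo examples to rule those out; and apply Theorem~\ref{mainthm2} to get the algebraic properties, noting that the theorem holds for any valid choice of hyperbolizing lattice $\Gamma$ (including the deeper ones Ontaneda needs to control pinching) and that Ontaneda's smoothing does not alter the underlying topological manifold, hence not the fundamental group. The only small stylistic point is that you should say at the outset (not only later) that $K$ must be chosen appropriately so that \cite[Corollary~5]{O20} actually yields the ``new'' examples; starting with ``any closed smooth $n$-manifold $K$'' and then silently restricting is slightly misleading.
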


\begin{remark}\label{rem:ontaneda_special_groups}
Thanks to the solution of the Borel Conjecture for closed aspherical $n$-manifolds with Gromov hyperbolic fundamental group in dimension $n\geq 5$ (see Bartels-L\"uck in \cite{BL12}), the fundamental groups of these manifolds provide examples of Gromov hyperbolic groups that are not isomorphic to lattices in $\sofull$ or the other real simple Lie groups of rank $1$.
While it is not a priori clear from their construction whether these groups are linear, they actually turn out to be virtually compact special, hence linear over $\zz$ and residually finite.
We note that Giralt proved in \cite{GI17} that the fundamental groups of the Gromov-Thurston manifolds are also virtually compact special. 
\end{remark}

Similarly, other applications obtained by Ontaneda in \cite{O20} can be taken to have additional algebraic features. For example, we have the following versions of Corollary 2  in \cite{O20}.

\begin{corollary}
Let $\varepsilon>0$. The cohomology ring of any finite CW--complex embeds in the cohomology ring of a closed Riemannian manifold which has sectional curvatures in $[-1-\varepsilon,-1]$ and whose fundamental group is Gromov hyperbolic and virtually compact special (hence satisfies properties  \eqref{item:first}-\eqref{item:last} in Theorem~\ref{mainthm2}). In particular, it 
can be embedded into the cohomology ring of a Poincar\'e Duality subgroup of $\operatorname{SL}_N(\zz)$ (for $N$ large).
\end{corollary}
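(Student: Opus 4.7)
The plan is to invoke Ontaneda's Corollary 2 in \cite{O20} and then observe that the resulting manifold falls within the scope of our Theorem~\ref{mainthm2}. Given a finite CW-complex, we first replace it by a homotopy equivalent finite simplicial complex $L$. Following Ontaneda, one embeds $L$ as a subcomplex of a closed triangulated manifold $K$, applies the Charney-Davis strict hyperbolization procedure $\mathcal H$ from Theorem~\ref{mainthm1}, and then invokes Ontaneda's smoothing from \cite{O20}. The outcome is a closed Riemannian manifold $M = \hyperbolize K$ with sectional curvatures in $[-1-\varepsilon,-1]$, and the ring embedding $H^*(L) \hookrightarrow H^*(M)$ is precisely the content of Ontaneda's Corollary 2 in \cite{O20} (which in turn exploits the locally convex inclusion $\hyperbolize L \hookrightarrow \hyperbolize K$ together with the cohomology-surjective natural map from a hyperbolized complex back to its input).

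Because Ontaneda's smoothing does not alter the homotopy type, the fundamental group of $M$ equals $\pi_1(\hyperbolize K)$, and our Theorem~\ref{mainthm2} applies. We conclude that $G = \pi_1(M)$ is Gromov hyperbolic and virtually compact special, and so satisfies properties \eqref{item:first}--\eqref{item:last} from Theorem~\ref{mainthm2}. In particular, property (2) provides a faithful representation $G \hookrightarrow \operatorname{SL}_N(\zz)$ for some $N$.

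For the final Poincar\'e Duality assertion, note that $M$ is a closed aspherical $n$-manifold (negative sectional curvature), so it is a $K(G,1)$ and $H^*(M) \cong H^*(G)$; moreover $G$ is a $PD_n$ group over $\zz$ as the fundamental group of a closed aspherical $n$-manifold. The image of $G$ in $\operatorname{SL}_N(\zz)$ is then the desired Poincar\'e Duality subgroup whose group cohomology contains $H^*(L)$ as a subring. The main point to verify, which was already addressed around Corollary~\ref{cor:ontaneda_special_manifolds}, is that Ontaneda's smoothing is genuinely built on top of the Charney-Davis strict hyperbolization $\mathcal H$ of Theorem~\ref{mainthm1}, so that Theorem~\ref{mainthm2} really does apply to the smoothed manifold; this is the only step that is not a direct citation.
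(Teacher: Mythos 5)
Your proposal is correct and follows the same route the paper implicitly takes: the corollary is stated as a direct combination of Ontaneda's Corollary 2 in \cite{O20} (which furnishes the pinched-negatively-curved manifold $M$ with the cohomology-ring embedding) and Theorem~\ref{mainthm2} applied to $\pi_1(M)=\pi_1(\hyperbolize K)$, together with the standard observations that $M$ is a closed aspherical manifold (so $H^*(M)\cong H^*(G)$ with $G$ a $PD_n$ group) and that virtual compact specialness gives an embedding $G\hookrightarrow\operatorname{SL}_N(\zz)$. You also correctly flag the one nontrivial compatibility check — that Ontaneda's smoothing is layered over the same Charney--Davis procedure covered by Theorems~\ref{mainthm1} and~\ref{mainthm2}, which holds because those theorems work for any admissible choice of hyperbolizing lattice $\Gamma$, including the deeper lattices Ontaneda needs.
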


For another application in the spirit of Corollary~\ref{cor:ontaneda_special_manifolds} see \cite{RU23}, where a relative version of the Charney-Davis hyperbolization procedure is used to construct closed manifolds with hyperbolic fundamental group that do not admit any real projective or flat conformal structures, in any dimension at least $5$.
It follows from the present work (or from \cite{LRGM23}) that the fundamental groups of these manifolds are virtually compact special too.

\subsubsection{Pathological aspherical manifolds}
Davis and Januszkiewicz used the hyperbolization procedures to construct aspherical manifolds exhibiting a variety of pathological behavior (see \cite{DJ91}). 
As a consequence of our Theorem~\ref{mainthm2},
these examples can now be constructed to have the added property that their fundamental groups are virtually compact special, hence satisfy properties \eqref{item:first}-\eqref{item:last} from Theorem~\ref{mainthm2}.
For the convenience of the reader, we collate some of their examples.

\begin{corollary}\label{cor:pathological}
It is possible to construct (topological) manifolds of the following types which are piecewise hyperbolic and locally $\cat {-1}$.
\begin{itemize}

\item A closed $4$-manifold which is not homotopy equivalent to any PL $4$-manifold (see \cite[\S 5a]{DJ91}).

\item For $n=4k,k\geq 2$, a closed  $n$-manifold which is not homotopy equivalent to any smooth manifold (see \cite[Example 5.2]{BLW10}).

\item For $n\geq 5$, a closed  $n$-manifold whose universal cover is not homeomorphic to $\rr^n$ (see \cite[\S 5b]{DJ91}).

\item For $n\geq 5$, a closed  $n$-manifold whose universal cover is homeomorphic to $\rr^n$, but whose boundary at infinity is not homeomorphic to $S^{n-1}$ (see \cite[\S 5c]{DJ91}).

\end{itemize}
Moreover, in all these examples, the fundamental groups of the manifolds are Gromov hyperbolic and virtually compact special (in particular, they satisfy properties \eqref{item:first}-\eqref{item:last} from Theorem~\ref{mainthm2}).
\end{corollary}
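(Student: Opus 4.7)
The plan is to revisit each pathological construction cited in the corollary and verify that the input simplicial complex can be fed into the Charney--Davis strict hyperbolization procedure $\mathcal H$ of Theorem~\ref{mainthm2} (in place of the non-strict hyperbolization originally used by Davis--Januszkiewicz), while preserving both the manifold structure of the output and the relevant pathological topological feature. Once this is done, Theorem~\ref{mainthm2} directly supplies the virtually compact special fundamental group in each case, together with all the listed consequences \eqref{item:first}--\eqref{item:last}.

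The key structural fact I would rely on is the following: hyperbolization procedures are constructed so as to preserve local structure, in the sense that for any vertex $v$ of the input complex $K$, the link of the corresponding vertex in $\hyperbolize K$ is (a suspension of, or an iterated join with, or otherwise controlled by) the link of $v$ in $K$. In particular, if $K$ is a PL triangulation of a closed $n$-manifold, then so is $\hyperbolize K$, and local topological pathologies of $K$ get transferred unchanged to $\hyperbolize K$. Since the hyperbolizing cube $\hq$ used in the Charney--Davis construction (see \S\ref{sec:strict hyperbolization}) has the same link combinatorics as the ordinary cube used in the non-strict case, this link-preservation property holds equally for the strict procedure $\mathcal H$. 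Strictness then additionally guarantees that the piecewise hyperbolic output is locally $\cat{-1}$, as required.

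For each bullet point I would choose the input $K$ exactly as in the relevant reference: a non-combinatorial triangulation of $S^4$ (for instance via a double suspension of a non-simply-connected homology $3$-sphere) for the first item; an input smooth manifold with appropriately non-integral Pontryagin data, as in \cite{BLW10}, for the second; a PL triangulation of $S^{n-1}$ in which some vertex link is a non-simply-connected homology sphere for the third, following \cite[\S5b]{DJ91}; and a variant involving a carefully chosen non-sphere link for the fourth, following \cite[\S5c]{DJ91}. In each case the original argument runs with $\mathcal H$ replacing the non-strict hyperbolization, and the output is closed, piecewise hyperbolic, and locally $\cat{-1}$.

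The main (mild) obstacle is verifying that the pathological property really is detected by the local combinatorics of $K$ and therefore survives composition with Gromov's cylinder $\gromov{\cdot}$ and the Charney--Davis strict hyperbolization: for the non-PL and exotic-link examples this follows directly from link preservation, while for the non-smoothability statement one must recall that the obstruction involves rational Pontryagin classes, which are topological invariants and behave well with respect to the normal bundle framework used in \cite{BLW10,O20}. With this verified, Theorem~\ref{mainthm2} applied to each resulting $\hyperbolize K$ then yields the Gromov hyperbolic, virtually compact special fundamental group and completes the corollary.
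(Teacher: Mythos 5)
Your approach matches the paper's: the paper gives no separate proof for this corollary, relying instead on the surrounding text, which says exactly what you say---rerun the Davis--Januszkiewicz constructions with the strict Charney--Davis hyperbolization in place of the non-strict one (the pathological feature survives because hyperbolization preserves links, manifold structure, and---for the non-smoothable example---rational Pontryagin data), then apply Theorem~\ref{mainthm2} to obtain the Gromov hyperbolic, virtually compact special fundamental group. One small factual slip in your sketch: the non-PL closed $4$-manifold of \cite[\S 5a]{DJ91} does not come from a double suspension of a homology $3$-sphere (double suspensions produce non-PL spheres only in dimension at least $5$); it rests instead on Freedman's topological $4$-manifold theory and Rokhlin's theorem, but since you propose to take $K$ verbatim from the cited source, this does not affect the argument.
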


\begin{remark}
Concerning the first example in Corollary~\ref{cor:pathological}, taking products with tori yields examples in all dimensions $n\geq 4$ of closed aspherical $n$-manifolds not homotopy equivalent
to any PL $n$-manifold. These manifolds will have fundamental group which is linear over $\zz$, but when $n\geq 5$ will only support a locally $\cat 0$ metric due to the product structure. It would be interesting to produce examples in dimensions $n\geq 5$ which support locally $\cat {-1}$ metrics. 
\end{remark}

\subsubsection{Representing cobordism classes}\label{subsec:cobordism}
As another application, we can obtain representatives for cobordism classes that are both topologically and algebraically nice.

\begin{corollary}\label{cor:cobordism}
Let $M$ be an arbitrary closed smooth manifold. Then $M$ is cobordant to an aspherical manifold $M'$, where $\pi_1(M')$ is a Gromov hyperbolic and virtually compact special (in particular, it satisfies properties \eqref{item:first}-\eqref{item:last} from Theorem~\ref{mainthm2}).
\end{corollary}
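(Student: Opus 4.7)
The plan is to produce $M'$ by applying the strict hyperbolization procedure $\mathcal H$ from Theorem~\ref{mainthm1} to a smooth triangulation of $M$, and then to invoke two classical facts about that procedure: that it preserves the manifold structure, and that it comes equipped with a natural cobordism to the input.

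First, I would fix a smooth triangulation of $M$, realizing it as a simplicial complex $K$ of dimension $n=\dim M$ that is compact, homogeneous (each simplex lies in a top-dimensional one, and every link is a PL sphere), and without boundary. The hypotheses of Theorem~\ref{mainthm1} are then satisfied, so we may form the closed piecewise hyperbolic polyhedron $M' := \mathcal H(K) = (\gromov K)_\Gamma$, and by Theorem~\ref{mainthm2} the group $\pi_1(M')$ is Gromov hyperbolic and virtually compact special, which already delivers properties \eqref{item:first}--\eqref{item:last}.

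Next, I would verify that $M'$ is a closed aspherical manifold. The point is that both Gromov's cylinder construction and the Charney--Davis hyperbolizing-cube replacement preserve local structure: the link of every vertex in $\gromov K$, and subsequently in $(\gromov K)_\Gamma$, is PL-homeomorphic to the link of the corresponding simplex in $K$. Since these are PL spheres, $M'$ is a closed PL $n$-manifold, and asphericity follows because $M'$ is locally $\cat{-1}$, so its universal cover is $\cat{-1}$ and thus contractible.

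The main obstacle, and the remaining step, is producing a cobordism between $M$ and $M'$. For this I would appeal to the classical \emph{hyperbolization cobordism} of Davis--Januszkiewicz and Charney--Davis: the hyperbolizing cube $\hq$ admits a natural degree-one map to the standard cube $\square^n$, and more importantly sits at one end of a canonical cobordism to $\square^n$ rel boundary; these local cobordisms glue together along the foldable cubulation $\gromov K$ to yield a cobordism between $M$ and $M'$ (see \cite{DJ91,CD95,O20}). The care needed at this step is purely one of compatibility: the particular arithmetic lattice $\Gamma$ selected to make the cubulation arguments of \S\ref{sec:dual cubical complex} and \S\ref{sec:special cubulation} go through plays no role in the cobordism construction, which depends only on the intrinsic structure of $\hq$. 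Smoothing, if desired, can be arranged as in Ontaneda's work \cite{O20}.
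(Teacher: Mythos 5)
Your identification of $M' := \mathcal H(K)$ for $K$ a smooth triangulation of $M$ matches the paper's choice (they write $\tau$ where you write $K$), and your verification that $M'$ is a closed aspherical manifold is sound. The gap is in the cobordism step, which is also where the real content of the corollary lies.

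The object you invoke, a ``canonical cobordism from $\hq$ to $\square^n$ rel boundary,'' does not exist as stated: $\partial\hq$ is the union of the faces $\cd^{-1}(F)$, a closed piecewise hyperbolic $(n-1)$-manifold that is not PL-homeomorphic to $\partial\square^n \cong S^{n-1}$, so there is no identification of the two boundaries relative to which the cobordism could be taken. Moreover, even granting some corrected local relative cobordism between $\square^n$ and $\hq$, gluing these cell-by-cell over the cubical complex $\gromov K$ would produce a cobordism between $\gromov K$ and $(\gromov K)_\Gamma = M'$, not between $M$ and $M'$; recall that $\gromov K$ is not homeomorphic to $K\cong M$ (Gromov's cylinder construction changes the homeomorphism type while preserving local structure), so you would be left with the additional, unaddressed task of cobording $M$ to $\gromov K$.

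The paper sidesteps all of this with Gromov's cone trick, which is the argument you should use. Let $\tau$ be your smooth triangulation of $M$, and set $K := \operatorname{Cone}(\tau)$, a compact $(n+1)$-dimensional simplicial complex. Apply $\mathcal H$ to this cone. Since the hyperbolization procedure preserves links (condition \eqref{item:hypdef_local} of \S\ref{sec:hyperbolization template}), the point $p\in\mathcal H(K)$ over the cone vertex has $\lk{p,\mathcal H(K)}\cong\tau\cong M$, while every other point has a spherical link, so $\mathcal H(K)$ is a manifold away from $p$. Removing a small open cone neighborhood of $p$ yields a compact $(n+1)$-manifold $W$ with $\partial W = M \sqcup \mathcal H(\tau)$, i.e.\ the desired cobordism to $M' = \mathcal H(\tau)$ in a single step, with no need to cobord $\square^n$ to $\hq$ or $M$ to $\gromov{\tau}$ separately. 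Theorem~\ref{mainthm2} then applies to $M'$ exactly as you say.
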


Following an idea of Gromov (see \cite{G87,P91}), one lets $K$ be the cone over a smooth triangulation $\tau$ of $M$.
Then we apply the strict hyperbolization $\hyperbolize K$, and note that since hyperbolization preserves links, the point $p\in \hyperbolize K$ corresponding to the cone point will have link a copy of $\tau$. 
Thus, removing a small neighborhood of $p$ leaves us with a cobordism $W$ between $M$ and $M':=\mathcal H(\tau)$. 
Our Theorem~\ref{mainthm2} then applies to $M'$. 
Note that $\pi_1(W)$ itself contains $\pi_1(M)$, hence might not be linear (for instance, if $\pi_1(M)$ is a
non-linear group). 
On the other hand, if $\pi_1(M)$ is residually finite, then $\pi_1(M')$ is also residually finite by our Theorem~\ref{mainthm2}, and we showed in \cite[Corollary 5.9]{LRGM23} that in this case there is a cobordism between $M$ and $M'$ that has residually finite fundamental group. 

\begin{remark}
Thom's work showed that oriented cobordism classes are rationally represented by products of even dimensional complex projective spaces (see \cite[Section 17]{MS74}).
So every smooth oriented closed
manifold has a multiple which is cobordant to a non-negatively curved Riemannian manifold. 
In analogy, combining Davis--Januszkiewicz--Weinberger \cite{DJW01}, Charney--Davis \cite{CD95}, 
and Ontaneda \cite{O20}, one obtains that every smooth oriented closed manifold is cobordant to a strictly negatively curved Riemannian manifold. 

In dimensions $\geq 5$, the Borel Conjecture is known to hold for aspherical manifolds with Gromov hyperbolic groups (see Bartels--L\"uck \cite{BL12}). As such, 
the topological manifold $M':=\mathcal H(\tau)$ is completely determined, up to homeomorphism, by its fundamental group. So the discussion above in principle 
reduces the study of cobordism classes of manifolds of dimension $n\geq 5$, to the study of the corresponding $\pi_1(M')$. Our corollary further
reduces it to the linear case.
\end{remark}

\begin{remark}
More generally, Corollary~\ref{cor:cobordism} works for a PL manifold, or even for a triangulable topological manifold.
Note that in all dimension $n\geq 4$ there exist closed topological manifolds that are not triangulable (see \cite{FR82,MA16}).
Moreover, in all dimension $n\geq 6$ such manifolds can be chosen to be aspherical by \cite{DFL14}, and have virtually compact special (hence residually finite) fundamental group by \cite[Theorem 5.12]{LRGM23}.
\end{remark}

\subsubsection{Prescribing the Gromov boundary}
 The groups obtained by strict hyperbolization are Gromov hyperbolic groups, so it is natural to ask what their Gromov boundary looks like.
 For example, the groups obtained by Riemannian hyperbolization in \cite{O20} (see \S\ref{subsec:ontaneda}) are fundamental groups of smooth Riemannian manifolds of negative curvature, hence their Gromov boundaries are spheres of the appropriate dimensions.
 
\begin{corollary}
Let $n\geq 1$, and let $M$ be a closed connected orientable PL $n$-manifold that bounds a compact orientable PL $(n+1)$-manifold.
Then there exists a Gromov hyperbolic group $G$ such that
\begin{itemize}
    \item the Gromov boundary of $G$ is homeomorphic to the tree of manifolds $\mathcal X (M)$;
    \item $G$ is virtually compact special (hence satisfies \eqref{item:first}-\eqref{item:last} in Theorem \ref{mainthm2}).
\end{itemize}
\end{corollary}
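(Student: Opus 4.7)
The plan is to combine our Theorem~\ref{mainthm2} with a known technique for engineering Gromov boundaries via strict hyperbolization. Given a closed connected orientable PL $n$-manifold $M$ bounding a compact orientable PL $(n+1)$-manifold $W$, I would fix a PL triangulation of $W$ extending one of $\partial W = M$, and form the compact $(n+1)$-dimensional simplicial complex
\[
K := W \cup_M cM
\]
obtained by coning off $\partial W$ to a single vertex $p$. By construction $K$ is a PL manifold away from $p$, while the link of $p$ in $K$ is a copy of the triangulated $M$. Applying the strict hyperbolization procedure $\mathcal{H}$ of Theorem~\ref{mainthm2} to $K$ yields a compact, locally $\cat{-1}$ polyhedron $\mathcal{H}(K)$, whose fundamental group $G := \pi_1(\mathcal{H}(K))$ is immediately Gromov hyperbolic and virtually compact special, giving properties \eqref{item:first}--\eqref{item:last}.

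Next, since strict hyperbolization preserves links, the link of every preimage of $p$ in the universal cover $\widetilde{\mathcal{H}(K)}$ is again a copy of $M$, and $\mathcal{H}(K)$ is a topological manifold away from the $G$-orbit of the image of $p$. To identify the Gromov boundary $\partial_\infty G$ with the tree of manifolds $\mathcal{X}(M)$, I would appeal to the established analysis of visual boundaries of strictly hyperbolized ``manifolds with isolated $M$-type singularities'', in the spirit of Ancel--Siebenmann and Jakobsche: the bounding hypothesis on $M$ provides the disks needed to perform consistent connected sums along copies of $M$, producing a well-defined inverse system whose limit is $\mathcal{X}(M)$, and this inverse system is then matched with the nested system of metric spheres in $\widetilde{\mathcal{H}(K)}$ around ideal points arising from geodesic rays that accumulate on the orbit of $p$.

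The main obstacle will be this boundary identification. Concretely, one has to show that the ideal points of $\partial_\infty \widetilde{\mathcal{H}(K)}$ arising from rays eventually escaping the orbit of $p$ form an open dense ``manifold part'' of the boundary, while the remaining ideal points -- those coming from rays that enter arbitrarily small neighborhoods of infinitely many lifts of $p$ -- assemble precisely into the Jakobsche inverse limit defining $\mathcal{X}(M)$. The latter step is where the PL bounding hypothesis on $M$ is essential, since it guarantees that the connected-sum operations required at each level of the inverse system can be performed coherently inside the $(n+1)$-dimensional fillings that appear locally in $\mathcal{H}(K)$. Once this identification is in place, our Theorem~\ref{mainthm2} contributes the virtual compact specialness of $G$ (and hence all of \eqref{item:first}--\eqref{item:last}) for free, completing the proof.
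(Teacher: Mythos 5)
The paper's proof of this corollary is a one-line reduction: the group $G$ is \emph{by definition} the one constructed by \'Swi\k{a}tkowski in \cite{SW20}, who both builds the relevant strictly hyperbolized pseudomanifold and proves the identification $\partial_\infty G \cong \mathcal{X}(M)$; the only new contribution of the corollary is to observe that Theorem~\ref{mainthm2} applies to \'Swi\k{a}tkowski's complex and hence $G$ is virtually compact special. You instead propose to build the input complex yourself ($K = W \cup_M cM$) and then re-derive the boundary identification from first principles. This leaves two genuine gaps.

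First, it is not automatic that your particular complex $K$ satisfies the hypotheses of \'Swi\k{a}tkowski's boundary theorem, nor that the boundary of $\pi_1(\mathcal{H}(K))$ is $\mathcal{X}(M)$ for an arbitrary choice of filling $W$ and triangulation. \'Swi\k{a}tkowski works with specific normal pseudomanifolds whose links are spheres or copies of $M$, and establishing that the boundary is a Jakobsche tree of manifolds requires delicate control of the combinatorics of the singular set and of the lifts in the universal cover; it is the main theorem of an entire paper. You cannot simply assert that your $K$ has the required form without checking this, and you do not.

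Second, and more seriously, the boundary identification itself is treated as if it were already ``established analysis,'' citing the ``spirit of Ancel--Siebenmann and Jakobsche.'' Those references concern the intrinsic topology of trees of manifolds (well-definedness of the inverse limit, homogeneity properties, and so on), \emph{not} the theorem that the Gromov boundary of a strictly hyperbolized pseudomanifold group is such a tree of manifolds. That theorem is precisely \'Swi\k{a}tkowski's result in \cite{SW20}, and your sketch of it — a dichotomy between rays escaping the orbit of $p$ and rays accumulating on it, with the latter ``assembling'' into the inverse limit — does not reflect how the identification actually works: $\mathcal{X}(M)$ is not a disjoint union of a manifold part and a singular part, but a global inverse limit in which every point is a limit of the connected-sum tower, so this dichotomy cannot be the organizing principle. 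Replacing your second and third paragraphs with a direct citation of \cite{SW20} (and only then applying Theorem~\ref{mainthm2} for virtual specialness) gives the correct, and much shorter, proof.
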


The groups in this statement are the ones obtained by \'Swi\k{a}tkowski in \cite{SW20} via strict hyperbolization of certain pseudomanifolds in which the link of a point is either a sphere or a copy of the manifold $M$.
The tree of manifolds $\mathcal X (M)$ is a  compact metrizable space which is obtained, roughly speaking, as a certain limit of connected sums of copies of $M$.


\subsubsection{Manifolds with exotic symmetries}
The hyperbolization procedures satisfy a certain functorial property: automorphisms of the simplicial complex $K$ induce isometries of the hyperbolized complex 
$\mathcal H(K)$. This has been used by various authors to produce closed manifolds with interesting symmetries. 

For example, if $G$ is a Gromov hyperbolic group which is a Poincar\'e Duality group over $\mathbb Z$, an easy application of Smith theory shows that the fixed subgroup $G^\sigma$ of an involution $\sigma\in Aut(G)$ is still a Poincar\'e Duality group, but over $\mathbb Z_2$. 
Farrell--Lafont in \cite{FL04} used an exotic symmetry produced via strict hyperbolization, to give examples whose fixed subgroups are {\bf not} Poincar\'e Duality over $\mathbb Z$. Our results now show that these examples can also be chosen to satisfy properties \eqref{item:first}-\eqref{item:last} in Theorem \ref{mainthm2}.

For another application, recall that in their seminal paper  \cite{BC00}, Baum--Connes defined a trace map $tr: K_0(C_r^*G)\rightarrow \mathbb R$, where $C_r^*G$ is the reduced $C^*$-algebra of the discrete
group $G$. They also formulated the {\it trace conjecture}, which predicted that when $G$ is a group with torsion, the image of the trace map is contained in the additive
{\it subgroup} of $\mathbb Q$ generated by $1/n$, where $n$ ranges over the order of finite subgroups of $G$. 
A counterexample to this conjecture was constructed by Roy \cite{RO99}, 
using the Davis--Januszkiewicz (non-strict) hyperbolization procedure. She constructed a group $G$ whose only finite subgroups are isomorphic to $\mathbb Z_3$, 
and an element in $K_0(C_r^*G)$ whose trace equals $-1105/9$. Nevertheless, there is always the possibility
that the original Baum--Connes trace conjecture might hold for certain restricted classes of groups. 
The computations carried out by Roy (\cite{RO99}, pgs. 210-213) apply verbatim 
if one instead uses the Charney--Davis strict hyperbolization, so our results have the following consequence.

\begin{corollary}\label{cor:trace_conjecture}
There exists a Gromov hyperbolic group $G$ whose only finite subgroups are isomorphic to $\zz_3$, but where the
image of the trace map contains $-1105/9$. Moreover, this group satisfies properties \eqref{item:first}-\eqref{item:last} in Theorem \ref{mainthm2}. In particular, the original Baum--Connes trace conjecture does not hold for the classes of groups \eqref{item:first}-\eqref{item:last} in Theorem \ref{mainthm2}.
\end{corollary}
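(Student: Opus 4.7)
The plan is to recycle Roy's construction from \cite{RO99} with one key substitution: wherever she applies the Davis--Januszkiewicz (non-strict) hyperbolization, I would use instead the Charney--Davis strict hyperbolization $\mathcal H$ provided by Theorem~\ref{mainthm2}. Roy's input is a simplicial complex $K$ carrying a simplicial action of $\zz_3$ with a prescribed fixed-point structure; her group $G$ is then the orbifold fundamental group of $\hyperbolize K / \zz_3$, equivalently an extension of $\pi_1(\hyperbolize K)$ by $\zz_3$. Her computation on pp.~210--213 of \cite{RO99} produces an element of $K_0(C_r^*G)$ whose Baum--Connes trace equals $-1105/9$, contradicting the original trace conjecture.

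First I would verify that the substitution preserves Roy's two combinatorial inputs. Functoriality of the Charney--Davis procedure ensures that the $\zz_3$-action on $K$ induces an isometric $\zz_3$-action on $\hyperbolize K$. Because $\hyperbolize K$ is locally $\cat{-1}$, its universal cover is a complete $\cat{-1}$ space, so the Bruhat--Tits fixed-point theorem guarantees that every finite subgroup of $G$ is conjugate into a point stabilizer; these stabilizers are controlled by links, which are preserved by hyperbolization. Hence the only finite subgroups of $G$ remain isomorphic to $\zz_3$, exactly as in Roy's analysis. Next, her trace computation depends only on the $\zz_3$-equivariant cellular data of $K$, the induced action on homology, and general Chern-character / Baum--Connes machinery; none of these inputs are sensitive to whether the hyperbolization is strict or not, so the numerical value $-1105/9$ transfers verbatim.

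For the remaining conclusion, note that $\pi_1(\hyperbolize K)$ is a torsion-free finite-index subgroup of $G$ and is the fundamental group of the strict hyperbolization of a compact simplicial complex. Theorem~\ref{mainthm2} therefore applies to it directly, so $\pi_1(\hyperbolize K)$ is Gromov hyperbolic and virtually compact special. Both properties pass to finite overgroups (a finite-index compact special subgroup of $\pi_1(\hyperbolize K)$ is still finite-index in $G$, and hyperbolicity is a quasi-isometry invariant), so $G$ itself satisfies properties \eqref{item:first}--\eqref{item:last} of Theorem~\ref{mainthm2}. The final sentence of the corollary is then formal: these classes of groups admit an element with non-rational trace, so the original Baum--Connes trace conjecture fails for each of them.

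The main obstacle is not conceptual but rather bookkeeping: one must check that every step of Roy's argument on pp.~210--213 of \cite{RO99} truly depends only on the functorial and link-preserving properties shared by both hyperbolization procedures, and not on some feature specific to the Davis--Januszkiewicz construction. Since both procedures are functorial, preserve links, and yield aspherical complexes, this translation is essentially mechanical, but it does require a careful line-by-line check of Roy's numerical computation to confirm that no hidden non-strictness enters.
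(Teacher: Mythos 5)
Your proposal matches the paper's argument: the paper's entire justification is the paragraph immediately preceding the corollary, which observes that Roy's computations on pp.~210--213 of \cite{RO99} apply verbatim once the Davis--Januszkiewicz hyperbolization is replaced by the Charney--Davis strict hyperbolization, so that the torsion structure and the trace value $-1105/9$ persist while Theorem~\ref{mainthm2} now supplies virtual compact specialness. You merely make explicit the supporting facts the paper leaves implicit (functoriality of $\mathcal H$ to transport the $\zz_3$-action, link preservation and the fixed-point theorem to control finite subgroups, and inheritance of virtual compact specialness by the finite overgroup $G$), so the substance is the same.
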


\begin{remark}
L\"uck formulated a refinement of the original Baum--Connes trace conjecture: the image of the trace
map is contained in the {\it subring} $\zz[1/|Fin(G)|]$, obtained from $\zz$ by inverting all the orders of finite subgroups of $G$. L\"uck 
showed that this refined Trace Conjecture holds for any group that satisfies the Baum--Connes Conjecture (see \cite{LU02}). In the subsequent
literature, this refined version is what is commonly referred to as the Trace Conjecture. For Gromov hyperbolic groups, the Baum--Connes Conjecture was established by Lafforgue (see \cite{LA02}). Thus the group appearing in our Corollary~\ref{cor:trace_conjecture} satisfies the refined trace conjecture.
\end{remark}

\subsection{Virtual algebraic fibering}
In this section we present new applications of a more algebraic flavor.
We say a group $G$ \textit{algebraically fibers} if it admits a surjective homomorphism to $\zz$ with finitely generated kernel. 
We say it \textit{virtually algebraically fibers} if it has a finite index subgroup that algebraically fibers.
Agol introduced the notion of   \textit{residually finite rationally solvable} (or RFRS) group  in \cite{AG08} as a major ingredient in the solution of the Virtual Haken Conjecture and Virtual Fibering Conjecture.
Kielak proved in \cite[Thoerem 5.3]{K20} that a finitely generated virtually RFRS group virtually algebraically fibers if and only if its first $L^2$--Betti number vanishes.
Fisher has extended this result in \cite[Theorem A]{FI21} to relate the vanishing of higher $L^2$--Betti numbers of $G$ to higher finiteness properties of the kernel of a virtual algebraic fibration. 

All of the groups constructed in this paper via strict hyperbolization are virtually compact special, hence  virtually RFRS, see \cite[Corollary 2.3]{AG08}.
In some cases, it is possible to prove vanishing of many $L^2$--Betti numbers (for instance for all the examples obtained by Ontaneda in \cite{O20}, provided the curvatures are sufficiently pinched; see below for details). 
Hence, we get several new examples of virtually compact special Gromov hyperbolic groups that admit a virtual algebraic fibration, whose kernel has good algebraic finiteness properties.
On the other hand, these groups can often be seen to be incoherent, and in some cases it is possible to see that the kernel of a virtual algebraic fibration is itself a witness to incoherence (i.e.\ is finitely generated but not finitely presented).

Before providing the details for our case, we note that similar arguments also work for arithmetic hyperbolic manifolds of simple type and for Gromov--Thurston manifolds. These are known to be virtually specially cubulated (hence RFRS) by \cite{HW12} and \cite{GI17} respectively.

\subsubsection{Kernels with good algebraic finiteness properties}

We start by constructing Gromov hyperbolic groups that virtually algebraically fiber, and are not isomorphic to groups that were previously known to have this property.

\begin{corollary}
For all $n\geq 4$ there is a closed Riemannian $n$-manifold $M$ with negative sectional curvatures and such that
\begin{itemize}
    
    \item $\pi_1(M)$ virtually algebraically fibers;
    
    \item  $\pi_1(M)$ is Gromov hyperbolic and  virtually compact special (hence satisfies \eqref{item:first}-\eqref{item:last} in Theorem \ref{mainthm2});
    
    \item $\pi_1(M)$ is  not isomorphic to a uniform lattice in $\sofull$ (or other real simple Lie group of rank $1$), or to the fundamental group of a Gromov-Thurston, Mostow-Siu, Deraux, or Stover--Toledo manifold.
\end{itemize}
\end{corollary}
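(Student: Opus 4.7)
The plan is to combine the Ontaneda-smoothed strict hyperbolization of Corollary~\ref{cor:ontaneda_special_manifolds} with Kielak's characterization of virtual algebraic fibering, using the virtual specialness from Theorem~\ref{mainthm2} as the bridge. More precisely, I would start from a smooth triangulation $K$ of the sphere $S^n$, apply the Ontaneda-enhanced strict hyperbolization $\mathcal H$ of \cite{O20} to obtain a closed smooth Riemannian $n$-manifold $M=\mathcal H(K)$ whose sectional curvatures lie in $[-1-\varepsilon,-1]$ for an arbitrary $\varepsilon>0$ to be fixed later. By Corollary~\ref{cor:ontaneda_special_manifolds}, $\pi_1(M)$ is immediately Gromov hyperbolic and virtually compact special, and $M$ is not homeomorphic to any of the previously known negatively curved models (locally symmetric of rank one, Gromov--Thurston, Mostow--Siu, Deraux, Stover--Toledo); this takes care of two of the three bullet points.

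The remaining ingredient is that $\pi_1(M)$ virtually algebraically fibers. Since virtually compact special groups are virtually RFRS by \cite[Corollary 2.3]{AG08}, Kielak's theorem \cite[Theorem 5.3]{K20} reduces this to showing that the first $L^2$-Betti number $b_1^{(2)}(\pi_1(M))$ vanishes. For this I would invoke the classical $L^2$-cohomology vanishing theorems for negatively pinched manifolds (Donnelly--Xavier, Jost--Xin, and related results): if the sectional curvatures of a closed Riemannian $n$-manifold are sufficiently pinched near $-1$, then $b_p^{(2)}(M)=0$ for every $p\neq n/2$. Since $n\geq 4$ we have $1\neq n/2$, so by choosing $\varepsilon$ small enough (depending only on $n$) and using asphericity of $M$ to identify $b_1^{(2)}(M)=b_1^{(2)}(\pi_1(M))$, the desired vanishing follows, and Kielak's theorem then produces a finite index subgroup of $\pi_1(M)$ mapping onto $\zz$ with finitely generated kernel.

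The main obstacle, and the only real technical point, is matching the pinching constant required by the Donnelly--Xavier/Jost--Xin vanishing theorems with the pinching that Ontaneda's construction can achieve. The former depends only on $n$, while the latter can be made arbitrarily close to $-1$ by \cite{O20}, so the two are compatible for every $n\geq 4$; one just needs to carefully pick $\varepsilon$ after reading off the required pinching threshold from the vanishing theorem. All other required properties are inherited directly from Theorem~\ref{mainthm2} and Corollary~\ref{cor:ontaneda_special_manifolds}, so no further geometric or group-theoretic input is needed beyond what has already been developed in the paper.
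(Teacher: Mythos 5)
Your proposal follows essentially the same route as the paper: start from Ontaneda's smoothed strict hyperbolization, invoke Corollary~\ref{cor:ontaneda_special_manifolds} and Theorem~\ref{mainthm2} for Gromov hyperbolicity, virtual compact specialness (hence, via \cite[Corollary 2.3]{AG08}, virtual RFRS), and non-isomorphism with the classical examples, then use the Donnelly--Xavier/Jost--Xin $L^2$-vanishing theorems under sufficient pinching to get $b_1^{(2)}(\pi_1(M))=0$, and finally apply Kielak's criterion \cite[Theorem 5.3]{K20} to conclude virtual algebraic fibering. One small caveat: your specific choice of $K$ as a triangulation of $S^n$ is not guaranteed to produce one of the ``new'' manifolds in Ontaneda's Corollary 5 --- you should instead take $K$ (and the pinching $\varepsilon$) as prescribed by Ontaneda and by Corollary~\ref{cor:ontaneda_special_manifolds}, which the paper does implicitly; also note that passing from ``$M$ not homeomorphic to a locally symmetric manifold'' to ``$\pi_1(M)$ not isomorphic to a lattice'' relies (at least in dimension $\geq 5$) on the Borel conjecture for hyperbolic $PD_n$ groups as recalled in Remark~\ref{rem:ontaneda_special_groups}, a step you elide but which is also implicit in the paper's treatment.
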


The manifolds in this statement are the ones constructed by Ontaneda in \cite{O20} (see Corollary~\ref{cor:ontaneda_special_manifolds} above).
As a result of our Theorem~\ref{mainthm2} the fundamental group of such a manifold $M$ is virtually compact special, and in particular it is virtually RFRS.
Moreover, $M$ can be chosen to have sectional curvatures pinched in the interval $[-1-\varepsilon,-1]$ for an arbitrarily small $\varepsilon>0$.
By a result of Donnelly and Xavier (see \cite[\S 4]{DX84}, and also \cite[Theorem 2.3]{JX00}), if the curvatures of $M$ are sufficiently pinched (i.e.\ $\varepsilon$ is sufficiently small with respect to the dimension $n$), then $M$ does not have any non-zero $L^2$--harmonic $p$-forms, for $p$ in a certain range. 
In particular,  $b^{(2)}_1(\pi_1(M))=0$.
By \cite{K20} we see that if $\varepsilon$ is small enough then $\pi_1(M)$ virtually algebraically fibers. 

Furthermore, one can pinch the curvatures even more to force the vanishing of the $L^2$--Betti numbers for $p=0,1,\dots,\lfloor{\frac n2} \rfloor -1$. 
In particular, using results from \cite{FI21} one can obtain examples in which $\pi_1(M)$ virtually algebraically fibers with kernel of type $FP_{\lfloor{\frac n2} \rfloor -1}(\qq)$.
Also note that in the even dimensional case $M$ will then satisfy the (weak) Hopf conjecture, i.e.\ $(-1)^{\frac n2}\chi(M) \geq 0 $.



    
    

\subsubsection{Kernels witness incoherence in dimension 4}
We have discussed how to use strict hyperbolization to obtain Gromov hyperbolic groups that virtually algebraically fiber with kernel of type $FP_{\lfloor{\frac n2} \rfloor}(\qq)$. 
On the other hand, these kernels should not be expected to have better finiteness properties.
Indeed, in the context of the previous paragraph, we can show that in dimension $n=4$ these kernels are not finitely presented (i.e.\ not of type $F_2$).

To see this, notice that Chern--Weil theory implies that the Euler characteristic of a closed negatively curved $4$-manifold is strictly positive (see \cite{C55}). 
This prevents the kernel of an algebraic fibration of $\pi_1(M)$ from being finitely presented, as we now describe.
We thank Genevieve Walsh for sharing the following argument with us. (This appears in \cite{KVW21}.)

\begin{lemma} \label{Genevieve}
Let $M$ be a closed aspherical $4$-manifold such that $\chi(M) \neq 0$.  If $\pi_1(M)$ virtually algebraically fibers, then $\pi_1(M)$ is incoherent (the kernel is not finitely presented). 
\end{lemma}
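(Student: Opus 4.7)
The plan is to combine the $L^2$-Betti number vanishing that a virtual algebraic fibration with finitely presented kernel would force, together with $L^2$-Poincaré duality for closed aspherical manifolds, to arrive at the contradiction $\chi(M) = 0$.

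By hypothesis there is a finite index subgroup $H \leq \pi_1(M)$ and a surjection $\phi \colon H \twoheadrightarrow \zz$ with finitely generated kernel $K = \ker\phi$; I would assume for contradiction that $K$ is in fact finitely presented, i.e.\ of type $FP_2$. Then $H$ is the fundamental group of a finite cover $\tilde M \to M$, itself a closed aspherical $4$-manifold, and multiplicativity of the Euler characteristic under finite covers gives $\chi(\tilde M) = [\pi_1(M):H]\cdot\chi(M) \neq 0$. The task is therefore to derive $\chi(\tilde M) = 0$.

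The key input is a standard vanishing theorem for $L^2$-Betti numbers in extensions with infinite amenable quotient: if $1 \to K \to H \to \zz \to 1$ is exact and $K$ is of type $FP_n$, then $b_i^{(2)}(H) = 0$ for all $0 \leq i \leq n$. This is exactly the kind of $L^2$-vanishing input which underlies the algebraic fibering results of Kielak \cite{K20} and Fisher \cite{FI21} cited in the introduction. Applying it with $n = 2$ yields $b_0^{(2)}(H) = b_1^{(2)}(H) = b_2^{(2)}(H) = 0$. Since $\tilde M$ is a closed aspherical $4$-manifold, $L^2$-Poincaré duality gives $b_i^{(2)}(H) = b_{4-i}^{(2)}(H)$, so \emph{all} five $L^2$-Betti numbers of $H$ vanish. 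Atiyah's $L^2$-index theorem then produces the desired contradiction via
$$\chi(\tilde M) = \sum_{i=0}^{4} (-1)^i b_i^{(2)}(H) = 0.$$

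The main obstacle is pinning down the sharp form of the vanishing theorem above: one needs the precise statement that finite presentability of $K$ alone (not a stronger condition such as type $FP_\infty$) already suffices to force the vanishing of $b_i^{(2)}(H)$ for $i \leq 2$. Stallings' classical example of an $FP_2$ kernel which fails to be $FP_3$ shows that no higher finiteness is available for free in this context, so matching the hypothesis exactly to finite presentability is essential. The required statement is part of the standard $L^2$-toolkit and can be located in Lück's monograph on $L^2$-invariants, which is where the argument would point for a reference.
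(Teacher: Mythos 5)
Your argument is essentially the paper's second (alternative) proof, attributed there to Kevin Schreve: assume $K$ is finitely presented, hence $FP_2(\qq)$; deduce $b_0^{(2)}(G)=b_1^{(2)}(G)=b_2^{(2)}(G)=0$ for the fibering subgroup $G$; use $L^2$-Poincar\'e duality for the closed aspherical $4$-manifold to kill the remaining two $L^2$-Betti numbers; and then invoke Atiyah's $L^2$-Euler characteristic formula to contradict $\chi(G)\neq 0$. The one substantive difference is that the paper's version of this argument carries an extra hypothesis --- it is stated ``under the additional assumption that $\pi_1(M)$ is virtually RFRS'' --- because it routes the vanishing step through the theorems of Kielak \cite{K20} and Fisher \cite{FI21}, which are formulated as biconditionals for virtually RFRS groups. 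You instead invoke the underlying one-sided vanishing theorem (a $\zz$-extension with kernel of type $FP_n(\qq)$ has vanishing $b_i^{(2)}$ for $i\leq n$) directly, correctly observing that this direction does not need RFRS; this makes your version apply to the lemma exactly as stated, without the extra hypothesis. You should pin down the precise reference for that vanishing statement (you gesture at L\"uck's monograph; the relevant material is his theory of $L^2$-Betti numbers of extensions with infinite amenable quotient, and it does yield the $FP_n(\qq)$ hypothesis rather than the stronger $F_n$ one, so your Stallings-type worry is a real issue that the cited theorem resolves).

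What you did not anticipate is the paper's primary proof, which avoids $L^2$-theory entirely. That argument uses the result of Hillman--Kochloukova that a type-$FP_2$ kernel of a surjection from a $PD_4$ group onto a $PD_1$ group is itself a $PD_3$ group, combined with multiplicativity of Euler characteristics along short exact sequences of groups of finite homological type, to get $\chi(G)=\chi(K)\chi(\zz)=0$ directly. The primary proof buys elementarity and avoids having to locate the sharp $L^2$-vanishing statement; your approach (like Schreve's) buys a self-contained argument within the $L^2$ framework already in play in the surrounding discussion, and makes transparent that the same argument gives the higher-degree incoherence phenomena the paper alludes to via Fisher's theorem.
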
 
\begin{proof} 
Suppose $\pi_1(M)$ virtually algebraically fibers, and let $G$ be the finite index subgroup of $\pi_1(M)$ which surjects to $\zz$ with finitely generated kernel $K$. 
Notice that $G$ is a $PD_4$ group with  $\chi(G) \neq 0$ (since Euler characteristic is multiplicative by index), and that $\zz$ is a $PD_1$ group. 
Assume by contradiction that $K$ is finitely presented (i.e.\ type $F_2$).
Then $K$ is in particular of type $FP_2$, and it follows from \cite[Corollary 1.1]{HK07} that $K$ is a $PD_3$ group.
In particular $K$ has finite homological type (and the same is true for $\zz$).
So, by the properties of Euler characteristics on short exact sequences (see \cite[Chapter IX, 7.3(d)]{BR82}) we can conclude that $\chi(G)=\chi(K)\chi(\zz)=0$. This contradicts the fact that $\chi(G)\neq 0$.


\end{proof} 

An alternative argument for this Lemma, under the additional assumption that $\pi_1(M)$ is virtually RFRS, was shared with us by Kevin Schreve. 
\begin{proof}
In the same set up, if by contradiction $K$ is finitely presented, then it is in particular of type $\operatorname{FP}_2(\qq)$. 
So, since  $G$ is also virtually RFRS, by \cite{K20,FI21} we get that $b^{(2)}_1(G)=b^{(2)}_2(G)=0$.
But $G$ is a $PD_4$ group, so by duality this implies that all $L^2$-Betti numbers vanish. This gives $\chi(G)=0$, which is again absurd.
\end{proof}

As a result we obtain the following statement. The manifold in it  is once again one of the manifolds obtained by Ontaneda, with curvatures sufficiently pinched.
\begin{corollary}
There exists a closed $4$-dimensional Riemannian manifold $M$ with negative sectional curvatures and such that
\begin{itemize}
    \item  $\pi_1(M)$ is incoherent (it virtually algebraically fibers with non finitely presented kernel);
    
    \item $\pi_1(M)$ is Gromov hyperbolic and   virtually compact special (hence satisfies \eqref{item:first}-\eqref{item:last} in Theorem \ref{mainthm2});
    
    \item $\pi_1(M)$ is  not isomorphic to a uniform lattice in $\sofull$  (or other real simple Lie group of rank $1$), or to the fundamental group of a Gromov-Thurston, Mostow-Siu, or Stover--Toledo  manifold.
\end{itemize}
\end{corollary}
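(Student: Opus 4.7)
The plan is to assemble the ingredients developed throughout the preceding discussion, starting from one of Ontaneda's Riemannian hyperbolizations in dimension $4$ with sufficiently pinched negative curvature. Fix $\varepsilon>0$ small (to be determined), and let $M$ be a closed Riemannian $4$-manifold, obtained from the Charney--Davis construction followed by Ontaneda's smoothing, with sectional curvatures in $[-1-\varepsilon,-1]$. By Corollary~\ref{cor:ontaneda_special_manifolds}, $\pi_1(M)$ is Gromov hyperbolic, virtually compact special, and not isomorphic to the fundamental group of any uniform lattice in $\mathrm{SO}(n,1)$, nor to any Gromov--Thurston, Mostow--Siu, Deraux, or Stover--Toledo manifold.

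Next I would promote virtual specialness to virtual algebraic fibering. Since virtually compact special groups are virtually RFRS by \cite[Corollary 2.3]{AG08}, I can apply Kielak's theorem \cite[Theorem 5.3]{K20}, which reduces the existence of a virtual algebraic fibration to the vanishing of the first $L^2$-Betti number $b_1^{(2)}(\pi_1(M))$. For the vanishing, I would invoke the Donnelly--Xavier estimates \cite[\S4]{DX84} (see also \cite[Theorem 2.3]{JX00}): if the pinching constant $\varepsilon$ is small enough relative to $n=4$, then $M$ admits no non-zero $L^2$-harmonic $1$-forms, forcing $b_1^{(2)}(\pi_1(M))=0$. Choosing $\varepsilon$ below this threshold produces a virtual algebraic fibration $G \twoheadrightarrow \zz$ with finitely generated kernel $K$, where $G \le \pi_1(M)$ is a suitable finite index subgroup.

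To conclude incoherence, I would appeal to Lemma~\ref{Genevieve}. The key input is that $\chi(M)>0$: this follows from the Chern--Gauss--Bonnet formula for closed negatively curved $4$-manifolds due to Chern \cite{C55}, which forces the Euler characteristic integrand to be strictly positive. Since Euler characteristic is multiplicative under finite covers, also $\chi(G)\neq 0$, and the hypotheses of Lemma~\ref{Genevieve} are satisfied. The lemma immediately yields that the kernel $K$ is finitely generated but not finitely presented, so $\pi_1(M)$ is incoherent and $K$ itself is the desired witness.

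The main subtlety, rather than any single hard step, is making sure that the curvature pinching needed for Donnelly--Xavier is compatible with Ontaneda's smoothing, which is already guaranteed because Ontaneda's construction allows the pinching constant to be made arbitrarily small. All other ingredients---virtual specialness (Theorem~\ref{mainthm2}), Kielak's fibering criterion, Chern's positivity of $\chi$, and Lemma~\ref{Genevieve}---then combine mechanically to yield the three bullet points in the statement.
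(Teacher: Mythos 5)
Your proof is correct and follows the same route as the paper: take an Ontaneda manifold with sufficiently pinched negative curvature, invoke Corollary~\ref{cor:ontaneda_special_manifolds} for hyperbolicity, virtual specialness, and non-isomorphism to the classical examples, combine virtual RFRS with Donnelly--Xavier and Kielak to obtain a virtual algebraic fibration, and then use Chern's positivity of $\chi$ in dimension $4$ together with Lemma~\ref{Genevieve} to show the kernel is not finitely presented. No gaps.
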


\begin{remark}
The situation in dimension 4 is quite different from that in dimension 5.
Indeed, Italiano, Martelli, Migliorini in \cite{IMM21} obtained a 5-dimensional   cusped hyperbolic manifold that fibers over the circle. 
Its fundamental group algebraically fibers, with  kernel of finite type (in particular finitely presented).
The hyperbolic groups obtained by suitable Dehn filling on these examples were shown to fiber with kernel of finite type.
Moreover,  recent work of Groves and Manning shows that some of these groups are virtually compact special (see \cite{GM22}).
\end{remark}

\begin{remark}
When $n\geq 5$, the groups obtained by strict hyperbolization done with a sufficiently large piece (as in Ontaneda) contain subgroups isomorphic to uniform arithmetic lattices in $\operatorname{SO}(4,1)$.
The incoherence of these subgroups (see \cite{KPV08,AG08,K13}) gives incoherence of the hyperbolized groups, but these subgroups are not fibers themselves (for instance because they are quasiconvex).
Notice that this approach does not work  in dimension $n=4$, as uniform lattices in $\operatorname{SO}(3,1)$ are coherent.
\end{remark}

\bigskip

\addtocontents{toc}{\protect\setcounter{tocdepth}{1}}

\subsection*{Structure of the paper}
This paper is structured as follows. 
In \S\ref{sec:intro} we presented the motivation, the context, the statements, and the major applications of our results.
In \S\ref{sec:cell complexes and hyperbolization procedures}, we provide  combinatorial and metric background about cell complexes and  hyperbolization procedures. 
\S\ref{sec:strict hyperbolization} is devoted to a description of Charney--Davis strict hyperbolization procedure for a cubical complex $X$. 
In particular, we study the geometry of the universal cover $\uchc$ of the hyperbolized complex $\hc$ in terms of a certain collection of convex subspaces called mirrors. This provides a graph of spaces decomposition of $\hc$.
In \S\ref{sec:dual cubical complex} we construct and study a dual $\cat 0$ cubical complex $\dccx$. 
Finally in \S\ref{sec:special cubulation} we study the action of the hyperbolized group $\hg=\pi_1(\hc)$ on this dual cubical complex $\dccx$, and prove that $\hg$ is virtually compact special.

\subsection*{Common terminology and notation}
The numbers in parentheses refer to the section(s) in which each item is introduced or discussed.

\begin{itemize}
    \item The hyperbolizing lattice  $\Gamma$ (\S\ref{sec:CD hyperbolizing cell}) and the cubical complex $X$.
    \item The hyperbolized complex $\hc$ (\S\ref{sec:CD hyperbolized complex}), and its universal cover $\uchc$ (\S\ref{sec:universal_cover}).
    \item The hyperbolized cube $\hq$ (\S\ref{sec:CD hyperbolizing cell}), and its universal cover $\uchq$ (\S\ref{sec:universal_cover}).
    \item The hyperbolized groups   $\hgq = \pi_1(\hq)$ (\S\ref{sec:CD hyperbolizing cell}) and  $\hg = \pi_1(\hc)$ (\S\ref{sec:CD hyperbolized complex}).
    \item The folding map $f:X\to \square^n$ of a foldable complex (\S\ref{sec:foldablecomplexes}), and the induced map $\foldc:\hc\to \hq$ on the hyperbolized complex (\S\ref{sec:CD hyperbolized complex},\S\ref{sec:universal_cover}).
    \item The Charney-Davis map $\cd:\hq\to \square^n$, and the induced map $\cdX:\hc\to X$ on the hyperbolized complex (\S\ref{sec:CD hyperbolized complex}).
    \item The dual cubical complex $\dccx$ (\S\ref{sec:dual cubical complex}).
     \item A face  $F\subseteq \square^n$.
    \item A cube  $C\subseteq X$ (if $X$ is a cubical complex) or $Q \subseteq \dccx$.
    \item A cell  $\sigma \subseteq \hc$, $\uchc$ (\S\ref{subsec:stratification}), $\hh^n$, $\hq$, $\uchq$ (\S\ref{sec:universal_cover}). 
    \item A tile $\tau$ in $\uchc$ (\S\ref{sec:universal_cover}), and the dual tile $\dcc \tau$ in $\dccx$ (\S\ref{subsec:efficiency}).
    \item A mirror $M$ in $\uchc$ (\S\ref{subsec:mirrors convexity}), and the dual mirror $\dcc M$ in $\dccx$ (\S\ref{subsec:mirror complexity}).
    \item An edge--path $p$ in $\dccx$, its length $\length p$, its height $\height p$ (\S\ref{subsec:cubes and links}, \S\ref{subsec:efficiency}), the number of $(p,M)$--crossings $\mirrorcomplexitymirror{p}{M}$ with respect to a mirror $M$, and its total mirror complexity $\mirrorcomplexity{p}$ (\S\ref{subsec:mirror complexity}).
\end{itemize}

\subsection*{Acknowledgements}
We thank Igor Belegradek, Dick Canary, Mike Davis, Jingyin Huang, Ben McReynolds, Nick Miller for useful conversations, and in particular Genevieve Walsh and Kevin Schreve for pointing out the arguments for Lemma~\ref{Genevieve}. 
J.-F. Lafont was partly supported by the NSF Grant number DMS-1812028 and DMS-2109683.
L. Ruffoni acknowledges support by the AMS, the Simons Foundation, the INDAM group GNSAGA, and thanks the Department of Mathematics at The Ohio State University for the hospitality during some visits in which this work was conducted.
We thank the referee for their very careful reading and their suggestions.


\section{Cell complexes and hyperbolization procedures}\label{sec:cell complexes and hyperbolization procedures}
	\addtocontents{toc}{\protect\setcounter{tocdepth}{2}}

We collect in this section some background material used in our constructions. In \S \ref{sec:cell complexes} we review the basics about cell complexes, and in \S \ref{sec:foldablecomplexes} we focus on foldable complexes, i.e.\ complexes that can be folded down to a standard simplex or cube. In \S \ref{sec:hyperbolization template} we introduce a general template for the study of hyperbolization procedures for foldable complexes. In \S \ref{sec:Gromov construction} we review a specific hyperbolization procedure due to Gromov.

\subsection{Combinatorial and metric geometry of cell complexes}\label{sec:cell complexes}

In this section we collect background material about cell complexes, mainly to fix notation and terminology; for a detailed treatment see \cite[\S I.7, \S II.5]{BH99}. Let us denote by $\mathbb M^n_k$ the simply connected Riemannian manifold of dimension $n$ and  constant sectional curvature $k$: for instance $\mathbb M^n_1=\mathbb S^n$ is the round sphere, $\mathbb M^n_0=\mathbb E^n$ is the Euclidean space, and $\mathbb M^n_{-1}=\hh^n$ is the hyperbolic space. An isometrically embedded copy of $\mathbb M^d_k$ inside $\mathbb M^n_k$ will be called a $d$--\textit{plane}, or a \textit{hyperplane} if $d=n-1$.

\subsubsection{Cells}
A \textit{cell} in $\mathbb M^n_k$ is defined to be the convex hull of a finite set of points; if $k>0$ we are going to also require that it is contained in an open ball of radius $\frac{\pi}{2\sqrt k}$. 
The \textit{dimension} of a cell $C$ is the smallest $d$ such that $C$ is contained in a $d$--plane. A cell of dimension $d$ will also be called a $d$--\textit{cell}.
The \textit{interior} of $C$ is its interior inside this $d$--plane.
A \textit{face} $F$ of $C$ is a subspace of the form $F=H\cap C$ where $H$ is a hyperplane such that $C$ lives in one of the two closed half--spaces bounded by $H$, and $H\cap C\neq \varnothing$. A face is itself a cell, and we call \textit{vertices} and \textit{edges} of $C$ the faces of dimension $0$ and $1$ respectively.

\subsubsection{Cell complexes}\label{subsec:cell complexes}
An $\mathbb M^n_k$-\textit{cell complex} is a topological space $X$ obtained by gluing together cells from $\mathbb M^n_k$ by isometries of their faces, in such a way that each cell embeds in $X$ and the intersection of any two cells is either empty or a cell. 
Notice that this definition is slightly more restrictive than the one in \cite[Definition I.7.37]{BH99} (which allows one to glue two faces of the same cell), and the one  in \cite{CD95} (in which cells are allowed to intersect in a proper union of faces). Both conditions can be satisfied by performing a cellular subdivision. On the other hand, we do not require cell complexes to be locally compact at this stage, i.e.\ a vertex can be contained in infinitely many cells. 

We call an $\mathbb M^n_k$-cell complex simply  a \textit{cell complex} when we do not need to keep track of $\mathbb M^n_k$.
For instance we will denote by $\triangle^n$ the standard $n$-simplex and by $\square^n=[0,1]^n$ the standard $n$-cube; these are cells in $\mathbb M^n_0$. A \textit{simplicial complex} is a  cell complex obtained by gluing simplices, and a \textit{cubical complex} is a cell complex obtained by gluing cubes.

The \textit{dimension} of a cell complex is the maximum dimension of its cells. We say that an $n$-dimensional cell complex is \textit{homogeneous} if every cell is contained in a cell of dimension $n$, and that it is \textit{without boundary} if every $(n-1)$-cell is contained in at least two different $n$-cells. 
For all $k=0,\dots,n$, the $k$-\textit{skeleton} of $X$ is the subspace consisting of all the cells of dimension at most $k$, and will be denoted by $X^{(k)}$.
A \textit{subcomplex} of $X$ is a closed subspace $Y\subseteq X$ which is a union of cells of $X$.
If $X$ and $Y$ are cell complexes, a continuous function $f:X\to Y$ is a \textit{combinatorial map} if for every cell $C$ of $X$ we have that $f$ is a homeomorphism from $C$ to a cell $f(C)$ of $Y$.

Given a cell complex $X$, its \textit{barycentric subdivision} $\barsub X$ is the simplicial complex whose $k$-simplices correspond to strictly ascending sequences of faces $F_0\subset \dots\subset F_k$ of $X$. 
There exists a natural (non--combinatorial) homeomorphism $X\to \barsub X$.
We refer to \cite[\S I.7.44-48]{BH99} for more details. 
Similarly, if $X$ is a cubical complex, then its \textit{cubical subdivision} is the cubical complex obtained by subdividing each $n$--cube along midcubes into $2^n$ cubes. 

\begin{remark}\label{rem:not a cell complex}
By definition, a cell is compact, it has finitely many faces, and it can be realized as the intersection of finitely many closed half-spaces (see \cite[\S I.7]{BH99}).
We want to warn the reader that one of the main object under investigation in this paper (see \S\ref{subsec:stratification}) is obtained by gluing together certain ``generalized cells'', i.e.\  subsets of $\hh^n$ which are given by the intersection of an infinite but locally finite  collection of closed half-spaces. These subsets are convex but not compact, so the resulting space is not strictly speaking a cell complex. However, some of the usual tools for the study of cell complexes can be applied in this context (e.g.\ links). We will highlight the subtleties in the construction whenever relevant.
\end{remark}

\subsubsection{Links}\label{subsec:def links}
Let $X$ be a cell complex. We define the link of points and cells as follows (see \cite[\S I.7]{BH99} for more details). 
Let $p$ be a point of an $n$--cell $C\subseteq \mathbb M_k^n$. Then we define the \textit{link} $\lk{p,C}$ to be the space of unit vectors in the tangent space at $p$ inside $C$. Measuring the angle between vectors endows $\lk{p,C}$ with a natural length metric, which makes it isometric to  an $(n-1)$--cell in $\mathbb S^{n-1}$.
The \textit{link} $\lk{p,X}$ of $p$ in $X$ is then defined by gluing together the links $\lk{p,C_i}$, where $C_i$ ranges over the cells of $X$ containing $p$. This is naturally an $\mathbb M_1^{n-1}$--cell complex.
If $Y$ is a sufficiently regular subspace of $X$ containing $p$ (e.g.\ a subcomplex), then the \textit{link} $\lk{p,Y}$ is defined analogously, restricting to vectors along $Y$.

Let $F$ be a $d$--face of an $n$--cell $C\subseteq \mathbb M_k^n$. Then we define the \textit{link} $\lk{F,C}$ to be the subspace of unit vectors in the tangent space  at an interior point of $F$, which are pointing into $C$ and are orthogonal to $F$. As before, this is naturally an $(n-d-1)$--cell in $\mathbb S^{n-1}$. 
The \textit{link} $\lk{C,X}$ of a $d$--cell $C\subseteq X$ is then defined by gluing together the links $\lk{C,C_i}$, where $C_i$ ranges over the cells of $X$ containing $C$. 
It is naturally an $\mathbb M_1^{n-d-1}$--cell complex. 
Finally, if $Y\subseteq X$ is a subcomplex of $X$ containing $C$, the \textit{link} $\lk{C,Y}$ of $C$ in $Y$ is defined analogously, by restricting to the cells of $Y$ that contain $C$.
Observe that if $X$ is a simplicial or cubical complex, then the link of a $d$--cell $C$ is a simplicial complex in which vertices are given by the $(d+1)$--cells  containing $C$, and in which $m+1$ vertices span an $m$--simplex if and only if the corresponding cells are contained in a $(d+m+1)$--cell.

\subsubsection{Spaces and complexes of bounded curvature}\label{subsec:non-positive curvature}
We will consider the usual notions of curvature for metric spaces, such as being locally $\cat k$ or Gromov hyperbolic (see \cite[\S II.1, \S III.H.1]{BH99} for more details). 
In particular, we will say a space is \textit{non-positively curved} if it is locally $\cat 0$, and \textit{negatively curved} if it is locally $\cat k$ for some $k<0$. 
Note that if $k<0$ then a $\cat{k}$ space is in particular Gromov hyperbolic (see \cite[Proposition III.H.1.2]{BH99}), and that if $k\leq 0$ then a $\cat k$ space is uniquely geodesic (see \cite[Proposition II.1.4]{BH99}). Whenever $x,y$ are points in a uniquely geodesic space, we denote by $[x,y]$ the unique geodesic between them.

Let $X$ be an $\mathbb M^n_k$-cell complex. Each cell can be naturally endowed with a metric from $\mathbb M^n_k$, and these can be glued together to make $X$ into a complete geodesic metric space, as soon as there are only finitely many isometry classes of cells in $X$ (see \cite[Theorem I.7.50]{BH99}). When equipped with this metric, $X$ is said to be a cell complex of piecewise constant curvature $k$; we say it is \textit{piecewise} \textit{spherical}, \textit{Euclidean}, or \textit{hyperbolic} when $k=1,0,-1$ respectively. If not otherwise specified, a simplicial or cubical complex is always endowed with its standard piecewise Euclidean metric.

It is natural to ask for conditions under which a complex of piecewise constant curvature is a space of bounded curvature, namely a locally $\cat k$ space. 
For cubical complexes this is completely controlled by the links of vertices.
In a cubical complex,  cells are isometric to the standard Euclidean cube $\square^n=[0,1]^n$, so the link of a vertex is a piecewise spherical simplicial complex, in which all edges have length $\frac{\pi}{2}$.
The following is known as Gromov's link condition (see \cite[Theorems II.5.18, II.5.20]{BH99}).
A simplicial complex is \textit{flag} if any $k+1$ pairwise adjacent vertices span a $k$--simplex.

\begin{lemma}\label{lem:gromov link condition}
Let $X$ be a cubical complex. Then the following are equivalent.
\begin{enumerate}
    \item $X$ is non--positively curved (i.e.\ locally $\cat 0$).
    \item The link of each vertex is a flag simplicial complex.
    \item The link of each vertex is a $\cat 1$ simplicial complex.
\end{enumerate}
\end{lemma}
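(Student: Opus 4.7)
The strategy is to prove the chain of implications (1) $\Leftrightarrow$ (3) $\Leftrightarrow$ (2), invoking the general characterization of local $\cat k$ piecewise constant curvature complexes via links, and then the specific rigidity of all-right spherical complexes.

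First I would address (1) $\Leftrightarrow$ (3). Since $X$ is a piecewise Euclidean cell complex with finitely many isometry types of cells (there are in fact only finitely many such types, one for each dimension), a general theorem of Bridson (see \cite[Theorem II.5.2]{BH99}) states that $X$ is locally $\cat 0$ if and only if the link $\lk{v,X}$ of every vertex $v$ is a $\cat 1$ space. The intuition is that away from the $0$-skeleton, every point has a neighborhood isometric to a small ball in a Riemannian manifold of curvature $0$ (possibly with a totally geodesic boundary stratification), so the only obstructions to the local $\cat 0$ condition are concentrated at vertices; and near a vertex $v$, the metric on $X$ looks like a Euclidean cone over $\lk{v,X}$, which is $\cat 0$ precisely when the link is $\cat 1$ (by Berestovski\u{\i}'s theorem, see \cite[Theorem II.3.14]{BH99}). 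This step I would state, with an appropriate reference, rather than reprove.

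Next I would prove the equivalence (2) $\Leftrightarrow$ (3), which is the content specific to cubical complexes. The link of a vertex in a cubical complex is an \emph{all-right spherical simplicial complex}: each simplex is a spherical simplex in which all edges (and hence all dihedral angles) have length $\pi/2$. The direction (3) $\Rightarrow$ (2) is easy: if three vertices in $\lk{v,X}$ are pairwise joined by edges but do not span a $2$-simplex, then the three edges form a closed loop of length $3\cdot \pi/2 = 3\pi/2 < 2\pi$ which cannot be homotoped to a point through shorter loops, contradicting the $\cat 1$ condition; the higher-dimensional case follows by induction, considering the link of a vertex in the allegedly missing simplex.

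The main step, and where the real work lies, is (2) $\Rightarrow$ (3). I would invoke Bowditch's criterion: a piecewise spherical complex is $\cat 1$ if and only if it contains no isometrically embedded closed geodesic of length $<2\pi$. In an all-right spherical complex $L$, every locally geodesic loop has length a multiple of $\pi/2$, and in fact one can show that the short loops one needs to rule out correspond exactly to ``empty simplices'', i.e. collections of $k+1$ pairwise adjacent vertices which fail to bound a $k$-simplex. The flag condition rules these out. The hard part is carrying out the induction cleanly: one proves by induction on $\dim L$ that an all-right flag simplicial complex is $\cat 1$, using the fact that the link of each vertex in $L$ is again an all-right flag complex of smaller dimension, hence $\cat 1$ by induction, which via the cone characterization shows $L$ is locally $\cat 1$; then one uses the absence of short closed geodesics to upgrade local to global. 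The classical reference for this full argument is \cite[Theorem II.5.18]{BH99}, so in practice I would cite it rather than reconstruct the induction in full. The main obstacle is this last implication, where one must convert a purely combinatorial hypothesis (flagness) into a metric one ($\cat 1$).
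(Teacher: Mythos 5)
Your proposal is correct and, in effect, matches the paper's treatment: the paper states this lemma as a standard fact and simply cites \cite[Theorems II.5.18, II.5.20]{BH99}, which is exactly the combination of the link-condition characterization and Gromov's flag criterion that you sketch and then cite. The only minor imprecision is your claim that every locally geodesic loop in an all-right spherical complex has length a multiple of $\pi/2$ (geodesics crossing through the interiors of simplices need not), but since you ultimately defer to \cite[Theorem II.5.18]{BH99} for the $(2)\Rightarrow(3)$ direction this does not affect the argument.
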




\subsection{Foldable complexes}\label{sec:foldablecomplexes}
Here we consider the notion of foldability for simplicial and cubical complexes that we will require later. 
The first definition is essentially from \cite[\S1]{BS99} (but see also \cite[Definition 7.2]{CD95}, and \cite{XIE04} for a more recent discussion).

A simplicial (respectively cubical) $n$-dimensional complex $X$ is \textit{foldable} if it admits a combinatorial map $f:X\to \triangle^n$ (respectively $f:X\to \square^n$) such that its restriction to each cell of $X$ is injective. Such a map will be called a \textit{folding} for $X$.
Notice that in a foldable complex the cells are necessarily embedded. This is the main reason why we have incorporated this condition in the definition of cell complex in \S \ref{sec:cell complexes}.

Foldability has some immediate consequences. If $X$ is foldable, and $p:Y\to X$ is a combinatorial map which is injective on each cell, then $Y$ is foldable too. 
In particular any covering of a foldable complex is foldable. Moreover if $X$ is foldable, then the links of cells of codimension $2$ are bipartite graphs. We collect below some examples in the cubical case; analogous ones can be constructed for the simplicial case.

\begin{figure}[h]
    \centering
    \includegraphics[width=.75\textwidth]{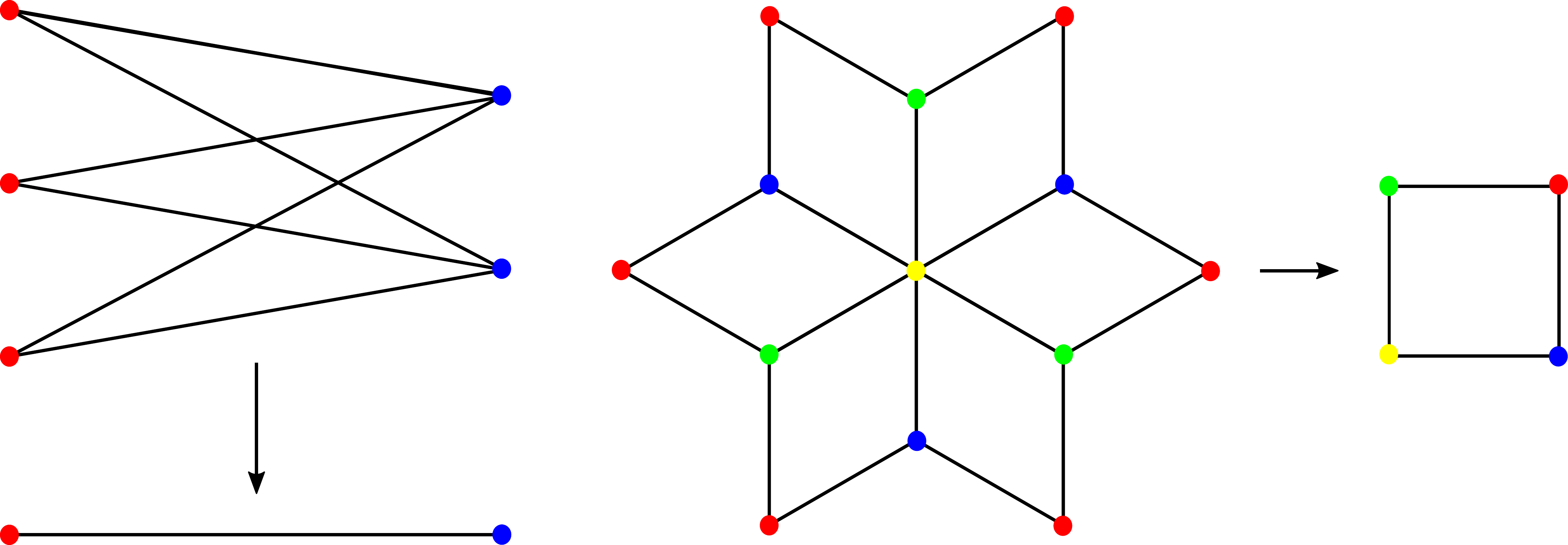}
    \caption{Foldable cubical complexes}
    \label{fig:foldable cubical complexes}
\end{figure}

\begin{example}[Foldable and not foldable cubical complexes]\label{ex:foldable_complexes}
\

\begin{enumerate}
    \item A graph is foldable if and only if it is bipartite (Figure~\ref{fig:foldable cubical complexes}, left).
    
    \item The rose $R_m$ consisting of $m$   squares  with a  vertex in common is foldable if and only if $m$ is even (Figure~\ref{fig:foldable cubical complexes}, right).
    
    \item Foldability of $X$ implies that links of codimension 2 cells are bipartite. 
    However, foldability is not completely determined by this property. For example,  let $X$ be the cubical complex obtained by taking the product $\partial \Delta^2 \times \rr$, where $\rr$ is endowed with the standard cell structure induced by $\zz$. Then the links of vertices are cycles of length 4 (hence they are bipartite), but $X$ is not foldable; notice that the universal cover of $X$ identifies the square complex defined by $\zz^2$ in $\rr^2$, which is foldable (compare \cite[Lemma 1.2]{BS99}).

\end{enumerate}

 \end{example}

A main source of foldability comes from barycentric subdivisions; the following is well-known (see \cite[Lemma 2.1]{BS99}), we include a proof for completeness (see left of Figure~\ref{fig:foldable subdivision} for an example).

\begin{lemma}\label{lem:barsub_folds}
If $X$ is a cell complex, then $\barsub X$ is a foldable simplicial complex.  
\end{lemma}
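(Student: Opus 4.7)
The plan is to build an explicit combinatorial map $f:\barsub X \to \triangle^n$ by labelling vertices of $\barsub X$ with dimensions of the corresponding cells of $X$, and to check that this labelling extends to a folding.

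Recall from \S\ref{subsec:cell complexes} that a vertex of $\barsub X$ is a cell $F$ of $X$, and that a collection $F_0,\dots,F_k$ of such vertices spans a $k$-simplex of $\barsub X$ precisely when, after reordering, they form a strictly ascending chain $F_0 \subsetneq F_1 \subsetneq \cdots \subsetneq F_k$. Fix a labelling $\lambda$ of the vertices $v_0,\dots,v_n$ of the standard simplex $\triangle^n$ by the integers $0,1,\dots,n$. Define $f$ on the vertex set of $\barsub X$ by $f(F) = v_{\dim F}$, and extend it combinatorially: given a simplex of $\barsub X$ corresponding to a chain $F_0 \subsetneq \cdots \subsetneq F_k$, send it affinely onto the simplex of $\triangle^n$ spanned by $\{v_{\dim F_0}, \dots, v_{\dim F_k}\}$.

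First I would verify that $f$ is well defined as a combinatorial map. Since the chain $F_0 \subsetneq \cdots \subsetneq F_k$ is strictly ascending, the dimensions $\dim F_0 < \dim F_1 < \cdots < \dim F_k$ are pairwise distinct, so the $k+1$ image vertices $v_{\dim F_i}$ are distinct and span a genuine $k$-simplex of $\triangle^n$ (here we use $\dim X = n$, so $\dim F_i \leq n$ for all $i$). Consequently $f$ restricts to an affine bijection on each simplex of $\barsub X$, which is exactly the definition of a combinatorial map that is injective on each cell. Finally, one checks compatibility on faces: the restriction of $f$ to a subchain is the combinatorial map associated to that subchain, so the local definitions glue together into a well-defined global map $f : \barsub X \to \triangle^n$.

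There is essentially no obstacle here: the content of the argument is the observation that the dimension function is strictly monotone along chains of faces, which is immediate from the definition of ``face'' in \S\ref{subsec:cell complexes}. By the definition of foldability in \S\ref{sec:foldablecomplexes}, the existence of such a map $f$ shows that $\barsub X$ is foldable, completing the proof.
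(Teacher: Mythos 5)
Your proof is correct and takes essentially the same approach as the paper: define $f$ on vertices of $\barsub X$ by sending the barycenter of a cell $F$ to the vertex of $\triangle^n$ labelled $\dim F$, and observe that strict monotonicity of dimension along chains of faces makes this injective on each simplex. The paper states this in a single sentence; you simply spell out the verification more fully.
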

\proof
Let $X$ have dimension $n$, and  consider the simplex spanned by $\{0,\dots,n\}$; this is just the standard simplex $\triangle^n$. Then we can define a map $f:\barsub X\to \triangle^n$ by sending a vertex of $\barsub X$ to the number which is equal to the dimension of the corresponding cell in $X$.
\endproof

\begin{figure}[h]
    \centering
    \includegraphics[width=.75\textwidth]{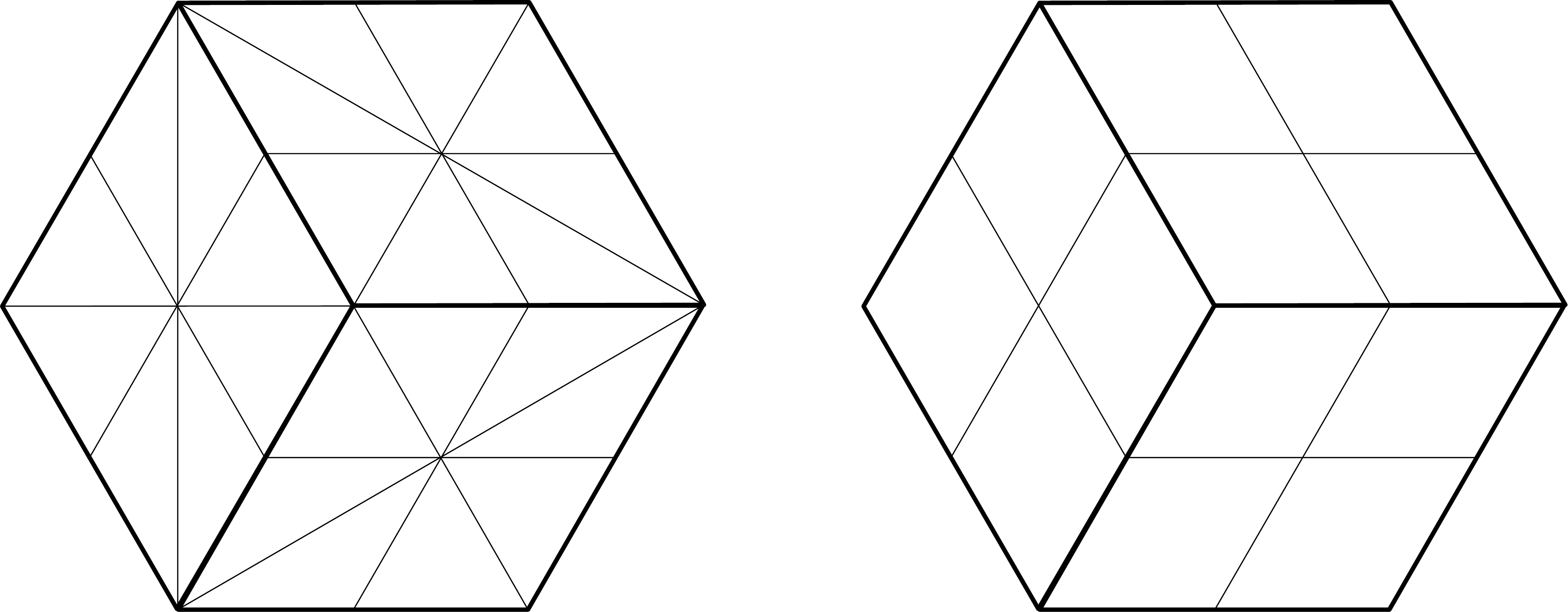}
    \caption{The barycentric subdivision of the rose of $3$ squares is a foldable simplicial complex, but its cubical subdivision is not a foldable cubical complex.}
    \label{fig:foldable subdivision}
\end{figure}

On the other hand, if $X$ is a non--foldable cubical complex of dimension at least $2$, then its cubical subdivision is still non--foldable (see Figure~\ref{fig:foldable subdivision}, right).  In \S \ref{sec:Gromov construction} we will review Gromov's construction and show that it can be used to turn any cubical complex into a foldable one (mildly changing the topology).


\subsection{Hyperbolization procedures}\label{sec:hyperbolization template}
In this section we set a framework for the study of certain constructions, which take a cell complex as input and return a non-positively curved space as output. 
The resulting space is in particular always aspherical, so the topology of the original complex is altered.
What is interesting is that this can happen in a controlled way that allows to preserve some features of the original complex. 
Constructions of this type are generally known as hyperbolization procedures (or asphericalization procedures). They were first introduced by Gromov (see \cite[\S 3.4.A]{G87}), and then popularized by several authors (see \cite{DJ91,P91,CD95,DFL14,O20}).

All the hyperbolization procedures we will consider in this paper are obtained by different incarnations of the same abstract construction, which we now review briefly, referring the reader to \cite{WI63} or \cite[\S 1]{DJ91} for more details. The naive idea is to fix some topological space $S$ and then replace every top-dimensional cell of a complex $X$ by a copy of $S$. For this gluing to be well--defined, it is common to assume that both $X$ and $S$ come equipped with chosen maps to a reference space.

For concreteness let us consider the following set up. Let us denote by $\sigma^n$ the standard simplex $\triangle^n$ or the standard cube $\square^n$, and let us fix a topological space $S$, equipped with a continuous map $g:S\to \sigma^n$, and a foldable simplicial or cubical complex $X$, equipped with a  fixed folding $f:X\to \sigma^n$. 
One then considers the fibered product $\mathcal H_S(X)=\{(x,s)\in X\times S\ | \ f(x)=g(s) \}$, i.e.\ the space obtained via the pullback square in Figure~\ref{fig:template}.

\begin{figure}
    \centering
    \begin{tikzpicture}
\node at (-1,1) (A) {$\mathcal H_S(X)$};
\node at (-1,-1) (B) {$X$};
\node at (1,-1) (C) {$\sigma^n$};
\node at (1,1) (D) {$S$};
\draw [dashed,->] (A) edge (B)   (A) edge (D) ;
\draw [->]  (B) edge (C) (D) edge (C);
\node at (0,-1.25) {$f$};
\node at (1.25,0) {$g$};
\node at (0,1.25) {$f_S$};
\node at (-1.4,0) {$g_X$};
\end{tikzpicture}
    \caption{A template for hyperbolization procedures}
    \label{fig:template}
\end{figure}

Note that the construction endows $\mathcal{H}_S(X)$ with natural continuous maps $g_X:\mathcal{H}_S(X)\to X$ and $f_S:\mathcal{H}_S(X)\to S$, which are just the restrictions of the projections onto the factors of $X\times S$, and which make the diagram commute.
Properties of the pair $(S,g)$ will result in properties of the space $\mathcal H_S(X)$, and the art of hyperbolization consists in crafting a pair $(S,g)$ which yields some interesting properties on $\mathcal H_S(X)$. For a trivial example, consider the case $S$ consists of a single point. Then $\mathcal H_S(X)$ is just the discrete set $f^{-1}(g(S))$. 

The following lemma identifies a mild condition under which the space $\mathcal H_S(X)$ looks like a collection of copies of $S$  (compare the remark on page 321 of \cite{WI63}).
We explicitly remark that we do not assume $S$ to be compact until part \eqref{item:hypcell compact} of this lemma. This will be relevant in \S\ref{sec:universal_cover} for the study of a certain combinatorial decomposition of a space into non--compact pieces.

\begin{lemma}\label{lem:cell_is_hyperbolizing_cell}
Let $g:S\to \sigma^n$ be surjective, and let $C\subseteq X$ be an $n$--cell. Then the following hold.
\begin{enumerate}
    \item \label{item:hypcell homeo} The map $f_S$ restricts to a homeomorphism $\varphi: g_X^{-1}(C) \to S$.
    
    \item \label{item:hypcell retraction} The map $\varphi^{-1} \circ f_S:\mathcal H_S(X) \to g_X^{-1}(C)$ is a retraction.
    
    \item \label{item:hypcell compact} If $X$ and $S$ are compact, then $\mathcal H_S(X)$ is compact too.

\end{enumerate}

\end{lemma}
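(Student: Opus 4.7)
The plan is to exploit the fact that a folding $f$ is injective on each cell, so its restriction to any $n$-cell $C\subseteq X$ is a homeomorphism onto $\sigma^n$. Let $h\colon \sigma^n\to C$ denote the inverse of $f|_C$. The key trick for part \eqref{item:hypcell homeo} is to exhibit an explicit continuous inverse $\psi\colon S\to g_X^{-1}(C)$ to $\varphi=f_S|_{g_X^{-1}(C)}$, namely $\psi(s)=(h(g(s)),s)$. First I would check that $\psi$ lands in $\mathcal H_S(X)$ (since $f(h(g(s)))=g(s)$) and in fact in $g_X^{-1}(C)$ (since $h(g(s))\in C$). Then $f_S\circ\psi=\operatorname{id}_S$ is immediate from the second coordinate, and $\psi\circ f_S=\operatorname{id}$ on $g_X^{-1}(C)$ because any $(x,s)$ there satisfies $x\in C$ and $f(x)=g(s)$, hence $h(g(s))=h(f(x))=x$. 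Continuity of $\psi$ comes from continuity of $g$ and $h$, and continuity of $\varphi$ is inherited from the ambient projection $X\times S\to S$. I do not expect surjectivity of $g$ to play a role in this step; it is only needed implicitly in the sense that otherwise the picture would be uninteresting.

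For part \eqref{item:hypcell retraction}, the map $\varphi^{-1}\circ f_S$ is continuous as a composition of continuous maps, and by the explicit formula it sends $(x,s)\mapsto (h(f(x)),s)$ when $(x,s)\in g_X^{-1}(C)$; since such $x$ lies in $C$, $h(f(x))=x$, so this is the identity on $g_X^{-1}(C)$. That is exactly the retraction property.

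For part \eqref{item:hypcell compact}, I would note that $\mathcal H_S(X)$ is the equalizer of the two maps $f\circ\operatorname{pr}_X$ and $g\circ\operatorname{pr}_S$ from $X\times S$ into the Hausdorff space $\sigma^n$, hence is a closed subset of $X\times S$. Under the compactness hypothesis, $X\times S$ is compact, and therefore so is its closed subset $\mathcal H_S(X)$.

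None of the three parts looks genuinely hard; the only point worth care is making sure the candidate inverse $\psi$ is well defined and continuous, which hinges precisely on foldability giving the local section $h=(f|_C)^{-1}$. If there is a subtle obstacle it is a notational one: the construction produces \emph{one} copy of $S$ per top cell of $X$, and later parts of the paper will want to identify $\mathcal H_S(X)$ with a union $\bigcup_C g_X^{-1}(C)\cong \bigsqcup_C S / \sim$, so I would phrase the proof so that $\psi$ and $\varphi$ are manifestly natural in $C$, which will make that identification transparent when it is invoked.
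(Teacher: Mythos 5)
Your parts \eqref{item:hypcell homeo} and \eqref{item:hypcell retraction} are essentially the paper's own argument: you construct the same explicit inverse $\psi(s)=(h(g(s)),s)$, which is the map $s\mapsto(\lambda(s),s)$ with $\lambda=(f|_C)^{-1}\circ g$ in the paper's notation, and deduce the retraction from the formula. (Your aside that surjectivity of $g$ is not actually needed for this to be well defined is correct: since $f|_C$ is a combinatorial map it is a homeomorphism of $C$ onto the unique $n$-cell $\sigma^n$, so $f_C^{-1}$ is defined on all of $\sigma^n$ regardless; the paper's parenthetical attributes this to surjectivity of $g$, which is beside the point.)

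For part \eqref{item:hypcell compact} you take a genuinely different and cleaner route. The paper argues that a compact $X$ has finitely many $n$-cells, so $\mathcal H_S(X)$ is covered by the finitely many compact sets $g_X^{-1}(C)\cong S$; note this step quietly relies on every point of $\mathcal H_S(X)$ lying over some $n$-cell, i.e.\ on $X$ being homogeneous, which is not part of the stated hypotheses of the lemma (though it holds in the paper's applications). Your argument instead observes that $\mathcal H_S(X)$ is the equalizer of $f\circ\operatorname{pr}_X$ and $g\circ\operatorname{pr}_S$ mapping into the Hausdorff space $\sigma^n$, hence is closed in $X\times S$, hence compact whenever $X$ and $S$ are. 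This is shorter, needs no homogeneity, and is the standard point-set fact; both arguments suffice here, but yours is the more robust one.
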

\proof
Let us denote by $f_C$ the restriction of the folding map $f$ to $C$. Note that $f_C:C\to \sigma^n$ is a homeomorphism.
To prove \eqref{item:hypcell homeo}, let $\varphi:g_X^{-1}(C)\to S$ be the restriction of $f_S$ to $g_X^{-1}(C)$. Then $\varphi$ is continuous, because it is just the restriction of the projection $X\times S\to S$. 
Injectivity and surjectivity of $\varphi$ follow respectively from those of $f_C$. 
To conclude, we construct an explicit continuous inverse. Consider the map $\lambda:S\to C$, $\lambda(s)=f_C^{-1}(g(s))$. Notice it is well--defined (because $g$ is surjective), and continuous.
Then the map $\psi:S\to g_X^{-1}(C)\subseteq X\times S$, $\psi(s)=(\lambda(s),s)$ provides a continuous inverse to $\varphi$. 

Now \eqref{item:hypcell retraction} follows from   \eqref{item:hypcell homeo}, as every element of $g_X^{-1}(C)$ is of the form $(\lambda(s),s)$. Finally, to prove \eqref{item:hypcell compact}, observe that if $X$ is compact, then it consists of finitely many $n$--cells. As a result of the previous argument, $\mathcal H_S(X)$ is covered by finitely many copies of the compact space $S$, hence it is compact.
\endproof

Depending on the applications in which they are interested, authors differ on what additional geometric conditions they require on the association $X\to \mathcal H_S(X)$, hence they start with different spaces $(S,g)$. We refer the reader to \cite{DJ91} for a very general treatment of how properties of $(S,g)$ imply properties of $\mathcal H_S(X)$.
Some commonly required conditions are the following

\begin{enumerate}

\item \label{item:hypdef_hyperbolicity} (Hyperbolicity): $\mathcal H_S(X)$ admits a non-positively curved metric.

\item \label{item:hypdef_functoriality} (Functoriality): if $Z\subseteq X$ is a locally convex subcomplex, then $\mathcal H_S(Z)\subseteq \mathcal H_S(X)$ is a locally convex subspace.

\item \label{item:hypdef_local} (Local structure): if $C\subseteq X$ is an $n$--cell, then $\mathcal H_S(C)$ is an $n$-manifold with boundary and corners, and $\lk{\mathcal H_S(C),\mathcal H_S(X)} \cong \lk{C,X}$. In particular, if $X$ is a manifold, then $\mathcal H_S(X)$ is a manifold too.

\item \label{item:hypdef_homology} (Homology surjectivity): the map $g_X:\mathcal H_S(X)\to X$ induces a surjection on homology.

\end{enumerate}

The association $X\to \mathcal H_S(X)$ is then called the \textit{hyperbolization procedure} defined by $(S,g)$.
We call $S$ the \textit{hyperbolizing cell}, and $\mathcal H_S(X)$ the \textit{hyperbolized complex}.
Despite the name (established in the literature), the output $\mathcal H_S(X)$ of a  hyperbolization procedure is a metric space which  a priori is just non-positively curved. A \textit{strict hyperbolization} is one for which $\mathcal H_S(X)$ is negatively curved.
In this paper we will consider a (non--strict) hyperbolization for simplicial complexes due to Gromov (see \S\ref{sec:Gromov construction}), and a strict hyperbolization for cubical complexes due to Charney and Davis (see \S\ref{sec:strict hyperbolization}).


\begin{remark}\label{rem:faces--create topology} 
If $(S,g)$ is a given hyperbolizing cell, $g:S\to \sigma^n$ is surjective, and $F\subseteq \sigma^n$ is a closed face of the $n$--cell $\sigma^n$, then the subspace $g^{-1}(F)$ will be called a \textit{face} of $S$. The dimension of a face of $S$ is defined to be simply the dimension of the corresponding face of $\sigma^n$.
Note that a face of $S$ does not need to be connected.
When this happens, $\mathcal H_S(X)$ may fail to be simply connected, even if both $X$ and $S$ are. 
For some interesting examples, see \cite[1b.1]{DJ91}, or consider the elementary one in Figure~\ref{fig:hyperbolization_topology}. 
Despite their non-trivial role in the construction, most of the times the maps $f$ and $g$ are omitted from the notation.
\end{remark}

\begin{figure}[ht]
    \centering
    \includegraphics[width=.8\textwidth]{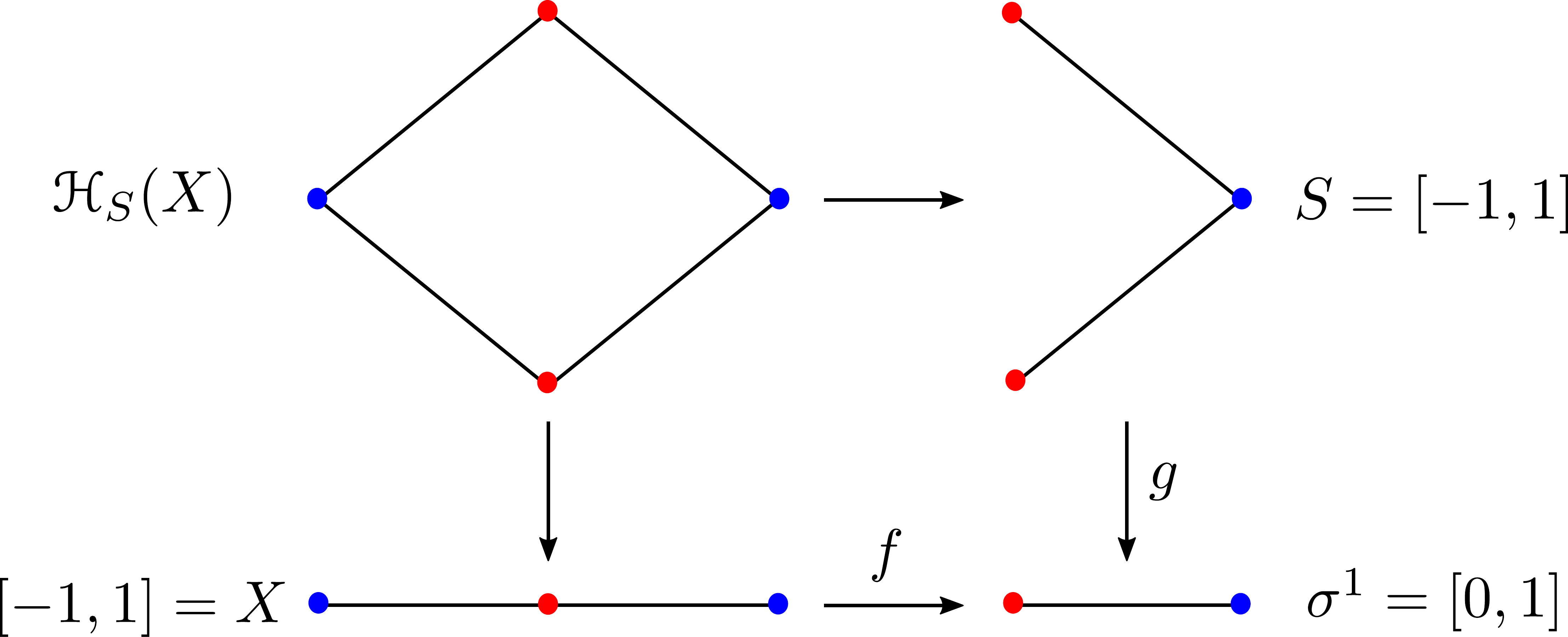}
    \caption{Example for Remark~\ref{rem:faces--create topology}: a face of the hyperbolizing cell $S$ is not connected and $\mathcal H_S(X)$ is not simply connected. The maps $f$ and $g$ here are defined by the vertex coloring. }
    \label{fig:hyperbolization_topology}
\end{figure}


\subsection{Gromov's cylinder construction}\label{sec:Gromov construction}

In this section we review a construction, due to Gromov, which turns a simplicial complex $K$ into a foldable cubical complex $\mathcal G(K)$ having non-positive curvature (see \cite[\S 3.4.A]{G87} for the original source, or \cite[\S 4c]{DJ91}, \cite[\S 4]{P91}, and references therein, for expository accounts). 

The construction uses induction on dimension and pullback simultaneously, following this scheme.
For each dimension $n\geq 1$ we will first define $\gromov {\triangle^n}$ and a map $g:\gromov {\triangle^n}\to \triangle^n$, then for any foldable $n$--dimensional simplicial complex $K$, with a folding $f:K\to \triangle^n$, we will define $\gromov K$ via the pullback square (compare \S \ref{sec:hyperbolization template})
\begin{center}
\begin{tikzpicture}
\node at (-1,1) (A) {$\gromov K$};
\node at (-1,-1) (B) {$K$};
\node at (1,-1) (C) {$\triangle^n$};
\node at (1,1) (D) {$\gromov {\triangle^n}$};
\draw [dashed,->] (A) edge (B)   (A) edge (D) ;
\draw [->]  (B) edge (C) (D) edge (C);
\node at (0,-1.25) {$f$};
\node at (1.25,0) {$g$};
\node at (0,1.25) {$f_{\gromov{\triangle^n}}$};
\node at (-1.4,0) {$g_K$};
\end{tikzpicture}
\end{center}
Finally, for a general $K$ (not necessarily foldable), we will define $\gromov K=\gromov{\barsub K}$ (recall that the barycentric subdivision is always foldable by Lemma~\ref{lem:barsub_folds}). Note that in any case the construction equips $\gromov K$ with a natural map to $\triangle^n$.

For $n=1$ we set $\gromov{\triangle^1}=\triangle^1$, and we define $g:\gromov {\triangle^1}\to \mathcal \triangle^1$ to be just the identity. 
By the pullback construction this defines $\gromov K$ and a map  $g:\gromov K\to \triangle^1$ for all simplicial graphs $K$. Concretely, when $K$ is a simplicial graph, then $\gromov K= K$ if $K$ is bipartite, and $\gromov K=\barsub K$ otherwise; the folding to $\triangle^1$ is induced by the bipartition.

\begin{figure}[h]
\centering
\def\svgwidth{\columnwidth}
\begingroup%
  \makeatletter%
  \providecommand\color[2][]{%
    \errmessage{(Inkscape) Color is used for the text in Inkscape, but the package 'color.sty' is not loaded}%
    \renewcommand\color[2][]{}%
  }%
  \providecommand\transparent[1]{%
    \errmessage{(Inkscape) Transparency is used (non-zero) for the text in Inkscape, but the package 'transparent.sty' is not loaded}%
    \renewcommand\transparent[1]{}%
  }%
  \providecommand\rotatebox[2]{#2}%
  \newcommand*\fsize{\dimexpr\f@size pt\relax}%
  \newcommand*\lineheight[1]{\fontsize{\fsize}{#1\fsize}\selectfont}%
  \ifx\svgwidth\undefined%
    \setlength{\unitlength}{1782.73433655bp}%
    \ifx\svgscale\undefined%
      \relax%
    \else%
      \setlength{\unitlength}{\unitlength * \real{\svgscale}}%
    \fi%
  \else%
    \setlength{\unitlength}{\svgwidth}%
  \fi%
  \global\let\svgwidth\undefined%
  \global\let\svgscale\undefined%
  \makeatother%
  \begin{picture}(1,0.47847341)%
    \lineheight{1}%
    \setlength\tabcolsep{0pt}%
    \put(0,0){\includegraphics[width=\unitlength,page=1]{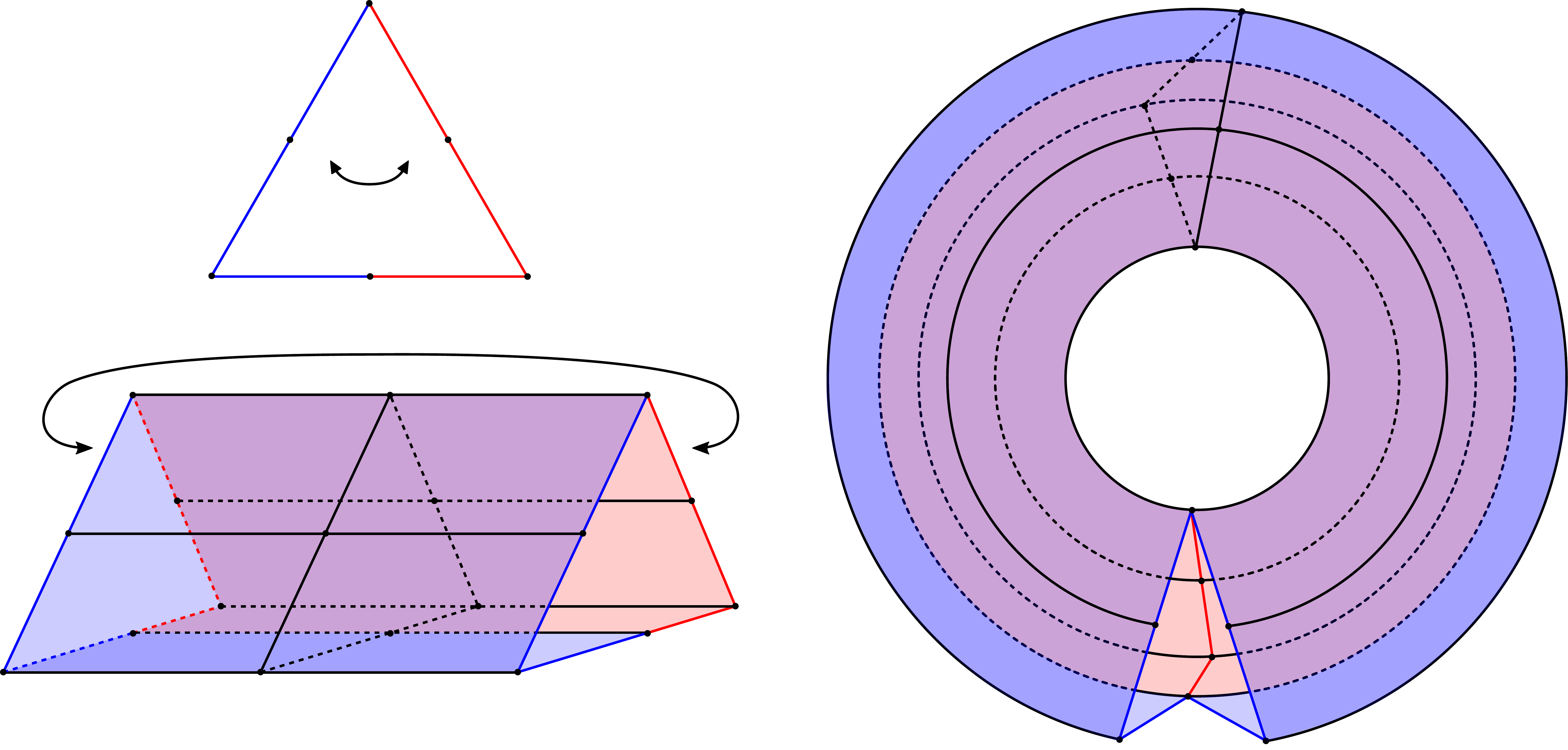}}%
    \put(0.27227254,0.45229863){\color[rgb]{0,0,0}\makebox(0,0)[lt]{\lineheight{1.25}\smash{\begin{tabular}[t]{l}$\gromov{\partial \triangle^2}$\end{tabular}}}}%
    \put(0.12323041,0.00347571){\color[rgb]{0,0,0}\makebox(0,0)[lt]{\lineheight{1.25}\smash{\begin{tabular}[t]{l}$\gromov{\partial \triangle^2} \times [-1,1]$\end{tabular}}}}%
    \put(0.73122818,0.229715){\color[rgb]{0,0,0}\makebox(0,0)[lt]{\lineheight{1.25}\smash{\begin{tabular}[t]{l}$\gromov{\triangle^2}$\end{tabular}}}}%
  \end{picture}%
\endgroup%

    \caption{Gromov's cylinder construction}
    \label{fig:gromov}
\end{figure}

Let us now assume by induction that for any simplicial complex $K$ of dimension at most $n-1$ the space $\mathcal G(K)$ is defined, and is endowed with a map to $\triangle^{n-1}$. 
In order to define $\gromov {\triangle^n}$, consider a reflection of $\partial \triangle^n$, and induce a reflection on  $\gromov{\partial \triangle^n}$. Let $U,V$ be the two closed half-spaces exchanged by the reflection, and define
$$
\gromov {\triangle^n}=\gromov {\partial \triangle^n}\times [-1,1] / \sim
$$
where $(u,t)\sim (u',t')$ if and only if $|t|=|t'|=1$ and $u=u'\in U$.   
Notice that taking a further quotient which identifies also points on $V$, one would get a map $\gromov {\triangle^n}\to \gromov {\partial \triangle^n}\times S^1$, and we can think of $\gromov {\triangle^n}$ as being obtained from $\gromov {\partial \triangle^n}\times S^1$ by cutting a slit in it along a half--fiber (see Figure~\ref{fig:gromov}).

By induction, $\gromov {\partial \triangle^n}$ is well--defined, and it comes with a map $\gromov {\partial \triangle^n}\to \triangle^{n-1}$. 
Notice that the boundary of $\gromov {\triangle^n}$ consists of two copies of $V$, glued along a subspace identifiable with $U\cap V$. In other words, $\partial \gromov {\triangle^n}$ can be naturally identified with $\gromov {\partial \triangle^n}$, hence $\partial \gromov {\triangle^n}$ comes with a map to $\partial \triangle^n$.
This map can be extended  to a map  $\gromov {\triangle^n}\to  \triangle^n$  as follows: take a regular neighborhood $N\cong \partial \gromov {\triangle^n} \times [0,1]$ inside $\gromov {\triangle^n}$, and identify $\triangle ^n$ with the cone over $\partial \triangle ^n$. Then extend the map over $N$ along the cone direction, and collapse the complement of $N$ to the cone point. This completes the construction of $\gromov{\triangle^n}$ and a map $g:\gromov{\triangle^n}\to \triangle ^n$. Arguing as above (i.e.\ with the template from \S \ref{sec:hyperbolization template}), this also defines $\gromov K$ for any simplicial complex $K$.

\begin{proposition}\label{prop:gromov_cylinder}
If $K$ is a simplicial complex, then $\gromov K$ is a foldable cubical complex of non--positive curvature. 
Moreover if $K$ is homogeneous (respectively without boundary,  locally compact, or compact), then $\gromov K$ is also homogeneous (respectively without boundary,  locally finite, or compact).
\end{proposition}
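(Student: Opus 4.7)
The plan is to prove all four assertions simultaneously by induction on the dimension $n$ of $K$, mirroring the inductive construction. The base cases $n=0,1$ are immediate: $\gromov K$ (or $\gromov{\barsub K}$ if $K$ is not bipartite) is a bipartite graph, hence automatically a foldable cubical complex, non-positively curved since every graph is, and preserves all the listed global properties.

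For the inductive step, first consider $\gromov{\triangle^n}$. I would equip $\gromov{\partial\triangle^n}\times[-1,1]$ with the product cubical structure (using the inductive cubical structure on $\gromov{\partial\triangle^n}$ together with the subdivision of $[-1,1]$ at $\{-1,0,1\}$); the gluing $\sim$ is then a cubical isomorphism between the two $t=\pm 1$ subcomplexes, so the quotient $\gromov{\triangle^n}$ inherits a cubical structure. Foldability follows by combining the inductive folding $\gromov{\partial\triangle^n}\to\square^{n-1}$ with the canonical folding $[-1,1]\to[0,1]$ that sends both $\pm 1$ to $0$; since the gluing only identifies points sitting at $t=\pm 1$, this product folding descends to a folding $\gromov{\triangle^n}\to\square^n$. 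Cubical structure and foldability for a general foldable $K$ are then transferred via the fibered product of \S\ref{sec:hyperbolization template}, and for an arbitrary $K$ one first invokes Lemma~\ref{lem:barsub_folds}.

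Non-positive curvature is the crux, and I would verify it via Gromov's link condition (Lemma~\ref{lem:gromov link condition}), by showing that every vertex link in $\gromov{\triangle^n}$ is a flag simplicial complex. Away from the gluing locus, a vertex link is a spherical join of a flag link in $\gromov{\partial\triangle^n}$ (by induction) with a link coming from the $[-1,1]$ direction, hence flag. At a vertex identified by $\sim$, the links coming from the two halves near $t=+1$ and $t=-1$ get glued along a link in $\gromov{\partial\triangle^n}$, and a careful book-keeping shows the result is again a spherical join of flag pieces. Flagness then transfers to $\gromov K$ for any foldable $K$, because the link at a vertex of $\gromov K$ is assembled from links in $\gromov{\triangle^n}$ glued along the adjacency pattern of $n$-simplices at the corresponding vertex of $K$, an assembly that preserves flagness by foldability of $f:K\to\triangle^n$.

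The preservation of the four global properties is read off directly from the pullback formula. An $n$-cube of $\gromov{K}$ corresponds to a pair $(s,C)$ with $s$ an $n$-simplex of $K$ and $C$ an $n$-cube of $\gromov{\triangle^n}$, so homogeneity and absence of boundary for $\gromov K$ reduce to the same properties for $K$, while local finiteness and compactness follow from the compactness of $\gromov{\triangle^n}$ together with Lemma~\ref{lem:cell_is_hyperbolizing_cell}\eqref{item:hypcell compact}. The hardest part of the plan is the flag-link analysis at the glued vertices of $\gromov{\triangle^n}$: one must verify that the identification $(u,-1)\sim(u,1)$ for $u\in U$ does not introduce any empty $3$-cycle in the spherical link, and this is the unique place in the argument where the specific choice of the reflection exchanging $U$ and $V$ plays an essential role.
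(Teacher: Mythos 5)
Your proposal takes a genuinely different route from the paper's: the paper dispatches both hard assertions by citation --- foldability from \cite[Lemma 7.5]{CD95}, non-positive curvature from \cite[Proposition 4c.2(3)]{DJ91}, and link-preservation (used for local finiteness) from \cite[Lemma 1e.1 and \S 4c]{DJ91} --- whereas you attempt a self-contained inductive proof. Your foldability argument is correct: the product fold $\gromov{\partial\triangle^n}\times[-1,1]\to\square^{n-1}\times\square^1$ descends, since the identification $(u,1)\sim(u,-1)$ for $u\in U$ is collapsed by the fold $[-1,1]\to[0,1]$ that identifies $\pm 1$, and the descended map stays injective on each cube. Your handling of the global properties via Lemma~\ref{lem:cell_is_hyperbolizing_cell}\eqref{item:hypcell compact} is also in the right spirit and close to what the paper does.

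The gap is in the non-positive curvature step, exactly where you flag the difficulty, and in one place the sketch is wrong as stated. At a glued vertex $v=[(u,\pm 1)]$ the structure of the link depends on where $u$ sits. For $u$ in the interior of $U$ the link is the suspension $\lk{u,\gromov{\partial\triangle^n}}\ast\{pt_+,pt_-\}$ and flagness is immediate. But for $u\in U\cap V$ (the fixed locus of the reflection) the link is the pushout $\bigl(\lk{u,\gromov{\partial\triangle^n}}\ast\{pt_-\}\bigr)\cup_{\lk{u,U}}\bigl(\lk{u,\gromov{\partial\triangle^n}}\ast\{pt_+\}\bigr)$, where $\lk{u,U}$ is a proper subcomplex of $\lk{u,\gromov{\partial\triangle^n}}$. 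This is \emph{not} a spherical join, so ``a spherical join of flag pieces'' fails here; flagness may still hold (the cone points $pt_\pm$ are not adjacent, so any flag violation must lie inside one cone or inside the doubled base), but that is an argument you have not given. The second unresolved step is the transfer from $\gromov{\triangle^n}$ to $\gromov K$: dismissing it as ``an assembly that preserves flagness by foldability'' hides that the link of a vertex of $\gromov K$ is an amalgam of copies of links in $\gromov{\triangle^n}$, glued according to the (arbitrary, flag) vertex link in $K$, and one must actually verify the amalgamation yields a flag complex --- this is essentially the full content of \cite[Proposition 4c.2(3)]{DJ91}. Until those two steps are carried out you have a plan, not a proof; the paper sidesteps both by citing.
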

\proof
First we show that $\gromov K$ admits the structure of a cubical complex, starting with the case $K=\triangle ^n$.
This is clear for $\gromov{\triangle^1}=[0,1]=\square^1$.
Then, arguing by induction, $\gromov{\triangle^n}$ inherits a cubical structure from the one of $\gromov{\partial \triangle^n}\times [-1,1]$. Here we think of $[-1,1]$ as being given the standard cubical structures as a union of two unit intervals, and we give $\gromov{\partial \triangle^n}\times [-1,1]$ the standard cubical structure coming from the fact that $\square^{n-1}\times \square^1=\square^n$.
Since $\gromov K$ is in general defined via the pullback construction (see \S \ref{sec:hyperbolization template}), it inherits a natural cubical structure from $\gromov{\triangle^n}$.

We now prove that the cubical complex $\gromov K$ has the desired properties.
Foldability is proven in \cite[Lemma 7.5]{CD95}. Non--positive curvature is proven in \cite[Proposition 4c.2(3)]{DJ91}. For the other properties we argue as follows. 
Note that for each $n$ the hyperbolizing cell $\gromov{\triangle^n}$ is homogeneous, has a single boundary component, and satisfies $\partial \gromov{\triangle ^n}=g^{-1}(\partial \triangle ^n)$. So, if $K$ is homogeneous then $\gromov K$ is homogeneous, and if $K$ is without boundary, the same holds for $\gromov K$. It is proved in \cite[Lemma 1e.1 and \S 4c]{DJ91} that Gromov's construction preserves the local structure (e.g.\ links). This implies that if $K$ is locally finite, then so is $\gromov K$. In particular, by \eqref{item:hypcell compact} in Lemma~\ref{lem:cell_is_hyperbolizing_cell}, if $K$ is compact, then so is $\gromov K$, because $\gromov{\triangle^n}$ is compact.
\endproof

We have defined Gromov's construction for simplicial complexes. 
Given any cell complex $X$ we can first take its barycentric subdivision $\barsub X$ (which is a simplicial complex), and then apply Gromov's construction to it.  

\begin{corollary}\label{cor:use_gromov}
If $X$ is any cell complex, then $\gromov{\barsub X}$ is a foldable cubical complex of non--positive curvature.  Moreover if $X$ is homogeneous (respectively without boundary,  locally compact, or compact), then $\gromov{\barsub X}$ is also homogeneous (respectively without boundary,  locally compact, or compact).
\end{corollary}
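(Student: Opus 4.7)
The plan is to reduce the statement to Proposition~\ref{prop:gromov_cylinder} via the barycentric subdivision. Concretely, I would argue in two steps: first show that the barycentric subdivision $\barsub X$ is a simplicial complex inheriting from $X$ whichever of the four properties (homogeneous, without boundary, locally compact, compact) is hypothesized; then feed $\barsub X$ into Proposition~\ref{prop:gromov_cylinder}, whose conclusion is exactly what we need.

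For the first step, recall from \S\ref{subsec:cell complexes} that the $k$-simplices of $\barsub X$ correspond to strictly ascending chains $F_0 \subset \cdots \subset F_k$ of faces of $X$, and that $X$ and $\barsub X$ are naturally homeomorphic. Since this homeomorphism is the identity on the underlying topological space, local compactness and compactness transfer immediately from $X$ to $\barsub X$. For homogeneity, assume $X$ is homogeneous of dimension $n$. Then any chain $F_0\subset\cdots\subset F_k$ can be extended to a maximal chain of length $n+1$: complete $F_k$ to some $n$-cell $C$ using homogeneity of $X$, and then interpolate intermediate faces of $C$ of each missing dimension. This shows every simplex of $\barsub X$ lies in an $n$-simplex, so $\barsub X$ is homogeneous of dimension $n$. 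For the ``without boundary'' property, suppose $X$ is homogeneous of dimension $n$ and without boundary, and let $\sigma$ be an $(n-1)$-simplex of $\barsub X$ corresponding to a chain $F_0\subset\cdots\subset F_{n-1}$. Since this chain has length $n$ but the maximal length is $n+1$, exactly one dimension $k\in\{0,\dots,n\}$ is missing. If $k<n$, inserting an intermediate face between $F_{k-1}$ and $F_{k+1}$ (or below $F_0$, or above $F_{n-1}$ into some fixed $n$-cell) yields at least two distinct extensions, using that the interval of faces between two consecutive $F_i,F_{i+1}$ of dimensions differing by $2$ always contains at least two intermediate faces. If $k=n$, we need to extend the chain above $F_{n-1}$ by an $n$-cell containing it; the ``without boundary'' hypothesis provides at least two such $n$-cells, yielding two distinct $n$-simplices of $\barsub X$ containing $\sigma$.

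For the second step, Proposition~\ref{prop:gromov_cylinder} applied to the simplicial complex $\barsub X$ yields immediately that $\gromov{\barsub X}$ is a foldable cubical complex of non-positive curvature, and that each of the four properties transfers from $\barsub X$ to $\gromov{\barsub X}$. Combining with the preservation already established in the first step gives the corollary.

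The only step requiring care is the ``without boundary'' case above, since the combinatorics of extending a chain depends on where the gap sits; I would handle it by the case split on $k$ as indicated, with the $k=n$ case being the genuine use of the hypothesis. Everything else is bookkeeping or a direct invocation of Proposition~\ref{prop:gromov_cylinder}.
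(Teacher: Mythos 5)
Your proposal is correct and follows the same route as the paper: the paper's proof is simply that $\barsub X$ is a simplicial complex homeomorphic to $X$, and then Proposition~\ref{prop:gromov_cylinder} applies. What you add beyond the paper is an explicit combinatorial verification that barycentric subdivision preserves homogeneity and the ``without boundary'' condition (the paper's appeal to ``homeomorphic to $X$'' directly handles only local compactness and compactness, which are topological invariants); your chain-extension argument, and in particular the diamond property of face lattices for the middle cases and the use of the ``without boundary'' hypothesis for the top case, is correct and fills that small gap. One cosmetic remark: in your ``without boundary'' step you needn't hypothesize homogeneity of $X$ at all -- an $(n-1)$-simplex of $\barsub X$ is a chain of $n$ faces with strictly increasing dimensions lying in $\{0,\dots,n\}$, so the top face automatically has dimension $n-1$ or $n$, and the argument goes through as you wrote it.
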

\proof
We know $K=\barsub X$ is a (foldable) simplicial complex (by Lemma~\ref{lem:barsub_folds}), homeomorphic to $X$. Then the statements follow from Proposition~\ref{prop:gromov_cylinder}.
\endproof

Gromov's construction is known to satisfy even more properties, namely conditions (1)-(6) in \cite{CD95} and (1), (2'), (3)-(5) in \cite{DJ91}. Some of these are versions of conditions \eqref{item:hypdef_hyperbolicity}--\eqref{item:hypdef_homology} from \S \ref{sec:hyperbolization template}, while others deal with preservation of stable tangent bundles and rational Pontryagin classes, when they are defined.
This is needed in the applications of the hyperbolization procedure to construct examples of closed aspherical manifolds with various prescribed features or pathologies (as in \cite{DJ91,O20}).
 

 \section{Strict hyperbolization}\label{sec:strict hyperbolization}

The hyperbolization procedure introduced by Charney and Davis in \cite{CD95} is defined for cubical complexes, and fits in the framework outlined in \S \ref{sec:hyperbolization template}, in the sense that it is determined by the choice of a hyperbolizing cell. Differently from Gromov's cylinder construction (described in \S \ref{sec:Gromov construction}), this procedure is not defined by induction. Rather, for each dimension $n>0$ a hyperbolizing cell is defined independently, and defines a hyperbolization procedure for $n$--dimensional cubical complexes.

While the original construction is a bit more general than the version we use here, we find it convenient to impose some mild restrictions on the cubical complex in order to simplify the presentation.
From now on assume $X$ is \textit{admissible}, i.e.\ it satisfies the following conditions (see \S~\ref{sec:cell complexes and hyperbolization procedures} for definitions):
\begin{enumerate}
    \item cubical; 
    \item homogeneous, without boundary;
    \item foldable;
    \item non-positively curved; 
    \item locally compact. 
\end{enumerate}
This setting, consistent with that of \cite{XIE04}, is more general than the one in \cite{BS99}, as we do not require gallery--connectedness.
In particular, we allow $X$ to be a pseudomanifold.
On the other hand, the first two conditions are a bit more restrictive than the corresponding ones in \cite{CD95}, while the other ones are the same. 
More precisely, if $X$ is foldable, then necessarily cubes of $X$ are embedded. In \cite{CD95} they allow two cubes to meet in a proper union of faces; note that such faces have to be disjoint in each cube, because non--positive curvature guarantees that links of vertices are simplicial. In particular, up to performing cubical subdivision, one can always assume that $X$ is cubical.
Finally we remark that at this stage we are only assuming local finiteness instead of compactness of $X$. While in our main theorems (Theorems \ref{mainthm1} and \ref{mainthm2}) we assume that the complex is compact (in order to get a hyperbolic group), most of the geometric and combinatorial arguments do not need that, and in \S\ref{lem:hypcube_quasiconvex_in_lattice} we actually need to consider a certain hyperbolization of $\rr^n$.

The main contribution of this section is to define some subspaces of the space that results from strict hyperbolization on an admissible cubical complex $X$.
We call such subspaces \textit{mirrors}, and prove that their lifts to the universal cover are convex and separating (see Proposition~\ref{prop:mirrors convex} and Proposition~\ref{prop:mirrors separate} respectively).
Along the way, we also study a combinatorial decomposition of the universal cover (see \S\ref{sec:universal_cover} and \S\ref{subsec:stratification}) that will be the starting point for the construction of the dual cubical complex in \S\ref{sec:dual cubical complex}.

\subsection{The hyperbolizing cell}\label{sec:CD hyperbolizing cell}
The hyperbolizing cell used in this hyperbolization procedure is a certain hyperbolic manifold with boundary and corners, obtained by cutting a closed hyperbolic manifold along a suitable collection of pairwise orthogonal totally geodesic codimension--$1$ submanifolds. While the existence of such an object is clear in dimension $2$ (see Figure~\ref{fig:CD hyp cube}), the construction in higher dimension requires some arithmetic methods involving quadratic forms (see \S \ref{sec:construction of CD cell} below for more details).
Specifically, the construction relies on the choice of a suitable congruence subgroup $\Gamma$ of an arithmetic lattice in $\so$, so we will denote the hyperbolizing cell by $\hq$.
Here and in the following we denote by $\igq$ the group of Euclidean isometries of the standard cube $\square^n$.
Also recall from Remark~\ref{rem:faces--create topology} that a $k$--face of $\hq$ is by definition a subspace of the form $\cd^{-1}(\square^k)$, where $\square^k$ is a $k$--face of $\square^n$.

\begin{lemma}[Corollary 6.2 in \cite{CD95}]\label{lem:CD cell}
For every $n\geq 2$ there exists a compact, connected, orientable hyperbolic $n$--manifold with corners $\hq$, an isometric action of $\igq$ on $\hq$, and a $\igq$--equivariant and face--preserving map $\cd:\hq\to \square^n$, such that the following hold.
\begin{enumerate}
    \item The poset of faces of $\hq$ is $\igq$--equivariantly isomorphic to that of $\square^n$.
    
    \item Each face of $\hq$ is totally geodesic.
    
    \item The faces of $\hq$ intersect orthogonally.
    
    \item \label{item:CD 0-cell} Each $0$--dimensional face is a single point.
    
    \item \label{item:CD cell CD map} The map $\cd:\hq\to \square^n$ and its restriction to each face have degree one.
\end{enumerate}
\end{lemma}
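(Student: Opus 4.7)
The plan is to build $\hq$ as the fundamental domain for a well-chosen reflection group acting on $\hh^n$, then pass to a torsion-free congruence subgroup so that everything lives inside a closed hyperbolic manifold.

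First, I would carry out an arithmetic construction of a uniform lattice in $\so$ that contains $n$ pairwise orthogonal reflections. The standard choice is to work with a quadratic form such as $q(x) = -\sqrt{2}\, x_0^2 + x_1^2 + \cdots + x_n^2$ over the ring of integers $\mathcal{O}_K$ of the totally real field $K=\qq(\sqrt{2})$. Because $q$ has signature $(n,1)$ but its Galois conjugate is positive definite, the group $\mathrm{O}(q,\mathcal{O}_K)$ is a uniform arithmetic lattice in $\oofull$ of simple type. The reflections $r_i$ across the coordinate hyperplanes $H_i = \{x_i = 0\}$ for $i=1,\dots,n$ lie in this lattice, and the $H_i$ are pairwise orthogonal in $\hh^n$. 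The subgroup $W$ they generate is a right-angled Coxeter group isomorphic to $(\zz/2)^n$, and it acts on $\hh^n$ with fundamental chamber the orthant $\{x_i \geq 0\}$, whose face poset is that of $\square^n$.

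Second, I would choose a torsion-free congruence subgroup $\Gamma \trianglelefteq \mathrm{O}(q,\mathcal{O}_K)$ (using a deep enough ideal to invoke Selberg's lemma / Borel density), normal in the larger reflection group, so that $M = \hh^n/\Gamma$ is a closed orientable hyperbolic $n$-manifold. The orthogonal hyperplanes $H_i$ and their $W$-translates descend to a finite collection of embedded, pairwise orthogonal, totally geodesic closed hypersurfaces in $M$. Cutting $M$ along these hypersurfaces produces finitely many isometric compact connected pieces; take $\hq$ to be one such piece. By construction it is a compact orientable hyperbolic $n$-manifold with corners, each of whose faces is totally geodesic, faces meet orthogonally, and the face poset matches that of $\square^n$, giving properties (2), (3), and (1). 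Property (\ref{item:CD 0-cell}) follows because there are only $n$ mutually orthogonal totally geodesic hyperplanes meeting at each corner, so the $0$-face is a single vertex.

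Third, I would construct the $\igq$-action and the map $g$. The stabilizer in $W$ of the chamber acts trivially, but the symmetries of the coordinate hyperplane configuration, coming from the natural $B_n$-action on $\rr^n$ permuting and signing the basis vectors, preserve $q$ up to its canonical form and thus, by choosing the congruence level $\Gamma$ to be $B_n$-invariant (which one can do since $B_n$ acts on the ambient lattice), descend to an isometric $B_n$-action on $\hq$ that is equivariantly isomorphic to the $B_n$-action on $\square^n$ at the level of face posets. To construct $g:\hq\to\square^n$, I would use the $B_n$-equivariant retraction onto the $1$-skeleton and then onto $\square^n$: concretely, collapse the manifold piece to its dual $\square^n$ via a $B_n$-equivariant map that sends each codimension-$k$ face of $\hq$ to the corresponding codimension-$k$ face of $\square^n$. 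Standard topological arguments (using that $\hq$ is orientable, that the map is degree one on each top face by construction, and that it extends cell-by-cell) give property (\ref{item:CD cell CD map}).

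The main obstacle is the arithmetic input: one must exhibit a lattice in $\so$ containing $n$ mutually orthogonal reflections and then control a normal torsion-free congruence subgroup that is also stable under the full $B_n$-symmetry of the orthogonal hyperplane configuration. Once this arithmetic lemma is in hand, the remaining steps (cutting, identifying the face poset with that of $\square^n$, and building the degree-one equivariant map $g$) are formal. I would refer to the construction in \cite[\S 6]{CD95} for the precise quadratic form and congruence subgroup used, rather than reproduce the number-theoretic verifications in detail.
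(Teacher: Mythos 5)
Your broad outline matches the one sketched in \S\ref{sec:construction of CD cell} of this paper (which itself defers to \cite[\S\,5--6]{CD95} for the full proof): take an arithmetic lattice $\Gamma$ in $\so$ whose normalizer in $\oofull$ contains the coordinate reflections and permutations, pass to $\hm=\hh^n/\Gamma$, cut along the images $M_1,\dots,M_n$ of the coordinate hyperplanes, and produce $\cd$ from a collapse map. However, two steps in your write-up misdescribe the construction in a way that would derail a proof.

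First, the sentence ``Cutting $M$ along these hypersurfaces produces finitely many isometric compact connected pieces; take $\hq$ to be one such piece'' is not what happens. The hyperbolizing cube $\hq$ is defined to be \emph{the entire} metric completion of $\hm\setminus\bigcup_{i=1}^n M_i$, not a single piece selected from a disjoint union. The content that makes this match the claim that $\hq$ is \emph{connected} is that the $M_i$ do not separate $\hm$ --- equivalently, in $\hh^n$ the group $\Gamma$ acts transitively on $\Gamma$-cells (the closures of connected components of the complement of all $\Gamma$-translates of the $H_i$). That transitivity, or the non-separating property of the $M_i$, is a genuine arithmetic/topological fact that Charney--Davis prove and your account leaves unaddressed. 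Without it, cutting $\hm$ could indeed produce several pieces, and those pieces need not be isometric to each other nor have the face poset of $\square^n$, so ``take one such piece'' cannot simply be asserted. (It is also not true that the $W$-translates of the $H_i$ give any new hyperplanes: since $W$ is generated by the reflections in the mutually orthogonal $H_i$, each $W$-translate of $H_j$ is again some $H_k$. In $\hm$ there are exactly $n$ hypersurfaces $M_1,\dots,M_n$.)

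Second, your construction of $\cd$ is too vague to establish property \eqref{item:CD cell CD map}. Charney--Davis build $\cd$ via a Pontryagin--Thom collapse: each oriented, coorientable hypersurface $M_i\subset\hm$ gives a degree-one map $\hm\to S^1$ by collapsing the complement of a tubular neighborhood, and assembling these yields $\cd_0:\hm\to (S^1)^n$, which lifts to $\cd:\hq\to\square^n$ after cutting. The degree-one statement for $\cd$ and its restriction to faces then reduces to a transversality/intersection-number count for the $M_i$. Your proposed ``$B_n$-equivariant retraction onto the $1$-skeleton'' of a manifold with corners and then ``onto $\square^n$'' is not a well-defined recipe and does not by itself control degrees on all lower-dimensional faces (recall from Remark~\ref{rmk:CD lowerdim faces hyp 1} that the $k$-faces of $\hq$ may be disconnected for $0<k<n$, and $\cd$ has degree one only on a preferred component, degree zero on the others). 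Finally, note that the present paper does not reprove this lemma; it is quoted directly as \cite[Corollary 6.2]{CD95}, so the number-theoretic and topological verifications you reasonably wish to ``refer to'' are exactly what the cited source supplies.
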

We call $\hq$ the \textit{hyperbolizing cube}, and $\cd$ the \textit{Charney--Davis map}. We denote by $\hgq=\pi_1(\hq)$ the fundamental group of the hyperbolizing cube.

\begin{remark}\label{rmk:CD lowerdim faces hyp 1}
In this hyperbolization procedure, a $k$--face of $\hq$ is guaranteed to be connected when $k=0,n$, but may be disconnected otherwise (see Remark~\ref{rem:faces--create topology}, and the Remark after Corollary 6.2 in \cite{CD95}).
Nevertheless, by abuse of notation, we will denote by $\hqk=\cd^{-1}(\square^k)$ the $k$--face of $\hq$, even when $0<k<n$. 
Notice that $\hqk$ is a priori different from the $k$--dimensional hyperbolizing cube, i.e.\ the hyperbolizing cell that one obtains by hyperbolizing a $k$--dimensional cube with a hyperbolizing lattice $\Lambda\subseteq \operatorname{SO}_0(k,1)$ for $0<k<n$. Namely, $\square^k_\Lambda$ is always connected by construction.
Finally, with respect to \eqref{item:CD cell CD map} in Lemma~\ref{lem:CD cell}, when $\hqk$ is disconnected, there is a preferred component of $\hqk$ on which $\cd$ has degree one, while it has degree zero on the other components (see \cite[Lemma 5.9(b)]{CD95} and \S\ref{sec:construction of CD cell} for details).
\end{remark}

\subsubsection{The construction of \texorpdfstring{$\hq$}{the hyperbolizing cube}}\label{sec:construction of CD cell}

To construct the hyperbolizing cube $\hq$, Charney and Davis consider the hyperboloid model for $\hh^n$ inside Minkowski space $\mink$, i.e.\ the space $\rr^{n+1}$ equipped with a quadratic form of signature $(n,1)$. The isometry group of $\mink$ is naturally identified with the indefinite orthogonal group $\oofull$, and its connected component $\so$ is naturally identified with the group of orientation preserving isometries of $\hh^n$.
Then they show that $\so$ contains an arithmetic lattice $\Gamma$ which enjoys some key properties, from which the properties of $\hq$ in Lemma~\ref{lem:CD cell} follow. 
In particular, $\Gamma$ is a cocompact torsion--free lattice of $\so$, whose normalizer in $\oofull$ contains all the permutations of the coordinates $x_1,\dots,x_n$, and all the reflections in the coordinate hyperplanes $H_i=\{(x_1,\dots,x_n,x_{n+1}) \in \mink \ | \ x_i=0\}$ for $i=1,\dots,n$. Note that these generate a group of isometries isomorphic to $\igq$.
We will refer to the lattice constructed in \cite[\S 6]{CD95} as the \textit{hyperbolizing lattice}.

\begin{figure}[h]
\centering
\def\svgwidth{\columnwidth}
\begingroup%
  \makeatletter%
  \providecommand\color[2][]{%
    \errmessage{(Inkscape) Color is used for the text in Inkscape, but the package 'color.sty' is not loaded}%
    \renewcommand\color[2][]{}%
  }%
  \providecommand\transparent[1]{%
    \errmessage{(Inkscape) Transparency is used (non-zero) for the text in Inkscape, but the package 'transparent.sty' is not loaded}%
    \renewcommand\transparent[1]{}%
  }%
  \providecommand\rotatebox[2]{#2}%
  \newcommand*\fsize{\dimexpr\f@size pt\relax}%
  \newcommand*\lineheight[1]{\fontsize{\fsize}{#1\fsize}\selectfont}%
  \ifx\svgwidth\undefined%
    \setlength{\unitlength}{1513.77760519bp}%
    \ifx\svgscale\undefined%
      \relax%
    \else%
      \setlength{\unitlength}{\unitlength * \real{\svgscale}}%
    \fi%
  \else%
    \setlength{\unitlength}{\svgwidth}%
  \fi%
  \global\let\svgwidth\undefined%
  \global\let\svgscale\undefined%
  \makeatother%
  \begin{picture}(1,0.35199114)%
    \lineheight{1}%
    \setlength\tabcolsep{0pt}%
    \put(0,0){\includegraphics[width=\unitlength,page=1]{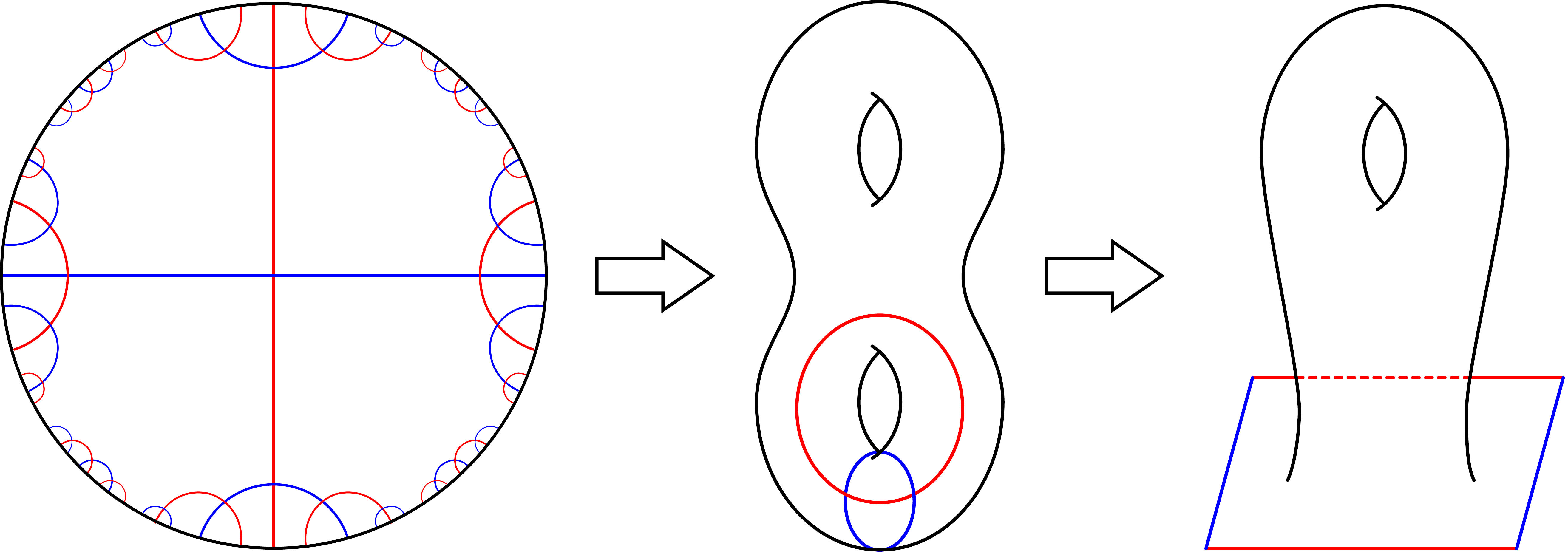}}%
    \put(0.18393379,0.26376914){\color[rgb]{1,0,0}\makebox(0,0)[lt]{\lineheight{1.25}\smash{\begin{tabular}[t]{l}$H_1$\end{tabular}}}}%
    \put(0.05454743,0.14315719){\color[rgb]{0,0,1}\makebox(0,0)[lt]{\lineheight{1.25}\smash{\begin{tabular}[t]{l}$H_2$\end{tabular}}}}%
    \put(0.54479385,0.1646905){\color[rgb]{1,0,0}\makebox(0,0)[lt]{\lineheight{1.25}\smash{\begin{tabular}[t]{l}$M_1$\end{tabular}}}}%
    \put(0.54479393,0.01083283){\color[rgb]{0,0,1}\makebox(0,0)[lt]{\lineheight{1.25}\smash{\begin{tabular}[t]{l}$M_2$\end{tabular}}}}%
    \put(0.28781602,0.33272854){\color[rgb]{0,0,0}\makebox(0,0)[lt]{\lineheight{1.25}\smash{\begin{tabular}[t]{l}$\hh^n$\end{tabular}}}}%
    \put(0.62752149,0.3296832){\color[rgb]{0,0,0}\makebox(0,0)[lt]{\lineheight{1.25}\smash{\begin{tabular}[t]{l}$\hm$\end{tabular}}}}%
    \put(0.94461055,0.3296832){\color[rgb]{0,0,0}\makebox(0,0)[lt]{\lineheight{1.25}\smash{\begin{tabular}[t]{l}$\hq$\end{tabular}}}}%
  \end{picture}%
\endgroup%

    \caption{A hyperbolizing cube}
    \label{fig:CD hyp cube}
\end{figure}

If $\Gamma$ is such a lattice, then it acts freely, properly discontinuously, and cocompactly by orientation--preserving isometries on $\hh^n$.
We can consider the closed connected oriented hyperbolic $n$--manifold  $\hm = \hh^n / \Gamma$. 
The hyperplanes $H_i$ descend to codimension--$1$ submanifolds $M_i = H_i / \stab{\Gamma}{H_i}$ which are closed, oriented, totally geodesic and pairwise orthogonal (see Figure~\ref{fig:CD hyp cube}).
Then the hyperbolizing cell $\hq$ is defined to be the metric completion of the space $\hm \setminus \cup_{i=1}^n M_i$, with respect to the length metric induced on the complement of $\cup_{i=1}^n M_i$. This is the manifold with boundary and corners obtained by cutting $\hm$ open along the submanifolds $M_1,\dots,M_n$ (see \cite[\S 5]{CD95}).
In particular, the map $\cd:\hq\to \square^n$ is induced by the collapse map $\cd_0:\hm\to (S^1)^n$ obtained by applying the Pontryagin-Thom construction to $\hm$ with respect to each of the codimension--$1$ submanifolds $M_1,\dots,M_n$.

\begin{remark}\label{rem:deeper hyp 1}
It is implicit in \cite{CD95} that a hyperbolizing lattice $\Gamma$ contains infinitely many other hyperbolizing lattices as proper subgroups. They still enjoy the properties which are relevant for the construction, and provide corresponding hyperbolizing cubes. 
As observed by Ontaneda in \cite[Lemma 2.1]{O17}, this can be used to produce  hyperbolizing cubes for which the normal injectivity radius of the faces is arbitrarily large.
\end{remark}


\subsection{The hyperbolized complex}\label{sec:CD hyperbolized complex}
Following the template of \S \ref{sec:hyperbolization template}, to define the strict hyperbolization procedure of \cite{CD95} we proceed as follows. For each dimension $n>0$, we choose the hyperbolizing cell to be the hyperbolizing cube $(\hq,\cd)$ defined in \S\ref{sec:CD hyperbolizing cell}. 
Then for  any  foldable cubical complex  $X$ of dimension $n$, we define the \textit{hyperbolized complex} to be the space $\hc$ obtained as the fiber product of the folding map $\fold :X\to \square^n$ and the Charney-Davis map  $\cd:\hq \to \square^n$, i.e.\ by the pullback square in  Figure~\ref{fig:CD_hypcomplexdiagram}.

\begin{figure}[ht]
    \centering
    \begin{tikzpicture}
\node at (-1,1) (A) {$\hc$};
\node at (-1,-1) (B) {$X$};
\node at (1,-1) (C) {$\square^n$};
\node at (1,1) (D) {$\hq$};
\draw [dashed,->] (A) edge (B)   (A) edge (D) ;
\draw [->]  (B) edge (C) (D) edge (C);
\node at (0,1.25) {$\foldc$};
\node at (0,-1.25) {$\fold$};
\node at (1.25,0) {$\cd$};
\node at (-1.4,0) {$\cdX$};
\end{tikzpicture}
    \caption{The hyperbolized complex $\hc$ as a fibered product.}
    \label{fig:CD_hypcomplexdiagram}
\end{figure}

\begin{figure}[ht]
    \centering
    \def\svgwidth{\columnwidth}
\begingroup%
  \makeatletter%
  \providecommand\color[2][]{%
    \errmessage{(Inkscape) Color is used for the text in Inkscape, but the package 'color.sty' is not loaded}%
    \renewcommand\color[2][]{}%
  }%
  \providecommand\transparent[1]{%
    \errmessage{(Inkscape) Transparency is used (non-zero) for the text in Inkscape, but the package 'transparent.sty' is not loaded}%
    \renewcommand\transparent[1]{}%
  }%
  \providecommand\rotatebox[2]{#2}%
  \newcommand*\fsize{\dimexpr\f@size pt\relax}%
  \newcommand*\lineheight[1]{\fontsize{\fsize}{#1\fsize}\selectfont}%
  \ifx\svgwidth\undefined%
    \setlength{\unitlength}{1607.84081647bp}%
    \ifx\svgscale\undefined%
      \relax%
    \else%
      \setlength{\unitlength}{\unitlength * \real{\svgscale}}%
    \fi%
  \else%
    \setlength{\unitlength}{\svgwidth}%
  \fi%
  \global\let\svgwidth\undefined%
  \global\let\svgscale\undefined%
  \makeatother%
  \begin{picture}(1,0.39529988)%
    \lineheight{1}%
    \setlength\tabcolsep{0pt}%
    \put(0,0){\includegraphics[width=\unitlength,page=1]{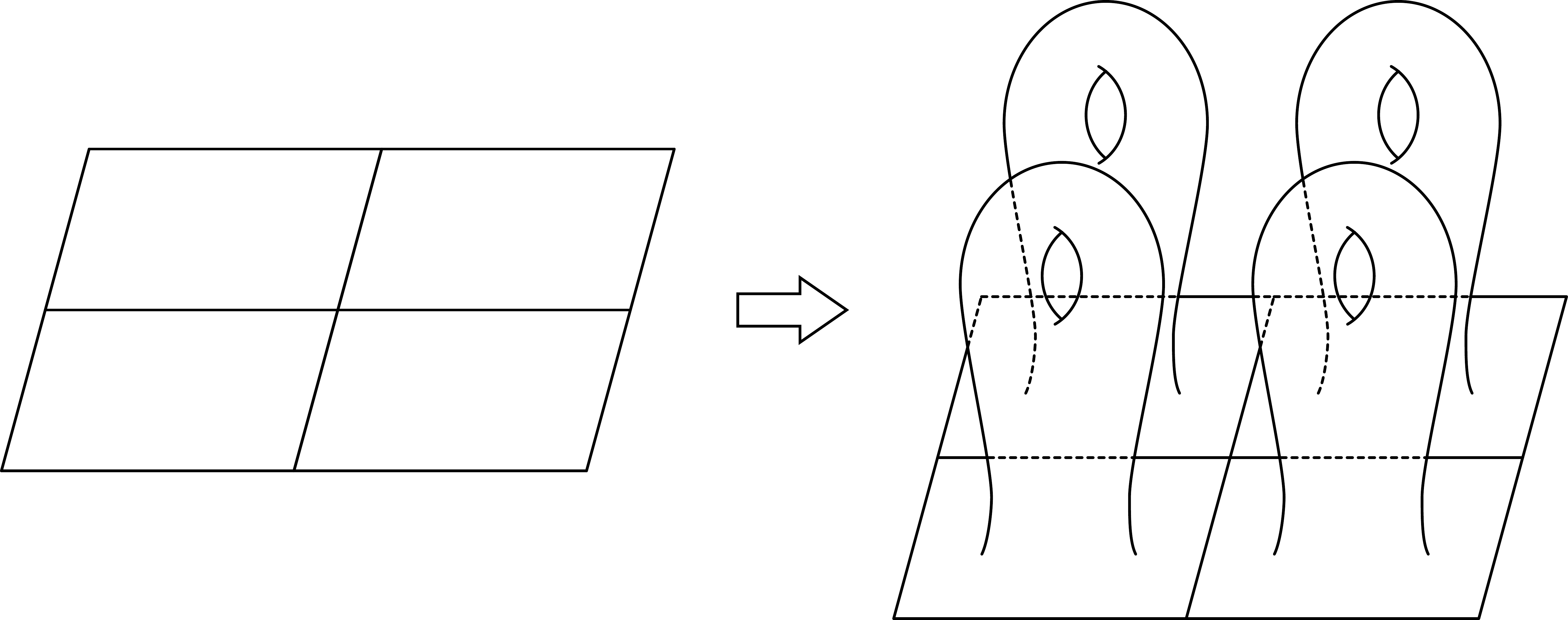}}%
    \put(0.01951348,0.10956739){\color[rgb]{0,0,0}\makebox(0,0)[lt]{\lineheight{1.25}\smash{\begin{tabular}[t]{l}$X$\end{tabular}}}}%
    \put(0.58906561,0.01300913){\color[rgb]{0,0,0}\makebox(0,0)[lt]{\lineheight{1.25}\smash{\begin{tabular}[t]{l}$\hc$\end{tabular}}}}%
  \end{picture}%
\endgroup%

    \caption{Strict hyperbolization of a square complex.}
    \label{fig:hyperbolized_complex}
\end{figure}

\begin{remark}\label{rem:CD hyp complex metric}
By \eqref{item:CD cell CD map} in Lemma~\ref{lem:CD cell} we know that $\cd$ is surjective. So, Lemma~\ref{lem:cell_is_hyperbolizing_cell} allows us to think of $\hc$ as being obtained by replacing every $n$--cube of $X$ by a hyperbolizing cube $\hq$, in the following sense (see  Figure~\ref{fig:hyperbolized_complex}). 
If $C$ is a top--dimensional cube of $X$, then its preimage $\cdX^{-1}(C)$ in $\hc$ is homeomorphic to $\hq$ (see \eqref{item:hypcell homeo} in Lemma~\ref{lem:cell_is_hyperbolizing_cell}).
Then one can endow $\hc$ with a length metric by gluing together these local metrics. 
In particular, $\foldc:\hc\to \hq$ induces an isometry $\cdX^{-1}(C) \to \hq$ for each top--dimensional cube $C\subseteq X$.
For a concrete example, if $X$ is (a suitable cubical subdivision of) the standard cubical structure on the $n$--torus, then $\hc$ is a closed hyperbolic manifold (see \cite[Lemma 3.2]{BE07} for details). Indeed, the piecewise hyperbolic metric obtained by gluing the hyperbolizing cubes together has no singularity and is in fact globally smooth and hyperbolic.
\end{remark}

We collect here some of the main properties of this construction which are relevant for our work.

\begin{proposition}[Proposition 7.1 in \cite{CD95}]\label{prop:CD complex}
For every $n\geq 2$ and every $n$--dimensional foldable cubical complex $X$, the space $\hc$ carries the structure of an $n$--dimensional piecewise hyperbolic cell complex, and is endowed with a map $\cdX:\hc\to X$, such that the following hold.
 
\begin{enumerate}
 
\item \label{item:CD faces and links} If $C\subseteq X$ is a $k$--cube, then  $\cdX^{-1}(C)\subseteq \hc$ is isometric to a $k$--face of $\hq$, and $\lk{\cdX^{-1}(C),\hc}$ is a piecewise spherical cell complex, isomorphic to $\lk{C,X}$.

\item \label{item:CD subcomplexes} If $Z\subseteq X$ is locally convex subcomplex of $X$, then $\cdX^{-1}(Z)$ is a locally convex subspace of $\hc$. 

\item \label{item:CD npc} If $X$ is locally $\cat 0$, then $\hc$ is locally $\cat{-1}$.

\item \label{item:CD hyperbolic} If $X$ is compact and locally $\cat 0$, then $\hg=\pi_1(\hc)$ is a Gromov hyperbolic group.

\end{enumerate}
\end{proposition}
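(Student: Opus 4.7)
The overall plan is to exploit the pullback construction of $\hc$ together with the fact that each top-dimensional preimage $\cdX^{-1}(C)$ is a faithful isometric copy of the hyperbolizing cube $\hq$, so that the local geometry of $\hc$ is assembled from pieces whose intrinsic geometry we already understand. The global properties then reduce to a link-level check plus standard facts about locally $\cat\kappa$ spaces.

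For part \eqref{item:CD faces and links}, I would first invoke Lemma~\ref{lem:cell_is_hyperbolizing_cell}\eqref{item:hypcell homeo} applied to each top-dimensional cube $C\subseteq X$ to see that $\cdX^{-1}(C)\cong \hq$ via the restriction of $\foldc$; the piecewise hyperbolic metric on $\hc$ is then defined precisely so that this is an isometry (as sketched in Remark~\ref{rem:CD hyp complex metric}). For a general $k$-cube $C$, I would argue that $C$ lies in some top-dimensional cube $C'\supseteq C$, and that the folding $\fold$ restricted to $C'$ identifies $C$ with a $k$-face $\square^k$ of $\square^n$; then the pullback forces $\cdX^{-1}(C)$ to be sent isometrically by $\foldc$ onto $\cd^{-1}(\square^k)=\hqk$, which is a $k$-face of $\hq$ by definition. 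The identification of links follows by computing at a point: directions in $\hc$ normal to $\cdX^{-1}(C)$ correspond, under $\cdX$, to directions in $X$ normal to $C$, because $\cdX$ is locally the projection from a fibered product in which the $\hq$-factor contributes only directions parallel to the face. Hence $\lk{\cdX^{-1}(C),\hc}\cong\lk{C,X}$ as piecewise spherical complexes.

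Part \eqref{item:CD subcomplexes} I would derive from the orthogonality of the faces of $\hq$ guaranteed by Lemma~\ref{lem:CD cell}. If $Z\subseteq X$ is a locally convex subcomplex, then at any interior point of a cube $C\subseteq Z$ the subspace $\cdX^{-1}(Z)$ is totally geodesic inside the corresponding copy of $\hqk$, because faces of $\hq$ are totally geodesic. At a point where several cubes of $Z$ meet, local convexity is equivalent, via the link criterion, to the assertion that $\lk{\cdX^{-1}(C),\cdX^{-1}(Z)}$ is a $\pi$-convex subcomplex of $\lk{\cdX^{-1}(C),\hc}$. Using the isomorphism from part \eqref{item:CD faces and links}, this translates exactly into the local convexity of $\lk{C,Z}$ inside $\lk{C,X}$, which is our hypothesis.

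For part \eqref{item:CD npc}, the approach is Gromov's link condition in the piecewise hyperbolic setting (the analogue of Lemma~\ref{lem:gromov link condition}): a piecewise hyperbolic cell complex built from $\cat{-1}$ cells is locally $\cat{-1}$ iff the link of every vertex is a $\cat 1$ spherical complex. The cells of $\hc$ are faces of $\hq$, which are totally geodesic hyperbolic, hence $\cat{-1}$. By part \eqref{item:CD faces and links}, the link of any vertex $v\in\hc$ is isomorphic as a piecewise spherical complex to the link of the corresponding vertex in $X$; if $X$ is locally $\cat 0$ then those links are $\cat 1$, so the same holds in $\hc$. The main subtlety — and what I would expect to be the most delicate point to verify carefully — is that the link isomorphism in \eqref{item:CD faces and links} really is an isometry of piecewise spherical complexes (not merely a combinatorial isomorphism), so that $\cat 1$ is transported; this follows from the orthogonality of faces in $\hq$, which ensures the gluing angles at corners match those of the standard cube. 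Finally, part \eqref{item:CD hyperbolic} is immediate: a compact locally $\cat{-1}$ geodesic space has universal cover which is complete, simply connected, and $\cat{-1}$ by the Cartan-Hadamard theorem, hence Gromov hyperbolic; the deck action of $\hg=\pi_1(\hc)$ is proper and cocompact, so by the Švarc-Milnor lemma $\hg$ is quasi-isometric to the universal cover and therefore Gromov hyperbolic.
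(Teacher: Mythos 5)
Note first that the paper does not prove this proposition: it is quoted verbatim from Charney--Davis as \cite[Proposition 7.1]{CD95}, and is followed immediately by remarks rather than a proof. So there is no in-paper argument to compare against; what follows assesses your proof on its own terms, with \cite{CD95} in the background.

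Your overall plan---identify $\cdX^{-1}(C)$ with a face of $\hq$ via the pullback, transport links, and then invoke the piecewise-$\hh^n$ link condition---is the right strategy and is essentially what Charney--Davis do. You correctly pin down the crux: the claimed link isomorphism in \eqref{item:CD faces and links} has to be an isometry of piecewise spherical complexes, and this rests entirely on the orthogonality of the faces of $\hq$ (Lemma~\ref{lem:CD cell}), which makes every normal link an all-right spherical complex with the same combinatorics (and the same edge lengths $\pi/2$) as the corresponding link in $X$. A small stylistic point on part \eqref{item:CD faces and links}: the detour through a top-dimensional cube $C'\supseteq C$ isn't needed and silently assumes homogeneity of $X$; the pullback already gives $\cdX^{-1}(C)$ directly as the fibered product of $C\cong\square^k$ with $\hqk=\cd^{-1}(\square^k)$, so you get the identification without that assumption.

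The place where the argument as written has a real gap is part \eqref{item:CD npc}. You write that ``the cells of $\hc$ are faces of $\hq$'' and then check the link condition at ``the link of any vertex $v\in\hc$'' by appealing to part \eqref{item:CD faces and links}. But $\hq$ is a compact hyperbolic manifold with corners, not a convex hyperbolic cell, and a face $\hqk$ of $\hq$ is likewise not a cell. To speak of $\hc$ as a piecewise hyperbolic cell complex one must first subdivide each copy of $\hq$ into genuine convex hyperbolic cells, and Gromov's link condition then has to be verified at \emph{every} vertex of that subdivision, including vertices interior to a copy of $\hq$ or lying in the relative interior of a positive-dimensional face $\hqk$. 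Part \eqref{item:CD faces and links} only controls the links of the strata $\cdX^{-1}(C)$, i.e.\ the normal links; it says nothing directly about arbitrary vertices of the subdivision. The fix is standard but needs to be said: at a point $p$ in the relative interior of the stratum $\cdX^{-1}(C)$ for a $k$-cube $C$, the link $\lk{p,\hc}$ decomposes as the spherical join $S^{k-1}\ast\lk{\cdX^{-1}(C),\hc}$; a spherical join is $\cat 1$ iff both factors are (\cite[II.3.15]{BH99}), $S^{k-1}$ is $\cat 1$, and $\lk{\cdX^{-1}(C),\hc}\cong\lk{C,X}$ is an all-right flag complex (since $X$ is $\cat 0$), hence $\cat 1$. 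With that join decomposition in place, the reduction to the single check at the $0$-strata that you perform is justified, and the rest of your argument for \eqref{item:CD npc} and \eqref{item:CD hyperbolic} goes through.
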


\begin{remark}\label{rmk:CD lowerdim faces hyp 2}
The statement says in particular that if $C$ is a top--dimensional cube of $X$ then $\cdX^{-1}(C)$ is isometric to $\hq$ (compare Remark~\ref{rem:CD hyp complex metric}). 
On the other hand, if $C$ is a $k$--cube with $k<n$, then $\cdX^{-1}(C)$ is isometric to $\hqk=\cd^{-1}(\square^k)$, i.e.\ the hyperbolization of a lower dimensional cell, as introduced in Remark~\ref{rmk:CD lowerdim faces hyp 1}. 

If $Z\subseteq X$ is a $k$--dimensional subcomplex, the subspace $\cdX^{-1}(Z)$ can be identified with the fibered product of the maps $\fold_{|_Z}:Z\to \square^k$ and $\cd^k:\hqk\to \square^k$, respectively obtained by restricting the folding map  $\fold :X\to \square^n$ to $Z$ and the Charney--Davis map $\cd:\hq\to \square^n$ to $\hqk$ (see Figure~\ref{fig:CD_hypcomplexdiagram_lowerdim}).
Loosely speaking, hyperbolization trickles down to the lower dimensional skeletons of the complex $X$.

\begin{figure}[h]
    \centering
    \begin{tikzpicture}
\node at (-1,1) (A) {$\cdX^{-1}(Z)$};
\node at (-1,-1) (B) {$Z$};
\node at (1,-1) (C) {$\square^k$};
\node at (1,1) (D) {$\hqk$};
\draw [dashed,->] (A) edge (B)   (A) edge (D) ;
\draw [->]  (B) edge (C) (D) edge (C);
\node at (0,-1.25) {$\fold_{|_Z}$};
\node at (1.5,0) {$\cd^k$};
\end{tikzpicture}
    \caption{Hyperbolization of lower dimensional subcomplexes}
    \label{fig:CD_hypcomplexdiagram_lowerdim}
\end{figure}

\end{remark}

\begin{remark}\label{rem:deeper hyp 2}
In this construction the choice of $X$ and $\Gamma$ are essentially independent. In particular for any fixed cubical complex $X$ one can consider deeper hyperbolizations by taking deeper hyperbolizing lattices (see Remark~\ref{rem:deeper hyp 1}). While the combinatorial geometry of the hyperbolized complex, controlled by $X$, remains unchanged under different choices of the hyperbolizing lattice, its hyperbolic geometry can be quantitatively improved by an appropriate choice of the hyperbolizing lattice, as observed by Ontaneda in \cite[Lemma 2.1]{O17}.
\end{remark}

\begin{remark}[Finding codimension-1 subspaces] \label{rem:where are the hyperplanes}
The original approaches to cubulating a group $G$ relied on producing sufficiently many codimension one subgroups inside $G$ (see \cite{SA95,SA97,HW14,BW12}).

Since the copies of $\hq$ in the hyperbolized complex $\hc$ are obtained from an arithmetic hyperbolic manifold, they contain a large supply of compact totally geodesic codimension one submanifolds. It is tempting to try and use these to produce codimension one subgroups in the hyperbolized group $\hg=\pi_1(\hc)$. 
The difficulty with this approach is due to lack of control on the angles at which these totally geodesic codimension one hypersurfaces intersect the boundary of $\hq$. This makes it unclear how to extend the proposed subspace past the boundary. One could take a geodesic extension, but it would not be clear what the global behaviour of the subspace would be (see left of Figure~\ref{fig:hyperplanes_fail}).
Or one could take a geodesic reflection, but that would give rise to a kink angle (see right of Figure~\ref{fig:hyperplanes_fail}). Given that $\hq$ has fixed finite diameter, kink angles too far from right angles might prevent the subspace from even being quasiconvex.

You can try to control the kink angle, for instance by requiring the codimension one submanifold to be orthogonal to all faces of $\hq$. In this case, the extension would be a locally convex subspace of $\hc$.
Examples of orthogonal subspaces can be obtained by noting that the symmetry group  of the cube $\igq $ acts on $\hq$ (see Lemma~\ref{lem:CD cell}). Each reflection  of $\igq$ has some fixed point set, which meets the boundary orthogonally and is totally geodesic.

However, one can only find finitely many such subspaces, both in the orthogonal case and in the case of  kink angles bounded away from $0$ (see \cite{SH91} and \cite[\S 5]{FLMS21}).
This would make it quite delicate to ensure that one can find enough such subspaces to apply the standard criteria for properness of the induced cubulation (such as those in \cite{BW12,HW14}).
To address these issues, we turn to a different type of subspaces, which we call mirrors. These are defined in \S\ref{subsec:mirrors convexity} using the foldability of $X$, and enjoy properties reminiscent of those of hyperplanes in a $\cat 0$ cube complex.
For the sake of clarity, the collection of mirrors also fails to provide a proper action of $\hg=\pi_1(\hc)$ on a $\cat 0$ cubical complex in the usual way.
Nevertheless, in \S\ref{sec:dual cubical complex} we will be able to use mirrors to construct an action of $\hg$ on a $\cat 0$ cubical complex for which the cube stabilizers are manageable, and are in a certain sense already detected by the action of $\hg$ by deck transformations on the universal cover $\uchc$ (see \S~\ref{subsec:action}). 
The  reader interested in these remarks should also compare this discussion with that in  Remark~\ref{rem:wallspaces} below.
\end{remark}

\begin{figure}[ht]
    \centering
    \includegraphics[width=\textwidth]{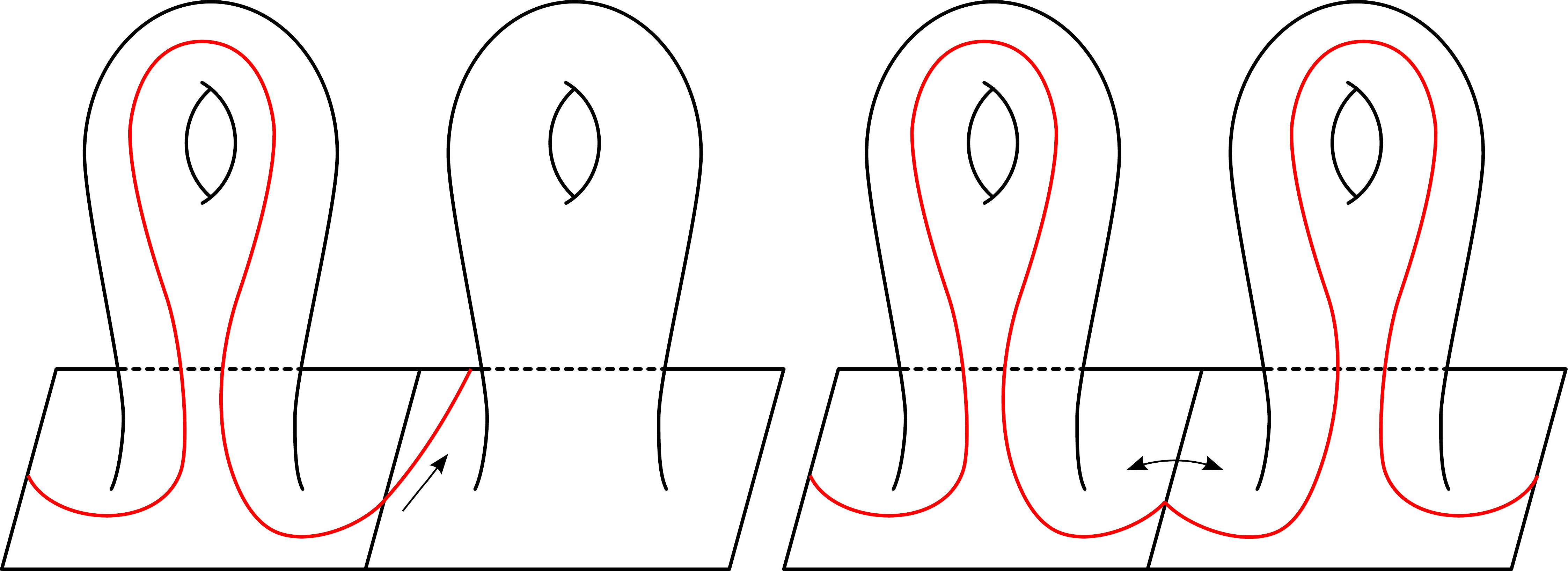}
    \caption{Failure of the attempt to create hyperplane--like subspaces. Left: geodesic extension. Right: geodesic reflection.}
    \label{fig:hyperplanes_fail}
\end{figure}


\subsection{Tiling, folding, and developing the universal cover}\label{sec:universal_cover}

Recall that we are assuming $X$ is an admissible complex, as defined at the beginning of \S\ref{sec:strict hyperbolization}.
It follows from Proposition~\ref{prop:CD complex} (see also Lemma~\ref{lem:cell_is_hyperbolizing_cell}) that the hyperbolized complex $\hc$ admits a decomposition into hyperbolized cubes, analogous to the decomposition of $X$ into cubes.
In this section we show how to obtain an analogous decomposition of the universal cover $\uchc$  of $\hc$ into pieces which are isometric to the universal cover $\uchq$ of the hyperbolizing cube. 
Let us denote by $\piuchc : \uchc\to\hc$ and $\pisquare : \uchq\to\hq$ the universal covering projections.

We start by realizing the space $\uchq$ as a convex subset of $\hh^n$. Let us consider once again the coordinate hyperplanes $H_i=\{(x_1,\dots,x_n,x_{n+1}) \in \mink \ | \ x_i=0\}$ for $i=1,\dots,n$ (introduced in \S \ref{sec:construction of CD cell}).
An \textit{open $\Gamma$--cell} is a connected component of the complement in $\hh^n$ of the collection of $\Gamma$--orbits of the hyperplanes $H_i$. A \textit{$\Gamma$--cell} is the closure of an open $\Gamma$--cell. 
Notice that all $\Gamma$--cells are convex, isometric to each other, and that $\Gamma$ permutes them transitively. 
It follows from the construction of $\hq$ in \S\ref{sec:construction of CD cell} that the universal cover $\uchq$ of $\hq$ can be isometrically identified with any $\Gamma$--cell (see Figure~\ref{fig:uchq_is_cell}).

\begin{figure}[h]
\centering
\def\svgwidth{\columnwidth}
\begingroup%
  \makeatletter%
  \providecommand\color[2][]{%
    \errmessage{(Inkscape) Color is used for the text in Inkscape, but the package 'color.sty' is not loaded}%
    \renewcommand\color[2][]{}%
  }%
  \providecommand\transparent[1]{%
    \errmessage{(Inkscape) Transparency is used (non-zero) for the text in Inkscape, but the package 'transparent.sty' is not loaded}%
    \renewcommand\transparent[1]{}%
  }%
  \providecommand\rotatebox[2]{#2}%
  \newcommand*\fsize{\dimexpr\f@size pt\relax}%
  \newcommand*\lineheight[1]{\fontsize{\fsize}{#1\fsize}\selectfont}%
  \ifx\svgwidth\undefined%
    \setlength{\unitlength}{1205.76473687bp}%
    \ifx\svgscale\undefined%
      \relax%
    \else%
      \setlength{\unitlength}{\unitlength * \real{\svgscale}}%
    \fi%
  \else%
    \setlength{\unitlength}{\svgwidth}%
  \fi%
  \global\let\svgwidth\undefined%
  \global\let\svgscale\undefined%
  \makeatother%
  \begin{picture}(1,0.46199345)%
    \lineheight{1}%
    \setlength\tabcolsep{0pt}%
    \put(0,0){\includegraphics[width=\unitlength,page=1]{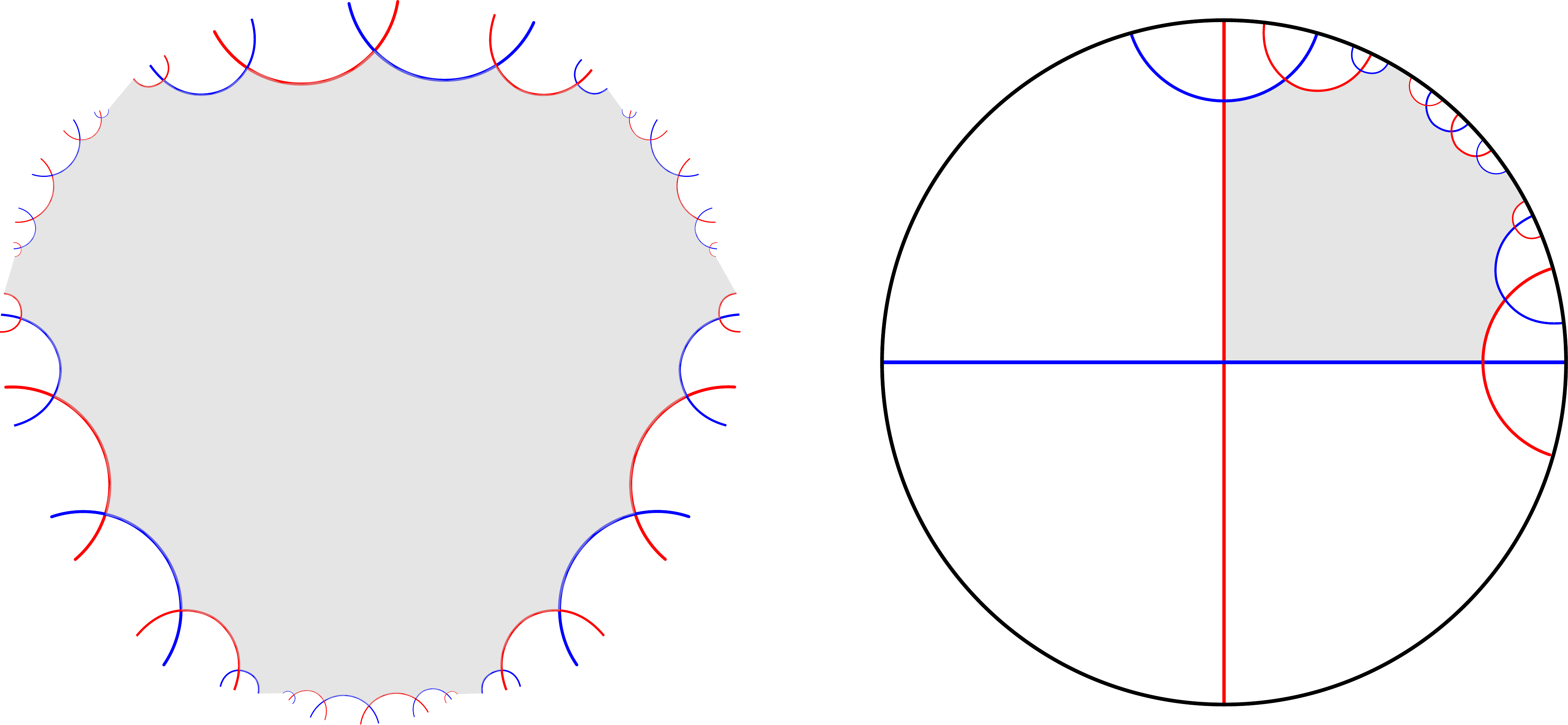}}%
    \put(0.10270339,0.28163188){\color[rgb]{0,0,0}\makebox(0,0)[lt]{\lineheight{1.25}\smash{\begin{tabular}[t]{l}$\uchq$\end{tabular}}}}%
    \put(0.80517254,0.28163188){\color[rgb]{0,0,0}\makebox(0,0)[lt]{\lineheight{1.25}\smash{\begin{tabular}[t]{l}$\uchq$\end{tabular}}}}%
    \put(0.65007382,0.10722537){\color[rgb]{0,0,0}\makebox(0,0)[lt]{\lineheight{1.25}\smash{\begin{tabular}[t]{l}$\hh^n$\end{tabular}}}}%
  \end{picture}%
\endgroup%

    \caption{The universal cover $\uchq$ of $\hq$, and its isometric embedding in $\hh^n$ as a $\Gamma$--cell}    
    \label{fig:uchq_is_cell}
\end{figure}

While it might be tempting to think that $\uchc$ is obtained via some simple fibered product construction involving $\widetilde X$ and $\uchq$, that is not the case.

\begin{remark}[What $\uchc$ is not] \label{rem:what_uchc_isnot}
Note that $\uchc \neq (\widetilde X)_\Gamma$, i.e.\ the universal cover of the hyperbolization of $X$ is not the hyperbolization of the universal cover of $X$. Indeed, $(\widetilde X)_\Gamma$ is not simply connected, because it retracts to $\hq$ by \eqref{item:hypcell retraction} in Lemma~\ref{lem:cell_is_hyperbolizing_cell}. 
Analogously, $\uchc$ is not the fiber product of $\widetilde X$ and $\uchq$ either. Indeed, note that the faces of $\uchq$ (i.e.\ the preimages of faces of $\square^n$ via the map $\cd \circ \pisquare$) are disconnected (see Figure~\ref{fig:uchq_is_cell}). This prevents the fiber product of $\widetilde X$ and $\uchq$ from being simply connected, as observed in Remark~\ref{rem:faces--create topology}.
\end{remark}

\begin{figure}[ht]
    \centering
    \begin{tikzpicture}
\node at (-1,1) (A) {$\hc$};
\node at (-1,-1) (B) {$X$};
\node at (1,-1) (C) {$\square^n$};
\node at (1,1) (D) {$\hq$};

\node at (-3,3) (F) {$\ichc$};
\node at (1,3) (G) {$\uchq$};
\node at (-5,3) (H) {$\uchc$};

\draw [->] (A) edge (B) (B) edge (C) (A) edge (D) (D) edge (C) (G) edge (D)  (F) edge (G);
\draw [->,dashed] (F) edge (A) (H) edge (F);
\draw [out=-90,in=180,->,dashed] (H) edge (A);
\draw [out=-90,in=180,->] (F) edge (B);
\draw [out=90,in=90,->] (H) edge (G);

\node at (0,1.25) {$\foldc$};
\node at (0,-1.25) {$\fold$};
\node at (1.25,0) {$\cd$};
\node at (1.25,2) {$\pisquare$};
\node at (-.5,0) {$\cdX$};
\node at (-3,0) {$\cdXi$};
\node at (-1,3.25) {$\foldic$};
\node at (-1.5,2) {$\pifromichc$};
\node at (-4,3.25) {$\pitoichc$};
\node at (-4.5,2) {$\piuchc$};
\node at (-2,4.5) {$\foldutocell$};
\end{tikzpicture}
    \caption{The hyperbolized complex $\hc$, its covering spaces, and the folding map.}
    \label{fig:CD_hypcomplexdiagram_coveringspaces}
\end{figure}

In order to address this, and get a working understanding of $\uchc$, we consider the intermediate space $\ichc$ obtained as a fibered product of $X$ and $\uchq$ along the maps $\fold : X\to \square^n$ and $\cd \circ \pisquare : \uchq\to \hq\to \square^n$ (see Figure~\ref{fig:CD_hypcomplexdiagram_coveringspaces}). By the universal property of pullbacks we have an induced map $\pifromichc: \ichc \to \hc$. 

\begin{lemma}
The map $\pifromichc: \ichc \to \hc$ is a covering map.
\end{lemma}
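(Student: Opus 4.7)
The plan is to realize $\pifromichc$ as a pullback of the covering map $\pisquare:\uchq\to \hq$, and then invoke the general fact that covering maps are stable under pullback.

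First, I would rewrite the fibered product defining $\ichc$ as an iterated pullback. Since the map $\uchq\to\square^n$ used to define $\ichc$ factors as $\cd\circ\pisquare$, the universal property of pullbacks identifies
\[
\ichc \;=\; X\times_{\square^n}\uchq \;\cong\; \bigl(X\times_{\square^n}\hq\bigr)\times_{\hq}\uchq \;=\; \hc \times_{\hq} \uchq,
\]
where the map $\hc\to\hq$ is $\foldc$ (projection onto the second factor) and the map $\uchq\to\hq$ is $\pisquare$. Concretely, the homeomorphism sends $(x,\tilde s)\in\ichc$ to $\bigl((x,\pisquare(\tilde s)),\tilde s\bigr)$, and its inverse forgets the middle coordinate. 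Under this identification, the projection $\hc\times_{\hq}\uchq\to\hc$ agrees with $\pifromichc$, because both send $(x,\tilde s)$ to $(x,\pisquare(\tilde s))$.

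Next, I would invoke the standard fact that the pullback of a covering map along an arbitrary continuous map is again a covering map. Explicitly, given any point $c\in\hc$, one chooses an evenly covered open neighborhood $V\subseteq\hq$ of $\foldc(c)$ with $\pisquare^{-1}(V)=\bigsqcup_i V_i$ and each $\pisquare|_{V_i}:V_i\to V$ a homeomorphism; then $U=\foldc^{-1}(V)$ is an open neighborhood of $c$ in $\hc$ such that
\[
\pifromichc^{-1}(U) \;=\; \bigsqcup_i \bigl(U\times_V V_i\bigr),
\]
with each summand mapping homeomorphically onto $U$ via the first projection. Since $\pisquare$ is a covering map (being a universal covering projection), this exhibits $\pifromichc$ as a covering map.

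There is no real obstacle here: the content is entirely the identification of $\ichc$ as the iterated pullback, which is a diagram chase through the universal property, combined with the routine verification that covering maps pull back to covering maps. The only mild subtlety worth stating explicitly is why $\foldc:\hc\to\hq$ is continuous with the induced length metric from Remark~\ref{rem:CD hyp complex metric}; but this is immediate because $\foldc$ is the restriction of the projection $X\times\hq\to\hq$, which is continuous in the product topology underlying the fibered product.
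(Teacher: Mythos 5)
Your argument is essentially the paper's own: both use the composition law for pullbacks to identify $\ichc$ with the fibered product $\hc\times_{\hq}\uchq$ and then invoke stability of covering maps under pullback. You simply spell out the explicit homeomorphism and the evenly covered neighborhoods, which is fine but adds nothing new.
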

\proof
By the composition law for pullbacks the space $\ichc$ is actually the same as the pullback of $\foldc:\hc\to \hq$ and $\pisquare : \uchq\to \hq$. In particular, the map $\pifromichc$ is the pullback of the universal covering projection $\pisquare$ along the map $\foldc$, hence is itself a covering map. 
\endproof

In particular, $\ichc$ can be endowed with a length metric that makes $\pifromichc$ a local isometry (see \cite[Proposition I.3.25]{BH99}), and the universal cover $\uchc$ can be realized as the universal cover of this space $\ichc$, even in a metric sense.  Let $\pitoichc : \uchc \to \ichc$ denote the universal covering projection.

\begin{figure}[h]
\centering
\def\svgwidth{\columnwidth}
\begingroup%
  \makeatletter%
  \providecommand\color[2][]{%
    \errmessage{(Inkscape) Color is used for the text in Inkscape, but the package 'color.sty' is not loaded}%
    \renewcommand\color[2][]{}%
  }%
  \providecommand\transparent[1]{%
    \errmessage{(Inkscape) Transparency is used (non-zero) for the text in Inkscape, but the package 'transparent.sty' is not loaded}%
    \renewcommand\transparent[1]{}%
  }%
  \providecommand\rotatebox[2]{#2}%
  \newcommand*\fsize{\dimexpr\f@size pt\relax}%
  \newcommand*\lineheight[1]{\fontsize{\fsize}{#1\fsize}\selectfont}%
  \ifx\svgwidth\undefined%
    \setlength{\unitlength}{5283.440422bp}%
    \ifx\svgscale\undefined%
      \relax%
    \else%
      \setlength{\unitlength}{\unitlength * \real{\svgscale}}%
    \fi%
  \else%
    \setlength{\unitlength}{\svgwidth}%
  \fi%
  \global\let\svgwidth\undefined%
  \global\let\svgscale\undefined%
  \makeatother%
  \begin{picture}(1,0.29798257)%
    \lineheight{1}%
    \setlength\tabcolsep{0pt}%
    \put(0,0){\includegraphics[width=\unitlength,page=1]{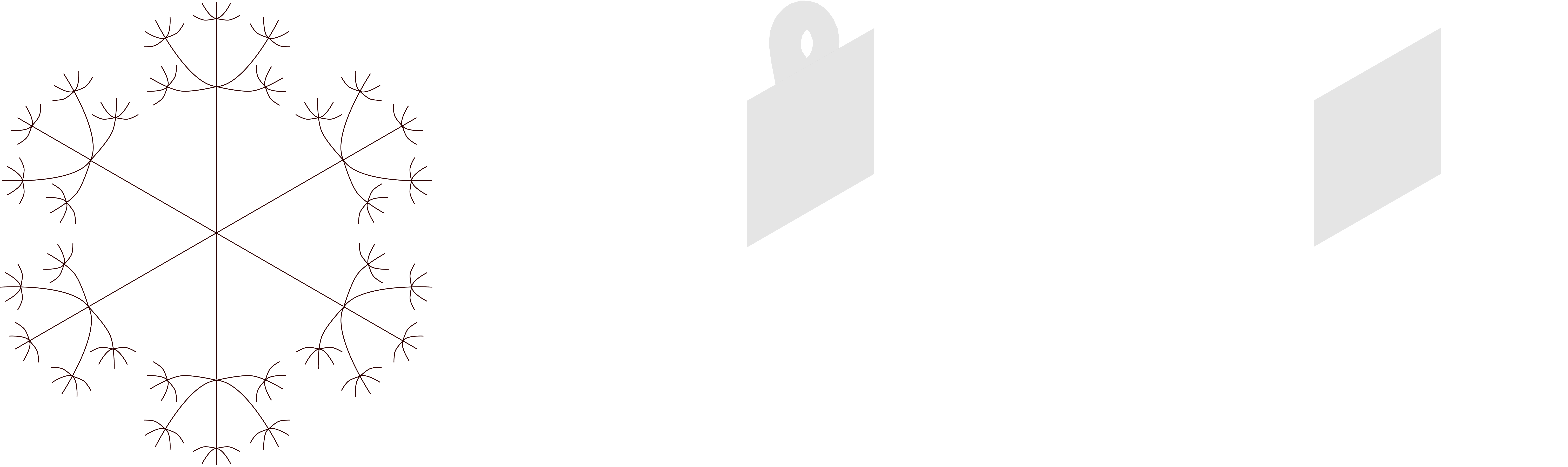}}%
    \put(0.25426357,0.26691941){\color[rgb]{0,0,0}\makebox(0,0)[lt]{\lineheight{1.25}\smash{\begin{tabular}[t]{l}$\uchc$\end{tabular}}}}%
    \put(0,0){\includegraphics[width=\unitlength,page=2]{tiles.pdf}}%
    \put(0.92726152,0.25983233){\color[rgb]{0,0,0}\makebox(0,0)[lt]{\lineheight{1.25}\smash{\begin{tabular}[t]{l}$X$\end{tabular}}}}%
    \put(0,0){\includegraphics[width=\unitlength,page=3]{tiles.pdf}}%
    \put(0.56549293,0.25953355){\color[rgb]{0,0,0}\makebox(0,0)[lt]{\lineheight{1.25}\smash{\begin{tabular}[t]{l}$\hc$\end{tabular}}}}%
    \put(0,0){\includegraphics[width=\unitlength,page=4]{tiles.pdf}}%
  \end{picture}%
\endgroup%

    \caption{Tiles in $\uchc, \hc$ and $X$.}
    \label{fig:tiles}
\end{figure}

We define a \textit{tile} of $\hc$ to be a subspace of the form $\cdX^{-1}(C)$, for $C$ a top--dimensional cube of $X$.
Recall from Remark~\ref{rem:CD hyp complex metric} that each tile of $\hc$ is isometric to $\hq$.
In complete analogy, we define a \textit{tile} in $\ichc$ and in $\uchc$ to be a connected component of the lift of a tile from $\hc$ via the covering maps $\pifromichc$ and $\piuchc=\pifromichc \circ \pitoichc$ respectively. We refer to this decomposition into tiles as the \textit{tiling} of each of these spaces (see Figure~\ref{fig:tiles}). 
Note that, since the complex $X$ is assumed to be admissible, each point of $X$ is either contained in the interior of a tile, or in the intersection of at least two tiles. 
Moreover the folding map $f$ of $X$ induces an analogous map on $\hc$ and its covering spaces, as established in the next lemma. 

\begin{lemma}\label{lem:foldingmap_hypcomplex}
The map $\foldutocell = \foldic \circ \pitoichc : \uchc \to \ichc \to \uchq$ restricts to an isometry between each tile of $\uchc $ and $\uchq$.
\end{lemma}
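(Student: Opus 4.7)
The plan is to factor $\foldutocell$ through the intermediate cover $\ichc$ and verify separately that each of the two factors restricts to an isometry at the level of tiles. Fix a top-dimensional cube $C\subseteq X$, and let $\tau_0:=\cdX^{-1}(C)\subseteq \hc$ denote the corresponding tile; by Remark~\ref{rem:CD hyp complex metric}, the projection $\foldc$ restricts to an isometry $\tau_0\to \hq$. A tile of $\ichc$ above $\tau_0$ is a connected component of $T:=\pifromichc^{-1}(\tau_0)$, and a tile of $\uchc$ above $\tau_0$ is a connected component of $\piuchc^{-1}(\tau_0)=\pitoichc^{-1}(T)$.

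The first step is to show that $T$ is connected and that $\foldic|_T$ is an isometry onto $\uchq$. Since $\ichc$ is by construction the pullback of $\foldc$ along $\pisquare:\uchq\to\hq$, restricting to the tile $\tau_0$ identifies $T$ with the pullback of the isometry $\foldc|_{\tau_0}:\tau_0\to\hq$ along $\pisquare$. Pulling back along an isometry is inert on the other factor: concretely, the map $(x,\tilde y)\mapsto \tilde y$ is a bijection $T\to \uchq$, with continuous inverse $\tilde y\mapsto \bigl((\foldc|_{\tau_0})^{-1}(\pisquare(\tilde y)),\,\tilde y\bigr)$. Hence $T\cong\uchq$ is connected and simply connected, so $T$ is itself a single tile of $\ichc$, and the identification above is exactly $\foldic|_T$. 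This bijection is in fact an isometry because the length metric on $\ichc$ is defined so that $\pifromichc$ is a local isometry, and on $T$ this metric agrees with the one pulled back from $\uchq$ via $\foldic$.

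For the second step, consider the universal covering $\pitoichc:\uchc\to\ichc$. Since the tile $T\subseteq\ichc$ is path-connected, locally path-connected, and simply connected, the covering $\pitoichc$ trivializes over $T$: each connected component of $\pitoichc^{-1}(T)$ maps homeomorphically onto $T$ via $\pitoichc$. Because $\pitoichc$ is a local isometry between length spaces, each of these homeomorphisms is in fact an isometry. Consequently, every tile $\tilde\tau\subseteq\uchc$ above $\tau_0$ is isometric to $T$ via $\pitoichc|_{\tilde\tau}$, and composing with the isometry $\foldic|_T:T\to\uchq$ yields that $\foldutocell|_{\tilde\tau}=\foldic|_T\circ\pitoichc|_{\tilde\tau}$ is an isometry onto $\uchq$, as claimed. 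The main obstacle I expect is the metric bookkeeping in the first step, namely verifying that the length metric induced on $T$ from $\ichc$ coincides with the metric pulled back from $\uchq$ via $\foldic$; once this is in place, the trivialization of a covering over a simply connected subspace is a standard fact.
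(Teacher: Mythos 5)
Your proof is correct and follows essentially the same route as the paper's: both factor $\foldutocell$ through $\ichc$, identify each tile of $\ichc$ with $\uchq$ (you unwind the pullback explicitly, the paper cites Lemma~\ref{lem:cell_is_hyperbolizing_cell}\eqref{item:hypcell homeo}), check the metric identification using Remark~\ref{rem:CD hyp complex metric} and the lifted metric on $\ichc$, and then use simple connectivity of tiles of $\ichc$ to lift isometrically along $\pitoichc$.
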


\proof
Recall that $\ichc$ is defined via a pullback construction, in the sense of \S \ref{sec:hyperbolization template}.
Therefore, by \eqref{item:hypcell homeo} in Lemma~\ref{lem:cell_is_hyperbolizing_cell}, the map $\foldic:\ichc \to \uchq$ restricts to a homeomorphism between each tile of $\ichc$ and $\uchq$. Since the metric on $\ichc$ is lifted from $\hc$ via $\pifromichc$, and $\foldc$ restricts to an isometry between each tile  of $\hc$ and $\hq$ (see  Remark~\ref{rem:CD hyp complex metric}), the map $\foldic$ actually gives an isometry between a tile of $\ichc$ and $\uchq$.
Since the tiles of $\ichc$ are simply connected, they lift isometrically to tiles of $\uchc$ via $\pitoichc$. In particular, $\pitoichc$ maps a tile of $\uchc$ isometrically onto a tile of $\ichc$. Therefore the map $\foldutocell = \foldic \circ \pitoichc$ maps a tile of $\uchc$ isometrically onto $\uchq$, just by composition.
\endproof

The map $\foldutocell$ from Lemma~\ref{lem:foldingmap_hypcomplex} will be called the \textit{folding map} of $\uchc$.
The composition of the folding map $\foldutocell$ with any isometric embedding $\varphi: \uchq\to C$ onto a  $\Gamma$--cell $C\subseteq \hh^n$ will be called a \textit{developing map} for $\uchc$.

\begin{remark}\label{rem:developing}
The restriction of a developing map to a tile is an isometric embedding of a tile into $\hh^n$ as a $\Gamma$--cell. Moreover if $T_1,T_2$ are two tiles of $\uchc$ meeting along a codimension--$1$ subspace $Z$, and $\varphi_1:T_1\to C_1\subseteq \hh^n$ is an isometric embedding onto a $\Gamma$--cell that maps $Z$ into some hyperplane $H$, then post--composing $\varphi_1$ with the reflection across $H$ provides an isometric embedding $\varphi_2$ of $T_2$ as a $\Gamma$--cell $C_2$ adjacent to $C_1$ along $H$. The two embeddings can be glued together to give an isometric embedding of $T_1\cup T_2$ onto the union of two  $\Gamma$--cells $C_1\cup C_2$ adjacent along $H$. This can be ``analytically continued'' by sequentially extending over adjacent tiles, to obtain a globally defined map $\uchc \to \hh^n$. However, this does not result in a global isometric embedding $\uchc \to \hh^n$ in general. This is due to the fact that links in $X$ can be very large, which gives rise to overlaps and singularities.
\end{remark}


\subsection{Mirrors: convexity}\label{subsec:mirrors convexity}
In this section we exploit foldability to define a collection of convex subcomplexes of $X$, and induce corresponding subspaces in $\hc$ and $\uchc$.
Let $Y$ be a foldable cubical complex of dimension $n$ (in the following we will consider $Y=X$ and $Y=\widetilde X$ depending on the situation).
If $f:Y\to \square^n$ is a fixed folding and $F\subseteq \square^n$ is a codimension-1 face, then we define a \textit{mirror} in $Y$ to be a connected component of $f^{-1}(F)$.


\begin{proposition}\label{prop:CAT(0)_mirrors_convex}
Let $Y$ be an admissible cubical complex. Then each mirror is a locally convex and geodesically complete subcomplex of $Y$. In particular, if $Y$ is $\cat 0$, then each mirror is convex.
\end{proposition}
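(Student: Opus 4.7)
My plan is to treat the three main assertions (subcomplex, local convexity, geodesic completeness) in turn, with the global convexity in the $\cat 0$ case as a corollary.

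First, I would observe that $M$ is a subcomplex of $Y$. Since $f$ is combinatorial and $F\subseteq\square^n$ is a subcomplex, the preimage $f^{-1}(F)$ is a union of cubes of $Y$, and its connected components are themselves subcomplexes (using local compactness of $Y$). For local convexity, I would verify the link criterion: for every cube $C\subseteq M$, the link $\lk{C,M}$ is a full subcomplex of $\lk{C,Y}$, which is the standard combinatorial characterization of local convexity for cubical subcomplexes. Foldability means the folding induces a simplicial map $f_*\colon\lk{C,Y}\to\lk{f(C),\square^n}$ which is injective on simplices, and one checks that $\lk{C,M}=f_*^{-1}(\lk{f(C),F})$: indeed a cube $C'\supseteq C$ of $Y$ lies in $M$ iff $f(C')\subseteq F$, because in that case $C$ and $C'$ both belong to $f^{-1}(F)$ and are connected there via $C\subseteq C'$. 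Since $F$ is a face of the cube $\square^n$, $\lk{f(C),F}$ is full in $\lk{f(C),\square^n}$; and the preimage of a full subcomplex under a simplex-injective simplicial map is full.

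For geodesic completeness, I would argue that $M$ is itself an admissible cubical complex of dimension $n-1$ (homogeneous, without boundary, locally finite). Homogeneity follows from that of $Y$, by extending any cube of $M$ to an $n$-cube $C'\subseteq Y$ and using the unique $(n-1)$-face of $C'$ mapping under $f$ to $F$, which contains the given cube and lies in $M$. The main content is the no-boundary condition: for an $(n-2)$-cube $\sigma\subseteq M$, the link $\lk{\sigma,Y}$ is a locally finite graph, and foldability makes the induced simplex-injective map to $\lk{f(\sigma),\square^n}$ (a single edge with $2$ vertices, corresponding to the two facets of $\square^n$ containing $f(\sigma)$) a bipartition of this graph into the $(n-1)$-cubes mapping to $F$ and those mapping to the opposite facet. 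Since $Y$ has no boundary, every vertex of $\lk{\sigma,Y}$ has valence $\geq 2$. Starting from at least one $(n-1)$-cube of $M$ containing $\sigma$ (guaranteed by homogeneity of $M$), its $\geq 2$ neighbors all lie on the opposite side of the bipartition, and any such neighbor has $\geq 2$ further neighbors back on the $F$-side, producing at least two distinct $(n-1)$-cubes of $M$ containing $\sigma$. Admissibility of $M$ as an $(n-1)$-dimensional complex then delivers geodesic completeness in the piecewise Euclidean metric.

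Finally, in the $\cat 0$ case I would invoke the standard fact that a connected, locally convex subcomplex of a $\cat 0$ cube complex is globally convex; equivalently, one runs a Cartan--Hadamard-style developing map argument, noting that the local isometry $M\hookrightarrow Y$ lifts to a global isometric embedding of the universal cover $\widetilde M$ onto a convex subset of $Y=\widetilde Y$, and the cubical structure identifies this with the inclusion of $M$ itself. The main obstacle I expect is keeping the various links (in $Y$, in $M$, in $\square^n$, in $F$) straight and matching up the bipartition and fullness properties carefully; the $\cat 0$ step itself is essentially a black-box citation to the theory of locally convex subcomplexes of $\cat 0$ cube complexes.
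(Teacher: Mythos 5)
The paper's own proof of this proposition is essentially a citation: it refers to Xie's Proposition~2.3 (and, via that, to Bridson--\'Swi\k{a}tkowski) for the ``locally convex and geodesically complete subcomplex'' assertion, and to Bridson--Wise and Ramos-Cuevas for local-to-global convexity in $\cat 0$ spaces. You instead reconstruct the whole argument. Your mechanism for local convexity --- the folding induces a simplex-injective simplicial map $f_*$ on links, and $\lk{C,M}$ is exactly the $f_*$-preimage of the full subcomplex $\lk{f(C),F}\subseteq\lk{f(C),\square^n}$, so it is full --- is a clean encapsulation of what the cited sources prove, and the bipartition argument on codimension-two links for the ``without boundary'' property is the standard one. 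The developing-map route for the $\cat 0$ consequence is fine (the lift $\widetilde M\to \widetilde Y=Y$ is an isometric embedding with convex image, so the covering $\widetilde M\to M$ is injective and hence trivial). So the logic is correct, but you are proving where the paper cites; that is a legitimate and somewhat more illuminating alternative.

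There are two points you pass over that need to be made explicit. First, you use that $M$ is non-positively curved --- this is needed both to call $M$ ``admissible'' and to run the developing-map argument --- but you never say why; the reason is that local convexity makes $\lk{v,M}$ a full subcomplex of the flag complex $\lk{v,Y}$, and full subcomplexes of flag complexes are flag. Second, and more importantly, the sentence ``admissibility of $M$ as an $(n-1)$-dimensional complex then delivers geodesic completeness'' is an assertion, not an argument. What you have actually shown is that $M$ is homogeneous and without boundary; the implication ``homogeneous $+$ without boundary $+$ locally finite $+$ NPC $\Rightarrow$ geodesically complete'' is true but not a tautology (one has to pass from the absence of free $(m-1)$-faces to the geodesic-extension property at points of every stratum, which goes through a statement about links). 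You should either cite it --- this is exactly what Xie's Proposition~2.3 and Bridson--\'Swi\k{a}tkowski's Lemma~3.2 supply, which is presumably why the paper outsources the whole proposition there --- or supply the short argument about links having no free faces and hence no ``dead-end'' directions. As written, the step is a gap, though an entirely fillable one.
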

\proof
For the first statement see \cite[Proposition 2.3]{XIE04} (and references therein such as \cite[Lemma 3.2(4)]{BS99}).  
In the $\cat 0$ case, local convexity implies global convexity (see for instance \cite[Theorem 1.6,1.10]{BUW12}, or \cite[Theorem 1.1]{RC16}).
\endproof

\begin{figure}[h]
\centering
\def\svgwidth{\columnwidth}
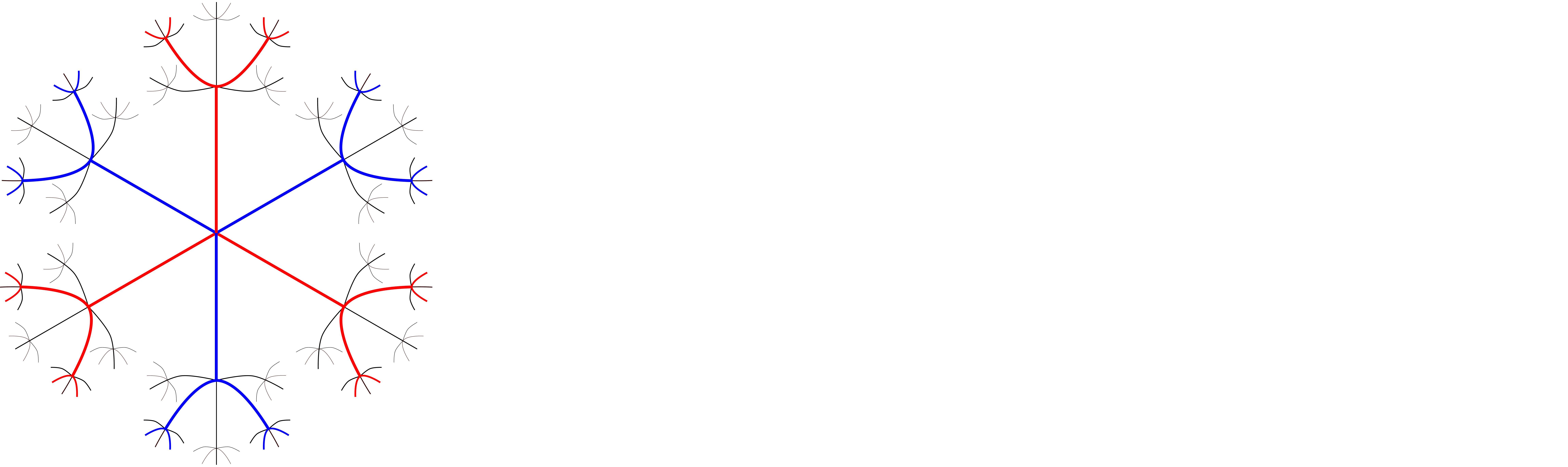
    \caption{Right to left: mirrors $M_1$, $M_2$ in $X$,  their preimages $\widehat M_1 =\cdX^{-1}(M_1), \widehat M_2 =\cdX^{-1}(M_2)$  in $\hc$, and the lifts $\widetilde M_1$, $\widetilde M_2$ to $\uchc$.}
    \label{fig:mirrors}
\end{figure}

\begin{figure}[h]
    \centering
    \begin{tikzpicture}
\node at (-1,1) (A) {$\hc$};
\node at (-1,-1) (B) {$X$};
\node at (1,-1) (C) {$\square^n$};
\node at (1,1) (D) {$\hq$};

\node at (-3,1) (H) {$\uchc$};

\draw [->] (A) edge (B) (B) edge (C) (A) edge (D) (D) edge (C) (H) edge (A) (H) edge (B);
\draw [out=-90,in=-135,->] (H) edge (C) ;

\node at (0,1.25) {$\foldc$};
\node at (0,-1.25) {$\fold$};
\node at (-2.5,-1) {$\foldutosquare$};
\node at (1.25,0) {$\cd$};
\node at (-.5,0) {$\cdX$};
\node at (-1.5,0) {$\cdXu$};
\node at (-2,1.25) {$\piuchc$};
\end{tikzpicture}
 
    \caption{The hyperbolized complex $\hc$ and the maps used to define mirrors.}
    \label{fig:CD_hypcomplexdiagram_mirrormaps}
\end{figure}

We now define a \textit{mirror} in $\uchc$ to be a connected components of $\foldutosquare^{-1}(F)$, where $F$ is a codimension-1 face of  $\square^n$ and $\foldutosquare$ is the map given by the composition
$\foldutosquare = \fold \circ \cdX \circ \piuchc : \uchc \to \hc \to X \to \square^n$ (see Figure~\ref{fig:mirrors} and
Figure~\ref{fig:CD_hypcomplexdiagram_mirrormaps}).
Equivalently, we could define it as a connected component of the full preimage of a mirror of $X$ via the map $\cdXu=\cdX \circ \piuchc$, but we find it convenient to use this definition. 
We will say that $M$ \textit{folds} to $F$, and we will denote by $\mirrors$ the collection of all mirrors in $\uchc$. 
Mirrors in $\hc$ are defined in the analogous way using the map $\fold \circ \cdX$.

\begin{proposition}\label{prop:mirrors convex}
Let $X$ be an admissible cubical complex. Then each mirror of $\uchc$ is a closed connected convex subspace of $\uchc$.
\end{proposition}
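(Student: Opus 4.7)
My plan is to identify each mirror of $\uchc$ with a connected component of $\cdXu^{-1}(M)$ for some mirror $M$ of $X$, and then import local convexity from $X$ through $\hc$ up to $\uchc$, finally globalizing it using the $\cat{-1}$ geometry of $\uchc$.

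First I would verify the identification. Since $\foldutosquare=\fold\circ\cdXu$, we have $\foldutosquare^{-1}(F)=\cdXu^{-1}(\fold^{-1}(F))$, and $\fold^{-1}(F)$ decomposes as the disjoint union of the mirrors of $X$ folding to $F$. Because distinct mirrors of $X$ are disjoint subcomplexes of $X$, their preimages in $\uchc$ are pairwise disjoint, so any connected component $\widetilde M$ of $\foldutosquare^{-1}(F)$ is a connected component of $\cdXu^{-1}(M)$ for a unique mirror $M\subseteq X$. Closedness and connectedness of $\widetilde M$ are then immediate: $M$ is a subcomplex hence closed in $X$, so $\cdXu^{-1}(M)$ is closed in $\uchc$, and a connected component of any closed subset of a topological space is itself closed (its closure is a connected superset, hence equals it by maximality).

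For convexity I would chain three transport steps to establish local convexity. Proposition~\ref{prop:CAT(0)_mirrors_convex} gives that the subcomplex $M\subseteq X$ is locally convex; item~\eqref{item:CD subcomplexes} of Proposition~\ref{prop:CD complex} then upgrades this to local convexity of $\widehat M:=\cdX^{-1}(M)\subseteq \hc$; finally, since $\piuchc$ is a covering projection and hence a local isometry, local convexity pulls back, so $\cdXu^{-1}(M)=\piuchc^{-1}(\widehat M)$ is locally convex in $\uchc$. Local convexity then passes to the connected component $\widetilde M$: for sufficiently close $x,y\in \widetilde M$ the unique geodesic joining them in $\uchc$ lies in $\cdXu^{-1}(M)$ by local convexity of the latter, and being a connected subset of $\cdXu^{-1}(M)$ that meets $\widetilde M$, it must lie entirely in $\widetilde M$.

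To globalize, I would invoke the $\cat{-1}$ geometry of $\uchc$: by item~\eqref{item:CD npc} of Proposition~\ref{prop:CD complex}, $\hc$ is locally $\cat{-1}$, and Cartan--Hadamard makes $\uchc$ globally $\cat{-1}$, in particular $\cat 0$ and uniquely geodesic. Since $\widetilde M$ is closed in the complete space $\uchc$ it is itself complete, and a connected, complete, locally convex subset of a $\cat 0$ space is convex, as cited in the proof of Proposition~\ref{prop:CAT(0)_mirrors_convex} (e.g.\ \cite[Theorem~1.6]{BUW12} or \cite[Theorem~1.1]{RC16}). The only genuinely delicate point I expect is confirming that local convexity descends from the full preimage $\cdXu^{-1}(M)$ to the chosen connected component; this is handled cleanly by the geodesic-connectedness argument above, which prevents the geodesic between nearby points of $\widetilde M$ from escaping into another component of $\cdXu^{-1}(M)$.
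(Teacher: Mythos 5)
Your proof is correct and follows essentially the same route as the paper: transport local convexity from the mirror $Z\subseteq X$ via Proposition~\ref{prop:CAT(0)_mirrors_convex}, then via item~\eqref{item:CD subcomplexes} of Proposition~\ref{prop:CD complex} to $\cdX^{-1}(Z)\subseteq\hc$, then pull back along $\piuchc$, and finally globalize using the $\cat 0$ local-to-global convexity principle. The only difference is that you explicitly justify why local convexity descends from the full preimage to the connected component, a step the paper leaves implicit; this is a harmless and correct elaboration.
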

\proof
Let $M$ be a mirror of $\uchc$, and let $F\subseteq \square^n$ be the codimension-1 face to which it folds. By definition $M$ is connected and closed.
To prove convexity we argue as follows. Let $Z = \cdX(\piuchc(M)) \subseteq X$, and notice that $Z$ is a mirror of $X$ that folds to $F$. 
By Proposition~\ref{prop:CAT(0)_mirrors_convex} we know that $Z$ is locally convex in $X$. 
By \eqref{item:CD subcomplexes} in Proposition~\ref{prop:CD complex}, we also know that $\cdX^{-1}(Z)$ is locally convex in $\hc$, and therefore $M\subseteq \uchc$ is locally convex too.
By \eqref{item:CD npc} in Proposition~\ref{prop:CD complex} we also know that $\hc$ is locally $\cat{-1}$. In particular $M$ is a closed and locally convex subspace in the $\cat 0$ space $\uchc$. Therefore it is convex (again by \cite[Theorem 1.6,1.10]{BUW12}, or \cite[Theorem 1.1]{RC16}).
\endproof


\subsection{Stratification of \texorpdfstring{$\uchc$}{the universal cover of the hyperbolized complex}}\label{subsec:stratification}
In this section we use the collection $\mirrors$ of mirrors, introduced in \S\ref{subsec:mirrors convexity}, to define a stratification of $\uchc$.
The \textit{open} $k$--\textit{stratum} $\Sigma^k$ of $\uchc$ is the subspace consisting of points that fold into the interior of a $k$-face of $\square^n$ via the map
$\foldutosquare = \fold \circ \cdX \circ \piuchc : \uchc \to \hc \to X \to \square^n$, or equivalently to the interior of a $k$--cube of $X$ via the map $\cdXu=\piuchc \circ \cdX:\uchc \to \hc \to X$
(see Figure~\ref{fig:CD_hypcomplexdiagram_mirrormaps}).
Notice that $\Sigma^k$ is a locally closed subspace.
An \textit{open $k$--cell} is a connected component of $\Sigma^k$.
A  \textit{$k$--cell} is the closure of an open $k$--cell.
We say that a cell $\sigma$ \textit{folds} to the face $F=\foldutosquare(\sigma)\subseteq\square^n$ and to the cube $C=\cdXu(\sigma)\subseteq X$.
The integer $k$ will be referred to as the \textit{dimension} of a $k$--cell.
An $(n-k)$--cell is a proper subset of the intersection of $k$ mirrors. In particular $0$--cells are points, and $n$-cells are tiles (as defined in \S\ref{sec:universal_cover}). 
We call $0$--cells \textit{vertices}, and $1$--cells \textit{edges} of the stratification.

\begin{remark}[Cellular structure]\label{rem:weird cell structure}
We explicitly observe that this choice of strata does not define a stratified space structure on $\uchc$ in the sense of \cite[Definition II.12.1]{BH99}.
Moreover, the decomposition of $\uchc$ into cells does not turn it into a genuine cell complex, as defined in \S\ref{sec:cell complexes}.
Indeed, while an open $k$-cell is homeomorphic to an open disk of dimension $k$, a $k$-cell is not homeomorphic to a closed disk of dimension $k$ as soon as $k\geq 2$. Its boundary in $\uchc$ consists of an infinite union of lower--dimensional cells, so it is neither connected nor compact. For instance, an $n$--cell (i.e.\ a tile) is isometric to a $\Gamma$--cell (see Figure~\ref{fig:uchq_is_cell}).
\end{remark}

Nevertheless, we can still recover a lot of the classical behavior and tools, by observing that cells are convex and that the link of cells and points can be defined in analogy to the classical case (see \S \ref{subsec:def links}). We gather here preliminary results about this, that will be useful in the following. 
For the sake of clarity, we emphasize that in our terminology cells are closed.

\begin{lemma}\label{lem:strat_cells_convex}
Let $\sigma \subseteq \uchc$ be a cell. Then $\sigma$ is convex.
\end{lemma}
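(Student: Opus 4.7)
The plan is to express each cell $\sigma$ as an intersection of a tile and a finite collection of mirrors, and to deduce its convexity from the convexity of these known pieces.

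First I would verify that every tile $T \subseteq \uchc$ is convex. By definition $T$ is a lift of a subspace of the form $\cdX^{-1}(C)$ for some top-dimensional cube $C \subseteq X$; since top-dimensional cubes of the non-positively curved complex $X$ are locally convex, Proposition~\ref{prop:CD complex}(\eqref{item:CD subcomplexes}) implies that $\cdX^{-1}(C)$ is locally convex in $\hc$, and hence $T$ is closed and locally convex in $\uchc$. By Lemma~\ref{lem:foldingmap_hypcomplex}, $T$ is isometric to the simply connected space $\uchq$, so the global convexity criterion cited in the proof of Proposition~\ref{prop:CAT(0)_mirrors_convex} implies that $T$ is convex in the $\cat 0$ space $\uchc$. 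This settles the case in which $\sigma$ is an $n$-cell.

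For $\sigma$ of dimension $k < n$, let $F \subseteq \square^n$ be the $k$-face to which $\sigma$ folds, and let $F_1, \dots, F_{n-k}$ be the codimension-1 faces of $\square^n$ containing $F$. Every point in the open cell whose closure is $\sigma$ lies on a unique mirror $M_i$ folding to $F_i$, and by connectedness of this open cell together with closedness of mirrors, the mirrors $M_1, \dots, M_{n-k}$ are independent of the chosen point. Next I would pick any tile $T$ containing $\sigma$; such a $T$ exists since tiles are closed, they cover $\uchc$, and $\sigma$ lies on the boundary of at least one tile. Composing the isometry $T \cong \uchq$ of Lemma~\ref{lem:foldingmap_hypcomplex} with an isometric realization of $\uchq$ as a $\Gamma$-cell $U \subseteq \hh^n$ (as in \S\ref{sec:universal_cover}), each intersection $T \cap M_i$ is identified with a single codimension-1 face of the convex polytope $U$. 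The intersection of these $n-k$ faces of $U$ is the $k$-face of $U$ corresponding to $\sigma$, yielding
$$ \sigma \;=\; T \cap M_1 \cap \cdots \cap M_{n-k}. $$
The right-hand side is an intersection of convex subspaces of $\uchc$ by the first step and Proposition~\ref{prop:mirrors convex}, hence is convex.

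The main obstacle will be to justify the decomposition formula above, i.e.\ to check that within a single tile $T$ each mirror $M_i$ meets $T$ in exactly one face of the $\Gamma$-cell $U$, and that the intersection of these $n-k$ faces of $U$ equals $\sigma$. This reduces to the elementary convex geometry of $U \subseteq \hh^n$, together with the observation that distinct codimension-1 faces of $U$ folding to the same $F_i$ lie in distinct connected components of $\foldutosquare^{-1}(F_i)$, hence in distinct mirrors; selecting the mirror $M_i$ passing through the interior of $\sigma$ then picks out the correct face of $U$, and the formula follows.
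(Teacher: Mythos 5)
Your overall strategy --- write $\sigma$ as the intersection of a convex tile with convex mirrors --- is genuinely different from the paper's proof, and it can be made to work, but as written the key decomposition step has a gap.

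The paper's proof is much shorter and more direct: since any $k$-cube $C$ of the non-positively curved complex $X$ (not just a top-dimensional one) is a locally convex subcomplex, Proposition~\ref{prop:CD complex}\eqref{item:CD subcomplexes} immediately gives that $\cdX^{-1}(C)$ is locally convex in $\hc$, hence $\sigma$ (a component of its preimage in $\uchc$) is closed and locally convex in a $\cat 0$ space, so convex. This is exactly the argument you give for tiles in your first step --- the paper simply notices it applies verbatim to all $k$-cells, with no need for the decomposition $\sigma = T \cap M_1\cap\cdots\cap M_{n-k}$.

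The gap in your version is in the sentence justifying that each $T\cap M_i$ is a single face of $U$. You appeal to ``the observation that distinct codimension-1 faces of $U$ folding to the same $F_i$ lie in distinct connected components of $\foldutosquare^{-1}(F_i)$.'' Being disjoint in $\partial U$ is not the same as lying in distinct components of the \emph{global} preimage $\foldutosquare^{-1}(F_i)$: a single mirror can a priori leave $T$, wind around $\uchc$, and re-enter $T$ along a different face. This is precisely the local-to-global failure illustrated by Example~\ref{ex:locsep nonsep}, and ruling it out is the nontrivial content of Lemma~\ref{lem:mirrors_in_tile}. The correct argument is the one the paper uses there: $T\cap M_i$ is convex (intersection of your convex $T$ from step 1 with the convex mirror $M_i$ from Proposition~\ref{prop:mirrors convex}), hence connected; it is also a disjoint union of $(n-1)$-cells of $T$ folding to $F_i$ (by Lemma~\ref{lem:strat_cells_in_tile_fold_disjoint}, which does not depend on the present lemma); so it is a single cell. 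You have all the ingredients for this --- indeed your step~1 supplies exactly the convexity of $T$ needed, avoiding any circularity with Lemma~\ref{lem:mirrors_in_tile} --- but you asserted the conclusion as an ``observation'' with a justification (components of $\foldutosquare^{-1}(F_i)$) that does not actually support it. Once that step is repaired, the remaining claim that the intersection of the $n-k$ faces $T\cap M_i$ is exactly $\sigma$ does follow from the pairwise orthogonality of the supporting hyperplanes of the $\Gamma$-cell, as you indicate.
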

\proof
Let $C=\cdXu (\sigma)\subseteq X$ be the cube of $X$ to which $\sigma$ folds.
By \eqref{item:CD subcomplexes} in Proposition~\ref{prop:CD complex}, we know $\cdX^{-1}(C)$ is locally convex in $\hc$. Since $\sigma$ is by definition a connected component of $\piuchc^{-1}(\cdX^{-1}(C))$ and $\piuchc$ is a local isometry, we can conclude that it is a locally convex subspace of $\uchc$.
Arguing similarly to previous proofs of convexity, we can conclude that $\sigma$ is convex, because it is closed and locally convex in the $\cat 0$ space $\uchc$ (see \eqref{item:CD npc} in Proposition~\ref{prop:CD complex} and \cite[Theorem 1.6,1.10]{BUW12}, or \cite[Theorem 1.1]{RC16}).
\endproof

We now proceed to the study of links.
Consider the universal covering map $\piuchc : \uchc \to \hc$. By Proposition~\ref{prop:CD complex}, $\hc$ is a piecewise hyperbolic cell complex, so the link of points and cells in $\hc$ is well--defined (see \S \ref{subsec:def links}). Since $\piuchc$ is a local isometry, we can just identify the \textit{link} of points and cells in $\uchc$ with the links of the corresponding points and cells in $\hc$.

\begin{lemma}\label{lem:strat_links_cells}
Let $\sigma \subseteq \uchc$ be a cell. 
\begin{enumerate}
    \item \label{item:strat_links_cells_cube} Let $C=\cdXu (\sigma)\subseteq X$ be the cube  to which it folds. 
    Then $\cdXu$ induces an isomorphism between $\lk{\sigma, \uchc}$ and $\lk {C,X}$.

    \item \label{item:strat_links_cells_face}  Let $\sigma$ be contained in another cell $\tau$. Let $F=\foldutosquare(\sigma), E=\foldutosquare(\tau)\subseteq \square^n$ be the faces to which they fold. 
    Then $\foldutosquare$ induces an isomorphism between $\lk{\sigma, \tau}$ and $\lk {F,E}$. 
    
    \item \label{item:strat_links_cells_simplicial}  Let $\sigma$ be a $k$--cell. Then $\lk{\sigma, \uchc}$ is a piecewise spherical simplicial complex with vertices given by the $(k+1)$--cells  containing $\sigma$, and in which $m+1$ vertices span an $m$--simplex if and only if the corresponding $(k+1)$--cells are contained in a $(k+m+1)$--cell.

\end{enumerate}
\end{lemma}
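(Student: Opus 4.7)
My plan is to derive all three parts by combining two principles that have already been established earlier in the paper: the covering projection $\piuchc$ is a local isometry (so it preserves links of cells), and the Charney--Davis construction preserves links on cubes, as recorded in Proposition~\ref{prop:CD complex}~\eqref{item:CD faces and links} and in Lemma~\ref{lem:CD cell}.

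For part~\eqref{item:strat_links_cells_cube}, I would factor $\cdXu=\cdX\circ\piuchc$. Since $\piuchc$ is a local isometry onto $\hc$, it identifies $\lk{\sigma,\uchc}$ with the link in $\hc$ of the connected component of $\cdX^{-1}(C)$ containing $\piuchc$ of an interior point of $\sigma$ (links are intrinsically local, so disconnectedness of $\cdX^{-1}(C)$ is not an issue: one just applies Proposition~\ref{prop:CD complex}~\eqref{item:CD faces and links} componentwise, each component being a copy of a face of $\hq$ by Remark~\ref{rmk:CD lowerdim faces hyp 2}). Proposition~\ref{prop:CD complex}~\eqref{item:CD faces and links} then provides an isomorphism onto $\lk{C,X}$, induced by $\cdX$, and the composition is the isomorphism induced by $\cdXu$.

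For part~\eqref{item:strat_links_cells_face}, I plan to run a similar argument but locally, on the side of the hyperbolizing cube. Pick a point $p$ in the interior of $\sigma$, and fix any tile $T\subseteq\uchc$ whose closure contains $p$; then a sufficiently small neighborhood of $p$ inside $\tau$ lies in $T$, because cells of $\uchc$ are intersections of tiles along shared faces so the directions in $\tau$ emanating from $p$ are realized inside $T$. By Lemma~\ref{lem:foldingmap_hypcomplex}, $\foldutocell$ restricts to an isometry $T\to\uchq$, hence identifies $\lk{\sigma,\tau}$ at $p$ with the corresponding link inside a face of $\uchq$. Lifting Lemma~\ref{lem:CD cell} through the local isometry $\pisquare:\uchq\to\hq$, the map $\cd\circ\pisquare$ is face-preserving and has degree one on each face, so it induces an isomorphism between the link of any face of $\uchq$ inside a containing face and the link of the corresponding face of $\square^n$ inside its containing face. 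Composing gives $\lk{\sigma,\tau}\cong\lk{F,E}$, induced by $\foldutosquare$.

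For part~\eqref{item:strat_links_cells_simplicial}, I would simply transfer the standard description of links in cubical complexes through the isomorphism from~\eqref{item:strat_links_cells_cube}. Since $\uchc$ is a piecewise hyperbolic space (Proposition~\ref{prop:CD complex}), links of cells in $\uchc$ are naturally piecewise spherical. By part~\eqref{item:strat_links_cells_cube}, $\lk{\sigma,\uchc}\cong\lk{C,X}$, and the right-hand side is, by the general recipe of \S\ref{subsec:def links} applied to the cubical complex $X$, a simplicial complex whose vertices are the $(k+1)$-cubes containing $C$ and whose $m$-simplices are collections of $m+1$ such cubes sitting inside a common $(k+m+1)$-cube. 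Under $\cdXu$ these cubes correspond bijectively to the $(k+1)$-cells of $\uchc$ containing $\sigma$ (again by part~\eqref{item:strat_links_cells_cube} applied in one higher dimension), and the incidence condition translates into the stated cellular condition.

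The one technical point that will need genuine care is the local argument in part~\eqref{item:strat_links_cells_face}: one must justify that the neighborhood of $p$ inside $\tau$ really is captured by a single tile, so that the tilewise isometry of Lemma~\ref{lem:foldingmap_hypcomplex} applies. The remainder is a routine composition of already established isomorphisms.
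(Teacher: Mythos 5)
Parts \eqref{item:strat_links_cells_cube} and \eqref{item:strat_links_cells_simplicial} of your proposal follow the paper's argument essentially verbatim (factor $\cdXu = \cdX\circ\piuchc$, use the covering projection plus Proposition~\ref{prop:CD complex}\eqref{item:CD faces and links}, then transfer the cubical-link description from $X$); those are fine.

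For part \eqref{item:strat_links_cells_face} you depart from the paper, and there is a genuine gap. The paper's route is much shorter: factor $\foldutosquare = \fold\circ\cdXu$, apply part \eqref{item:strat_links_cells_cube} to get $\lk{\sigma,\tau}\cong\lk{C,B}$ (where $B$ is the cube of $X$ to which $\tau$ folds), and then observe that the folding $\fold$ is a combinatorial isomorphism on each individual cube, hence sends $\lk{C,B}$ isomorphically to $\lk{F,E}$. You instead go down through the hyperbolizing cube via Lemma~\ref{lem:foldingmap_hypcomplex}. The problem is the claim that, for ``any tile $T$ whose closure contains $p$'', a small neighborhood of $p$ inside $\tau$ lies in $T$. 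That is simply false for an arbitrary such $T$: if $\tau$ is, say, an edge and $T$ is a tile meeting $\tau$ only in the vertex $\sigma$, then $T\cap\tau=\sigma$ (by Lemma~\ref{lem:strat_cell_intersection}) and no neighborhood of $p$ in $\tau$ lies in $T$. The slogan you offer --- ``cells are intersections of tiles along shared faces so the directions in $\tau$ are realized inside $T$'' --- would justify this only under the extra hypothesis $\tau\subseteq T$, which you never arrange. The fix is easy (homogeneity of $X$ gives a tile $T\supseteq\tau$, and then the rest of your argument goes through), but as written the key step is a non-sequitur. A secondary wobble: the passage from ``$\cd\circ\pisquare$ has degree one on faces'' to ``it induces an isomorphism of links'' is not a formal implication; degree one alone does not preserve links. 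What you actually need is exactly the link-preservation statement already packaged in Proposition~\ref{prop:CD complex}\eqref{item:CD faces and links}, and citing it directly (as the paper does) is both safer and shorter than re-deriving it at the level of $\uchq$.
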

\proof
The map  $\cdXu:\uchc \to X$  is the composition of the map $\piuchc:\uchc \to \hc$, which  preserves links because it is a covering map, and the map $\cdX:\hc \to X$, which preserves links thanks to \eqref{item:CD faces and links} in Proposition~\ref{prop:CD complex}. This proves \eqref{item:strat_links_cells_cube}.

To prove \eqref{item:strat_links_cells_face} we argue similarly. The map  $\foldutosquare:\uchc \to \square^n$  is the composition of the map $\cdXu:\uchc \to X$, which preserves links by \eqref{item:strat_links_cells_cube}, and the folding map $\fold:X\to \square^n$. By definition of folding, $\fold$ is a combinatorial isomorphism on each cube of $X$. If $B$ is the cube to which $\tau$ folds, the folding  induces an isomorphism between $\lk{C,B}$ and $\lk{F,E}$.

Finally, \eqref{item:strat_links_cells_simplicial} follows from \eqref{item:strat_links_cells_cube}, the fact that  $\cdXu:\uchc \to X$ maps cells of $\uchc$ to cubes of $X$ preserving inclusion relations, and the fact that the link of a cell in a cubical complex carries a piecewise spherical simplicial structure as described in the statement.
\endproof

\begin{lemma}\label{lem:strat_cell_intersection}
Let $\sigma_1,\sigma_2 \subseteq \uchc$ be two cells. Then either $\sigma_1\cap \sigma_2$ is empty or 
it is a cell. 

\end{lemma}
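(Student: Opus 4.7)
The plan is to first reduce the problem to the cubical structure of $X$ via the folding map $\cdXu$, and then identify a nonempty intersection as a cell through a decomposition into mirrors.

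Let $\sigma_i^\circ$ denote the open cell whose closure is $\sigma_i$, and let $C_i = \cdXu(\sigma_i) \subseteq X$ be the cube to which $\sigma_i$ folds. Since $\sigma_i \subseteq \cdXu^{-1}(C_i)$ (because $C_i$ is closed and $\sigma_i$ is the closure of a connected component of $\cdXu^{-1}(\operatorname{int}(C_i))$), we obtain $\sigma_1 \cap \sigma_2 \subseteq \cdXu^{-1}(C_1 \cap C_2)$. As $X$ is a cubical complex, $C_1 \cap C_2$ is either empty---in which case $\sigma_1 \cap \sigma_2 = \varnothing$ and we are done---or it is a common face $C$ of $C_1$ and $C_2$.

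Assume then $\sigma_1 \cap \sigma_2 \neq \varnothing$. By Lemma~\ref{lem:strat_cells_convex} the cells $\sigma_i$ are convex, so their intersection is a nonempty closed convex (hence connected) subset of $\cdXu^{-1}(C)$. To identify it as a cell, I would establish a \emph{mirror characterization}: for a cell $\sigma$ folding to a face $F \subseteq \square^n$ of codimension $k$, writing $F = \hat F^{(1)} \cap \cdots \cap \hat F^{(k)}$ as an intersection of codimension-$1$ faces of $\square^n$, there exist unique mirrors $M^{(j)}$ folding to $\hat F^{(j)}$ such that $\sigma = M^{(1)} \cap \cdots \cap M^{(k)}$. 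The inclusion $\sigma \subseteq \bigcap_j M^{(j)}$ comes from the observation that $\sigma^\circ$ is connected and lies in $\foldutosquare^{-1}(\hat F^{(j)})$, which is a disjoint union of mirrors, so $\sigma^\circ$ is contained in a single one; taking closures yields the claim. The reverse inclusion uses convexity of $\bigcap_j M^{(j)}$ (Proposition~\ref{prop:mirrors convex}) together with the fact that its relative interior---the part folding to $\operatorname{int}(F)$---is connected, and therefore coincides with the single open cell $\sigma^\circ$.

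Applying this characterization to both $\sigma_i$, write $\sigma_i = \bigcap_j M_i^{(j)}$ for appropriate mirrors. The assumption $\sigma_1 \cap \sigma_2 \neq \varnothing$ forces a compatibility condition: whenever a codimension-$1$ face $\hat F$ of $\square^n$ appears in the expressions for both $F_1 = \foldutosquare(\sigma_1)$ and $F_2 = \foldutosquare(\sigma_2)$, the corresponding mirrors of $\sigma_1$ and $\sigma_2$ must coincide, for otherwise they would be disjoint connected components of $\foldutosquare^{-1}(\hat F)$, contradicting nonemptiness. Hence $\sigma_1 \cap \sigma_2$ is the intersection of exactly one mirror for each codimension-$1$ face of $\square^n$ containing $F_1 \cap F_2 = \fold(C)$, and by the mirror characterization applied in reverse this intersection is precisely a cell folding to $\fold(C)$, as required.

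The main technical obstacle I expect is the connectedness of the relative interior in the mirror characterization, i.e., showing that $\bigcap_j M^{(j)} \cap \foldutosquare^{-1}(\operatorname{int}(F))$ is connected. This should follow from a local analysis using the developing map into $\hh^n$ (Remark~\ref{rem:developing}) together with the orthogonality of the faces of the hyperbolizing cube (Lemma~\ref{lem:CD cell}), which ensures that mirrors meet transversally, so that the intersection has codimension exactly $k$ with a dense top-dimensional stratum.
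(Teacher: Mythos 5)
Your ``mirror characterization''---that a cell $\sigma$ of codimension $k$ equals the intersection $M^{(1)}\cap\cdots\cap M^{(k)}$ of the mirrors folding to the codimension-$1$ faces of $\square^n$ that contain $\foldutosquare(\sigma)$---is false. The paper warns against exactly this in \S\ref{subsec:stratification}: ``an $(n-k)$-cell is a \emph{proper} subset of the intersection of $k$ mirrors.'' The intersection $\bigcap_j M^{(j)}$ is indeed convex and connected, but that is not enough: mirrors extend across the whole of $\uchc$, and so does their common intersection. For $k=1$ this is immediate (the ``intersection of one mirror'' is the entire mirror, containing infinitely many codimension-$1$ cells); in general the set $\bigl(\bigcap_j M^{(j)}\bigr)\cap\foldutosquare^{-1}(\operatorname{int} F)$---whose connectedness you flag as the main technical obstacle---is in fact disconnected, being a disjoint union of one open $(n-k)$-cell per tile that $\bigcap_j M^{(j)}$ passes through, with the open cells separated by lower-dimensional strata where further mirrors come in. The statement that \emph{is} true, used in the paper in the proof of Lemma~\ref{lem:projecting cells to mirrors} by iterating Lemma~\ref{lem:mirrors_in_tile}, is $\sigma=\tau\cap M^{(1)}\cap\cdots\cap M^{(k)}$ for a tile $\tau$ containing $\sigma$; the restriction to a single tile is essential. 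Your ``compatibility'' observation (two mirrors folding to the same codimension-$1$ face and sharing a point must coincide) is sound, but the final step---applying the characterization in reverse to identify $\sigma_1\cap\sigma_2$ with a single cell---is precisely what breaks without the tile.

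The paper's own proof is local, not global. After observing that $\sigma_1\cap\sigma_2$ is convex (Lemma~\ref{lem:strat_cells_convex}), hence connected, it works at vertex links: at a vertex $v\in\sigma_1\cap\sigma_2$, any two edges of the intersection through $v$ span a $2$-face inside $\sigma_1$ and a $2$-face inside $\sigma_2$, which must agree by simpliciality of $\lk{v,\uchc}$ (Lemma~\ref{lem:strat_links_cells}); flagness of the link, inherited from the non-positive curvature of $X$ via Lemma~\ref{lem:gromov link condition}, then produces a unique cell $\sigma_v$ through $v$ containing all edges of $\sigma_1\cap\sigma_2$ at $v$, and connectedness of $\sigma_1\cap\sigma_2$ lets one glue the $\sigma_v$ together. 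If you want to salvage a mirror-based route, you would have to show first that $\sigma_1\cap\sigma_2$ lies inside a single tile before invoking an intersection-with-mirrors description---but proving that is essentially the content of the link argument.
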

\proof

Let  $\sigma_1\cap \sigma_2$ be non empty. If it contains either a single vertex, or a single edge, then we are done. 
So let us assume that it contains at least two edges.
Also note that, since cells are convex by Lemma~\ref{lem:strat_cells_convex}, the intersection  $\sigma_1 \cap \sigma_2$ is convex. Therefore, if there are several edges then they cannot all be disjoint.

Let $v\in \sigma_1\cap \sigma_2$ be a vertex, and let $e,e'$ be two edges of $\sigma_1\cap \sigma_2$ meeting at $v$.
Note that by Lemma~\ref{lem:strat_links_cells} links in $\uchc$ are isomorphic to the corresponding links in $X$.
In particular, $e$ and $e'$ are edges of the cell $\sigma_1$ meeting at a vertex, so there is a $2$--face $\tau_1\subseteq \sigma_1$ containing both $e$ and $e'$. Analogously, we get a $2$--face $\tau_2\subseteq \sigma_2$ with the same property.
Since these links are simplicial, necessarily we have $\tau_1=\tau_2$, otherwise we would see a bigon in the link of $v$.
In particular, $\tau_1=\tau_2 \subseteq \sigma_1\cap\sigma_2$.
This shows that any two edges of $\sigma_1\cap \sigma_2$ meeting at $v$  are adjacent in $\lk{v,\sigma_1\cap \sigma_2}$.
By Lemma~\ref{lem:strat_links_cells} we know that $\lk{v,\uchc}\cong \lk{\cdXu(v),X}$, and this is a flag simplicial complex because $X$ is non--positively curved (see Lemma~\ref{lem:gromov link condition}).
The same holds for  $\lk{v,\sigma_1\cap \sigma_2}$ because $\sigma_1\cap \sigma_2$ is convex in $\uchc$.
In particular, all the edges of $\sigma_1\cap \sigma_2$ that contain $v$ are actually contained in a unique cell of minimal dimension in $\sigma_1\cap \sigma_2$; we denote this cell by $\sigma_v$.
Now, if $v,w$ are adjacent vertices of $\sigma_1\cap \sigma_2$, then by uniqueness we have $\sigma_v=\sigma_w$.
Finally, by connectedness of $\sigma_1\cap \sigma_2$, it follows that all vertices of $\sigma_1\cap \sigma_2$ are contained in a single cell.
\endproof

\begin{lemma}\label{lem:min_max_cells}
Let $\{\sigma_j \ | \ j\in J\}$ be a collection of cells of $\uchc$. Then the following statements hold.
\begin{enumerate}
    \item \label{item:lower_cell} If $\sigma=\bigcap\limits_{j\in J}  \sigma_j$ is not empty, then $\sigma$ is the unique cell of maximal dimension contained in  $\sigma_j$ for all $j\in J$.
    
    \item \label{item:upper_cell} If $\bigcup\limits_{j\in J} \sigma_j$ is contained in a single cell, then there exists a unique cell $\sigma$ of minimal dimension containing $\sigma_j$ for all $j\in J$.
\end{enumerate}
\end{lemma}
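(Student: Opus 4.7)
The plan is to deduce both parts from Lemma~\ref{lem:strat_cell_intersection} (pairwise intersections of cells are cells), after first reducing arbitrary intersections to finite ones using the local finiteness of $X$.

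The key preliminary observation is that the set of cells of $\uchc$ containing any given cell (or point) is finite. Indeed, by Lemma~\ref{lem:strat_links_cells}\eqref{item:strat_links_cells_cube}, the cells of $\uchc$ containing a cell $\sigma$ are in bijection with the cubes of $X$ containing $\cdXu(\sigma)$; since $X$ is admissible (in particular locally finite), there are only finitely many such cubes. The same applies at a point: cells containing $p\in\uchc$ correspond bijectively to cubes containing $\cdXu(p)$, so there are finitely many.

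For part \eqref{item:lower_cell}, pick any point $p\in\sigma=\bigcap_{j\in J}\sigma_j$. Each $\sigma_j$ is one of the finitely many cells containing $p$, so the family $\{\sigma_j\}_{j\in J}$ has only finitely many distinct members. By Lemma~\ref{lem:strat_cell_intersection} applied inductively to a finite nonempty intersection of cells, $\sigma$ is itself a cell. For the maximality and uniqueness assertions: any other cell $\sigma'$ contained in every $\sigma_j$ satisfies $\sigma'\subseteq\bigcap_j\sigma_j=\sigma$, so $\dim\sigma'\leq\dim\sigma$. Moreover, if $\dim\sigma'=\dim\sigma$, then an open cell (a connected component of some stratum $\Sigma^k$) contained in another open cell of the same dimension must coincide with it, hence $\sigma'=\sigma$ after taking closures.

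For part \eqref{item:upper_cell}, consider the collection $\mathcal R$ of all cells of $\uchc$ that contain every $\sigma_j$. By hypothesis $\mathcal R$ is nonempty, and since its members all contain (say) $\sigma_{j_0}$ for some fixed $j_0\in J$, the collection $\mathcal R$ is finite by the preliminary observation. The intersection $\sigma:=\bigcap_{\rho\in\mathcal R}\rho$ contains every $\sigma_j$, hence is nonempty, so by part \eqref{item:lower_cell} it is a cell. By construction $\sigma$ is contained in every cell of $\mathcal R$, so it is the unique cell of minimal dimension containing all the $\sigma_j$.

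The only subtlety worth flagging is the reduction to finite intersections in part \eqref{item:lower_cell}; everything else is a formal consequence of Lemmas~\ref{lem:strat_cells_convex}, \ref{lem:strat_links_cells}, and \ref{lem:strat_cell_intersection}. There is no substantial new geometric content, so I do not expect any real obstacle beyond bookkeeping.
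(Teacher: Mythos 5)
Your proof is correct and follows essentially the same route as the paper: reduce to finitely many distinct cells, apply Lemma~\ref{lem:strat_cell_intersection} inductively for part~\eqref{item:lower_cell}, and deduce part~\eqref{item:upper_cell}. The only differences are presentational --- you justify the finiteness reduction explicitly via the link isomorphism and a base point, and you obtain~\eqref{item:upper_cell} constructively by intersecting all cells containing every $\sigma_j$ and invoking~\eqref{item:lower_cell}, whereas the paper argues~\eqref{item:upper_cell} by contradiction using two supposed minimal cells; both versions rest on the same key ingredient.
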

We refer to the cell in \eqref{item:lower_cell} (respectively \eqref{item:upper_cell}) of Lemma~\ref{lem:min_max_cells} as the \textit{lower cell} (respectively \textit{upper cell}) of the collection $\{\sigma_j \ | \ j\in J\}$.
\proof
Since $X$ is finite--dimensional and locally compact, if $J$ is infinite, then $\sigma=\bigcap\limits_{j\in J}  \sigma_j$ is empty. So let us assume that $J$ is finite.
By Lemma~\ref{lem:strat_cell_intersection} the intersection of finitely many cells is either empty or made of a single cell. This proves \eqref{item:lower_cell}.
To prove \eqref{item:upper_cell}, assume by contradiction that there are two different cells of minimal dimension $\sigma,\sigma'$ containing each $\sigma_j$. Then $\sigma\cap \sigma'$ is a proper union of cells, against Lemma~\ref{lem:strat_cell_intersection}.
\endproof

\begin{lemma}\label{lem:strat_cells_in_tile_fold_disjoint}
Let $\tau \subseteq \uchc$ be a cell. Let $\sigma_1,\sigma_2\subseteq \tau$ be cells of lower dimension, and let $F_1,F_2\subseteq \square^n$ be the faces to which they fold.
If $F_1=F_2$, then $\sigma_1, \sigma _2$ are either disjoint or equal.
\end{lemma}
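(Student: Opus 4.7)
The plan is to prove the contrapositive: assuming $\sigma_1 \cap \sigma_2 \neq \varnothing$, I will show $\sigma_1 = \sigma_2$. A preliminary observation is that since both cells fold onto the same face $F := F_1 = F_2 \subseteq \square^n$, they must share its dimension $k := \dim F$ by the very definition of the $k$-stratum. So we are really comparing two $k$-cells of $\tau$ that both fold to the same $k$-face of $E := \foldutosquare(\tau)$.

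First I would apply Lemma~\ref{lem:strat_cell_intersection} to conclude that $\rho := \sigma_1 \cap \sigma_2$ is itself a cell, of some dimension $k' \leq k$. If $k' = k$ then in fact $\rho = \sigma_1 = \sigma_2$: distinct open $k$-cells are disjoint connected components of $\Sigma^k$, so an intersection which contains a full open $k$-cell forces the two closed $k$-cells to coincide. Hence I may assume $k' < k$, and then $\rho$ folds to a proper sub-face $F' \subsetneq F$ of dimension $k'$.

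The heart of the argument is the link isomorphism of Lemma~\ref{lem:strat_links_cells}\eqref{item:strat_links_cells_face} applied to $\rho \subseteq \tau$, which provides a simplicial isomorphism $\foldutosquare_* : \lk{\rho, \tau} \to \lk{F', E}$. Combined with the simplicial description of links in item \eqref{item:strat_links_cells_simplicial}, restricted to the subcomplex of $\lk{\rho, \uchc}$ coming from cells of $\tau$, this identifies the $k$-cells of $\tau$ containing $\rho$ with $(k - k' - 1)$-simplices of $\lk{\rho, \tau}$, and likewise the $k$-faces of $E$ containing $F'$ with simplices of $\lk{F', E}$. Under $\foldutosquare_*$, both $\sigma_1$ and $\sigma_2$ are sent to the unique simplex corresponding to $F$; bijectivity forces them to coincide, completing the proof.

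The main subtlety I anticipate is checking that the simplicial description in Lemma~\ref{lem:strat_links_cells}\eqref{item:strat_links_cells_simplicial}, as phrased for $\lk{\sigma, \uchc}$, behaves as expected when restricted to the sub-link $\lk{\rho, \tau}$. This should be essentially tautological, since $\tau$ is a union of cells of $\uchc$ on which the folding map is combinatorial, but it is the one bookkeeping point at which I would be pedantic.
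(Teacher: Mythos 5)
Your proof is correct and follows essentially the same route as the paper. The paper applies Lemma~\ref{lem:strat_links_cells}\eqref{item:strat_links_cells_face} at an arbitrary vertex $v \in \sigma_1 \cap \sigma_2$ rather than at the full intersection cell $\rho$ (so it does not need to invoke Lemma~\ref{lem:strat_cell_intersection} first), but the mechanism is identical: the induced isomorphism of links sends both $\sigma_1$ and $\sigma_2$ to the simplex corresponding to $F$, so injectivity forces $\sigma_1 = \sigma_2$.
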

\proof
Assume that $\sigma_1, \sigma _2$ are not disjoint, and let $v\in \sigma_1\cap \sigma_2$ be a vertex.
Let $E\subseteq \square^n$ be the face to which $\tau$ folds.
We have that $\foldutosquare(\sigma_1)=F_1=F_2=\foldutosquare(\sigma_2)$, and by \eqref{item:strat_links_cells_face} in Lemma~\ref{lem:strat_links_cells} (with $\sigma=v$) the map $\foldutosquare$ induces an isomorphism between $\lk{v,\tau}$ and  $\lk{\foldutosquare(v)),E}$.
Therefore, $\sigma_1=\sigma_2$.
\endproof

\begin{lemma}\label{lem:mirrors_in_tile}
Let $M$ be a mirror and let $\tau$ be a $k$--cell of $\uchc$ not entirely contained in $M$. If $M \cap \tau\neq \varnothing$, then $M\cap \tau$ is a ($k-1$)--cell.
\end{lemma}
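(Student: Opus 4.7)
The plan is to show that $\tau \cap M$ coincides with exactly one $(k-1)$-cell of the stratification of $\uchc$.

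I begin by analyzing the faces involved. Let $E = \foldutosquare(\tau)$ be the $k$-face of $\square^n$ to which $\tau$ folds and $F = \foldutosquare(M)$ the codimension-1 face to which $M$ folds. Since $\tau$ is connected and $M$ is a connected component of $\foldutosquare^{-1}(F)$, the inclusion $E\subseteq F$ would force $\tau\subseteq\foldutosquare^{-1}(F)$ and hence $\tau\subseteq M$ (as $\tau$ meets $M$), contradicting the hypothesis; so $E\not\subseteq F$. Combined with $\tau\cap M\neq\varnothing$, which yields $E\cap F\neq\varnothing$, this makes $G:=E\cap F$ a proper non-empty face of $E$, of dimension $k-1$.

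Next, $\tau\cap M$ is non-empty, convex (by Lemma~\ref{lem:strat_cells_convex} and Proposition~\ref{prop:mirrors convex}), and decomposes into cells in the following sense: for any $p\in\tau\cap M$, if $\sigma_p$ denotes the unique cell whose relative interior contains $p$, then $\sigma_p\subseteq\tau\cap M$. Indeed $\sigma_p\subseteq\tau$ since $\tau$ is closed; and $\foldutosquare(\sigma_p)$ is an open face of $\square^n$ whose interior contains $\foldutosquare(p)\in F$, so it lies entirely in $F$ (open faces partition $\square^n$ and $F$ is closed). Hence $\sigma_p\subseteq\foldutosquare^{-1}(F)$, and being connected and meeting $M$, $\sigma_p\subseteq M$.

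Fix $p\in\tau\cap M$. The link isomorphism $\lk{\sigma_p,\tau}\cong\lk{\foldutosquare(\sigma_p),E}$ from Lemma~\ref{lem:strat_links_cells}\eqref{item:strat_links_cells_face}, combined with the cube structure of $E$ and the fact that $G$ is the unique $(k-1)$-face of $E$ containing $\foldutosquare(\sigma_p)$, identifies a $(k-1)$-cell $\sigma\subseteq\tau$ with $\sigma_p\subseteq\sigma$ and $\foldutosquare(\sigma)=G$; uniqueness among $(k-1)$-cells of $\tau$ folding to $G$ also follows from Lemma~\ref{lem:strat_cells_in_tile_fold_disjoint}. The same connectedness argument applied to $\sigma$ (connected, folds into $F$, meets $M$) shows $\sigma\subseteq M$, so $\sigma\subseteq\tau\cap M$.

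It remains to prove $\tau\cap M\subseteq\sigma$. Suppose for contradiction that some $p'\in(\tau\cap M)\setminus\sigma$. The previous construction applied to $p'$ produces a $(k-1)$-cell $\sigma'\subseteq\tau\cap M$ folding to $G$ and containing $p'$; by Lemma~\ref{lem:strat_cells_in_tile_fold_disjoint}, $\sigma$ and $\sigma'$ are disjoint. Choose $q\in\mathrm{int}(\sigma)$ and $q'\in\mathrm{int}(\sigma')$. By convexity, the geodesic $[q,q']\subseteq\uchc$ lies in $\tau\cap M$, and its image under $\foldutosquare$ lies in $E\cap F=G\subseteq\partial E$; since $\foldutosquare$ restricts to a bijection between $\mathrm{int}(\tau)$ and $\mathrm{int}(E)$, this forces $[q,q']\subseteq\partial\tau$. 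But $\tau$ is a convex cell in the $\cat{-1}$ space $\uchc$, whose faces meet orthogonally by Lemma~\ref{lem:CD cell}; a non-branching geodesic between interior points of two disjoint $(k-1)$-faces cannot stay in $\partial\tau$, because these faces share no common face of $\tau$ other than $\tau$ itself, so the geodesic is forced to enter $\mathrm{int}(\tau)$. This contradicts $[q,q']\subseteq\partial\tau$, and so $\tau\cap M=\sigma$ is a $(k-1)$-cell.

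The main obstacle is the last paragraph: one must carefully use the convex polyhedral geometry of $\tau$ inside the $\cat{-1}$ space $\uchc$ to prevent a geodesic of $\tau\cap M$ from traveling through $\partial\tau$ between two disjoint $(k-1)$-faces. This ultimately rests on the orthogonality of faces of the hyperbolized cube and the non-branching of geodesics in $\uchc$.
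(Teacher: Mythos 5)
The first half of your proof — showing that $\tau\cap M$ is a union of $(k-1)$-cells of $\tau$ all folding to $G=E\cap F$ — matches the paper's argument, and your use of the link isomorphism to produce the $(k-1)$-cell $\sigma$ through a given $p$ is a reasonable way to flesh out what the paper compresses into a citation of Lemma~\ref{lem:strat_cells_in_tile_fold_disjoint}. It is the uniqueness step where your proof goes astray.

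Two problems. First, the assertion that $\foldutosquare$ restricts to a bijection between $\mathrm{int}(\tau)$ and $\mathrm{int}(E)$ is false: $\tau$ is a non-compact cell of the stratification (it develops to an unbounded convex region in a totally geodesic $\hh^k$, cf.\ Figure~\ref{fig:uchq_is_cell}), while $E$ is a compact face of $\square^n$, so $\foldutosquare|_\tau$ is very far from injective. The conclusion you want — that $[q,q']$ lies in $\partial\tau$ — is true, but for the simpler reason that $[q,q']\subseteq M\cap\tau$ and you have already shown $M\cap\tau$ to be a union of $(k-1)$-cells, all in the codimension-1 stratum of $\tau$; no appeal to injectivity of $\foldutosquare$ is needed or available. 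Second, the final step asserts that a geodesic between interior points of two disjoint $(k-1)$-cells of $\tau$ ``cannot stay in $\partial\tau$'' because ``faces meet orthogonally'' and ``share no common face.'' This is the crux, and as written it is not a proof: disjointness of the two cells and orthogonality of faces do not, without further argument, force the geodesic into the interior. (The underlying fact is correct — after developing $\tau$ into $\hh^k$, the two disjoint $(k-1)$-cells lie on distinct bounding hyperplanes, and since $q$ lies strictly inside every half-space except the one bounded by $H_\sigma$, while $q'\notin H_\sigma$, an interior point of $[q,q']$ is strictly inside all bounding half-spaces — but this is a different argument from the one you sketch, and needs to be spelled out.)

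The paper avoids all of this. Once you know $\gamma=[q,q']\subseteq\tau\cap M$ and $\tau\cap M$ is the union of pairwise \emph{disjoint} (by Lemma~\ref{lem:strat_cells_in_tile_fold_disjoint}) closed $(k-1)$-cells, with $q,q'$ lying in two distinct such cells, you are done: a connected set contained in a locally finite disjoint union of closed sets must lie in a single one of them. This purely topological observation replaces your entire last paragraph and sidesteps both the false bijectivity claim and the unjustified polytope-geometry claim. I recommend you substitute it.
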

\proof
First we show that $M\cap \tau$ is a union of ($k-1$)--cells. Then we show that the union actually consists of a single cell.

Let $\foldutosquare = \fold \circ \cdX \circ \piuchc : \uchc \to \hc \to X \to \square^n$ be the map that folds $\uchc$ to $\square^n$, and let $F\subseteq \square^n$ be the codimension--1 face to which $M$ folds (i.e.\ $F=\foldutosquare(M)$).
Similarly, let $E\subseteq \square^n$ be the $k$--face to which $\tau$ folds (i.e.\ $E=\foldutosquare(\tau)$).
Since $\tau\not\subseteq M$ we have $E\not\subseteq F$, and therefore $E\cap F$ is a $(k-1)$--face of $\square^n$.
Let $p\in M\cap \tau$ be an arbitrary point. 
By Lemma~\ref{lem:strat_cells_in_tile_fold_disjoint}, among the ($k-1$)--cells of $\tau$ that contain $p$, there is exactly one that folds to $E\cap F$; denote it by $\sigma_p$. 
Clearly $\sigma_p\subseteq \tau$. Moreover, since $\sigma_p$ and $M$ are non--disjoint, both fold into $F$, and $M$ is a mirror,  we also have $\sigma_p \subseteq M$. Therefore  we have $M\cap \tau = \bigcup_{p\in M\cap \tau} \sigma_p$, i.e.\ $M\cap \tau$ is a union of ($k-1$)--cells that fold to $E\cap F$.

To see that $M\cap \tau$ actually consist of only one cell, assume by contradiction that $M \cap \tau$ contains two distinct  ($k-1$)--cells $\sigma_1, \sigma_2$. 
Let $p_i\in \sigma_i$ and let $\gamma=[p_1,p_2]$ be the unique geodesic between them. Since $M$ and $\tau$ are both convex (by Proposition~\ref{prop:mirrors convex} and Lemma~\ref{lem:strat_cells_convex} respectively), we have that $\gamma\subset \tau \cap M$. Hence we find a path of cells in the boundary of $\tau$ that all fold to $E\cap F$.
But this is absurd because different boundary cells of $\tau$ folding to the same face of $\square^n$ are necessarily disjoint, again by Lemma~\ref{lem:strat_cells_in_tile_fold_disjoint}.
\endproof


\subsection{Graph of spaces decomposition for \texorpdfstring{$\uchc$}{the universal cover of the hyperbolized complex}}\label{subsec:graph of spaces}
Our goal in \S\ref{subsec:separation} will be to prove that mirrors in $\uchc$ enjoy a strong separation property. Our strategy will be to exploit a certain  graph of spaces decomposition for $\uchc$ (in the sense of \cite{SW79}), which we introduce in this section, using the foldability of $X$ (see \cite{BS99,XIE04} for analogous constructions).

Recall from \S\ref{subsec:mirrors convexity} that $\uchc$ is equipped with a collection  $\mirrors$ of closed convex subspaces called mirrors.
For each $i=1,\dots,n$, let $\mirrors_i$ be the collection of mirrors of $\uchc$ that fold to one of the two parallel $i^{th}$ faces of $\square^n=[0,1]^n$, i.e.\ $\{ x_i=0\} $ and $\{ x_i=1\} $. 
Notice that by construction any two elements of $\mirrors_i$ are disjoint, and even have disjoint $\varepsilon$--neighborhoods for $\varepsilon$ sufficiently small (because $\Gamma$ is cocompact).



    

Let $\components_i$ be the collection of connected components of $\uchc \setminus \cup_{M\in \mirrors_i} M$.
For each mirror $M\in \mirrors_i$ and for each component $C\in \components_i$, consider the following \textit{equidistant space}, obtained by pushing the mirror $M$ into the component $C$ (see Figure~\ref{fig:edgespaces}).
\[
\edgespace=\{ x\in C \ | \ d(x,M)=\varepsilon\}.
\]

\begin{figure}[h]
\centering
\def\svgwidth{\columnwidth}
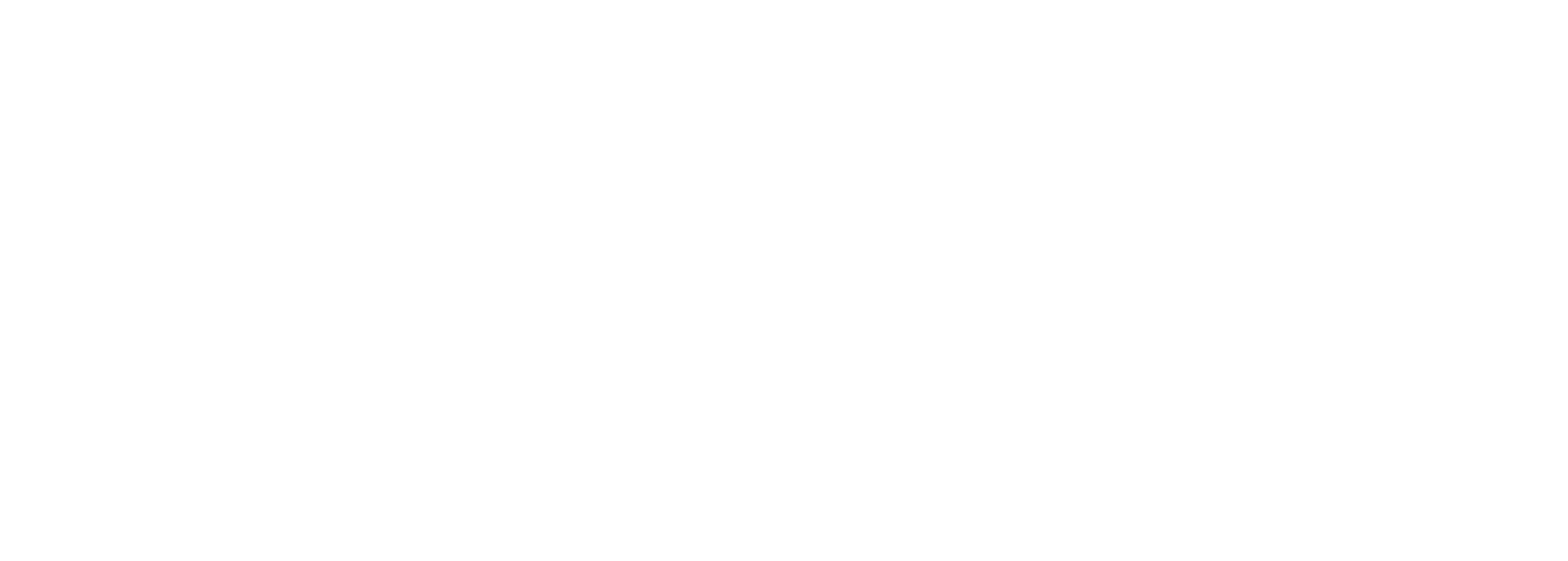
    \caption{Some examples of equidistant spaces in dimension $2$ and $3$. Left: an equidistant space relative to a mirror $M$ in the vicinity of the intersection with two other mirrors. Here $\dim X=3$, and all mirrors are locally Euclidean. Right: three equidistant spaces relative to the same mirror $M$ but three different complementary components, in the vicinity of the intersection with another mirror. Here $\dim X=2$, and the mirrors branch, i.e.\ are not locally Euclidean.}
    \label{fig:edgespaces}
\end{figure}

Notice that while we know $M$ is convex by Proposition~\ref{prop:mirrors convex}, it is not clear whether $C$ is convex. A priori, $C$ could meet $M$ on more than one side, i.e.\ the closure of $C$ could contain a piece of $M$ in its interior. We will see this is not the case by considering a suitable graph of spaces decomposition of $\uchc$. Our first step is to show that $\edgespace$ is simply connected; in the process, we actually show it is a $\cat k$--space for some $k\in (-1,0)$. The idea for this can be summarized as follows: inside each tile, $\edgespace$ looks like an equidistant hypersurface from a hyperplane in $\hh^n$, and this is a non--positively curved hypersurface in $\hh^n$ (see Figure~\ref{fig:equidistant}). 
Then contribution from different tiles come together in a way that does not introduce any positive curvature along mirrors. We start from a preliminary lemma from classical hyperbolic geometry. Recall that a hyperplane in $\hh^n$ is a totally geodesic copy of $\hh^{n-1}$.

\begin{figure}[h]
\centering
\def\svgwidth{.7\columnwidth}
\begingroup%
  \makeatletter%
  \providecommand\color[2][]{%
    \errmessage{(Inkscape) Color is used for the text in Inkscape, but the package 'color.sty' is not loaded}%
    \renewcommand\color[2][]{}%
  }%
  \providecommand\transparent[1]{%
    \errmessage{(Inkscape) Transparency is used (non-zero) for the text in Inkscape, but the package 'transparent.sty' is not loaded}%
    \renewcommand\transparent[1]{}%
  }%
  \providecommand\rotatebox[2]{#2}%
  \newcommand*\fsize{\dimexpr\f@size pt\relax}%
  \newcommand*\lineheight[1]{\fontsize{\fsize}{#1\fsize}\selectfont}%
  \ifx\svgwidth\undefined%
    \setlength{\unitlength}{1502.63625738bp}%
    \ifx\svgscale\undefined%
      \relax%
    \else%
      \setlength{\unitlength}{\unitlength * \real{\svgscale}}%
    \fi%
  \else%
    \setlength{\unitlength}{\svgwidth}%
  \fi%
  \global\let\svgwidth\undefined%
  \global\let\svgscale\undefined%
  \makeatother%
  \begin{picture}(1,0.65978605)%
    \lineheight{1}%
    \setlength\tabcolsep{0pt}%
    \put(0,0){\includegraphics[width=\unitlength,page=1]{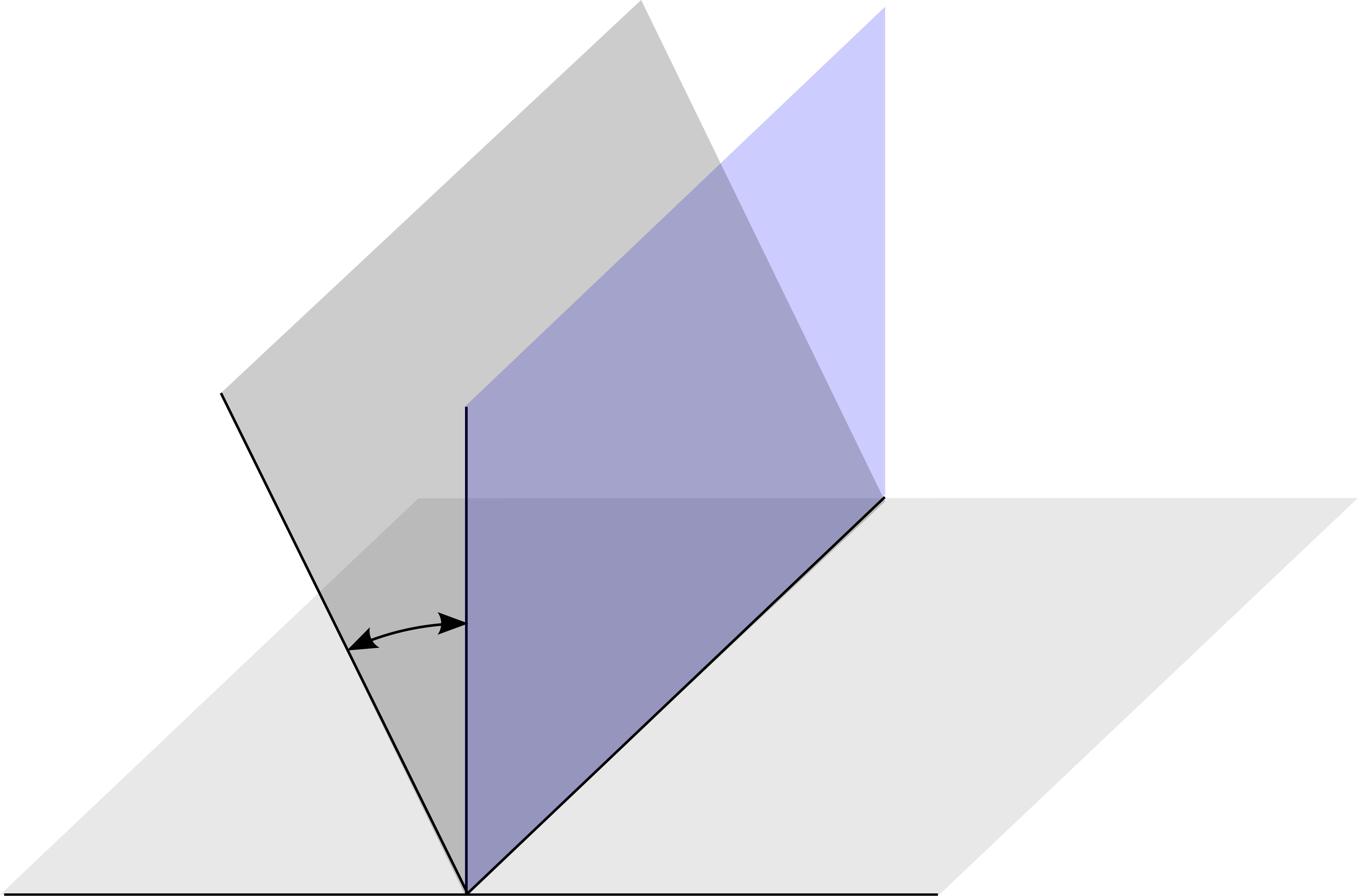}}%
    \put(0.60457986,0.57938587){\color[rgb]{0,0,1}\makebox(0,0)[lt]{\lineheight{1.25}\smash{\begin{tabular}[t]{l}$V$\end{tabular}}}}%
    \put(0.43296904,0.58010498){\color[rgb]{0,0,0}\makebox(0,0)[lt]{\lineheight{1.25}\smash{\begin{tabular}[t]{l}$S_V^\varepsilon$\end{tabular}}}}%
    \put(0.27251454,0.22055787){\color[rgb]{0,0,0}\makebox(0,0)[lt]{\lineheight{1.25}\smash{\begin{tabular}[t]{l}$\varepsilon$\end{tabular}}}}%
  \end{picture}%
\endgroup%

    \caption{Equidistant surface from a hyperplane.}
    \label{fig:equidistant}
\end{figure}

\begin{lemma}\label{lem:equidistant_surfaces}
Let $V\subseteq \hh^n$ be a hyperplane, and let $\pi_V:\hh^n\to V$ be the nearest point projection to $V$. Let $\varepsilon>0$ and $S^\varepsilon_V=\{ x\in \hh^n \ | \ d(x,V)=\varepsilon\}$. Then the following hold.
\begin{enumerate}
    \item $S^\varepsilon_V$ is a smooth $(n-1)$--dimensional submanifold of $\hh^n$.
    
    \item For each $p\in S^\varepsilon_V$, the geodesic $[p,\pi_V(p)]$ is orthogonal to $V$ and $S^\varepsilon_V$.
    
    \item For every other hyperplane $W$, if $V\cap W \neq \varnothing$, then $S^\varepsilon_V \cap W \neq \varnothing$. 
    
    \item \label{item:eqsurf_orthogonal} For every  hyperplane $W$,  $W$ is orthogonal to $S^\varepsilon_V$ if and only if $W$ is orthogonal to $V$.
    
    \item \label{item:eqsurf_conformal} $\pi_V:S^\varepsilon_V\to V$ is a $\cosh^2 (\varepsilon)$--conformal diffeomorphism.
    
    \item \label{item:eqsurf_curvature} The induced metric on $S^\varepsilon_V$ has constant sectional curvature $\frac{-1}{\cosh^2 (\varepsilon)}$.
\end{enumerate}
\end{lemma}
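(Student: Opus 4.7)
My plan is to work in the hyperboloid model $\hh^n\subset \mink$ used in \S\ref{sec:construction of CD cell}, where a hyperplane can be written as $V=V_u:=\{x\in\hh^n : \langle x,u\rangle=0\}$ for some unit spacelike vector $u\in\mink$. The key observation, which powers all six claims, is the following explicit parametrization: for each $y\in V$, the curve
\[
\gamma_y(t)=\cosh(t)\,y+\sinh(t)\,u
\]
is the unit-speed geodesic through $y$ in direction $u$, orthogonal to $V$ at $y$. Since $\langle\gamma_y(t),u\rangle=\sinh(t)$ and this quantity measures signed distance to $V$, the map $\Phi:V\to S^\varepsilon_V$ defined by $\Phi(y)=\gamma_y(\varepsilon)$ parametrizes one component of $S^\varepsilon_V$ (and the other is obtained by replacing $\varepsilon$ with $-\varepsilon$). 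In particular, $\Phi$ is a smooth embedding, proving (1), and by construction the geodesic $[\Phi(y),y]$ is the orthogonal geodesic segment from $\Phi(y)$ to $V$; applying the Gauss lemma at both endpoints yields orthogonality to $V$ and to $S^\varepsilon_V$, giving (2).

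For (3), given a hyperplane $W$ with $W\cap V\neq\varnothing$, pick $q\in W\cap V$ and a unit vector $v\in T_qW$ not tangent to $V\cap W$ (this exists since $\dim W>\dim(V\cap W)$). The geodesic $t\mapsto\exp_q(tv)$ stays inside $W$ by total geodesicity, and its distance to $V$ is a continuous function starting at $0$ and tending to $+\infty$, so by the intermediate value theorem it crosses $\varepsilon$; the crossing point lies in $W\cap S^\varepsilon_V$.

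For (5), differentiating $\Phi$ at $y$ along $v\in T_yV$ gives $d\Phi_y(v)=\cosh(\varepsilon)\,v$ (viewed in $\mink$), which is orthogonal to $u$ and to $\Phi(y)$, hence tangent to $S^\varepsilon_V$. The Minkowski inner product therefore pulls back to $\cosh^2(\varepsilon)\,g_V$, where $g_V$ is the induced hyperbolic metric on $V$. This exhibits $\Phi^{-1}=\pi_V|_{S^\varepsilon_V}$ as a $\cosh^2(\varepsilon)$-conformal diffeomorphism. Since $V$ is itself isometric to $\hh^{n-1}$ (curvature $-1$), conformally scaling the metric by $\cosh^2(\varepsilon)$ multiplies sectional curvatures by $\cosh^{-2}(\varepsilon)$, establishing (6).

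Claim (4) is the only one that requires a bit of care. If $W$ is orthogonal to $S^\varepsilon_V$ at some $p\in W\cap S^\varepsilon_V$, then the unit normal $\nu_p$ to $S^\varepsilon_V$ at $p$ lies in $T_pW$; since $\nu_p$ is tangent to the geodesic from $p$ to $\pi_V(p)$, total geodesicity of $W$ forces this entire geodesic into $W$, so $\pi_V(p)\in W\cap V$ and the tangent vector to this geodesic at $\pi_V(p)$, which is the unit normal to $V$ there, lies in $T_{\pi_V(p)}W$; thus $W\perp V$. Conversely, if $W\perp V$ at $q\in W\cap V$, then the normal direction to $V$ at $q$ lies in $T_qW$, and the geodesic in this direction stays in $W$ and reaches $S^\varepsilon_V$ at $\Phi(q)$; the conformal formula $d\Phi_q=\cosh(\varepsilon)\,\mathrm{Id}$ together with parallel transport shows that the tangent to this geodesic remains in $T_{\Phi(q)}W$ at time $\varepsilon$, hence $W\perp S^\varepsilon_V$ at $\Phi(q)$. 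The main subtlety, and the step I expect to require the most care to write up cleanly, is verifying that orthogonality at a single point of intersection suffices, which follows from the homogeneity of the setting under the subgroup of $\mathrm{Isom}(\hh^n)$ stabilizing both $V$ and $W$ and acting transitively on $V\cap W$.
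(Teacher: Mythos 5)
Your proof is correct, and it takes a genuinely different (and more self-contained) route than the paper. The paper works in the upper half-space model, normalizes $V$ to a vertical hyperplane, and cites Fenchel for the explicit computation of items (1)--(5), then derives (6) from (5) via the rescaling law for sectional curvature. You instead work in the hyperboloid model and build everything from the single explicit formula $\Phi(y)=\cosh(\varepsilon)\,y+\sinh(\varepsilon)\,u$, from which the fact that $d\Phi_y=\cosh(\varepsilon)\,\mathrm{Id}$ on $T_yV$ falls out immediately; this delivers the conformal factor in (5) (and hence (6), by the same rescaling step as the paper) without outsourcing the calculation. Your proofs of (2), (3), and (4) are also cleaner than a half-space computation would be, since they exploit total geodesicity and the IVT directly. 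Two small remarks: for your converse in (4), you don't actually need ``the conformal formula together with parallel transport''---the tangent $\gamma_q'(\varepsilon)$ lies in $T_{\Phi(q)}W$ simply because $\gamma_q\subseteq W$ and $\gamma_q'(\varepsilon)$ is the unit normal to $S^\varepsilon_V$ there, so total geodesicity alone closes that step. And since $S^\varepsilon_V$ as defined in the lemma has two components, it is worth noting explicitly (as you do) that $\Phi_{\pm\varepsilon}$ parametrize them separately; in the subsequent use (Lemma~3.6) the relevant object is the one-sided equidistant set $\edgespace$, so the two-component issue is harmless.
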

\proof
The first five statements can be proved by explicit computations in the upper half--space model of $\hh^n$, normalizing so that $V$ is a vertical hyperplane (see Figure~\ref{fig:equidistant}).
The computation for dimension $n=3$ is carried out in detail in \cite[IV.5, page 58]{F89}, and readily generalizes to higher dimensions. 
Finally, \eqref{item:eqsurf_curvature} follows from \eqref{item:eqsurf_conformal} and the general formula for the behavior of the sectional curvatures under rescaling.
\endproof


For the next lemma, recall from \S\ref{sec:universal_cover} that tiles are closed by definition, and that a developing map is an isometric embedding of a tile into $\hh^n$ as a $\Gamma$--cell.

\begin{lemma}\label{lem:edge_spaces_combinatorial}
Let $M\in \mirrors_i$ and $C\in \components_i$.
Then for $\varepsilon >0$ small enough the following hold.
\begin{enumerate}

    \item \label{item:edge_space_meets_mirrors} For every mirror $N\in \mirrors$, if $\edgespace \cap N \neq \varnothing$ then $M\cap N \neq \varnothing$ and $C\cap N \neq \varnothing$. 
    
    \item \label{item:edge_space_meets_tiles}  For every tile $\tau$, if $\edgespace \cap \tau \neq \varnothing$ then $M \cap \tau  \neq \varnothing$ and $C \cap \tau \neq \varnothing$. 
    
    \item \label{item:edge_space_in_tile}  For every tile $\tau$ such that $\edgespace \cap \tau \neq \varnothing$, and any developing map $ \varphi:\tau\to \hh^n$,  $\varphi$ induces an isometry between $\edgespace\cap \tau$ and $S^\varepsilon_{V}\cap \varphi(\tau)$, where $V$ is the hyperplane containing $\varphi(M\cap \tau)$.      

    \item \label{item:edge_space_orthogonal_mirrors} For every mirror $N\in \mirrors$, if $\edgespace \cap N \neq \varnothing$ then $\edgespace$ is orthogonal to $N$.
    
\end{enumerate}
\end{lemma}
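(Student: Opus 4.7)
The plan is to exploit cocompactness of the $G = \pi_1(\hc)$-action on $\uchc$ to choose $\varepsilon > 0$ uniformly small, and then reduce each of the four assertions to a local statement inside a single tile, where the developing map of Lemma~\ref{lem:foldingmap_hypcomplex} identifies the picture with a subset of $\hh^n$ and Lemma~\ref{lem:equidistant_surfaces} applies directly. Since $G$ permutes both the tiling and the collection of mirrors with finitely many orbits of pairs, there is a uniform positive lower bound $\delta_0$ on the distance between $M$ and any mirror or tile disjoint from $M$, together with a uniform normal injectivity radius $\delta_1 > 0$ so that on every tile $\tau$ incident to $M$ the closed $\delta_1$-neighborhood of $M \cap \tau$ in $\tau$ projects onto $M \cap \tau$ along geodesics staying inside $\tau$. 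I take $\varepsilon < \min(\delta_0, \delta_1)$.

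With this choice, (\ref{item:edge_space_meets_mirrors}) and (\ref{item:edge_space_meets_tiles}) are immediate: any point of $\edgespace \cap N$ (respectively $\edgespace \cap \tau$) witnesses $d(M, N) \leq \varepsilon < \delta_0$ (respectively $d(M, \tau) \leq \varepsilon < \delta_0$), forcing the required intersections with $M$, while the intersections with $C$ are automatic since $\edgespace \subseteq C$. For (\ref{item:edge_space_in_tile}), Lemma~\ref{lem:mirrors_in_tile} identifies $M \cap \tau$ as a codimension-1 face of $\tau$, and the choice of $\delta_1$ ensures that every $x \in \edgespace \cap \tau$ has its nearest point on $M$ inside $M \cap \tau$, reached by a geodesic contained in $\tau$; hence $\edgespace \cap \tau$ coincides with the $\varepsilon$-equidistant set of $M \cap \tau$ in $\tau$, which under any developing map $\varphi:\tau \to \hh^n$ becomes precisely $S^\varepsilon_V \cap \varphi(\tau)$ for $V$ the unique hyperplane containing $\varphi(M \cap \tau)$. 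For (\ref{item:edge_space_orthogonal_mirrors}), take $p \in \edgespace \cap N$; then $N \notin \mirrors_i$ (otherwise $p$ would lie in $\cup \mirrors_i$, contradicting $p \in C$), so $N$ folds to a coordinate face of $\square^n$ different from the one to which $M$ folds. The length-$\varepsilon$ geodesic from $p$ to $\pi_M(p)$ enters a tile $\tau$ incident to $p$, and both $M \cap \tau$ and $N \cap \tau$ are codimension-1 faces of $\tau$ in distinct coordinate directions; developing $\tau$ to a $\Gamma$-cell $\varphi(\tau)$, these become adjacent (rather than parallel) walls $V$ and $W$ of the cell, and Lemma~\ref{lem:CD cell}(3) gives $V \perp W$ at their intersection. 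Lemma~\ref{lem:equidistant_surfaces}(\ref{item:eqsurf_orthogonal}) then yields $S^\varepsilon_V \perp W$ at $\varphi(p)$, and pulling back through $\varphi^{-1}$ using (\ref{item:edge_space_in_tile}) shows that $\edgespace$ is orthogonal to $N$ at $p$.

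The main technical obstacle is making the uniform normal injectivity radius $\delta_1$ precise and verifying that the nearest-point projection from a point in $\edgespace \cap \tau$ onto $M$ stays inside $\tau$, rather than exiting through a face other than $M \cap \tau$. This rests on combining cocompactness of $G$ with the compact hyperbolic geometry of each tile and the orthogonality of its boundary faces at their incidence loci; once secured, the remaining claims become formal transfers of classical hyperbolic geometry via the developing map.
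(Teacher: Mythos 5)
Your approach has the same overall structure as the paper's proof: use a uniform lower bound to force the intersections in parts~\eqref{item:edge_space_meets_mirrors}--\eqref{item:edge_space_meets_tiles}, then transfer parts~\eqref{item:edge_space_in_tile}--\eqref{item:edge_space_orthogonal_mirrors} into $\hh^n$ via the developing map, Lemma~\ref{lem:mirrors_in_tile}, and Lemma~\ref{lem:equidistant_surfaces}. There is, however, one genuine gap, and one place where you correctly flag a subtlety but address it in a roundabout way.

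The gap is in the source of the uniform constants. You derive $\delta_0$ and $\delta_1$ from cocompactness of the $G=\pi_1(\hc)$-action on $\uchc$ and from ``finitely many orbits of pairs.'' But at this point in the paper $X$ is assumed only to be locally compact, not compact (see the discussion at the start of \S\ref{sec:strict hyperbolization}); in particular the machinery of \S\ref{sec:strict hyperbolization} is later applied to the hyperbolization of the non-compact complex $\rr^n$ in the proof of Lemma~\ref{lem:hypcube_quasiconvex_in_lattice}, where $\hc$ is non-compact and $G$ does not act cocompactly on $\uchc$. The paper instead obtains the uniform separation from cocompactness of the hyperbolizing lattice $\Gamma$: each tile develops isometrically to a $\Gamma$-cell in $\hh^n$ bounded by $\Gamma$-translates of the coordinate hyperplanes $H_1,\dots,H_n$, and $\Gamma$-cocompactness (together with local finiteness of $X$) gives a uniform positive lower bound $D$ on the distance between any two disjoint such hyperplanes. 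Your argument goes through once $G$-cocompactness is replaced by $\Gamma$-cocompactness, but as written it does not prove the lemma in the stated generality.

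On part~\eqref{item:edge_space_in_tile}: you correctly identify that one must justify that $\pi_M(x)\in M\cap\tau$ whenever $x\in\edgespace\cap\tau$, so that $d_{\uchc}(x,M)$ is realized inside $\tau$ --- the paper's own proof of this item is terse on this. But a normal-injectivity-radius device $\delta_1$ is not the cleanest route, and in fact the property holds for all $\varepsilon>0$: because $\varphi(\tau)$ is a convex region of $\hh^n$ bounded by pairwise orthogonal hyperplanes, the nearest-point projection to any of its boundary hyperplanes stays in the region. The paper isolates exactly this fact as Lemma~\ref{lem:intersecting mirror projection}, whose proof is a short right-angled triangle argument. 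Invoking it directly (or reproducing the triangle argument) both shortens your proof of~\eqref{item:edge_space_in_tile} and sidesteps the uniformity issue above, since no smallness of $\varepsilon$ is required at that step. The same orthogonality structure is also what makes the final identification of the $\varepsilon$-equidistant set of $M\cap\tau$ inside $\tau$ with $S^\varepsilon_V\cap\varphi(\tau)$ valid, a point your argument uses implicitly.
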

\proof
To prove \eqref{item:edge_space_meets_mirrors} note that if $\edgespace \cap N \neq \varnothing$, then in particular $C\cap N \neq \varnothing$. Since $\Gamma$ is cocompact, there is a uniform lower bound $D>0$ on the distance between disjoint mirrors. But $\edgespace \cap N \neq \varnothing$ means that $N$ comes $\varepsilon$ close to $M$. By choosing $\varepsilon < D$ we can force $N$ to actually intersect $M$.

The proof of \eqref{item:edge_space_meets_tiles} is analogous to that of \eqref{item:edge_space_meets_mirrors}. 
Suppose $\edgespace \cap \tau \neq \varnothing$. Then clearly $C\cap \tau \neq \varnothing$. Moreover, a point in $\edgespace \cap \tau$ witnesses that $d(M,\tau)<\varepsilon$, and by choosing $\varepsilon$ small enough we can ensure that this forces an intersection, again by cocompactness of $\Gamma$. 

Now we consider \eqref{item:edge_space_in_tile}.
Suppose that $\edgespace \cap \tau \neq \varnothing$.
Then by \eqref{item:edge_space_meets_tiles} we know that $M \cap \tau  \neq \varnothing$ and $C \cap \tau \neq \varnothing$. In particular $M$ appears as an ($n-1$)--cell in the boundary of $\tau$ thanks to Lemma~\ref{lem:mirrors_in_tile}. If we pick a developing map $\varphi$ for $\tau$, then $\varphi(\tau)$ is a $\Gamma$--cell, and $\varphi(M)$ is some hyperplane $V$ on its boundary (see Lemma~\ref{lem:foldingmap_hypcomplex} and Remark~\ref{rem:developing}).
Then the statement follows from the fact that $\varphi$ is an isometric embedding of $\tau$ into $\hh^n$.

Finally, to prove \eqref{item:edge_space_orthogonal_mirrors}, suppose that $\edgespace \cap N \neq \varnothing$. Then by \eqref{item:edge_space_meets_mirrors} we know that $N\cap M\neq \varnothing$. In particular by construction $N$ is orthogonal to $M$.
Then the statement follows from \eqref{item:edge_space_in_tile}, together with \eqref{item:eqsurf_orthogonal} in Lemma~\ref{lem:equidistant_surfaces}.
\endproof


Next, our goal is to prove that equidistant spaces are negatively curved. In order to do this, we will study the geometry of links of points in $\uchc$, along various subspaces (we refer the reader to \S \ref{subsec:stratification} for definitions). 
Recall that the link  of a point in $\uchc$ is identified to the link of its projection to $\hc$. 

\begin{remark}\label{rem:weird cell structure mirrors and edgespaces}
All the subspaces of $\uchc$ considered here (such as a mirror $M$, and the induced space $\edgespace$) carry a natural locally finite cellular structure induced by that of $\uchc$. 
Even if they are not genuine cell complexes (as in Remark~\ref{rem:weird cell structure}), their projections to $\hc$ are, and links can be defined in analogy to the classical case.
\end{remark}


For a mirror $M$ we denote by $\pi_M:\uchc \to M$ the nearest point projection.
This is well-defined because $\uchc$ is $\cat 0$ and $M$ is convex by Proposition~\ref{prop:mirrors convex}.

\begin{lemma}\label{lem:edge_spaces_link_projection}   
Let $M\in \mirrors_i$, $C\in \components_i$, $p\in \edgespace$.
Then for $\varepsilon >0$ small enough the following holds.
Let $\tau_1,\dots, \tau_m$ be the collection of tiles containing $p$, and let $T=\tau_1\cup\dots\cup \tau_m$.
Then the following hold.
\begin{enumerate}
    \item \label{item:link_mirrortile_cat1} $\lk{\pi_M(p),M\cap T}$ is $\cat 1$.
    \item \label{item:link_edgespace_to_mirrortile} $\pi_M:\edgespace \to M$ induces an isometry $\lambda_p:\lk{p,\edgespace} \to \lk{\pi_M(p), M \cap T}$.
    \item \label{item:link_edgespace_cat1} $\lk{p,\edgespace}$ is $\cat 1$.
\end{enumerate}
\end{lemma}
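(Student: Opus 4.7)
The plan is to establish the three claims in sequence, noting that \eqref{item:link_edgespace_cat1} will follow immediately from \eqref{item:link_mirrortile_cat1} and \eqref{item:link_edgespace_to_mirrortile}.

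For \eqref{item:link_mirrortile_cat1}, I would first observe that $M$ is convex in the locally $\cat{-1}$ space $\uchc$ by Proposition~\ref{prop:mirrors convex} together with Proposition~\ref{prop:CD complex}\eqref{item:CD npc}, so $M$ inherits a locally $\cat{-1}$ piecewise hyperbolic cellular structure whose top-dimensional cells are the $(n-1)$-faces $M\cap \tau$ of tiles $\tau$ meeting $M$. The hyperbolic version of Gromov's link condition then ensures $\lk{\pi_M(p),M}$ is $\cat 1$. It remains to argue that, for $\varepsilon$ small enough, $M\cap T$ is a full neighborhood of $\pi_M(p)$ in $M$, so that $\lk{\pi_M(p),M\cap T}=\lk{\pi_M(p),M}$. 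The point is that every top-dimensional cell of $M$ containing $\pi_M(p)$ is a common $(n-1)$-face of exactly two tiles of $\uchc$, one on each side of $M$; provided $\varepsilon$ is chosen smaller than the uniform normal injectivity radius of faces of $\hq$ (finite by cocompactness of $\Gamma$, cf.\ Remark~\ref{rem:deeper hyp 1}), the perpendicular geodesic of length $\varepsilon$ from $\pi_M(p)$ into $C$ lies inside the tile on the $C$-side, so that tile is one of the $\tau_j$. Hence the cells $M\cap \tau_j$ together cover every top-dimensional cell of $M$ incident to $\pi_M(p)$.

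For \eqref{item:link_edgespace_to_mirrortile}, the plan is a tile-by-tile analysis using Lemma~\ref{lem:edge_spaces_combinatorial}\eqref{item:edge_space_in_tile}. For each $\tau_j$, a developing map $\varphi_j:\tau_j\to\hh^n$ identifies $\edgespace\cap \tau_j$ with $S_V^\varepsilon\cap\varphi_j(\tau_j)$ for the hyperplane $V$ containing $\varphi_j(M\cap \tau_j)$, and identifies the restriction of $\pi_M$ to $\tau_j$ with the restriction of the hyperbolic nearest point projection $\pi_V:S_V^\varepsilon\to V$. By Lemma~\ref{lem:equidistant_surfaces}\eqref{item:eqsurf_conformal}, the latter is a $\cosh^2(\varepsilon)$-conformal diffeomorphism; since conformal maps preserve angles, they induce isometries on (piecewise spherical) links. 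This yields an isometry from each cell $\lk{p,\edgespace\cap \tau_j}$ of $\lk{p,\edgespace}$ onto the corresponding cell $\lk{\pi_M(p), M\cap \tau_j}$ of $\lk{\pi_M(p),M\cap T}$. These tile-wise isometries assemble into the desired global isometry $\lambda_p$ because they are all restrictions of the globally defined projection $\pi_M$, and surjectivity follows by lifting each direction at $\pi_M(p)$ tangent to $M\cap\tau_j$ to its unique perpendicular $C$-ward companion in $\edgespace\cap\tau_j$.

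Finally, \eqref{item:link_edgespace_cat1} is immediate from \eqref{item:link_mirrortile_cat1} and \eqref{item:link_edgespace_to_mirrortile}, since a metric space isometric to a $\cat 1$ space is itself $\cat 1$. The main obstacle I anticipate is the verification that $M\cap T$ is a \emph{full} (as opposed to half) neighborhood of $\pi_M(p)$ in $M$: this crucially requires choosing $\varepsilon$ below the normal injectivity radius, so that $C$ selects exactly one tile across each top-dimensional cell of $M$ incident to $\pi_M(p)$. Without this choice, $\lk{\pi_M(p),M\cap T}$ would be only a proper subspace of $\lk{\pi_M(p),M}$ and the identification needed for \eqref{item:link_edgespace_to_mirrortile} would break.
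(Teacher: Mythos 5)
Your argument for part (1) contains a genuine gap. You claim that for $\varepsilon$ small enough, $M\cap T$ is a full neighborhood of $\pi_M(p)$ in $M$, so that $\lk{\pi_M(p),M\cap T}=\lk{\pi_M(p),M}$, and you try to secure this via a normal-injectivity-radius bound. This equality is \emph{false} in general, and no choice of $\varepsilon$ can rescue it, because the obstruction is combinatorial rather than metric. Two things go wrong: first, an $(n-1)$-cell of $M$ need not be a face of exactly two tiles (admissible complexes are merely ``without boundary,'' so an $(n-1)$-cube of $X$ may sit on three or more $n$-cubes, and mirrors can branch along codimension-one strata); second, and more fundamentally, $\pi_M(p)$ may lie on a lower-dimensional cell of the stratification where $M$ branches off into directions not met by any of the tiles $\tau_1,\dots,\tau_m$ containing $p$. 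In that case there are top-dimensional cells of $M$ through $\pi_M(p)$ which are not contained in $T$, regardless of how small $\varepsilon$ is. The paper explicitly flags this: immediately after this lemma it records (Remark~\ref{rem:link mirror branch}, with Figure~\ref{fig:mirror_branch}) that $L_{M\cap T}=\lk{\pi_M(p),M\cap T}$ is a closed subspace of $\lk{\pi_M(p),M}$ which is \emph{possibly proper}. So the identification your proof of (1) rests on, and which you yourself flag at the end as the main obstacle, does not hold.

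The paper's proof of (1) therefore takes a different route that avoids comparing $\lk{\pi_M(p),M\cap T}$ with the full $\lk{\pi_M(p),M}$. It works inside $L=\lk{\pi_M(p),\uchc}$, which is an all-right spherical complex (mirrors meet orthogonally) in which the direction $\overrightarrow p$ toward $p$ is a vertex. Because the complex is all-right, $\lk{\pi_M(p),T}$ is identified with the closed ball $\overline{B(\overrightarrow p,\pi/2)}$ and $\lk{\pi_M(p),M\cap T}$ with the metric sphere $\partial B(\overrightarrow p,\pi/2)$. The ball is $\pi$-convex and hence $\cat 1$, it is naturally the spherical cone $C_1(\partial B(\overrightarrow p,\pi/2))$, and Berestovskii's theorem then gives that $\partial B(\overrightarrow p,\pi/2)=L_{M\cap T}$ is $\cat 1$. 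A salvageable variant of your idea would be to argue that $L_{M\cap T}$ is a $\pi$-convex subspace of the $\cat 1$ space $\lk{\pi_M(p),M}$ using the ``branches meet at angle $\geq\pi$'' observation from the Remark; but that convexity is itself not immediate and is precisely what you would need to prove, so the gap remains until that is supplied.

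Your argument for (2) is essentially the paper's tile-wise conformal-diffeomorphism argument and correctly targets $\lk{\pi_M(p),M\cap T}$ rather than $\lk{\pi_M(p),M}$; note in particular that (2) does not actually require the false equality you worried about. Part (3) is correctly deduced from (1) and (2). The issue to fix is entirely in (1).
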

\proof
Of course, \eqref{item:link_edgespace_cat1} follows from \eqref{item:link_mirrortile_cat1} and \eqref{item:link_edgespace_to_mirrortile}.
For convenience, let us denote $L=\lk{ \pi_M(p),\uchc}$, $L_T=\lk{ \pi_M(p),T}$, and $L_{M\cap T}=\lk{\pi_M(p), M\cap T}$.
We have $L_{M\cap T}\subseteq L_T \subseteq L$.
Equip $L_{M\cap T}$ and $L_T$ with the induced length metric.
Let $ \overrightarrow p \in L_T$ be the direction at $\pi_M(p)$ pointing to $p$ (see Figure~\ref{fig:links}). 

\begin{figure}[ht]
\centering
\def\svgwidth{\columnwidth}
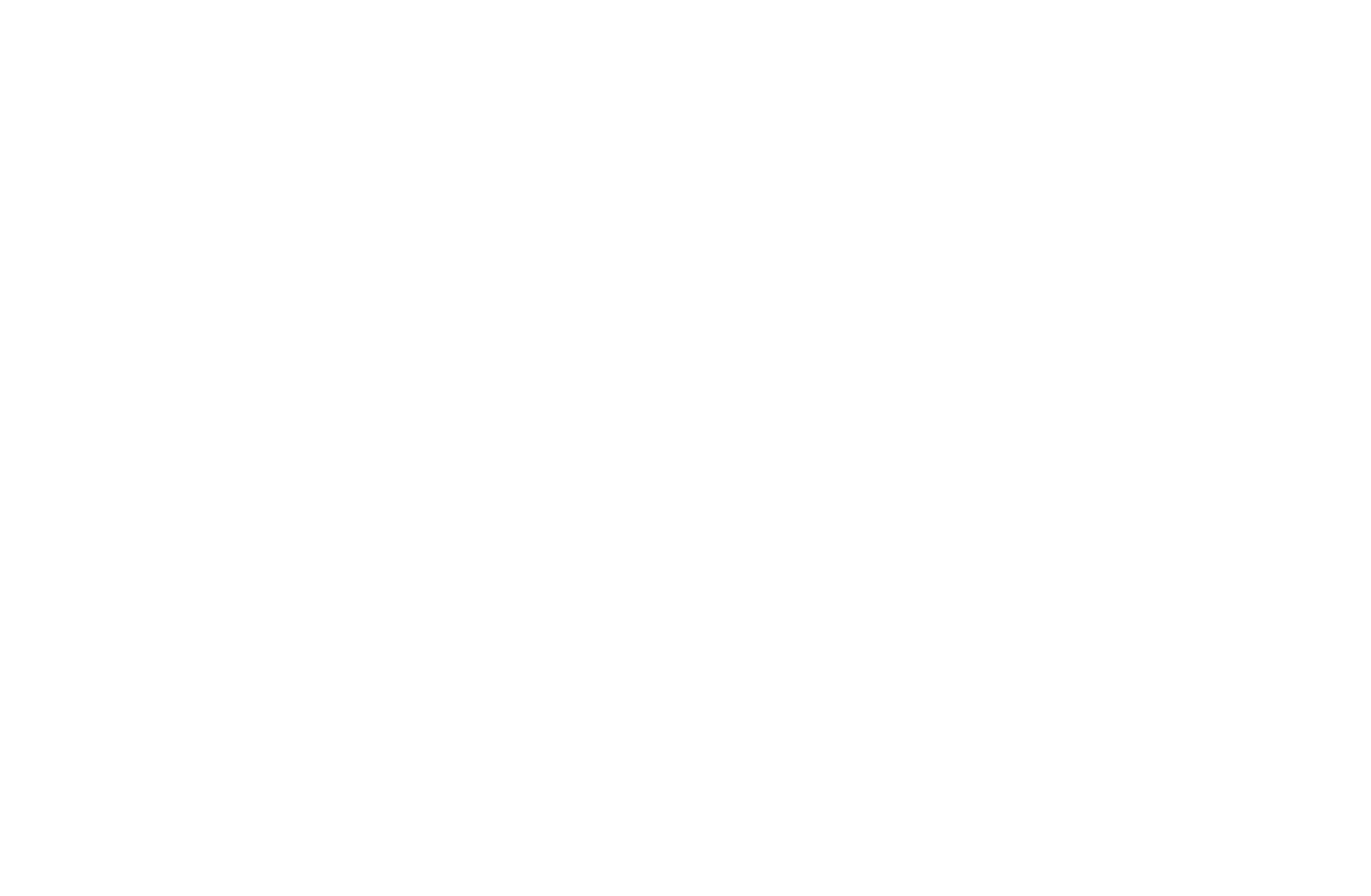
\caption{Links of points along various subspaces in the proof of Lemma~\ref{lem:edge_spaces_link_projection}. Here $p$ is contained in four tiles and sits on the intersection of two mirrors $M',M''$. The vertical projection is the nearest point projection $\pi_M:\edgespace \to M$.}
\label{fig:links}
\end{figure}

We start by proving \eqref{item:link_mirrortile_cat1}.
Since $\uchc$ is negatively curved, $L$ is $\cat 1$.
  In particular, balls of radius at most $\pi/2$ are $\pi$-convex and $\cat 1$.
Since $\uchc$ is piecewise hyperbolic, $L$ is piecewise spherical.
Moreover, all the mirrors containing $p$ intersect $M$ orthogonally by construction.
Therefore, $L$  has a natural structure of all-right spherical complex in which $\overrightarrow p$ is a vertex (possibly up to subdivision if $\pi_M(p)$ is not a vertex).
In particular, we have natural identifications $L_T=\overline{B\left( \overrightarrow p, \frac{\pi}{2} \right)}$ and $L_{M\cap T}=\partial B\left( \overrightarrow p, \frac{\pi}{2} \right)$.

Let $C_1 (Y)$ denote the spherical cone over a space $Y$, and denote the cone point by $0$.
Since $L$ is an  all-right spherical complex, we have a natural isometry
$$\varphi: C_1\left( \partial B\left( \overrightarrow p, \frac{\pi}{2} \right) \right) \to \overline{B\left( \overrightarrow p, \frac{\pi}{2} \right)}$$
defined as follows: $\varphi(0)=\overrightarrow p$, and for each $\overrightarrow q \in \partial B\left( \overrightarrow p, \frac{\pi}{2} \right)$ and $0<t\leq \frac{\pi}{2}$ let $\varphi(t,\overrightarrow q)$ be the point at distance $t$ from $\overrightarrow p$ along the geodesic $[\overrightarrow p, \overrightarrow q]$.    
As a result, $C_1(L_{M\cap T})= C_1\left( \partial B\left( \overrightarrow p, \frac{\pi}{2} \right) \right)$ is $\cat 1$. By Berestovskii's Theorem (see \cite[II.3.14]{BH99}) we conclude that $L_{M\cap T}$ is $\cat 1$ as desired.

\bigskip

To prove \eqref{item:link_edgespace_to_mirrortile} we argue as follows.
By \eqref{item:edge_space_in_tile} in Lemma~\ref{lem:edge_spaces_combinatorial} and \eqref{item:eqsurf_conformal} in Lemma~\ref{lem:equidistant_surfaces} we know that within each tile $\tau_k$ the projection $\pi_M$ is a conformal diffeomorphism, so it induces an isometry  $\lambda_p^{\tau_k}:\lk{p,\edgespace\cap \tau_k}\to L_k=\lk{\pi_M(p), M\cap \tau_k)}$.
This is enough in the case $m=1$, i.e.\ when $p$ is contained in a single tile. When $m\geq 2$, by gluing together the maps $\lambda_p^{\tau_k}$, we obtain a map
$\lambda_p:\lk{p,\edgespace} \to L_1\cup\dots\cup L_m=L_{M\cap T}$. 
Notice that shooting geodesic rays from $\pi_M(p)$ into $T$ along directions in $L_T$ provides an isometry
$$ \psi: \lk{\overrightarrow p, L_T} \to \lk{p,\edgespace}$$
Combining this with the natural isometry 
$$ r: \lk{\overrightarrow p, B\left( \overrightarrow p, \frac{\pi}{2} \right)}\to \partial B\left( \overrightarrow p, \frac{\pi}{2} \right)$$
and using the aforementioned identifications, we obtain the desired isometry
$$\lk{p,\edgespace} \overset{\psi^{-1}}{\to} \lk{\overrightarrow p, L_T} = \lk{\overrightarrow p, \overline{B\left( \overrightarrow p, \frac{\pi}{2} \right)} } \overset{r}{\to}  \partial B\left( \overrightarrow p, \frac{\pi}{2} \right) = L_{M\cap T}.$$
\endproof

\begin{remark}\label{rem:link mirror branch}
Note that, in the notation of Lemma~\ref{lem:edge_spaces_link_projection}, $L_{M\cap T}=\lk{\pi_M(p), M\cap T}$ is a closed subspace of $\lk{\pi_M(p), M}$ which is possibly proper. Indeed, $\pi_M(p)$ might live on a lower dimensional cell, where $M$ might branch off away from $T$, as in Figure~\ref{fig:mirror_branch}.
However, all the branches make an angle of at least $\pi$ with each other, because $M$ is convex.
\end{remark}

\begin{figure}[h]
\centering
\def\svgwidth{.6\columnwidth}
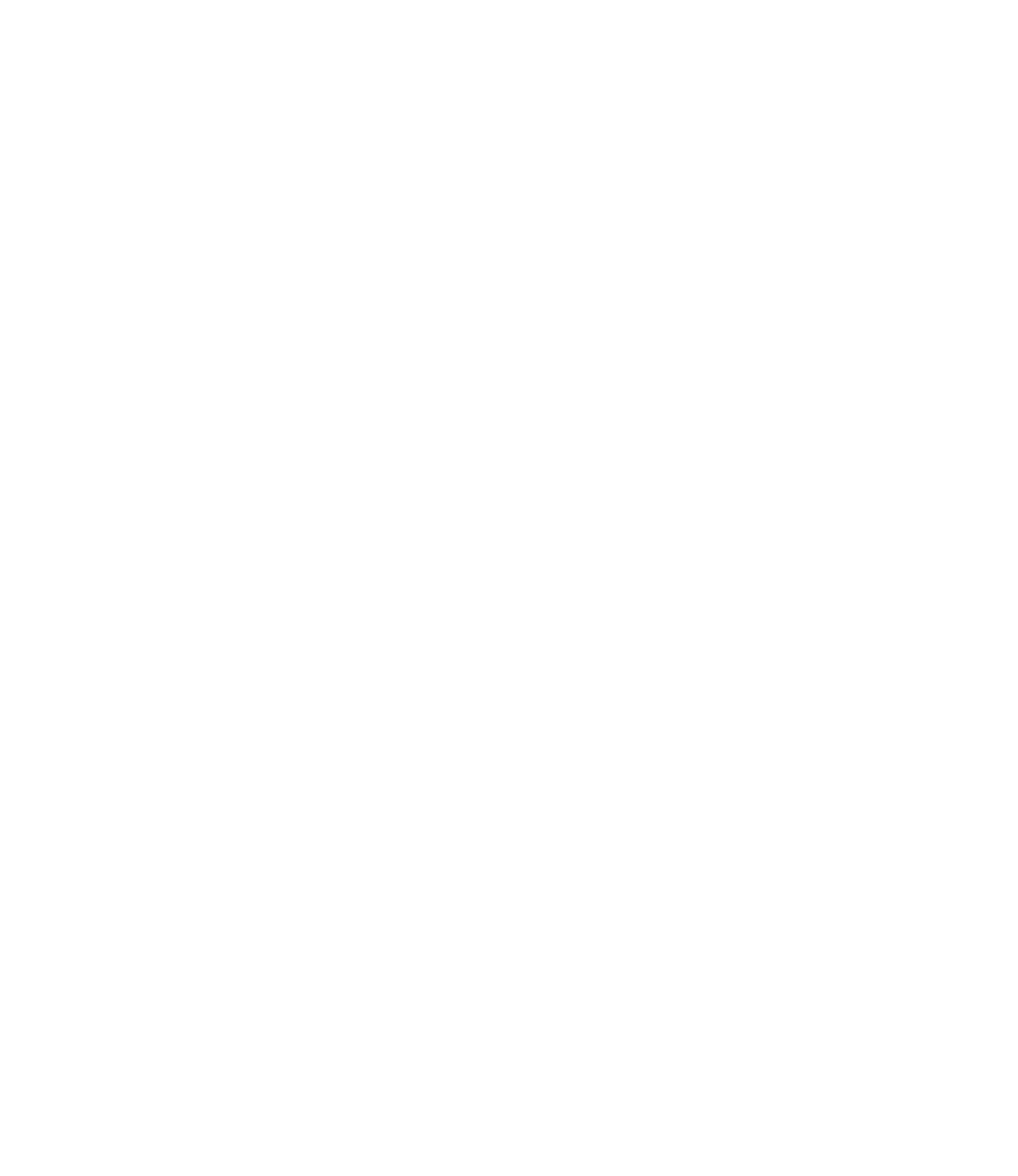
\caption{A mirror $M$ branching away from $T$, the  union of tiles containing $p$ (other mirrors not displayed).}
    \label{fig:mirror_branch}
\end{figure}


\begin{lemma}\label{lem:edge_spaces_npc}
Let $M\in \mirrors_i$ and $C\in \components_i$.
Then for $\varepsilon >0$ small enough there is $k\in (-1,0)$ such that the following hold.
\begin{enumerate}
    \item \label{item:edge_space_npc} The metric induced on $\edgespace$ is locally $\cat k$.  
    
    \item \label{item:edge_space_geodesics}  The nearest point projection $\pi_M:\edgespace \to M$ maps non--constant local geodesics to non--constant local geodesics.
    
    \item \label{item:edge_space_cat-1} The metric induced on $\edgespace$ is $\cat k$. 
    
\end{enumerate}
\end{lemma}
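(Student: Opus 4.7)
Set $k = -1/\cosh^2(\varepsilon)$, which lies in $(-1,0)$ for every $\varepsilon > 0$; this will be the target curvature bound.

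The plan for \eqref{item:edge_space_npc} is to verify the local $\cat k$ condition pointwise. Inside each tile $\tau$ that meets $\edgespace$, Lemma~\ref{lem:edge_spaces_combinatorial}\eqref{item:edge_space_in_tile} develops $\edgespace \cap \tau$ isometrically onto a portion of an equidistant hypersurface $S^\varepsilon_V \subseteq \hh^n$, and by Lemma~\ref{lem:equidistant_surfaces}\eqref{item:eqsurf_curvature} this piece has constant sectional curvature $k$. By Lemma~\ref{lem:edge_spaces_combinatorial}\eqref{item:edge_space_orthogonal_mirrors}, any other mirror met by $\edgespace$ cuts it orthogonally, so $\edgespace$ is built by gluing constant curvature $k$ pieces along totally geodesic codimension-$1$ subspaces. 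The link criterion for piecewise constant-curvature spaces (the hyperbolic analog of Lemma~\ref{lem:gromov link condition}, see \cite[Chapter II.5]{BH99}) then reduces local $\cat k$ to the $\cat 1$-property of the link at every point, which is precisely Lemma~\ref{lem:edge_spaces_link_projection}\eqref{item:link_edgespace_cat1}.

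For \eqref{item:edge_space_geodesics}, given a non-constant local geodesic $\gamma \subseteq \edgespace$, inside each tile $\tau$ crossed by $\gamma$ the restriction of $\pi_M$ to $\edgespace \cap \tau$ is a homothety with constant ratio $\cosh(\varepsilon)$ by Lemma~\ref{lem:equidistant_surfaces}\eqref{item:eqsurf_conformal}, and hence sends geodesics to geodesics. The only thing left to verify is what happens at a point $p \in \gamma$ at which $\gamma$ crosses another mirror: there, being a local geodesic means that the pair of directions of $\gamma$ at $p$ is antipodal in $\lk{p, \edgespace}$, and by the link isometry $\lambda_p$ of Lemma~\ref{lem:edge_spaces_link_projection}\eqref{item:link_edgespace_to_mirrortile} this pair maps to an antipodal pair in $\lk{\pi_M(p), M \cap T}$, which is precisely the local geodesic condition for $\pi_M(\gamma)$ at $\pi_M(p)$.

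For \eqref{item:edge_space_cat-1}, my plan is to use \eqref{item:edge_space_geodesics} to show that $\pi_M|_{\edgespace}: \edgespace \to M$ is a homeomorphism onto a locally convex subset of $M$. Combined with the homothety property on each tile, this would make $\pi_M|_{\edgespace}$ a global $\cosh(\varepsilon)$-homothety onto its image. Since $M$ is $\cat 0$ by Proposition~\ref{prop:mirrors convex}, a locally convex connected subset of it is convex and therefore itself $\cat 0$ (see \cite[Theorem 1.6, 1.10]{BUW12} or \cite[Theorem 1.1]{RC16}); rescaling then gives global $\cat k$ for $\edgespace$. The local convexity of the image follows from \eqref{item:edge_space_geodesics}. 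The main obstacle I foresee is the injectivity of $\pi_M|_{\edgespace}$, which is delicate because mirrors can branch (Remark~\ref{rem:link mirror branch}) and $M$ need not be a manifold. The cleanest way to handle this should be to construct an inverse by flowing from $m \in \pi_M(\edgespace) \subseteq M$ along the (locally unique) unit normal direction into the fixed component $C$ for distance $\varepsilon$; Lemma~\ref{lem:edge_spaces_combinatorial}\eqref{item:edge_space_meets_mirrors}--\eqref{item:edge_space_meets_tiles} ensure that for $\varepsilon$ small this flow stays within the expected combinatorial neighborhood of $M$ and produces a single point of $\edgespace$. If this direct approach becomes too technical, a fallback is to invoke a Cartan--Hadamard style theorem (\cite[Theorem II.4.1]{BH99}) and instead prove simple connectedness of $\edgespace$ by the same projection argument.
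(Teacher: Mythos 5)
Your treatment of item (1) matches the paper's: develop each tile piece onto an equidistant hypersurface of curvature $k=-1/\cosh^2(\varepsilon)$ and reduce to the $\cat 1$ link condition of Lemma~\ref{lem:edge_spaces_link_projection}\eqref{item:link_edgespace_cat1} via the piecewise-constant-curvature version of Gromov's link criterion. That part is fine.

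For item (2) there is a genuine gap. You argue that the incoming/outgoing directions of $\gamma$ at a break point $p$ map, under $\lambda_p$, to an antipodal pair in $\lk{\pi_M(p), M\cap T}$, and you then declare this to be ``precisely the local geodesic condition for $\pi_M(\gamma)$.'' It is not: the local geodesic condition for a path in $M$ requires the Alexandrov angle measured in $\lk{\pi_M(p), M}$, and $\lk{\pi_M(p), M\cap T}$ is in general a \emph{proper} subspace of $\lk{\pi_M(p), M}$ (Remark~\ref{rem:link mirror branch}). Passing from a subspace to its ambient space can only decrease length-metric distances, so the distance in the full link could a priori drop below $\pi$. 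The paper closes this gap by observing that $M$ is convex, hence $\lk{\pi_M(p),M}$ is $\cat 1$ and has no geodesic loops shorter than $2\pi$, which rules out a shortcut. You need some version of that step.

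For item (3) your primary route is different from the paper's, and it is circular. You want to show that $\pi_M|_{\edgespace}$ is a homeomorphism onto a locally convex subset of $M$, and you recognize that injectivity is the delicate point; your proposed fix is to flow from $m\in M$ along the normal into the component $C$. The problem is that a priori $C$ could abut $M$ on more than one side (or $M$ could branch at $m$ with several branches all facing $C$), in which case the normal direction into $C$ is not unique and the flow does not produce a single point of $\edgespace$. Ruling out this phenomenon is exactly what the separation results of \S\ref{subsec:separation} accomplish (see Remark~\ref{rem:edge map not nice}), but those results rest on the graph-of-spaces decomposition of Proposition~\ref{prop:graph of spaces}, which in turn uses Lemma~\ref{lem:edge_spaces_npc}\eqref{item:edge_space_cat-1} to get $\pi_1$-injective edge maps. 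So injectivity of $\pi_M|_{\edgespace}$ is not available at this stage without circularity. The paper deliberately avoids injectivity: it uses \eqref{item:edge_space_npc} to reduce to simple connectedness, and then uses \eqref{item:edge_space_geodesics} to argue that a non-trivial free homotopy class in $\edgespace$ has a non-constant local geodesic representative $c_\gamma$, whose image $\pi_M(c_\gamma)$ is a non-constant local geodesic in $M$ and hence not nullhomotopic, contradicting the contractibility of the $\cat 0$ space $M$. This is essentially the ``fallback'' you mention in your last sentence, and it should be your primary (indeed, only) route; as stated the fallback is too vague to count as a proof, so you would need to spell it out.
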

\proof
To prove \eqref{item:edge_space_npc} we argue as follows. By \eqref{item:edge_space_in_tile} in Lemma~\ref{lem:edge_spaces_combinatorial}, we know that, away from the intersection with mirrors, $\edgespace $ is locally isometric (via a developing map) to an equidistant hypersurface in $\hh^n$. Such a hypersurface is a manifold of negative curvature $k\in (-1,0)$ by \eqref{item:eqsurf_conformal} in Lemma~\ref{lem:equidistant_surfaces}. 
By Remark~\ref{rem:weird cell structure mirrors and edgespaces}, $\edgespace$ is essentially a cell complex, so by \cite[Theorem II.5.2]{BH99} $\edgespace$ is locally $\cat k$ if and only if the link of every vertex is a $\cat 1$ space. 
This condition is verified by \eqref{item:link_edgespace_cat1} in Lemma~\ref{lem:edge_spaces_link_projection}.


Now we consider \eqref{item:edge_space_geodesics}. 
By  \eqref{item:edge_space_in_tile} in Lemma~\ref{lem:edge_spaces_combinatorial} and \eqref{item:eqsurf_conformal} in Lemma~\ref{lem:equidistant_surfaces}, we know that in the interior of each tile $\pi_M$ is a conformal diffeomorphism with constant conformal factor. Therefore it sends a local geodesic on $\edgespace$ to a piecewise local geodesic on $M$, possibly broken at points where two or more tiles meet. 
To take care of those possibly singular points, we invoke \eqref{item:link_edgespace_to_mirrortile} in  Lemma~\ref{lem:edge_spaces_link_projection}, which guarantees that $\pi_M$ induces an isometric embedding of links also at those points. 
Indeed, if $p\in \edgespace$ is such a break point, and $c$ is a geodesic on $\edgespace$ through $p$, then the incoming and outgoing directions are at distance  $D\geq \pi$ in $\lk{p, \edgespace}$. 
Let $c'=\pi_M(c)$. Then $c'$ is a piecewise geodesic in $M$ through $\pi_M(p)$. 
With the notations of Lemma~\ref{lem:edge_spaces_link_projection}, the distance in $\lk{\pi_M(p), M\cap T}$ between the incoming and outgoing directions is the same $D\geq \pi$. The distance in the full $\lk{\pi_M(p),M}$ is not smaller, as $\lk{\pi_M(p),M}$ does not contain geodesic loops shorter than $2\pi$ by convexity. 
So, $c'$ is a local geodesic in $M$ at $\pi_M(p)$. 
Moreover if $c$ is non-constant then $c'$ is non--constant because $\pi_M$ is locally injective.

To conclude, we prove  \eqref{item:edge_space_cat-1}. By \eqref{item:edge_space_npc} we know that $\edgespace$ is locally $\cat k$, so we only need to prove that it is also simply connected. By contradiction, let $\gamma\in \pi_1(\edgespace)$ be a non--trivial homotopy class. 
Since $\edgespace$ is complete and non--positively curved, $\gamma$ is represented by a unique non--constant local geodesic $c_\gamma$. 
By \eqref{item:edge_space_geodesics} $\pi_M(c_\gamma)$ is a non--constant local geodesic on $M$. Since $M$ is complete and non--positively curved, $\pi_M(c_\gamma)$ is not nullhomotopic, 
which contradicts the fact that $M$ is contractible.
\endproof

\begin{remark}\label{rem:fold npc}
Note that if for a mirror $M$ and a tile $\tau$ the intersection $M\cap \tau$ was lower--dimensional, then the equidistant space space would develop to an equidistant hypersurface from a lower--dimensional totally geodesic subspace of $\hh^n$, which has some positive curvature.
So,  Lemma~\ref{lem:mirrors_in_tile} (establishing that if a mirror intersects a tile then the intersection is a codimension-1 cell) is a key tool to prove that edge spaces are non--positively curved.
\end{remark}

\begin{figure}[h]
\centering
\def\svgwidth{\columnwidth}
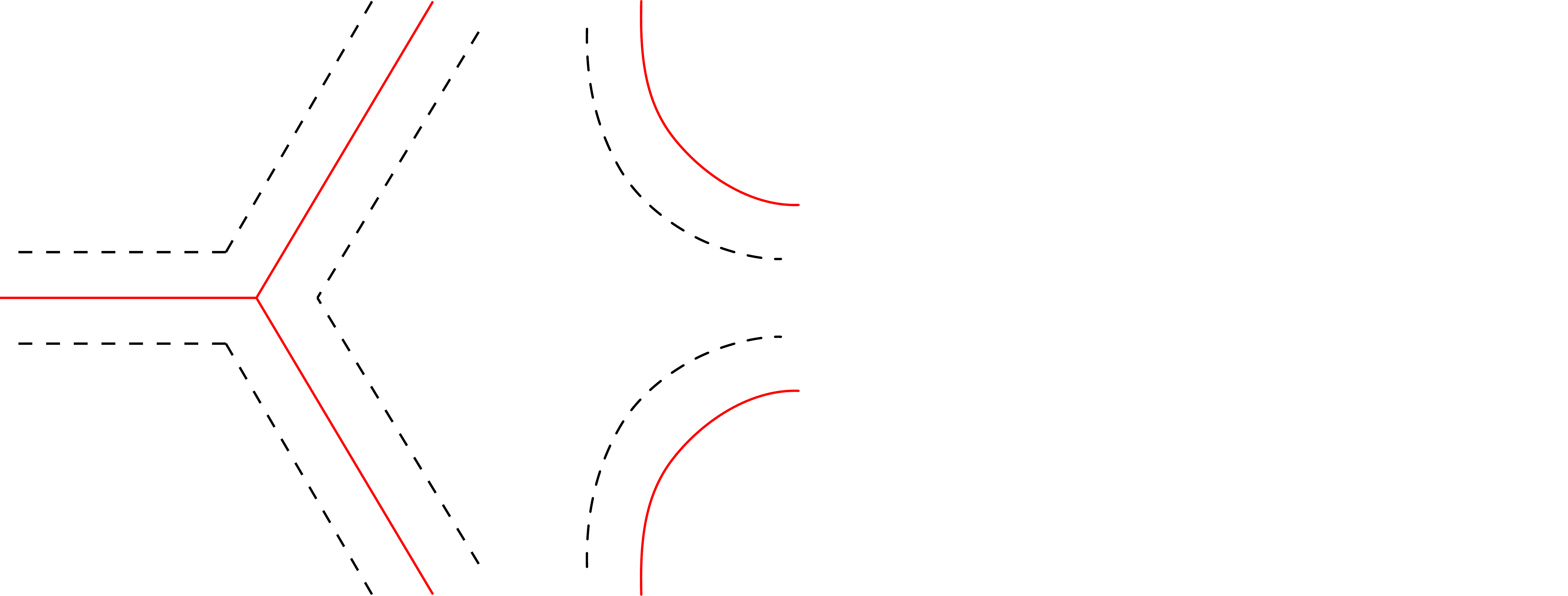
    \caption{The graph of spaces decomposition of $\uchc$}    
    \label{fig:graph_spaces}
\end{figure}

\begin{proposition}\label{prop:graph of spaces}
$\uchc$ admits the structure of a graph of spaces, with underlying graph a connected tree.
\end{proposition}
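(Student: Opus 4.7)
The plan is to fix an index $i \in \{1, \ldots, n\}$ and build a graph of spaces decomposition of $\uchc$ from the family $\mirrors_i$, in the spirit of the Bass--Serre decompositions used in \cite{BS99,XIE04}. The underlying graph $\mathcal{T}_i$ has vertex set $\components_i$ and edge set $\mirrors_i$, where $M \in \mirrors_i$ is declared incident to $C \in \components_i$ precisely when $M \subseteq \overline{C}$. To each vertex $C$ I assign the vertex space $V_C := \overline{C}$, and to each edge $M$ the edge space $E_M := M$, with the inclusions $M \hookrightarrow \overline{C}$ as attaching maps. Since the pieces $\{\overline{C}\}_{C \in \components_i}$ cover $\uchc$ and pairwise intersect only along elements of $\mirrors_i$, this data assembles, once the incidences are correctly identified, into a bona fide graph of spaces whose total space is $\uchc$.

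The first key step is to show each $M \in \mirrors_i$ is incident to exactly two vertices of $\mathcal{T}_i$, i.e.\ that $M$ is two-sided. Within any tile $\tau$ meeting $M$, Lemma~\ref{lem:mirrors_in_tile} shows that $M \cap \tau$ is a codimension-$1$ cell, and any developing map $\varphi : \tau \to \hh^n$ (Remark~\ref{rem:developing}) carries $M \cap \tau$ onto a piece of a hyperplane, so $\tau \setminus (M \cap \tau)$ has exactly two connected components. Because the mirrors of $\mirrors_i$ are pairwise at uniformly positive distance (the observation just before Lemma~\ref{lem:edge_spaces_combinatorial}), for $\varepsilon$ small enough the $\varepsilon$-neighborhood of $M$ in $\uchc$ meets no other mirror of $\mirrors_i$; hence the local two sides glue consistently across adjacent tiles into exactly two components of $\uchc \setminus \bigcup_{N \in \mirrors_i} N$ accumulating on $M$.

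For the tree property I would use simple-connectedness of $\uchc$. There is a natural continuous projection $\rho : \uchc \to \mathcal{T}_i$ collapsing each $\overline{C}$ to its vertex $v_C$ and parametrising the $\varepsilon$-collar of each $M$ linearly along the corresponding edge (using nearest-point projection to $M$ on each side, well-defined since $M$ is convex in the $\cat 0$ space $\uchc$ by Proposition~\ref{prop:mirrors convex}). Any edge-loop in $\mathcal{T}_i$ lifts to a loop in $\uchc$ by concatenating paths in the path-connected vertex spaces $\overline{C}$ with short transverse arcs crossing each mirror, so $\rho_{*} : \pi_1(\uchc) \to \pi_1(\mathcal{T}_i)$ is surjective. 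Since $\uchc$ is the universal cover and so $\pi_1(\uchc) = 1$, we get $\pi_1(\mathcal{T}_i) = 1$; the graph $\mathcal{T}_i$ is moreover connected because $\uchc$ is, and a connected graph with trivial fundamental group is a tree.

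The main technical obstacle I anticipate is the two-sidedness step, especially at points where $M$ crosses mirrors from $\mirrors_j$ with $j \neq i$ and at points where $M$ itself branches (as in Remark~\ref{rem:link mirror branch}): there one must verify that the branching does not introduce extra ``sides'' of $M$ or merge the two sides through a cell of positive codimension. The orthogonality between mirrors of different families, the convexity of $M$, and the link analysis already carried out in Lemma~\ref{lem:edge_spaces_link_projection} should suffice to control this.
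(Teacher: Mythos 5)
Your proposal hinges on the claim that each mirror $M \in \mirrors_i$ is two-sided, i.e.\ incident to exactly two components of $\components_i$, and this is false in the generality considered here. A mirror of $\uchc$ can have more than two complementary components: a codimension-$1$ cube of $X$ may be shared by three or more top-dimensional cubes, and after hyperbolization the corresponding codimension-$1$ cell of $M$ has that same number of tiles adjacent to it. The paper points this out explicitly --- see the caption to Figure~\ref{fig:edgespaces} (three complementary components of a single mirror), Remark~\ref{rem:edge map not nice}, and Remark~\ref{rem:wallspaces}, which discusses mirrors with more than two complementary components and the complications this causes. So the combinatorial object you build, with vertex set $\components_i$, edge set $\mirrors_i$, and ``$M$ incident to $C$'' whenever $M \subseteq \overline C$, is not a graph in the required sense: some ``edges'' would need three or more endpoints. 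The local step also misfires: by Lemma~\ref{lem:mirrors_in_tile} and Remark~\ref{rem:developing}, $M \cap \tau$ is a codimension-$1$ \emph{boundary} face of the tile $\tau$, so $\tau \setminus (M\cap\tau)$ is connected, not two pieces; the local sides of $M$ at a point of $M\cap\tau$ are indexed by the tiles adjacent there, of which there may be arbitrarily many, and the link analysis of Lemma~\ref{lem:edge_spaces_link_projection} does not collapse them to two.

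The paper avoids this by taking the underlying graph $T_i$ to be bipartite, with a vertex $v_M$ for each mirror $M\in\mirrors_i$ \emph{and} a vertex $v_C$ for each component $C\in\components_i$, and an edge $e_{M,C}$ whenever $M$ meets $\overline C$; there is then no constraint on the degree of any vertex. The edge space over $e_{M,C}$ is not $M$ itself but the one-sided equidistant pushoff $\edgespace$ of $M$ into $C$, shown to be $\cat{k}$ with $k<0$ (hence contractible) in Lemma~\ref{lem:edge_spaces_npc}. This is not a cosmetic change: the natural map $\pi_M\colon \edgespace\to M$ is in general neither surjective (if $M$ branches away from $C$) nor injective (if $C$ wraps around $M$), so using $M$ directly with the inclusions as attaching maps would not encode the correct local structure. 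Your closing argument that the underlying graph is a tree --- collapse to the graph, use contractibility of $\uchc$ --- is the same as the paper's and is sound, but it is applied to a graph that is not well defined in your setup.
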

\proof
We define a graph $T_i$ as follows (see Figure~\ref{fig:graph_spaces}). Vertices come in two different families, namely a vertex $v_M$ for each mirror $M\in \mirrors_i$ and a vertex $v_C$ for each component $C\in \components_i$. Then we place one edge $e_{M,C}$ between $v_M$ and $v_C$ whenever $M$ intersects the closure of $C$.
Vertex and edge spaces are defined as follows:
we associate $M$ to $v_M$, $C$ to $v_c$, and $\edgespace$ to the edge $e_{M,C}$ between them.

The edge maps to the two types of vertices are respectively given by the nearest point projection $\pi_M:\edgespace \to M$ and the natural inclusion $i:\edgespace \hookrightarrow C$. 
Note that $i$ is an embedding, and that by \eqref{item:eqsurf_conformal} in Lemma~\ref{lem:equidistant_surfaces} we know that the restriction of  the projection $\pi_M$ to each tile is an embedding too.
Moreover, edge spaces are contractible by \eqref{item:edge_space_cat-1} in Lemma~\ref{lem:edge_spaces_npc}, so the gluing maps are automatically injective on fundamental groups.

Recall that by construction any two mirrors from $\mirrors_i$ are disjoint, and even have disjoint $\varepsilon$--neighborhoods for $\varepsilon$ sufficiently small (because $\Gamma$ is cocompact).
Similarly, any two components from $\components_i$ are disjoint.
Moreover, $\uchc \setminus \bigsqcup_{M\in \mirrors_i} M = \bigsqcup_{C\in \components_i} C$ and the boundary of a component consists of a disjoint union of closed subspaces, each of which sits inside a different mirror from $\mirrors_i$.
The resulting graph of spaces is homeomorphic to $\uchc$. 

We are left to show that $T_i$ is a connected tree.
Connectedness of $T_i$ follows directly from that of $X$.
There is a natural map $r_i:\uchc \to T_i$ obtained by collapsing all the vertex spaces to points and all the cylinders over edge spaces to edges. Notice that $r_i$ is a retraction and $\uchc$ is contractible, which forces $T_i$ to be simply connected.
\endproof

\begin{remark}\label{rem:edge map not nice}
In this graph of spaces decomposition all the spaces involved are non--positively curved, but the edge maps are not local isometries. Moreover, further pathological behavior can arise depending on the structure of the mirrors, as we now discuss. Note that the following phenomena already arise in the setting of cubical complexes, i.e.\ are not introduced by the hyperbolization procedure.

On one hand, if the mirror $M$ branches (i.e.\ has non--locally Euclidean points) in such a way that different branches meet the closure of different complementary components, then the nearest point projections $\pi_M:\edgespace\to M$ from the individual edge spaces fail to be surjective. 

On the other hand, if the mirror $M$ is such that a complementary component $C$ wraps around $M$ and meets it on different sides, then the map $\pi_M:\edgespace\to M$ fails to be injective.
This would be the case for a mirror that separates locally but not globally, e.g.\ one that is contained in the closure of a single complementary component. In this case the corresponding vertex would be a boundary vertex for the tree $T_i$. We will see in \S~\ref{subsec:separation} that this failure of injectivity does not occur in our setting.

\end{remark}

\begin{remark}[A graph of groups decomposition for $\hg$]
Note that $\hg=\pi_1(\hc)$ acts on $\uchc$ sending mirrors to mirrors and preserving the coloring, i.e.\ each family $\mirrors_i$. In particular it preserves this graph of spaces decomposition, hence it acts on the underlying graph, which has been seen to be a tree. The action is without global fix points and without inversions. This realizes $\hg=\pi_1(\hc)$ as a graph of groups. 
It is worth noticing that combination theorems are available in the literature, which provide a way to construct a cubulation of a group expressed as a graph of cubulated groups, when certain conditions are met (see for instance \cite{HW12,HW19,W21}).
In our context, the vertex groups are given by the fundamental groups of the mirrors from $\mirrors_i$ and the components from $\components_i$. While it is reasonable to expect that the former are cubulated (e.g.\ arguing by induction on dimension), it is not at all clear that the latter should be.
The guiding idea for the rest of the paper is that nevertheless those components can be further decomposed into tiles. The fundamental group of a tile can be shown to be cubulated (see Lemma~\ref{lem:hyperbolizing cube is vcs}), and the results of Groves and Manning from \cite{GM18} then provide a way to combine the cubulation from each tile into a global cubulation.
\end{remark}


\subsection{Mirrors: separation}\label{subsec:separation}
In this section we will prove a strong separation property for mirrors in $\uchc$.
In order to obtain convexity of the mirrors, in the proof of Proposition~\ref{prop:mirrors convex} we have used the fundamental fact that in a $\cat 0$ space local convexity implies global convexity. The same local--to--global property fails for separation, as shown by the following example.

\begin{example}\label{ex:locsep nonsep}
Consider the square complex $Y$  in the center of Figure~\ref{fig:locsepnonsep}. 
Consider the subcomplex $Z$ consisting of the central thick (red) edge.
The subspace $Z$  is locally separating in $Y$, in the sense that for any $z \in Z$ and any  arbitrarily small neighborhood $U_z$ of  $z$ in $Y$, $U_z\setminus Z$ is disconnected. However, $Z$ is not separating, i.e.\ $Y\setminus Z$ is connected. 
Notice that  $Y$ is a $\cat 0$ and foldable cubical complex, but $Z$ is not a full connected component of the preimage of a codimension-$1$ face, i.e.\ not a mirror.

In this example both $Y$ and $Z$ have boundary, but it can be modified to obtain an example without boundary. We start by attaching eight more squares following the pattern in Figure~\ref{fig:locsepnonsep}, and extending $Z$ with two more edges. In the resulting complex, no edge meeting $Z$ is a boundary edge, so we can keep adding squares (and extending $Z$) to get an admissible complex which displays the same pathology as the original one.
\end{example}

\begin{figure}[h]
\centering
\def\svgwidth{\columnwidth}
\begingroup%
  \makeatletter%
  \providecommand\color[2][]{%
    \errmessage{(Inkscape) Color is used for the text in Inkscape, but the package 'color.sty' is not loaded}%
    \renewcommand\color[2][]{}%
  }%
  \providecommand\transparent[1]{%
    \errmessage{(Inkscape) Transparency is used (non-zero) for the text in Inkscape, but the package 'transparent.sty' is not loaded}%
    \renewcommand\transparent[1]{}%
  }%
  \providecommand\rotatebox[2]{#2}%
  \newcommand*\fsize{\dimexpr\f@size pt\relax}%
  \newcommand*\lineheight[1]{\fontsize{\fsize}{#1\fsize}\selectfont}%
  \ifx\svgwidth\undefined%
    \setlength{\unitlength}{1622.83464567bp}%
    \ifx\svgscale\undefined%
      \relax%
    \else%
      \setlength{\unitlength}{\unitlength * \real{\svgscale}}%
    \fi%
  \else%
    \setlength{\unitlength}{\svgwidth}%
  \fi%
  \global\let\svgwidth\undefined%
  \global\let\svgscale\undefined%
  \makeatother%
  \begin{picture}(1,0.22358073)%
    \lineheight{1}%
    \setlength\tabcolsep{0pt}%
    \put(0,0){\includegraphics[width=\unitlength,page=1]{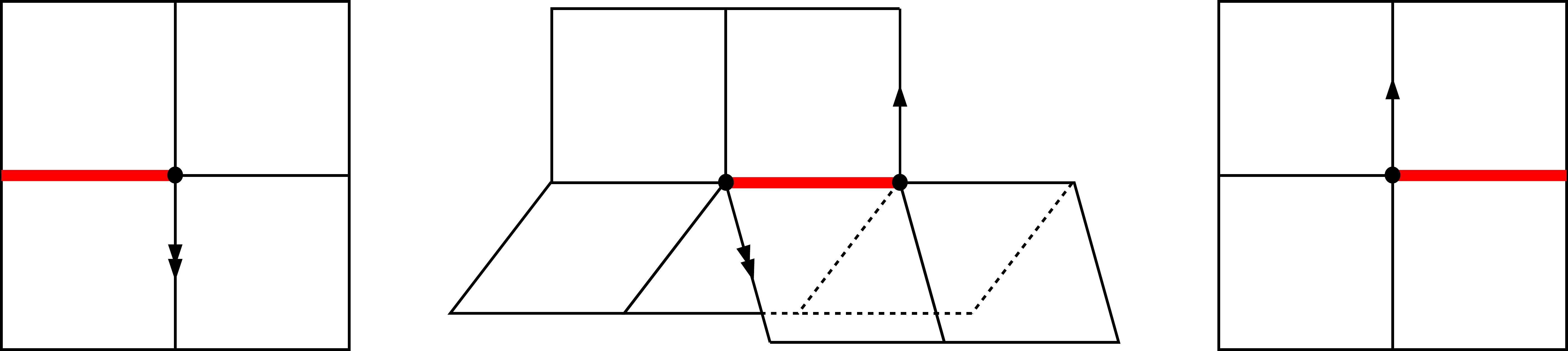}}%
    \put(0.63284704,0.17698106){\color[rgb]{0,0,0}\makebox(0,0)[lt]{\lineheight{1.25}\smash{\begin{tabular}[t]{l}$Y$\end{tabular}}}}%
    \put(0.50092512,0.12548327){\color[rgb]{1,0,0}\makebox(0,0)[lt]{\lineheight{1.25}\smash{\begin{tabular}[t]{l}$Z$\end{tabular}}}}%
  \end{picture}%
\endgroup%

\caption{A locally separating but not separating subcomplex in a $\cat 0$ square complex.}
\label{fig:locsepnonsep}
\end{figure}

When $Y$ is a homogeneous cubical complex of dimension $n$, every $k$--cube $F$ of $Y$ is contained in some $n$-cell. When $Y$ has no boundary, $F$ is contained in at least two distinct $n$-cubes. This motivates the following definition. 
Let $M$ be a mirror of $Y$ and let $F$ be a $k$--cube of $M$. 
A \textit{framing} for $F$ is a choice of two $n$--cubes $\{C_1,C_2\}$ of $Y$ such that $F\subseteq C_1\cap C_2 \subseteq M$.
We note explicitly that this definition is relative to the fixed mirror $M$.
For the next proof, we will make use of some properties of hyperplanes in $\cat 0$ cubical complexes. We refer the reader to \cite[Theorem 4.10]{SA95} or \cite[Example 3.3.(3), Lemma 13.3]{HW08} for details and proofs.

\begin{figure}[h]
\centering
\def\svgwidth{.5\columnwidth}
\begingroup%
  \makeatletter%
  \providecommand\color[2][]{%
    \errmessage{(Inkscape) Color is used for the text in Inkscape, but the package 'color.sty' is not loaded}%
    \renewcommand\color[2][]{}%
  }%
  \providecommand\transparent[1]{%
    \errmessage{(Inkscape) Transparency is used (non-zero) for the text in Inkscape, but the package 'transparent.sty' is not loaded}%
    \renewcommand\transparent[1]{}%
  }%
  \providecommand\rotatebox[2]{#2}%
  \newcommand*\fsize{\dimexpr\f@size pt\relax}%
  \newcommand*\lineheight[1]{\fontsize{\fsize}{#1\fsize}\selectfont}%
  \ifx\svgwidth\undefined%
    \setlength{\unitlength}{822.6417169bp}%
    \ifx\svgscale\undefined%
      \relax%
    \else%
      \setlength{\unitlength}{\unitlength * \real{\svgscale}}%
    \fi%
  \else%
    \setlength{\unitlength}{\svgwidth}%
  \fi%
  \global\let\svgwidth\undefined%
  \global\let\svgscale\undefined%
  \makeatother%
  \begin{picture}(1,0.50183273)%
    \lineheight{1}%
    \setlength\tabcolsep{0pt}%
    \put(0,0){\includegraphics[width=\unitlength,page=1]{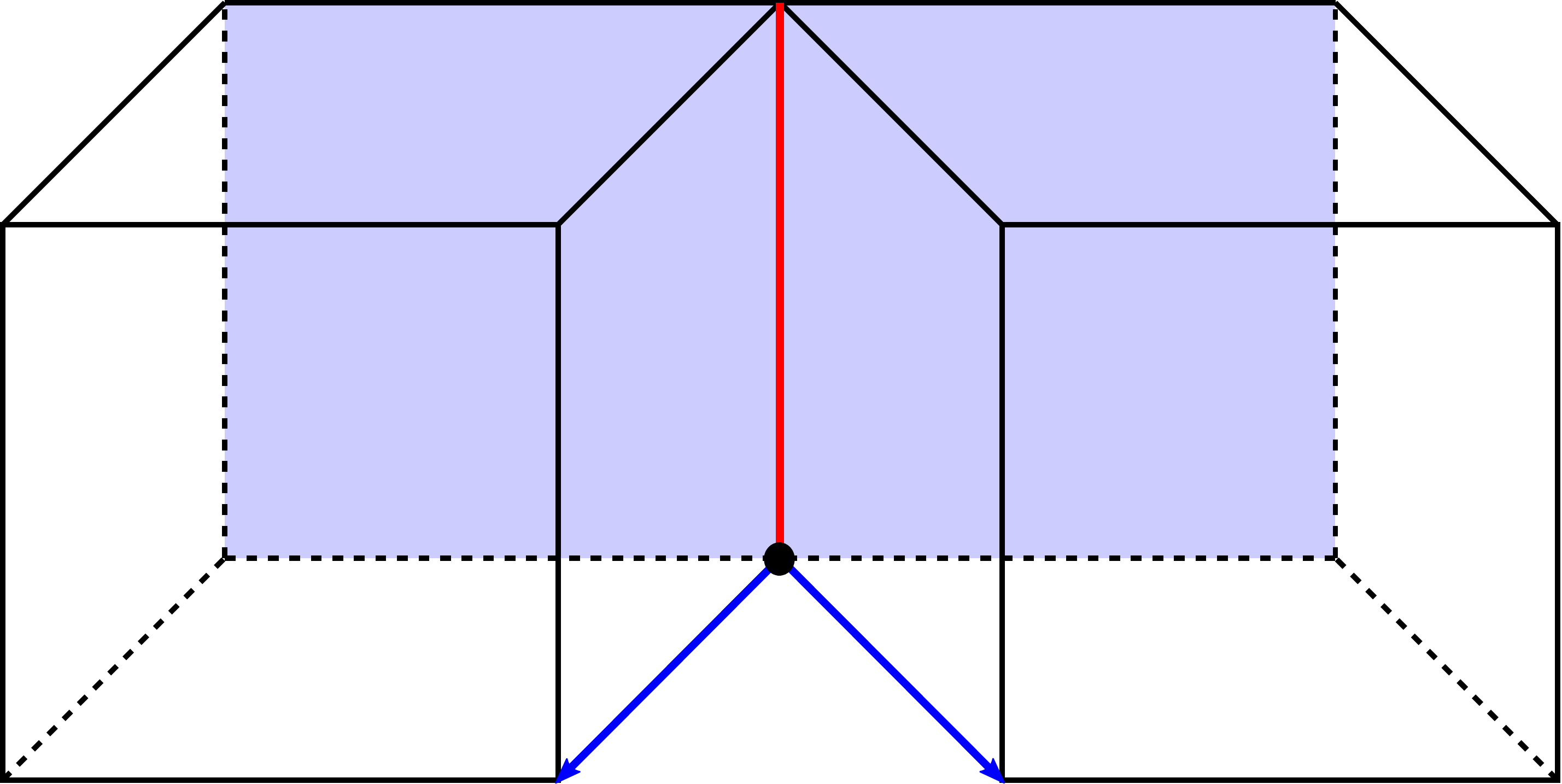}}%
    \put(0.5195822,0.30864719){\color[rgb]{1,0,0}\makebox(0,0)[lt]{\lineheight{1.25}\smash{\begin{tabular}[t]{l}$F$\end{tabular}}}}%
    \put(0.4320592,0.16459895){\color[rgb]{0,0,0}\makebox(0,0)[lt]{\lineheight{1.25}\smash{\begin{tabular}[t]{l}$v$\end{tabular}}}}%
    \put(0.20048837,0.44540174){\color[rgb]{0,0,1}\makebox(0,0)[lt]{\lineheight{1.25}\smash{\begin{tabular}[t]{l}$M$\end{tabular}}}}%
    \put(0.39734897,0.01228358){\color[rgb]{0,0,1}\makebox(0,0)[lt]{\lineheight{1.25}\smash{\begin{tabular}[t]{l}$e_1$\end{tabular}}}}%
    \put(0.54321938,0.01228358){\color[rgb]{0,0,1}\makebox(0,0)[lt]{\lineheight{1.25}\smash{\begin{tabular}[t]{l}$e_2$\end{tabular}}}}%
    \put(0.01450184,0.22659446){\color[rgb]{0,0,0}\makebox(0,0)[lt]{\lineheight{1.25}\smash{\begin{tabular}[t]{l}$C_1$\end{tabular}}}}%
    \put(0.88972423,0.22659446){\color[rgb]{0,0,0}\makebox(0,0)[lt]{\lineheight{1.25}\smash{\begin{tabular}[t]{l}$C_2$\end{tabular}}}}%
  \end{picture}%
\endgroup%

\caption{A framing for a cube $F$ on a mirror $M$.}
\label{fig:framing}
\end{figure}

\begin{lemma}\label{lem:CAT(0)_mirrors_separate}
Let $Y$ be a $\cat 0$ admissible cubical complex.
Then each mirror separates $Y$. More precisely, let $M\subseteq Y$ be a mirror, let $F\subseteq M$ be a $k$--cube, and let $\{C_1,C_2\}$ be a framing for $F$. Then $C_1,C_2$ are contained in the closure of two distinct connected components of $Y\setminus M$.
\end{lemma}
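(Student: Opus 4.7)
The plan is to exhibit $M$ as a globally two-sided hypersurface in $Y$ and then convert the two-sidedness into a continuous $\{+,-\}$-valued separation function on $Y\setminus M$ using the nearest-point projection. I use throughout that $Y$ is $\cat 0$, that $M$ is closed and convex by Proposition~\ref{prop:CAT(0)_mirrors_convex}, and hence the projection $\pi_M:Y\to M$ is well defined and continuous and $M$ itself is contractible.

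First I would establish local two-sidedness of $M$ in $Y$, namely that every point of $M$ has a neighborhood $N$ such that $N\setminus M$ has exactly two connected components. At an interior point of a top-dimensional face of $M$ this is immediate from admissibility, since such a face is a codimension-$1$ face of $Y$ lying on exactly two top-dimensional cubes of $Y$, one on each local side. At a lower-dimensional face, one argues combinatorially using foldability and the fact that $M$ is a full connected component of $f^{-1}(f(M))$: in the link of such a point, the subcomplex coming from $M$ disconnects the ambient link, and the two classes of complementary pieces are distinguished by the folding direction orthogonal to $f(M)$.

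These local two-sided structures assemble into a $\mathbb{Z}/2$-principal bundle of ``sides'' over $M$; since $M$ is contractible the bundle is trivial, and we obtain a coherent global labeling of the two sides by $+$ and $-$. For each $p\in Y\setminus M$, the geodesic from $\pi_M(p)$ to $p$ meets $M$ orthogonally at $\pi_M(p)$ in the $\cat 0$ sense, so its initial tangent direction lies in $\lk{\pi_M(p),Y}\setminus \lk{\pi_M(p),M}$ and selects one of the two global sides; call the resulting value $s(p)\in\{+,-\}$. Continuity of $s$ on $Y\setminus M$ follows from continuity of $\pi_M$ and of initial directions of geodesics in a $\cat 0$ space, together with the observation that if $s$ were to change value along a continuous path in $Y\setminus M$, the initial direction would have to cross into $\lk{\pi_M(p),M}$, which would make the geodesic tangent to $M$ at the projection point, forbidden for a nearest-point projection.

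Finally, I would verify that the framing places $C_1$ and $C_2$ on opposite sides. Since $M$ folds to a codimension-$1$ face $F'$ of $\square^n$ and each $C_j$ folds homeomorphically to $\square^n$, the intersection $C_j\cap M$ is the unique codimension-$1$ face of $C_j$ that folds to $F'$, and $C_j\setminus M$ lies entirely on one definite local side of $M$. If $C_1$ and $C_2$ lay on the same local side of $M$ along $F$, then near any interior point of $F$ their interiors would overlap transversely to $M$, producing points of $C_1\cap C_2$ outside $M$ and contradicting the framing condition $C_1\cap C_2\subseteq M$. Hence $s$ takes different values on $C_1\setminus M$ and $C_2\setminus M$, so they sit in distinct connected components of $Y\setminus M$ whose closures contain $C_1$ and $C_2$ respectively. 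The main obstacle I anticipate is the rigorous verification that the local two-sided structure extends globally without monodromy, and that the initial-direction map $p\mapsto s(p)$ is continuous at projections landing on low-dimensional strata of $M$; both should reduce to contractibility of $M$ combined with standard $\cat 0$ facts about the variation of geodesics with their endpoints.
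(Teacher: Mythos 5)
Your argument has a genuine gap at its very first step. You claim that every point of $M$ has a neighborhood $N$ such that $N\setminus M$ has \emph{exactly two} connected components, asserting that this follows from admissibility because a codimension-$1$ face of $Y$ lies on ``exactly two'' top-dimensional cubes. But the paper's definition of admissibility (see \S\ref{sec:strict hyperbolization}, relying on the definitions in \S\ref{subsec:cell complexes}) only requires $Y$ to be homogeneous and without boundary, where ``without boundary'' means each $(n-1)$-cell is contained in \emph{at least} two $n$-cubes, not exactly two. Mirrors can and do branch (see Figure~\ref{fig:edgespaces} and Remark~\ref{rem:edge map not nice}, where the paper explicitly discusses mirrors that ``branch, i.e.\ are not locally Euclidean''), so $M$ need not be locally two-sided: a neighborhood of a point on $M$ may have three or more local complementary components. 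Once local two-sidedness fails, there is no $\mathbb{Z}/2$-principal bundle of sides, no global labeling $s\colon Y\setminus M\to\{+,-\}$, and the whole apparatus collapses. Note that $Y\setminus M$ can indeed have more than two components, so the conclusion of the lemma (that $C_1$ and $C_2$ lie in two \emph{distinct} components, not that there are only two) is deliberately compatible with branching; any proof must accommodate this.

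The paper's proof circumvents this by never attempting a global side labeling. Instead it picks a vertex $v$ of $F$, considers the edges $e_1\subseteq C_1$, $e_2\subseteq C_2$ leaving $v$ transverse to $M$, passes to the hyperplanes $H_1,H_2$ dual to those edges, and uses that $Y$ (being $\cat 0$) is a special cube complex together with foldability to deduce $H_1\neq H_2$ and $H_1\cap H_2=\varnothing$. It then observes that the carrier face $H_i\times\{0\}$ lies in $M$, so any path from $H_1$ to $H_2$ must cross $M$; since $C_i$ is in the same component as $H_i$, the framing is separated. This hyperplane-carrier argument is local to the framing and indifferent to how many complementary components $M$ has globally, which is precisely what the branching phenomenon requires. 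If you want to salvage a two-sidedness approach, you would need to restrict attention to a single complementary component adjacent to $F$ at a time, but at that point you have essentially rediscovered the hyperplane argument; there is no shortcut through a $\{+,-\}$ function on all of $Y\setminus M$.
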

\proof
Let $v$ be a vertex on $F$, let $e_i$ be the edge of $C_i$ with starting point $v$ and endpoint in $C_i\setminus M$ (see Figure~\ref{fig:framing}). 
Note that this edge exists because $C_i$ is $n$--dimensional, while $M$ is $(n-1)$--dimensional and convex, so that $M\cap C_i$ is some $(n-1)$--dimensional face $E_i$ of $C_i$, by an argument similar to that of Lemma~\ref{lem:mirrors_in_tile}.
Also note that by definition of framing, $C_1\cap C_2\subseteq M$, and therefore $e_1\neq e_2$.
Let  $H_i$ be the hyperplane of $Y$ dual to $e_i$. 
In particular this means that $H_i$ meets $C_i$ in the midcube orthogonal to $e_i$.
Since $Y$ is $\cat 0$, $Y$ is special in the sense of \cite{HW08}. 
Since foldability of $Y$ prevents $e_1,e_2$ from being contained in the same square, we then get that $H_1\neq H_2$ (hyperplanes do not self-osculate) and $H_1\cap H_2=\varnothing$ (hyperplanes do not inter-osculate).  
Moreover, $H_k\cap M=\varnothing$ and $Y\setminus H_k$ consists of exactly two components, one containing $M$ and one not containing $M$.


The carrier of a hyperplane $H$ in a $\cat 0$ is isomorphic to $H\times [0,1]$. 
By definition  of mirror, if $M$ contains an $(n-1)$-cube of $H\times \{0\}$  then actually $H\times \{0\}\subseteq M$. 
Since $M$ contains the $(n-1)$--cell $E_i=C_i\cap M$ of $C_i$, and $E_i \subseteq H_i\times \{0\}$  by construction, we can conclude that $M$ contains $H_i\times\{0\}$ for $i=1,2$.
It follows that any path from $H_1$ to $H_2$ must intersect $M$.
In particular, $M$ separates $Y$ in at least two components, one containing $H_1$ and one containing $H_2$. The closures of such components contain $C_1$ and $C_2$ respectively.
\endproof

We want to extend this result to mirrors in $\uchc$. 
To do this, we introduce the following terminology, in analogy with the cubical case.
Let $M$ be a mirror of $\uchc$, and let $\sigma$ be a $k$--cell of $M$. A \textit{framing} for $\sigma$ is the choice of two distinct $n$--cells $\tau_1, \tau_2$ such that $\sigma \subseteq \tau_1\cap \tau_2 \subseteq M$.
We begin by obtaining a weak separation property.
 
 \begin{lemma}\label{lem:mirror almost separates framing}
 Let $M\in \mirrors_i$, let $\sigma\subseteq M$ be a $k$--cell, and let $\{\tau_1,\tau_2\}$ be a framing for $\sigma$.
 Then there exist two different components $C_1,C_2 \in \components_i$ whose closure contain $\tau_1,\tau_2$ respectively.
 \end{lemma}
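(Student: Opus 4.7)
My plan is to argue by contradiction via the global developing map. Suppose that the interiors of $\tau_1$ and $\tau_2$ both lie in a single component $C \in \components_i$; then there is a continuous path $\gamma : [0,1] \to C$ from $p_1 \in \mathrm{int}(\tau_1)$ to $p_2 \in \mathrm{int}(\tau_2)$ which avoids $\bigcup_{M' \in \mirrors_i} M'$. After a small generic perturbation, I may assume that $\gamma$ crosses codim-$1$ cells only transversely at isolated interior points, and that none of the cells it crosses lies in any mirror of $\mirrors_i$. I would then fix a developing map $\varphi : \uchc \to \hh^n$ by choosing an isometric embedding $\varphi|_{\tau_1}$ onto some $\Gamma$-cell $D_1$ and extending by analytic continuation across tiles; since $\uchc$ is simply connected, the discussion in Remark~\ref{rem:developing} gives a globally well-defined continuous map.

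Two elementary observations about the interplay between $\varphi$ and $\mirrors_i$ are then central. First, a point $q \in \uchc$ lies in $\bigcup_{M' \in \mirrors_i} M'$ if and only if $\varphi(q)$ lies on the $\Gamma$-orbit $\mathcal{H}_i := \Gamma \cdot H_i$, because mirrors of $\mirrors_i$ are precisely the subspaces folding to the $i$-th codimension-$1$ faces of $\square^n$. Second, since $H_i$ is orthogonal to $H_j$ for $j \neq i$ and the reflections in the coordinate hyperplanes normalize $\Gamma$, reflection across any $g \cdot H_j$ (with $g \in \Gamma$ and $j \neq i$) preserves $\mathcal{H}_i$ setwise and hence permutes the partition of $\hh^n \setminus \mathcal{H}_i$ into ``$i$-slabs''. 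Using these, the function $s(q) := [\varphi(q)] \in \pi_0(\hh^n \setminus \mathcal{H}_i)$ is well-defined on $\uchc \setminus \bigcup_{M' \in \mirrors_i} M'$ and locally constant: inside each tile, $\varphi$ embeds isometrically into a $\Gamma$-cell contained (minus its $i$-th faces) in a single $i$-slab, and analytic continuation across any $(n-1)$-cell outside $\mirrors_i$ reflects across an element of $\mathcal{H}_j$, which near the generic crossing point of $\varphi \circ \gamma$ keeps the image in the same $i$-slab. Constancy of $s$ on $C$ forces $D_1 = \varphi(\tau_1)$ and $D_2 = \varphi(\tau_2)$ to sit in a single $i$-slab of $\hh^n$.

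The contradiction would then come from showing that $D_1$ and $D_2$ must lie in \emph{different} $i$-slabs. When $\dim(\tau_1 \cap \tau_2) = n-1$ this is immediate: the tiles are directly adjacent across an $(n-1)$-face contained in $M$, its developed image spans a hyperplane $H \in \mathcal{H}_i$, and $D_2 = R_H(D_1)$ sits on the opposite side of $H$ from $D_1$. In general, when $\dim(\tau_1 \cap \tau_2) < n-1$, one connects $\tau_1$ to $\tau_2$ by a chain of tiles in the link $\lk{\tau_1 \cap \tau_2, \uchc}$ and, using Lemma~\ref{lem:mirrors_in_tile} and the convex link-level picture of $M$ around $\sigma$, checks that the framing hypothesis $\tau_1 \cap \tau_2 \subseteq M$ forces any such chain to contain an odd number of direction-$i$ adjacencies. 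Composing reflections then moves $D_1$ across an odd number of hyperplanes in $\mathcal{H}_i$, placing $D_2$ in a different $i$-slab from $D_1$. I expect this parity step to be the main obstacle: establishing that the number of direction-$i$ adjacencies in such a chain is odd and chain-independent requires a careful combinatorial argument in $\lk{\tau_1 \cap \tau_2, \uchc}$, and its chain-independence is itself a manifestation of the global well-definedness of $\varphi$ (so that the $i$-parity around any loop through the link is even).
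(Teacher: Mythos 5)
Your proposal hinges on a step that does not hold: you claim that because $\uchc$ is simply connected, analytic continuation of a single-tile isometric embedding produces a globally well-defined developing map $\varphi : \uchc \to \hh^n$. This is false. $\uchc$ is not a manifold, and the holonomy around a codimension-$2$ cell is generically non-trivial. Concretely, suppose $m$ tiles surround a codimension-$2$ cell $\tau$; by foldability $m$ is even, and since consecutive mirror faces meet orthogonally, each pair of successive reflections across adjacent $\Gamma$-cell walls containing the developed image of $\tau$ composes to a rotation by $\pi$. Going around all $m$ tiles yields a rotation by $(m/2)\pi$, which is the identity only when $m \equiv 0 \pmod 4$. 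For links with $m = 6$ tiles (say) the development does not close up, even though the small loop around $\tau$ is nullhomotopic in $\uchc$. Simple connectivity alone is not enough; one also needs the local monodromy to be trivial, and it isn't. Indeed Remark~\ref{rem:developing} explicitly warns that ``analytic continuation'' does not yield a global isometric embedding precisely because links in $X$ can be large. The formally defined developing map $\varphi \circ \foldutocell$ sends \emph{every} tile to the \emph{same} $\Gamma$-cell, which is useless for your slab-counting argument.

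One might try to salvage this by developing only along the path $\gamma$ and along a direct jump from $\tau_1$ to $\tau_2$, then comparing. But the two developments assign $\tau_2$ two potentially different $\Gamma$-cells $D_2$ and $D_2'$, and the discrepancy is exactly the holonomy of the concatenated loop. Without triviality of that holonomy, the observation that one development keeps the $i$-slab constant while the other changes it is not a contradiction — it is just a constraint on the holonomy. So the argument collapses at the same place. You also flag the parity step around $\lk{\tau_1\cap\tau_2,\uchc}$ as the ``main obstacle'' and do not complete it, but the more fundamental gap is upstream. The paper avoids all of this by lifting $\cdX : \hc \to X$ to a map $\alpha : \uchc \to \widetilde X$ between universal covers, applying the purely cubical separation statement Lemma~\ref{lem:CAT(0)_mirrors_separate} to the framing $\{\alpha(\tau_1),\alpha(\tau_2)\}$ of $\alpha(\sigma)\subseteq \alpha(M)$ in $\widetilde X$, and then pulling that separation back: $\alpha^{-1}(\alpha(M))$ is a union of mirrors from $\mirrors_i$, so the whole collection $\mirrors_i$ separates $\tau_1$ from $\tau_2$. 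No development or holonomy question arises.
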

 \proof
The map $\cdX:\hc\to X$ lifts to a map $\alpha:\uchc\to \widetilde X$ between the universal covers. 
Note that it sends mirrors to mirrors. 
In particular we obtain a mirror $\alpha(M)$ and a $k$--cube $\alpha(\sigma)\subseteq \alpha(M)$ with a framing $\{\alpha(\tau_1),\alpha(\tau_2)\}$. 
By Lemma~\ref{lem:CAT(0)_mirrors_separate} we can conclude that $\alpha(\tau_1)$ and $\alpha(\tau_2)$ are separated by $\alpha(M)$ in $\widetilde X$.
This implies that $\alpha^{-1}(\alpha (\tau_1))$ and $\alpha^{-1}(\alpha (\tau_2))$ are separated in $\uchc$ by $\alpha^{-1}(\alpha (M))$, i.e.\ the full preimage of the mirror $\alpha (M)$ in $\uchc$. 
Note that $\alpha^{-1}(\alpha (M))$ consist of infinitely many mirrors from $\mirrors_i$: indeed, recall from Lemma~\ref{lem:mirrors_in_tile} that disjoint $(n-1)$--cells of a tile belong to different mirrors.
A fortiori, $\tau_1$ and $\tau_2$ are separated by the entire collection $\mirrors_i$. 
In particular, there exists two different components $C_1,C_2 \in \components_i$ whose closure contain $\tau_1,\tau_2$ respectively, as desired.
 \endproof
 
\begin{remark}
Observe that in the proof of Lemma~\ref{lem:mirror almost separates framing}, it is not clear whether the framing is separated by $M$ itself. While the entire collection of mirrors $\mirrors_i$ disconnects $\uchc$ into a collection of complementary components, it is not a priori clear that any single mirror separates $\uchc$. 
\end{remark}

Recall from Proposition~\ref{prop:graph of spaces} that $\uchc$ admits the structure of a graph of spaces over a connected tree $T_i$, and that there is a natural retraction $r_i:\uchc \to T_i$ obtained by collapsing all the vertex spaces to points and all the cylinders over edge spaces to edges.

\begin{lemma}\label{lem:dual_graph_is_boundaryless_tree}
The tree $T_i$ has no boundary.
\end{lemma}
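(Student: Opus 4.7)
The plan is to show that every vertex of $T_i$ has degree at least $2$, interpreting ``no boundary'' to mean that $T_i$ has no leaves (consistent with Remark~\ref{rem:edge map not nice}, where a boundary vertex corresponds to a mirror contained in the closure of a single complementary component). Recall that $T_i$ has two kinds of vertices: a mirror vertex $v_M$ for each $M \in \mirrors_i$, and a component vertex $v_C$ for each $C \in \components_i$, with an edge $e_{M,C}$ whenever $M \cap \overline{C} \neq \varnothing$. So I need to handle the two cases separately.

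For a component vertex $v_C$, I would exploit the local structure of a tile in $\overline{C}$. Pick any tile $\tau$ whose interior lies in $C$ (such a tile exists because the interior of every tile is $n$-dimensional and so avoids all mirrors, hence sits in some component). By Lemma~\ref{lem:foldingmap_hypcomplex}, $\tau$ is isometric to $\uchq$ via the folding map, and so it has two opposite $(n-1)$-cells $\sigma_0, \sigma_1$ folding to the two parallel codimension-$1$ faces $\{x_i = 0\}$ and $\{x_i = 1\}$ of $\square^n$. These cells lie on mirrors $M_0, M_1 \in \mirrors_i$, and $M_0 \neq M_1$ because $\foldutosquare^{-1}(\{x_i=0\})$ and $\foldutosquare^{-1}(\{x_i=1\})$ are disjoint. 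Since $\sigma_k \subseteq \overline{C} \cap M_k$ for $k=0,1$, both edges $e_{M_0,C}$ and $e_{M_1,C}$ appear in $T_i$, so $\deg(v_C) \geq 2$.

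For a mirror vertex $v_M$, my plan is to produce a framing of some cell of $M$ and invoke Lemma~\ref{lem:mirror almost separates framing}. Pick any $(n-1)$-cell $\sigma$ of $M$, and let $C = \cdXu(\sigma)$ be the $(n-1)$-cube of $X$ to which it folds. Admissibility of $X$ (homogeneous and without boundary) ensures that $C$ is a face of at least two distinct $n$-cubes of $X$. Via the link isomorphism of part~\ref{item:strat_links_cells_cube} of Lemma~\ref{lem:strat_links_cells}, these correspond to two distinct tiles $\tau_1, \tau_2$ in $\uchc$ both containing $\sigma$. By Lemma~\ref{lem:strat_cell_intersection}, $\tau_1 \cap \tau_2$ is a single cell containing $\sigma$; as $\tau_1 \neq \tau_2$ it has dimension at most $n-1$, so it must equal $\sigma$. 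Thus $\sigma \subseteq \tau_1 \cap \tau_2 = \sigma \subseteq M$, and $\{\tau_1, \tau_2\}$ is a framing for $\sigma$. Lemma~\ref{lem:mirror almost separates framing} then puts $\tau_1, \tau_2$ in the closures of distinct components $C_1, C_2 \in \components_i$, so both $e_{M, C_1}$ and $e_{M, C_2}$ are edges of $T_i$, giving $\deg(v_M) \geq 2$.

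The main subtlety will be the mirror case: as Example~\ref{ex:locsep nonsep} illustrates, locally separating subcomplexes can fail to separate globally, so one cannot conclude directly that $M$ has two sides. The framing argument is precisely what reduces the problem to the already-established cubical statement (Lemma~\ref{lem:CAT(0)_mirrors_separate}), where global separation is inherited from the hyperplane structure of $\widetilde X$ under the map $\alpha:\uchc \to \widetilde X$. Everything else is bookkeeping in the cellular stratification.
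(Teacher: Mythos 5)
Your proof is correct and follows the paper's approach: treat mirror vertices and component vertices separately, and for the former produce a framing of an $(n-1)$--cell of $M$ and apply Lemma~\ref{lem:mirror almost separates framing}. The minor differences---you construct the framing explicitly from admissibility of $X$ and the link isomorphism, and for $v_C$ you exhibit two specific mirrors via the two parallel $i^{\text{th}}$ faces of $\square^n$ rather than invoking Lemma~\ref{lem:mirrors_in_tile} (which the paper uses to note that $v_C$ in fact has infinite degree)---amount to filling in details the paper leaves implicit, not a different route.
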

\proof
It is enough to show that each vertex has at least two neighboring vertices.
Vertices of $T_i$ are either associated to mirrors from $\mirrors_i$ or to components from $\components_i$. 
We analyze the two different cases separately.
Let $v_C$ be the vertex associated to a component $C\in \components_i$. Then $v_C$ has infinitely many edges coming into it, because $C$ has infinitely many mirrors from $\mirrors_i$ on its boundary (this is already true for a single tile: by Lemma~\ref{lem:mirrors_in_tile}, disjoint $(n-1)$--cells in the boundary of a tile belong to different mirrors). 

Now let $v_M$ be the vertex associated to a mirror $M\in \mirrors_i$.
Let $\sigma\subseteq M$ be an $(n-1)$-cell on it, and pick a framing $\{\tau_1,\tau_2\}$.
By Lemma~\ref{lem:mirror almost separates framing}, there exist two different components $C_1,C_2 \in \components_i$ whose closure contain $\tau_1,\tau_2$ respectively.
The corresponding vertices $v_{C_1},v_{C_2}$ in $T_i$ are both adjacent to the vertex $v_M$ corresponding to $M$, as desired.
\endproof

The next result is the analogue of Lemma~\ref{lem:CAT(0)_mirrors_separate} from the cubical case.

\begin{proposition}\label{prop:mirrors separate}
Each $M\in \mirrors_i$ separates $\uchc$. 
More precisely, let $M\in \mirrors_i$ be a mirror, let $\sigma \subseteq M$  be a $k$--cell, and let $\{\tau_1,\tau_2\}$ be a framing for $\sigma$. Then $\tau_1,\tau_2$ are contained in the closure of two distinct connected components of $\uchc\setminus M$.
\end{proposition}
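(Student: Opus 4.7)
The plan is to exploit the graph of spaces decomposition of $\uchc$ established in Proposition~\ref{prop:graph of spaces}, which realizes $\uchc$ as a graph of spaces over the tree $T_i$. Writing $r_i : \uchc \to T_i$ for the associated retraction, the strategy is to transfer the separation statement to the tree $T_i$, where it is transparent, and then lift it back.

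The first step will be to verify that $r_i^{-1}(v_M) = M$. In the graph of spaces construction, $M$ is precisely the vertex space attached at $v_M$, so $r_i$ collapses $M$ onto $v_M$ and sends each cylinder over an edge space onto an open edge, none of which contains $v_M$. Consequently $\uchc \setminus M = r_i^{-1}(T_i \setminus \{v_M\})$. By Lemma~\ref{lem:dual_graph_is_boundaryless_tree}, $v_M$ has at least two neighbors in $T_i$, so removing it leaves at least two connected components. For each such component $U \subseteq T_i \setminus \{v_M\}$, the preimage $r_i^{-1}(U)$ is a union of vertex spaces and edge cylinders glued along their incidences in $U$; since each such piece is connected (mirrors and components are connected by definition, and edge spaces are connected by Lemma~\ref{lem:edge_spaces_npc}) and the subgraph $U$ is connected, the preimage is connected. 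This exhibits $\uchc \setminus M$ as a disjoint union of at least two connected open subsets, proving separation.

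For the statement about the framing, I will apply Lemma~\ref{lem:mirror almost separates framing} to $\{\tau_1, \tau_2\}$ to obtain two distinct components $C_1, C_2 \in \components_i$ whose closures contain $\tau_1$ and $\tau_2$ respectively. Since $\sigma \subseteq M \cap \tau_j \subseteq M \cap \overline{C_j}$ for $j = 1, 2$, both $v_{C_1}$ and $v_{C_2}$ are adjacent to $v_M$ in $T_i$. In the tree $T_i$, the unique path from $v_{C_1}$ to $v_{C_2}$ has length two and passes through $v_M$, so these vertices lie in distinct components of $T_i \setminus \{v_M\}$. Lifting through $r_i$ places $\tau_1 \subseteq r_i^{-1}(v_{C_1})$ and $\tau_2 \subseteq r_i^{-1}(v_{C_2})$ in distinct components of $\uchc \setminus M$, as required.

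The main subtlety I anticipate is in the first step. Remark~\ref{rem:edge map not nice} warns that the edge maps in this graph of spaces can fail to be local isometries, injective, or surjective, and one might worry that this complicates the identification $r_i^{-1}(v_M) = M$ or the connectedness of $r_i^{-1}(U)$. However, these pathologies only affect the specific way the cylinders are glued to the vertex spaces; they do not alter the underlying combinatorial graph of spaces structure, which is what the retraction depends on. Provided this is handled carefully, the argument reduces cleanly to combinatorics on the tree $T_i$.
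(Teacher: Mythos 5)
Your proof is correct and takes essentially the same route as the paper: both reduce separation in $\uchc$ to separation in the tree $T_i$ via the retraction $r_i$ from Proposition~\ref{prop:graph of spaces}, using Lemma~\ref{lem:dual_graph_is_boundaryless_tree} for the first statement and Lemma~\ref{lem:mirror almost separates framing} for the framing statement. You spell out a couple of steps the paper leaves implicit (notably, that $r_i^{-1}$ of a connected subgraph of $T_i\setminus\{v_M\}$ is connected, and the caveat about the edge maps from Remark~\ref{rem:edge map not nice}), but the substance of the argument is identical.
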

\proof
For the first statement, consider the natural retraction $r_i:\uchc \to T_i$. 
Note that for each  mirror $M\in \mirrors_i$ there is a corresponding vertex $v_M\in T_i$, and $M=r_i^{-1}(v_M)$.
By Lemma~\ref{lem:dual_graph_is_boundaryless_tree} we know that $T_i$ is a tree with no boundary, hence any of its vertices disconnects it. Therefore $M=r_i^{-1}(v_M)$ disconnects $\uchc$.

For the second statement, we fix a $k$--cell $\sigma \subseteq M$ and a framing $\{\tau_1,\tau_2\}$. By Lemma~\ref{lem:mirror almost separates framing} we get two components $C_1,C_2 \in \components_i$ containing $\tau_1,\tau_2$ in their closures. Note that these are complementary components of the entire collection of mirrors $\mirrors_i$, not complementary components of the mirror $M$.
The corresponding vertices $v_{C_1},v_{C_2}$ in $T_i$ are both adjacent to the vertex $v_M$ corresponding to $M$, and are separated by $v_M$ in $T_i$, since $T_i$ is a tree (see Proposition~\ref{prop:graph of spaces}). 
Arguing as above via the natural retraction $r_i:\uchc \to T_i$, we can conclude that $\tau_1,\tau_2$ are separated by $M$ in $\uchc$.
\endproof

We conclude this section with some remarks about the construction that we have described.

\begin{remark}[Foldability is key]
Foldability of $X$ has played the role of some sort of \textit{combinatorial completeness}, as it guarantees that if a mirror $M$ intersects a tile $T$, then $M$ goes across  $T$ along a top dimensional subcomplex of the boundary. 
This has  provided both features of  non--positive curvature (see Remark~\ref{rem:fold npc})   and separation properties (as in the proof of Lemma~\ref{lem:CAT(0)_mirrors_separate}).
Example~\ref{ex:locsep nonsep} shows that neither is available if foldability is not taken into account in the definition of mirrors (even on a foldable complex).
\end{remark}

\begin{remark}[Complexes with boundary]
The construction from \S\ref{subsec:graph of spaces} can be generalized to cubical complexes that have enough \textit{good} mirrors (i.e.\ mirrors that admit a cell which locally separates a framing), and
keeping track only of such mirrors in the construction of the tree of spaces.
For instance, one could drop the assumption that $X$ is without boundary, and ignore the mirrors that are entirely contained in the boundary. One still gets a decomposition as a graph of spaces over a tree without boundary. Indeed, vertices associated to good mirrors still have degree at least $2$. One may worry about vertices associated to components. Even if there is a cube of $X$ with only one face $F$ contained in a good mirror, each of the components  $C\in \components_i$ of $\uchc$ arising from it still has infinitely many boundary cells corresponding to $F$. This guarantees that the vertices of the tree which are associated to components in $\components_i$ still have infinite degree.

Alternatively, one can work with a relative version of the Charney-Davis hyperbolization procedure that is designed to hyperbolize complexes with boundary without altering the boundary components (see \cite{CD95,BE07}).
We consider the problem of cubulating the resulting relatively hyperbolic groups in \cite{LRGM23}.
\end{remark}


\section{The dual cubical complex}\label{sec:dual cubical complex}

We define a cubical complex associated to the stratification of $\uchc$ introduced in \S\ref{subsec:stratification}, and prove that it is a $\cat 0$ cubical complex (see Theorem~\ref{thm:dual cubical complex is CAT(0)}). 
Recall that $X$ is assumed to be an admissible cubical complex (as defined at the beginning of \S\ref{sec:strict hyperbolization}). Let $n=\dim(X)$ be its dimension.
The \textit{dual cubical complex} is denoted $\dccx$ and defined as follows.

\begin{itemize}
    \item Vertices are given by all the $k$-cells in $\uchc$ for $k=0,\dots,n$. 
    \item Two vertices corresponding to cells $\sigma$ and $\tau$ are connected by an edge if and only if  $|\dim(\sigma)-\dim(\tau)|=1$, and either $\sigma \subseteq \tau$ or $\tau\subseteq \sigma$.
    \item For $k>1$, we attach one $k$-dimensional cube whenever we see its 1-skeleton. 
\end{itemize}
The resulting cell complex $\dccx$ is a cubical complex (see Figure~\ref{fig:dccx}). 
Moreover, we can label its $0$--skeleton by integers $0\leq k\leq n$: if $v$ is a vertex dual to a $k$--cell $\sigma$, then we define the \textit{height} of $v$ to be $\height v=\dim (\sigma)=k$.

\begin{figure}[h]
\centering
\def\svgwidth{\columnwidth}
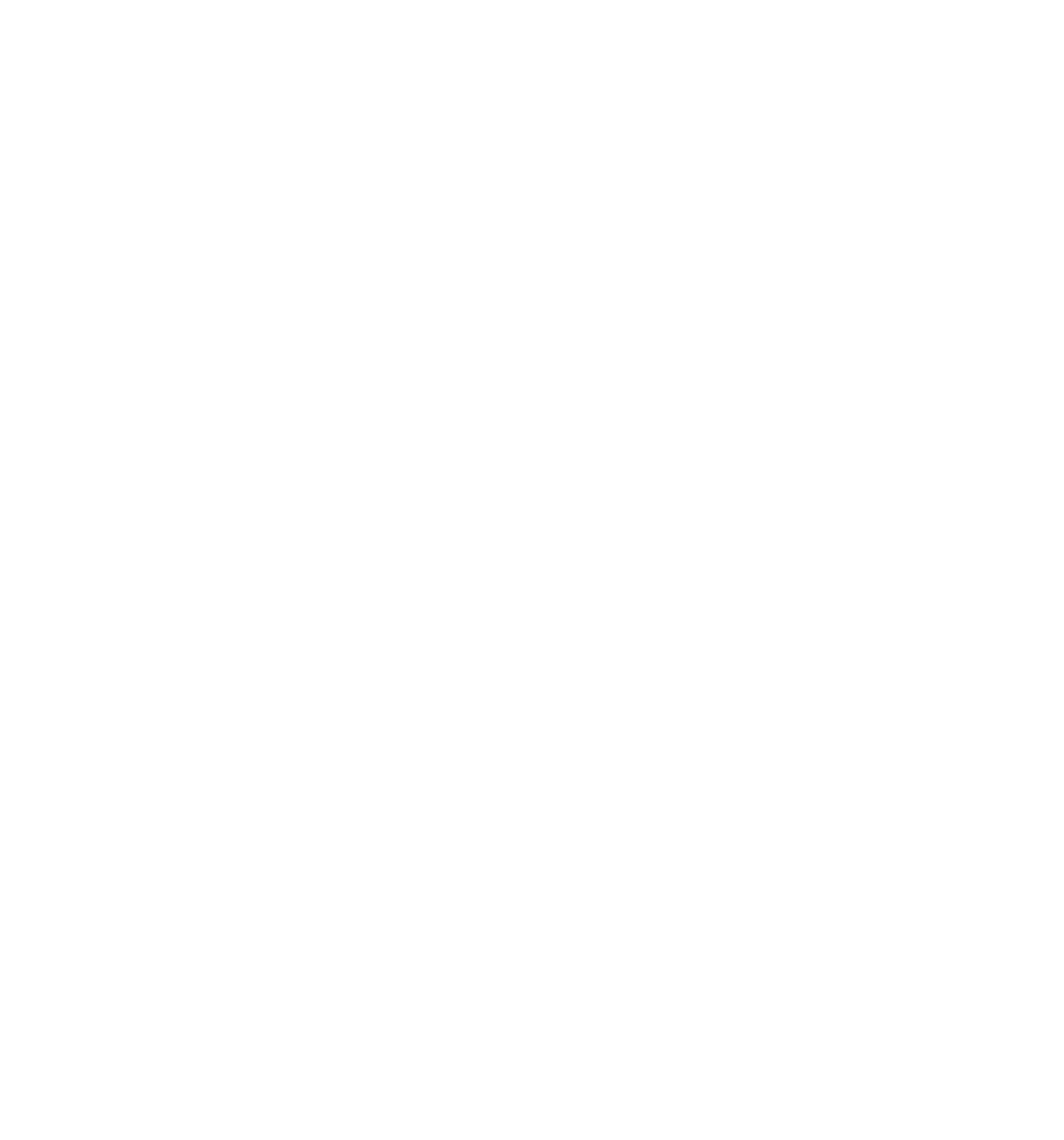
    \caption{The dual cubical complex $\dccx$ superimposed on the stratification of $\uchc$; compare Figure~\ref{fig:tiles}. (In this picture the dimension is $n=2$. Key: $\fullmoon$, $\astrosun$, and $\newmoon$  denote a vertex of height $0,1,2$ respectively.)}    
    \label{fig:dccx}
\end{figure}

Note that if one applies this construction to the standard $n$-simplex, one obtains a natural cubulation of the $n$-simplex by $n+1$ cubes of the same dimension.
However, it is not clear when this construction preserves asphericity. For instance, applying this construction to a solid octahedron results in a cube complex with non-trivial $\pi_2$.

In this section we study the combinatorial geometry of $\dccx$, by analyzing cubes and links in \S\ref{subsec:cubes and links}, some notions of complexity for edge--paths in \S \ref{subsec:efficiency} and \S \ref{subsec:mirror complexity}, and how to use them to prove that $\dccx$ is simply connected in \S \ref{subsec:mirror surgeries}. Before starting, the following two remarks address the relation between $\dccx$ and other natural combinatorial structures associated to $\uchc$ and its collection of mirrors $\mirrors$.

\begin{remark}[The associated graded poset]
The set of cells in $\uchc$ can be partially ordered by inclusion. The result is a graded poset, whose rank function is given by the dimension of the corresponding cell. The height we just defined is induced by this rank function. One could construct the order complex of such a poset, by taking a simplex for every chain. This would result in a simplicial complex, and is not what we are considering here.
\end{remark}

\begin{remark}[The associated wallspace] \label{rem:wallspaces}
Since mirrors are separating subspaces (see  Proposition~\ref{prop:mirrors separate}), the collection of mirrors can be used to define a wallspace structure $(\uchc, \mirrors)$ on $\uchc$, and one could consider the dual $\cat 0$ cubical complex $\dccm$ associated to this wallspace by Sageev's construction. 
We refer the reader to \cite{SA95,HP98,HW14} for details about this construction, and we only review the main ingredients here.
Given a mirror $M$, any partition of the complementary components into two classes is called a wall associated to $M$. An orientation of a wall is a choice of one of the two classes.
A vertex of $\dccm$ can then be described as a consistent choice of orientation for each mirror.

When $X$ and all mirrors are homeomorphic to manifolds, each mirror of $\uchc$ has exactly two complementary components.
In this quite restrictive case, an orientation of a wall is just a choice of one of the two complementary components. 
Therefore vertices of $\dccm$ correspond to tiles (i.e.\ $n$--cells) in the stratification of $\uchc$, and two vertices are connected by an edge when the corresponding tiles are adjacent along a mirror.
In particular, $\dccm$ is an $n$--dimensional cubical complex that can be subdivided to recover  $\dccx$.
However, if there are mirrors which have more than two complementary components (such as in Figures~\ref{fig:mirrors} and \ref{fig:dccx}), then we find vertices in $\dccm$ which do not correspond to tiles  from the stratification of $\uchc$ (they are not canonical vertices, in the terminology of \cite{HW14}).
As a result, the dimension of $\dccm$ is usually higher than that of $X$, and it is more challenging to relate the actions of $\hg$ on $\uchc$ and on $\dccm$. 
\end{remark}


\subsection{Cubes and links}\label{subsec:cubes and links}
In this section we explore basic facts about the cubical geometry of $\dccx$. While this complex is not locally compact (see Remark~\ref{rmk:dcc_non_locally_finite}), its dimension is the same as that of $X$ (see Lemma~\ref{lem:dcc_fin_dim}), and the links of vertices are flag complexes (see Proposition~\ref{prop:flag_links}).

The first two lemmas show that squares and cubes in $\dccx$ admit unique vertices of minimum and maximum height. Recall that the height of a vertex is the dimension of its dual cell, and notice that, by definition of $\dccx$, if $u,v$ are adjacent vertices, then $|\height u - \height v |=1$.

\begin{figure}[h]
\centering
\def\svgwidth{.5\columnwidth}
\begingroup%
  \makeatletter%
  \providecommand\color[2][]{%
    \errmessage{(Inkscape) Color is used for the text in Inkscape, but the package 'color.sty' is not loaded}%
    \renewcommand\color[2][]{}%
  }%
  \providecommand\transparent[1]{%
    \errmessage{(Inkscape) Transparency is used (non-zero) for the text in Inkscape, but the package 'transparent.sty' is not loaded}%
    \renewcommand\transparent[1]{}%
  }%
  \providecommand\rotatebox[2]{#2}%
  \newcommand*\fsize{\dimexpr\f@size pt\relax}%
  \newcommand*\lineheight[1]{\fontsize{\fsize}{#1\fsize}\selectfont}%
  \ifx\svgwidth\undefined%
    \setlength{\unitlength}{353.48156065bp}%
    \ifx\svgscale\undefined%
      \relax%
    \else%
      \setlength{\unitlength}{\unitlength * \real{\svgscale}}%
    \fi%
  \else%
    \setlength{\unitlength}{\svgwidth}%
  \fi%
  \global\let\svgwidth\undefined%
  \global\let\svgscale\undefined%
  \makeatother%
  \begin{picture}(1,0.71079839)%
    \lineheight{1}%
    \setlength\tabcolsep{0pt}%
    \put(0,0){\includegraphics[width=\unitlength,page=1]{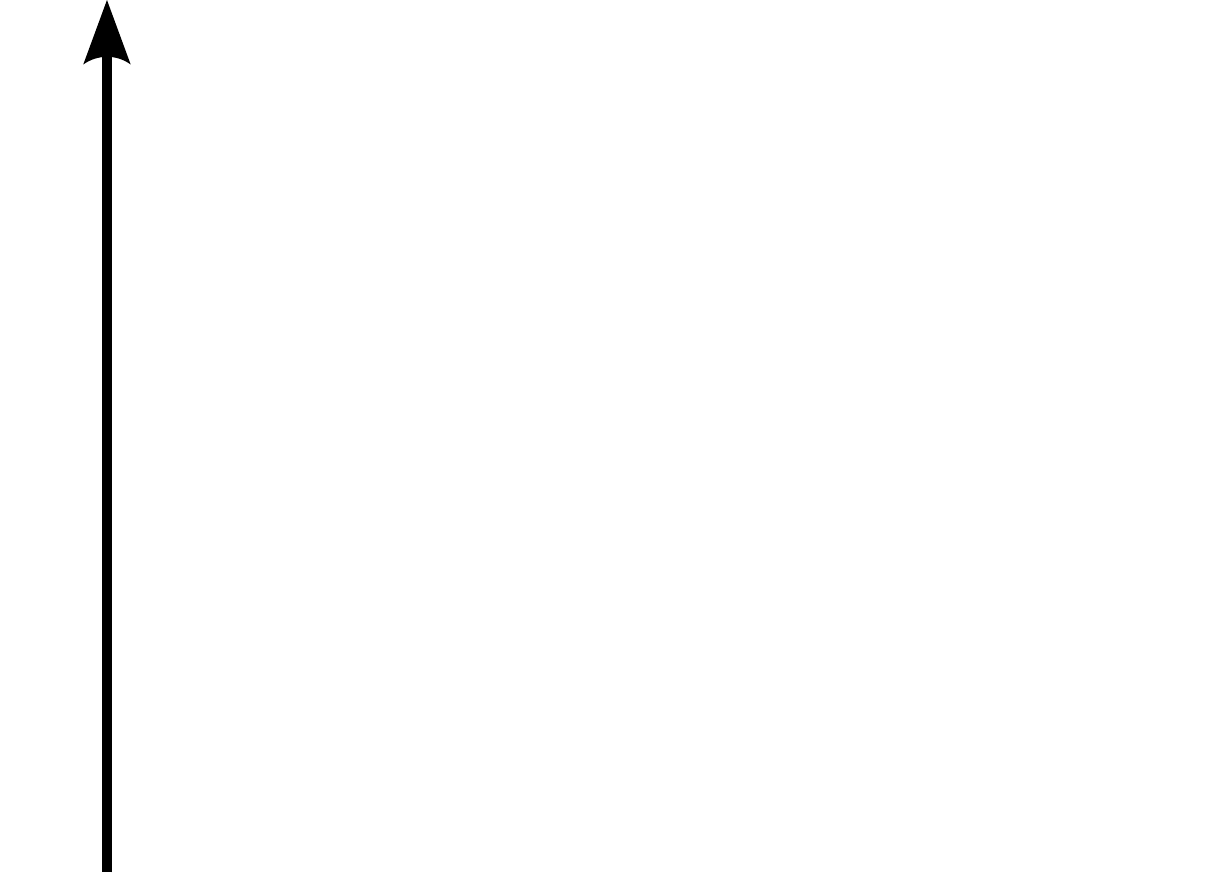}}%
    \put(-0.00642325,0.60761524){\color[rgb]{0,0,0}\makebox(0,0)[lt]{\lineheight{1.25}\smash{\begin{tabular}[t]{l}$\operatorname{h}$\end{tabular}}}}%
    \put(0,0){\includegraphics[width=\unitlength,page=2]{dcc_square.pdf}}%
    \put(0.4904514,0.06978685){\color[rgb]{0,0,0}\makebox(0,0)[lt]{\lineheight{1.25}\smash{\begin{tabular}[t]{l}$v_3$\end{tabular}}}}%
    \put(0.4904514,0.57476356){\color[rgb]{0,0,0}\makebox(0,0)[lt]{\lineheight{1.25}\smash{\begin{tabular}[t]{l}$v_1$\end{tabular}}}}%
    \put(0.23486084,0.32857213){\color[rgb]{0,0,0}\makebox(0,0)[lt]{\lineheight{1.25}\smash{\begin{tabular}[t]{l}$v_2$\end{tabular}}}}%
    \put(0.73983765,0.32857213){\color[rgb]{0,0,0}\makebox(0,0)[lt]{\lineheight{1.25}\smash{\begin{tabular}[t]{l}$v_4$\end{tabular}}}}%
  \end{picture}%
\endgroup%

    \caption{A square in $\dccx$.}
    \label{fig:dcc_square}
\end{figure}

\begin{lemma}\label{lem:height_square}
Let $S$ be a square of $\dccx$. 
Let $v_1,v_2,v_3,v_4$ be its vertices, with $v_2$ and $v_4$ non-adjacent in $S$.
If $\height{v_2} = \height{v_4}$, then $|\height{v_1}-\height{v_3}|=2$. 
In particular there is a unique vertex of maximal (respectively minimal) height, and the cell dual to it contains (respectively is contained in) each of the cells dual to the other vertices.

\end{lemma}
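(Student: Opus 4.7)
The plan is to argue by contradiction on the first claim, and then read off the ``in particular'' statement from the structure that emerges. Since adjacent vertices of $\dccx$ have heights differing by exactly $1$, the hypothesis $\height{v_2}=\height{v_4}=k$ forces $\height{v_1},\height{v_3}\in\{k-1,k+1\}$, so the content of the first claim is to rule out $\height{v_1}=\height{v_3}$.

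First I would suppose $\height{v_1}=\height{v_3}=k+1$ and seek a contradiction. Writing $\sigma_i$ for the cell dual to $v_i$, the four edges of $S$ record the inclusions
\[
\sigma_2\subseteq \sigma_1,\quad \sigma_2\subseteq \sigma_3,\quad \sigma_4\subseteq \sigma_1,\quad \sigma_4\subseteq \sigma_3,
\]
so $\sigma_2,\sigma_4$ are $k$-cells contained in $\sigma_1\cap \sigma_3$. By Lemma~\ref{lem:strat_cell_intersection}, $\sigma_1\cap\sigma_3$ is a single cell of dimension $k$ or $k+1$. If it has dimension $k+1$, then $\sigma_1=\sigma_1\cap \sigma_3=\sigma_3$, forcing $v_1=v_3$. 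If it has dimension $k$, then it equals both $\sigma_2$ and $\sigma_4$, forcing $v_2=v_4$. Either outcome contradicts the fact that the four vertices of the square $S$ are distinct.

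The symmetric case $\height{v_1}=\height{v_3}=k-1$ is handled in the same way: now $\sigma_1,\sigma_3$ are $(k-1)$-cells sitting inside $\sigma_2\cap \sigma_4$, and invoking Lemma~\ref{lem:strat_cell_intersection} again forces $\sigma_2\cap \sigma_4$ to be a single cell of dimension $k-1$ or $k$, each case producing a coincidence among the $v_i$. Hence $|\height{v_1}-\height{v_3}|=2$, as claimed.

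For the ``in particular'' part, the first claim gives a labeling in which one of $v_1,v_3$ has height $k+1$ and the other has height $k-1$; after relabeling assume $\height{v_1}=k+1$ and $\height{v_3}=k-1$. The two edges through $v_1$ yield $\sigma_2,\sigma_4\subseteq \sigma_1$, and then $\sigma_3\subseteq \sigma_2\subseteq \sigma_1$, so $\sigma_1$ contains each of the other three dual cells. Dually, $\sigma_3\subseteq \sigma_2$ and $\sigma_3\subseteq \sigma_4$ via the edges at $v_3$, while $\sigma_3\subseteq \sigma_2\subseteq \sigma_1$ gives $\sigma_3\subseteq \sigma_1$, so $\sigma_3$ is contained in each of the other three. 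Uniqueness of these maxima and minima is automatic from the height function being injective on the four vertices. I do not anticipate a genuine obstacle here; the only point that requires care is verifying that the intersection $\sigma_1\cap \sigma_3$ (respectively $\sigma_2\cap \sigma_4$) is nonempty before applying Lemma~\ref{lem:strat_cell_intersection}, which follows from the existence of the edges of $S$.
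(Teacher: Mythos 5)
Your proof is correct and follows essentially the same route as the paper's: reduce to ruling out $\height{v_1}=\height{v_3}$, and derive a contradiction from the fact that $\sigma_1\cap\sigma_3$ (respectively $\sigma_2\cap\sigma_4$) would have to be a single cell containing two distinct cells of the same dimension, invoking Lemma~\ref{lem:strat_cell_intersection}. The only difference is that you spell out the dimension dichotomy (intersection of dimension $k$ versus $k+1$) explicitly and write out the dual case $\height{v_1}=\height{v_3}=k-1$ in full, whereas the paper compresses both into a one-line appeal to the lemma and a remark that the other case is "completely analogous"; this is a matter of exposition, not substance.
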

\proof
Let $h=\height{v_2}=\height{v_4}$ be the common value of the height of $v_2$ and $v_4$. Since $v_1$ is adjacent to $v_2$ and $v_4$, we have $\height{v_1}=h\pm 1$, and similarly for $v_3$ (see Figure~\ref{fig:dcc_square}). 
In particular $|\height{v_1}-\height{v_3}|$ is either $0$ or $2$.
By contradiction let us assume that $|\height{v_1}-\height{v_3}|=0$, i.e.\ $\height{v_1}=\height{v_3}=h\pm 1$.
Without loss of generality we can assume that $\height{v_1}=\height{v_3}=h+1$.  (The case $\height{v_1}=\height{v_3}=h-1$ is completely analogous, via a dual argument).
For $j=1,2,3,4$, let $\sigma_j$ be the cell of $\uchc$ dual to the vertex $v_j$.
Since $v_1$ is adjacent to $v_2$ and $v_4$, and has higher height, $\sigma_1$ contains $\sigma_2$ and $\sigma_4$; the same holds for $\sigma_3$. So $\sigma_1 \cap \sigma_3$ contains $\sigma_2\cup \sigma_4$, contradicting Lemma~\ref{lem:strat_cell_intersection}.

To prove the final statement, let us assume without loss of generality that $v_1$ is the vertex of maximal height and $v_3$ is the one of minimal height, i.e.\ $\height{v_1}-1=h=\height{v_3}+1$. Then we have that $\sigma_3\subseteq \sigma_2,\sigma_4 \subseteq \sigma_1$.
\endproof

In the next lemma we extend this result to higher dimensional cubes of $\dccx$. By  an \textit{edge--path} in $\dccx$ we will mean a continuous path which is entirely contained in the 1--skeleton (i.e.\ is a sequence of edges). If an edge--path $p$ goes through vertices $v_0,\dots,v_s$ of $\dccx$, we will write $p=(v_0,\dots,v_s)$; note that the sequence of vertices completely determines the sequence of edges, hence the path. We call $p$ an \textit{edge--loop} if it is a closed loop, i.e.\ $v_0=v_s$.
For an edge--path $p=(v_0,\dots,v_s)$ we define $\length p =s$  to be the \textit{length} of $p$, i.e.\ the number of edges in it. 
We also define the \textit{height} of $p$ to be $\height p=\max\{\height {v_0},\dots,\height {v_s}\}$. Notice that along each edge of $p$ the height must increase or decrease exactly by 1. 

\begin{lemma}\label{lem:cube_minmax}
Let $Q$ be a cube of $\dccx$. Then the following hold.
\begin{enumerate}
    \item There is a unique vertex $v\in Q$ of minimal height. The cell dual to it is contained in each of the cells dual to the vertices of $Q$.
    \item There is a unique vertex $w\in Q$ of maximal height. The cell dual to it  contains each of the cells dual to the vertices of $Q$.
\end{enumerate}
\end{lemma}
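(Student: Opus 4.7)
The plan is to argue by induction on the dimension $k$ of the cube $Q$. The case $k=1$ is immediate from the definition of edges in $\dccx$, and the case $k=2$ is precisely Lemma~\ref{lem:height_square}. For the inductive step with $k\geq 3$, I would decompose $Q = F \times \{0,1\}$ as the product of a $(k-1)$-dimensional face $F$ and an edge; each vertex $v \in F$ has a unique partner $v' \in F' := F\times \{1\}$ joined to it by an edge of $Q$, and I set $\epsilon(v) = \height{v'}-\height{v} \in \{+1,-1\}$.

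The key step, which is also the main obstacle, is to show that $\epsilon$ is constant on $F$. This reduces to a finite case check on $2$-faces. Given adjacent $v, w \in F$ with opposite signs $\epsilon(v) \neq \epsilon(w)$, the four vertices $\{v, w, v', w'\}$ span a $2$-face of $Q$; combining the edge constraints $|\height{v}-\height{w}|=|\height{v'}-\height{w'}|=1$ with $\epsilon(v)=-\epsilon(w)$ pins down the four heights up to a symmetry and forces both pairs of opposite vertices of the square to have equal heights, contradicting Lemma~\ref{lem:height_square} (which asserts that if one pair of opposite vertices of a square has equal heights then the other pair differs by $\pm 2$). Since $\epsilon(v)=\epsilon(w)$ for every adjacent pair of vertices in $F$, the connectedness of the $1$-skeleton of $F$ forces $\epsilon$ to be constant on $F$.

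After possibly swapping $F$ and $F'$, I may assume $\epsilon \equiv +1$, so that every vertex of $F'$ is strictly higher than its partner in $F$. The unique minimum of $Q$ must therefore lie in $F$ and the unique maximum in $F'$, and applying the inductive hypothesis to $F$ and $F'$ separately yields both existence and uniqueness. For the cell-containment statement, let $v_{\min}$ denote the unique minimum and $u \in Q$ be arbitrary. If $u \in F$, then $\sigma_{v_{\min}} \subseteq \sigma_u$ follows from the inductive hypothesis applied to $F$. If instead $u = v' \in F'$, induction applied to $F$ gives $\sigma_{v_{\min}} \subseteq \sigma_v$ for its partner $v \in F$, and since $v, v'$ are adjacent in $\dccx$ with $\height{v} < \height{v'}$, the definition of the edges of $\dccx$ yields $\sigma_v \subseteq \sigma_{v'}$, so $\sigma_{v_{\min}}\subseteq \sigma_u$ by transitivity. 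The corresponding statement for the unique maximum follows by a dual argument.
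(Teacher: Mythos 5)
Your argument is correct, but it takes a genuinely different route from the paper's. The paper proves uniqueness of the minimal-height vertex by a path-lowering argument: if two vertices of minimal height existed, take an edge-path in $Q$ joining them of minimal height, and repeatedly slide its local maxima across squares using Lemma~\ref{lem:height_square} to produce a lower path, a contradiction. The containment claim then gets a separate (and somewhat delicate) ``walk backward, then walk forward'' argument of the same flavor, keeping the path length constant while lowering vertices. You instead induct on $\dim Q$: decompose $Q = F \times \{0,1\}$, define the sign $\epsilon(v)=\height{v'}-\height{v}\in\{\pm 1\}$, and show $\epsilon$ is constant on $F$ by checking that a sign flip on an edge of $F$ would force a $2$-face whose two pairs of opposite vertices both have equal heights, contradicting Lemma~\ref{lem:height_square}. (Your case analysis is sound: after fixing $\height v = a$ and using the four edge constraints, the assumption $\epsilon(v)=-\epsilon(w)$ forces $\height w = a\pm 1$ in exactly one way, and then indeed both diagonal pairs of the square coincide.) With $\epsilon$ constant, both uniqueness and the containment statement drop out of the inductive hypothesis for the facets $F, F'$ plus transitivity of inclusion. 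What each approach buys: the paper's is entirely local and self-contained, needing only the square lemma applied along paths. Yours is more structural --- it shows that on $Q$ the height is, up to an additive constant, the $\ell^1$-distance from $v_{\min}$ --- and this is exactly the claim the paper has to re-derive from scratch inside the proof of Lemma~\ref{lem:dcc_fin_dim}, so your approach would give that lemma essentially for free. The cost is having to handle the three base cases $k\le 2$ and maintain the inductive hypothesis carefully.
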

\proof
We prove the first statement; the second is obtained by an analogous argument.
Let $k$ be the minimal height of vertices of $Q$, and assume by contradiction that there is at least a pair of vertices of $Q$ of height $k$.
Consider an edge--path $p=(v_0,\dots,v_s)$ in $Q$ such that $\height {v_0}=\height {v_s}=k$, $v_0\neq v_s$, and such that $p$ is an edge--path of minimal height among all edge--paths in $Q$ joining a pair of vertices of height $k$. This is well--defined since the height of such a path can only be an integer between $0$ and $n$. Let $\height p=h$ be the height of $p$.

\begin{figure}[h]
\centering
\def\svgwidth{.75\columnwidth}
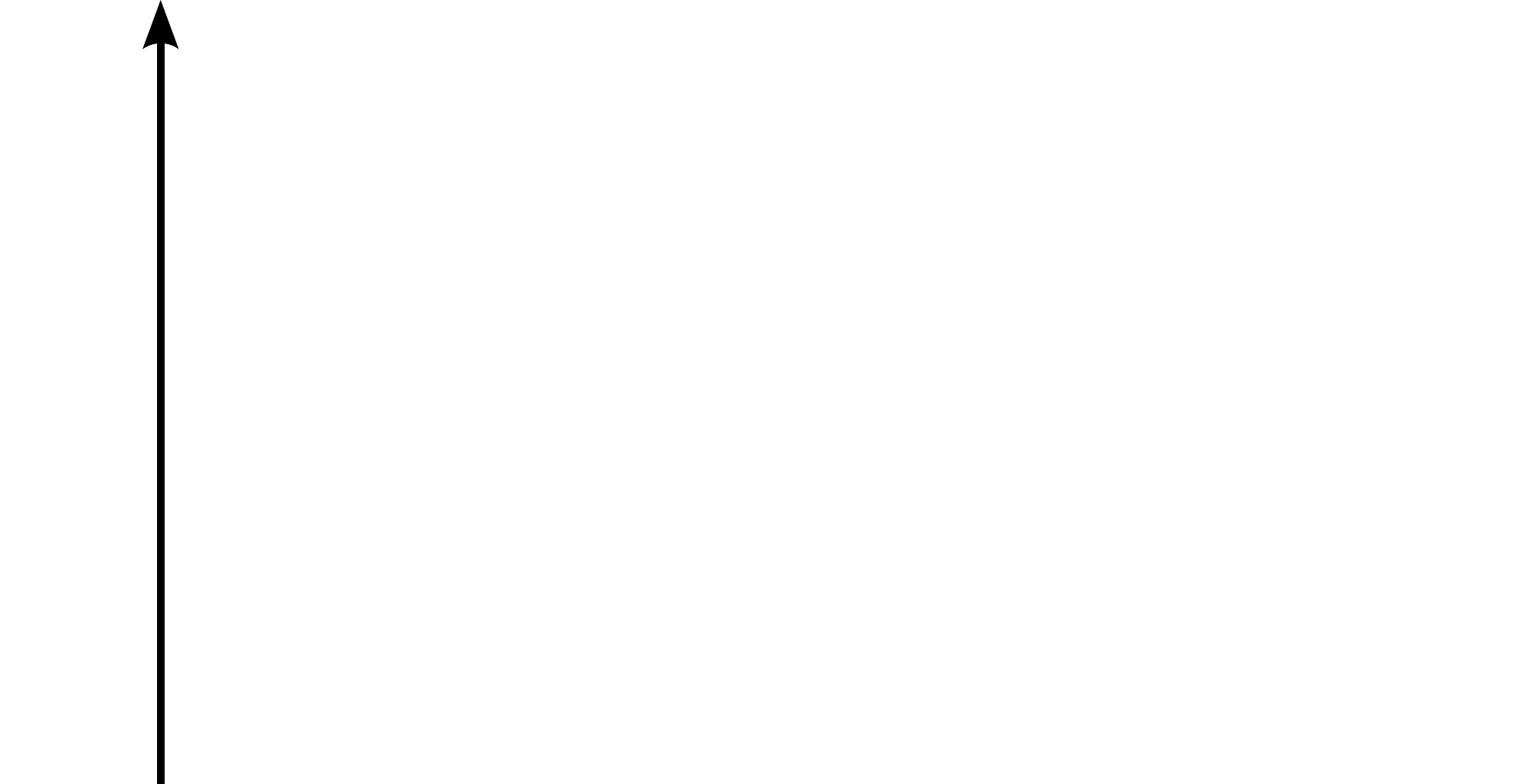
    \caption{Lowering a vertex of maximal height on a path in $\dccx$.}
    \label{fig:dcc_min_cube1}
\end{figure}

Let $v_j$ be a vertex of $p$ of maximal height $\height {v_j}=h=\height p$. Then $\height {v_{j\pm 1}}=h-1$ (notice that $k\geq 0$ and $h\geq k+1\geq 1$). Since $(v_{j-1},v_j,v_{j+1})$ is part of the cube $Q$, it must be contained in a square, i.e.\ there exists a vertex $u_j$ of $Q$ (not necessarily on $p$), such that $\{v_{j-1},v_j,v_{j+1},u_j\}$ span a square in $Q$.
Lemma~\ref{lem:height_square} implies that $\height {u_j}=h-2$. 
We can construct a new path in $Q$ starting from the path $p$ by lowering the vertex of maximal height, i.e.\ by replacing $v_j$ with $u_j$ (see Figure~\ref{fig:dcc_min_cube1}). We repeat the same operation on all vertices of height $h$ along the path, and let $p'$ be the resulting path in $Q$.
We have that $\height {p'}=h-1< h=\height {p}$, contradicting the minimality of the height of $p$. 
This concludes the proof by contradiction, and proves the uniqueness of a vertex $v$ of minimal height $k$ in $Q$.

We are left to show that the cell $\sigma$ dual to $v$ is contained in all the cells dual to the other vertices of $Q$. 
By contradiction suppose there are vertices in $Q$ whose dual cells do not contain $\sigma$; call such vertices exceptional. 
Let $p=(v_0,\dots,v_s)$ be an edge--path in $Q$ with $v_0=v$, $v_s$ an exceptional vertex, and having minimal length among all edge--paths of $Q$ between $v$ and an exceptional vertex. 
We have $\height {v_{s-1}}=\height {v_s}\pm 1$.  
If $\height {v_{s-1}}=\height {v_s}- 1$, then the cell dual to $v_{s-1}$ is contained in the cell dual to $v_s$. By minimality of $p$, we have that $v_{s-1}$ is not exceptional, so the cell dual to $v_{s-1}$ contains $\sigma$, and hence $v_s$ cannot be exceptional. Therefore $\height {v_{s-1}}=\height {v_s}+ 1$ (as in Figure~\ref{fig:dcc_min_cube2}).

\begin{figure}[h]
\centering
\def\svgwidth{.75\columnwidth}
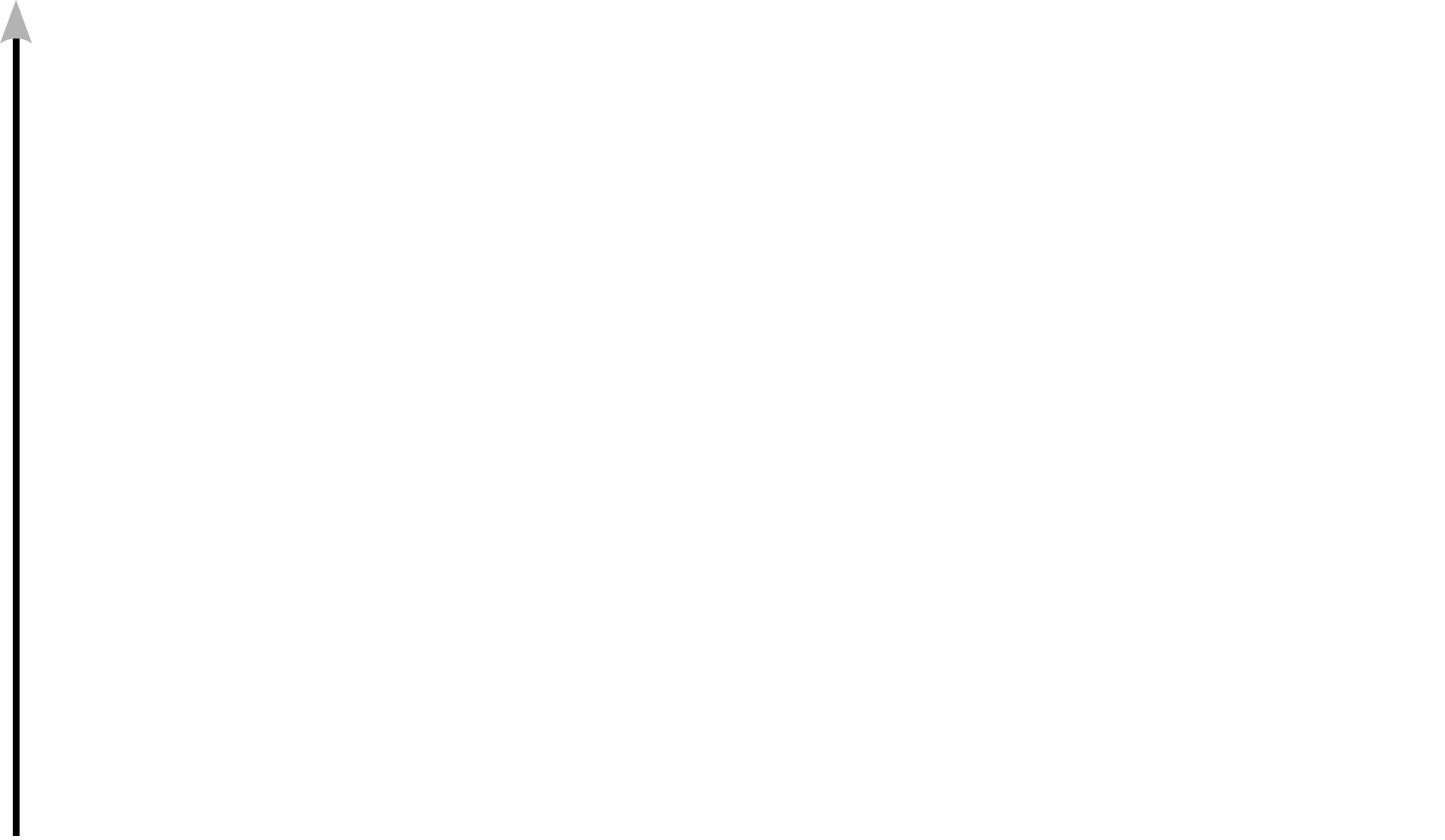
    \caption{Walking backwards along a path in $\dccx$ to find a vertex $v_j$ that can be lowered, and then walking forward to lower vertices until the exceptional endpoint $v_s$.}
    \label{fig:dcc_min_cube2}
\end{figure}

We keep walking backwards along $p$ until we find a triple of vertices $\{v_{j-1},v_j,v_{j+1}\}$ such that $\height {v_j}=\height {v_{j\pm 1}}+1$ (notice $j$ is well defined and positive, since $v$ is the unique vertex of minimal height in the whole $Q$). 
Arguing as before we complete to a square in $Q$ with vertices $\{v_{j-1},v_j,v_{j+1},u_j\}$; again by Lemma~\ref{lem:height_square} we have $\height {u_j}=\height {v_j}-2$ (see Figure~\ref{fig:dcc_min_cube2}). 
By minimality of $p$, $u_j$ must be non--exceptional, and so we can change $p$ by replacing $v_j$ with $u_j$, without changing its length. 
Walking forward along $p$, we can keep changing the path without changing its length, until we are able to complete $\{v_{s-1},v_s\}$ to a square $\{u_{s-2},v_{s-1},v_s,u_{s-1}\}$ in $Q$ with $u_{s-1}$ non-exceptional and with height $\height{u_{s-1}}=\height {v_{s-1}}-2=\height {v_s}-1$ (once again, see Figure~\ref{fig:dcc_min_cube2}). 
In particular, the cell dual to $u_{s-1}$ contains $\sigma$ and is contained in the cell dual to $v_s$, which contradicts the fact that the last vertex  $v_s$ was chosen to be exceptional. 
\endproof

As a consequence, we obtain the following statement.
\begin{lemma}\label{lem:dcc_fin_dim}
The complex $\dccx$ has dimension $\dim {\dccx}=\dim X = n$.
\end{lemma}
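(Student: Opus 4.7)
The plan is to establish both inequalities $\dim \dccx \geq n$ and $\dim \dccx \leq n$, with the latter being the more substantive direction.

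For the lower bound, I would exhibit an $n$-cube in $\dccx$. Since $X$ is admissible it is $n$-dimensional and homogeneous, so $\uchc$ contains a tile $\tau$ (an $n$-cell), and $\tau$ contains some vertex $\sigma_0$. By Lemma~\ref{lem:strat_links_cells}(2), the cells $\sigma$ of $\uchc$ with $\sigma_0 \subseteq \sigma \subseteq \tau$ are in bijection with the faces of the standard cube $E = \foldutosquare(\tau)$ containing the vertex $F = \foldutosquare(\sigma_0)$, forming a Boolean lattice of rank $n$. Under the definition of $\dccx$, two such dual vertices are joined by an edge precisely when the corresponding faces differ in one coordinate; this is exactly the Hasse diagram of the Boolean lattice, which is the $1$-skeleton of an $n$-cube. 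Hence an $n$-cube of $\dccx$ is attached, and $\dim \dccx \geq n$.

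For the upper bound, let $Q$ be any $d$-cube of $\dccx$. By Lemma~\ref{lem:cube_minmax} there is a unique minimum vertex $v$ dual to an $m$-cell $\sigma$ and a unique maximum vertex $w$ dual to an $M$-cell $\tau$, with $\sigma \subseteq \tau$. Since $v$ is the minimum in $Q$, each of its $d$ neighbors in $Q$ must have height $m+1$; by the definition of $\dccx$ these neighbors are dual to $d$ distinct $(m+1)$-cells $\sigma_1,\dots,\sigma_d$ each containing $\sigma$, and by Lemma~\ref{lem:cube_minmax} each of them is also contained in $\tau$. Now applying Lemma~\ref{lem:strat_links_cells}(2), the map $\foldutosquare$ identifies the cells of $\uchc$ lying between $\sigma$ and $\tau$ with the faces of the $M$-dimensional cube $E$ containing the $m$-face $F = \foldutosquare(\sigma)$. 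The $(m+1)$-faces of $E$ containing $F$ correspond to the coordinate directions of $E$ perpendicular to $F$, and there are exactly $M-m$ of them. Therefore $d \leq M-m \leq n$, giving $\dim \dccx \leq n$.

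No step is a serious obstacle; the argument is essentially combinatorial bookkeeping once the structural results of Lemma~\ref{lem:strat_links_cells} and Lemma~\ref{lem:cube_minmax} are in hand. The only mild subtlety is to ensure that the bijections from Lemma~\ref{lem:strat_links_cells}(2) are applied correctly in the ``generalized cell'' setting of $\uchc$ described in Remark~\ref{rem:weird cell structure}, but this has already been absorbed into the statement of that lemma.
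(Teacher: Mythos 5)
Your proof is correct. The lower bound coincides with the paper's (a tile and a vertex give an $n$-cube), but your upper bound argument takes a genuinely different route. The paper proves the stronger claim that for \emph{every} vertex $v$ of a cube $Q$ one has $\height v = \height{v_{\operatorname{min}}} + d_Q(v_{\operatorname{min}},v)$, via a path-lowering argument using Lemma~\ref{lem:height_square}, and then reads off $\dim Q \leq n$ from the range of the height function. You instead bound $\dim Q$ directly by the degree of $v_{\operatorname{min}}$ in $Q$: each of its $d$ neighbors is dual to a distinct $(m+1)$-cell sandwiched between $\sigma$ (the cell dual to $v_{\operatorname{min}}$) and $\tau$ (the cell dual to the maximum vertex), and Lemma~\ref{lem:strat_links_cells}(2) identifies those $(m+1)$-cells with the vertices of the $(M-m-1)$-simplex $\lk{F,E}$, of which there are exactly $M-m \leq n$. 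Your route is more local (it looks only at the star of $v_{\operatorname{min}}$ inside the two extremal cells) and avoids any path surgery; the paper's argument is longer but yields the height-equals-distance formula, which is a slightly more informative structural statement about cubes of $\dccx$ even if it is not reused elsewhere. Both are legitimate.
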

\proof
If $\tau $ is a tile of $\uchc$ and $x$ one of its vertices, then the collection of cells containing $x$ and contained in $\tau$ provides a cube of dimension exactly $n$, so $\dim {\dccx} \geq n$, so we focus on the other inequality.

Let $Q$ be a cube of $\dccx$, and let $v_{\operatorname{min}}$  be the vertex of minimal height in $Q$ (see Lemma~\ref{lem:cube_minmax}).
We claim that for each vertex $v\in Q$ we have
$$
\height v = \height{v_{\operatorname{min}}}+d_Q(v_{\operatorname{min}},v)
$$
where $d_Q(v_{\operatorname{min}},v)$ is the distance in $Q$ of $v$ from $v_{\operatorname{min}}$. 
Since the height can take values only  between $0$ and $n=\dim X$, this directly implies that
$$
\dim Q = \max \{ d_Q(v_{\operatorname{min}},v)\} = \max \{ \height v - \height{v_{\operatorname{min}}}\} \leq n.
$$

In order to prove the claim, pick a vertex $v\in Q$, and let $p=(v_0,\dots,v_s)$ be an edge--path of minimal length $s=d_Q(v_{\operatorname{min}},v)$ in $Q$ from $v_0=v_{\operatorname{min}}$ to $v_s=v$. 
Since the height can at most increase by $1$ along each edge of $p$, we have the inequality $\height v \leq \height{v_{\operatorname{min}}}+d_Q(v_{\operatorname{min}},v)$.
Assume by contradiction that the inequality is strict. Then the height is not monotonically increasing along $p$. Let $v_k$ be the first vertex of $p$ which is a local maximum for the height function. 
Arguing as above via Lemma~\ref{lem:height_square}, we look at the triple $v_{k-1},v_k,v_{k+1}$, and complete it to a square with a fourth vertex $u_k$ such that $\height{u_k}=\height{v_k}-2$.
We can even assume without loss of generality that $k=2$ (otherwise we proceed as in the proof of Lemma~\ref{lem:cube_minmax} and change $p$ along squares walking backwards towards $v_{\operatorname{min}}$).
But then $\height{u_2}=\height{v_2}-2=\height{v_{\operatorname{min}}}$. Minimality of $v_{\operatorname{min}}$ implies $u_2=v_{\operatorname{min}}$, and therefore we get that $v_3$ was already adjacent to $v_{\operatorname{min}}$. This provides a path from $v_{\operatorname{min}}$ to $v$ of length at most  $d_Q(v_{\operatorname{min}},v)-2$, which is a contradiction.
\endproof

We now turn to the study of links of vertices of $\dccx$. 
Recall that $\dccx$ is a cubical complex, hence its links are simplicial complexes (see \S\ref{subsec:def links}). 
In particular, if $v\in \dccx$ is a vertex, then vertices in $\lk{v,\dccx}$ correspond to vertices in $\dccx$ which are adjacent to $v$.
We begin with the following combinatorial characterization of simplices in the link of a vertex.

\begin{lemma}\label{lem:simplices_in_link}
Let $\sigma$ be a $k$-cell of $\uchc$, and let $v$ be the dual vertex in $\dccx$. 
Let $v_0,\dots,v_m$ be a collection of vertices of $\dccx$ adjacent to $v$, and let $\tau_0,\dots,\tau_m$ be the dual cells in $\uchc$. Then $v_0,\dots,v_m$ induce a simplex in $\lk {v,\dccx}$ if and only if the following two conditions are satisfied
\begin{itemize}
    \item [$( \downarrow )$] there exists a cell $\lambda$ of $\uchc$ such that $\lambda\subseteq \tau_j,j=0,\dots,m$,
    \item [$( \uparrow )$] there exists a cell $\mu$ of $\uchc$ such that $\tau_j \subseteq \mu,j=0,\dots,m$.
\end{itemize}
\end{lemma}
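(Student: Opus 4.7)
The plan is to prove both implications separately: the forward direction reduces immediately to a previous lemma, while the reverse direction requires identifying the right cube of $\dccx$ through the folding map.

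For the forward direction, if $v_0, \ldots, v_m$ span a simplex in $\lk{v, \dccx}$, the standard correspondence between simplices of a cubical link at a vertex and cubes containing that vertex provides a cube $Q$ of $\dccx$ whose vertex set includes $v$ and $v_0, \ldots, v_m$. Applying Lemma~\ref{lem:cube_minmax} to $Q$ yields unique vertices of minimal and maximal height, whose dual cells $\lambda$ and $\mu$ satisfy $\lambda \subseteq \sigma$, $\lambda \subseteq \tau_j$, $\sigma \subseteq \mu$, and $\tau_j \subseteq \mu$ for every $j$, thereby verifying both $(\downarrow)$ and $(\uparrow)$.

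For the reverse direction, assume both conditions hold. My first step is to invoke Lemma~\ref{lem:min_max_cells} on the family $\{\sigma, \tau_0, \ldots, \tau_m\}$ to obtain a maximal common subcell $\lambda$ and a minimal common supercell $\mu$. I then consider the collection
\[
\mathcal F = \{\xi \subseteq \uchc : \xi \text{ is a cell and } \lambda \subseteq \xi \subseteq \mu\}
\]
and aim to show that $\foldutosquare$ restricted to $\mu$ gives an inclusion-preserving bijection between $\mathcal F$ and the set of faces of $\foldutosquare(\mu) \subseteq \square^n$ that contain $\foldutosquare(\lambda)$. For this, Lemma~\ref{lem:strat_links_cells} ensures that $\foldutosquare$ is combinatorial on $\mu$ (compatibly with inclusions), while Lemma~\ref{lem:strat_cells_in_tile_fold_disjoint} rules out the possibility of two distinct cells contained in $\mu$ folding to the same face of $\square^n$. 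The right-hand side is manifestly the face poset of a combinatorial cube of dimension $\dim \mu - \dim \lambda$.

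Having identified $\mathcal F$ with the face poset of a cube, the defining rule of $\dccx$ (attach a cube whenever its $1$-skeleton is present) yields a cube of $\dccx$ whose vertex set is dual to $\mathcal F$; this cube contains $v, v_0, \ldots, v_m$, and $v$ is adjacent within it to each $v_j$. Extracting the unique sub-cube of dimension $m+1$ at $v$ spanned by the edges $(v, v_j)$ (whose $1$-skeleton is automatically present in $\dccx$) gives an $(m+1)$-cube of $\dccx$ witnessing that $v_0, \ldots, v_m$ span a simplex in $\lk{v, \dccx}$. The main delicate point is the combinatorial identification of $\mathcal F$ with a cube's face poset: cells of $\uchc$ are only generalized cells (see Remark~\ref{rem:weird cell structure}), so the transfer of cubical structure from $\square^n$ is not automatic, but the cited lemmas are tailored exactly for this purpose.
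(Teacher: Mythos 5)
Your proof is correct and follows essentially the same route as the paper's. The forward direction is identical: both identify the cube of $\dccx$ containing the relevant vertices and apply Lemma~\ref{lem:cube_minmax} to produce the extremal cells $\lambda$ and $\mu$. For the reverse direction, the paper passes through $\cdXu$ to compare $\lk{\lambda,\mu}$ with $\lk{C_\lambda,C_\mu}$ in $X$ via part \eqref{item:strat_links_cells_cube} of Lemma~\ref{lem:strat_links_cells}, whereas you pass all the way to $\square^n$ via $\foldutosquare$, combining part \eqref{item:strat_links_cells_face} of that lemma with Lemma~\ref{lem:strat_cells_in_tile_fold_disjoint} to get an honest poset bijection between the interval $[\lambda,\mu]$ and the corresponding interval in the face poset of $\square^n$. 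This is a modest variation rather than a genuinely different argument; it is arguably slightly more explicit about why the interval is a Boolean lattice (the paper leaves the injectivity of the comparison map more implicit). You are also a touch more careful by running Lemma~\ref{lem:min_max_cells} on the full family $\{\sigma,\tau_0,\dots,\tau_m\}$, which guarantees $\lambda\subseteq\sigma\subseteq\mu$; the paper simply uses the $\lambda$, $\mu$ from hypotheses $(\downarrow)$, $(\uparrow)$, which requires a small observation to see that $\sigma$ lands in the chain. One terminological nit: the set $\mathcal F$ you produce is isomorphic to the vertex poset (Boolean lattice $2^{[d]}$) of a $d$-cube, not its face poset (which has $3^d$ elements); this does not affect the argument, since what you actually use is the Boolean lattice structure to read off the $1$-skeleton.
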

\proof 
First of all, note that since $\dccx$ is a cubical complex, the vertices $v_0,\dots,v_m$ induce a simplex in $\lk {v,\dccx}$ if and only if there exists a cube $Q$ of $\dccx$ containing $v,v_0,\dots,v_m$.

Assume that they induce a simplex, and let $Q$ be the corresponding cube. 
From Lemma~\ref{lem:cube_minmax} we know that $Q$ has a unique vertex of minimal height, and a unique vertex of maximal height. Let $\lambda,\mu$ be the dual cells. Lemma~\ref{lem:cube_minmax} then implies that $\lambda,\mu$ satisfy the conditions $(\downarrow)$ and $(\uparrow)$ in the statement.

Vice versa suppose that the conditions $(\downarrow)$ and $(\uparrow)$ are satisfied. 
Notice that we have $\lambda \subseteq \tau_j\subseteq \mu$ for all $j=0,\dots,m$.
Let $C_\lambda=\cdXu(\lambda)$ and $C_\mu=\cdXu(\mu)$ be the corresponding cubes of $X$, under the map $\cdXu=\cdX \circ \piuchc :\uchc \to \hc \to X$.
Notice that $\lk{\lambda,\mu}\cong \lk{C_\lambda, C_\mu}$ by  Lemma~\ref{lem:strat_links_cells}.  
In particular, we see that in $\uchc$ there is a collection of cells containing $\lambda$ and contained in $\mu$ (among which we find the cells $\tau_j$) that gives rise to a cube $Q$ in $\dccx$ containing the vertices  $v,v_0,\dots,v_m$.  Therefore $v_0,\dots,v_m$ induce a simplex in $\lk{v,\dccx}$, as desired.
\endproof

\begin{remark}
When the conditions ($\downarrow$) and ($\uparrow$) from Lemma~\ref{lem:simplices_in_link} are satisfied, the cells $\lambda,\mu$  can be chosen to be the lower and upper cell provided by Lemma~\ref{lem:min_max_cells}.
\end{remark}

\begin{remark}\label{rmk:dcc_non_locally_finite}
A cell of dimension at least $2$ in $\uchc$ always admits infinitely many codimension-1 cells (see Figure~\ref{fig:uchq_is_cell}). Lemma~\ref{lem:simplices_in_link} implies that the link of the dual vertex is neither compact nor connected. In particular the cubical complex $\dccx$ is not locally compact.
As a result, even though $\dccx $ is constructed as a sort of dual cubical barycentric subdivision with respect to the combinatorial decomposition of $\uchc$ into cells, $\dccx$ is not homeomorphic to $\uchc$. Namely, $\uchc$ is locally compact, while $\dccx$ is not locally compact.

\end{remark}

As recalled above, if $v\in \dccx$ is a vertex, then the vertices appearing in $\lk{v,\dccx}$ correspond to vertices of $\dccx$ that are adjacent to $v$, and these vertices have height equal to $\height v\pm 1$. 
We find it useful to decompose $\lk {v,\dccx}$ into two subcomplexes: we denote by $\lkd {v,\dccx}$ the full subcomplex of $\lk {v,\dccx}$ generated by vertices of height $\height v-1$, and by $\lku {v,\dccx}$  the full subcomplex of $\lk {v,\dccx}$ generated by vertices of height $\height v+1$. 
As we will see, their geometry is controlled respectively by a certain Helly property for orthogonal hyperplanes in $\hh^n$, and by the non--positive curvature of $X$.
The following statement provides the Helly property. Notice that orthogonality is a key feature here: without the orthogonality requirement, the statement already fails for three geodesics in $\hh^2$.
On the other hand, the interested reader will notice that the argument generalizes to a collection of pairwise orthogonal and totally geodesic hypersurfaces in a simply connected complete manifold of non--positive curvature. We will not need this generality in this paper.

\begin{lemma}[Helly property for orthogonal hyperplanes in $\hh^n$]\label{lem:helly_hyperbolic_space}
Let $\mathcal V$ be a collection of pairwise orthogonal hyperplanes in $\hh^n$. Then $|\mathcal V|\leq n$, and for all $k\in \{2, \dots , n\}$ all the $k$--fold intersections are non-empty.
\end{lemma}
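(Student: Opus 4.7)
The plan is to work in the hyperboloid model of $\hh^n$ inside Minkowski space $\mink$, translating everything into linear algebra with respect to the indefinite inner product $\langle \cdot , \cdot \rangle$ of signature $(n,1)$. Recall that every hyperplane $V\subseteq \hh^n$ is of the form $V_v = \{ x \in \hh^n : \langle x, v\rangle = 0\}$ for some spacelike vector $v\in \mink$ (i.e.\ $\langle v, v\rangle >0$), unique up to nonzero scalar. A direct computation shows that two hyperplanes $V_v, V_w$ meet if and only if the $2$--plane $\operatorname{span}(v,w)$ is positive definite, and in that case they meet orthogonally if and only if $\langle v, w \rangle = 0$. In particular, a collection of pairwise orthogonal hyperplanes corresponds to a collection of pairwise $\langle\cdot,\cdot\rangle$--orthogonal spacelike vectors.

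Given pairwise orthogonal hyperplanes $V_{v_1},\ldots, V_{v_k}$, the first step is to observe that $v_1,\ldots, v_k$ are linearly independent and span a subspace $U\subseteq \mink$ on which $\langle\cdot,\cdot\rangle$ restricts to a positive definite form (the Gram matrix is diagonal with positive entries). Since $\mink$ has signature $(n,1)$, any positive definite subspace has dimension at most $n$, which yields the bound $|\mathcal V|\leq n$.

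For the second part, fix any $k\in\{2,\dots,n\}$ and any sub-collection $v_1,\dots,v_k$. The intersection $V_{v_1}\cap\cdots\cap V_{v_k}$ is exactly $W\cap \hh^n$, where $W=U^\perp\subseteq\mink$ is the $\langle\cdot,\cdot\rangle$--orthogonal complement of $U=\operatorname{span}(v_1,\dots,v_k)$. Since $U$ is non-degenerate of signature $(k,0)$ and $\mink$ has signature $(n,1)$, additivity of signatures under orthogonal decompositions gives that $W$ has dimension $n+1-k$ and signature $(n-k,1)$. In particular $W$ contains a timelike direction, so $W\cap\hh^n$ is non-empty (and is in fact a totally geodesic copy of $\hh^{n-k}$, reducing to a single point when $k=n$).

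The main substantive ingredient is the translation of "orthogonal hyperplanes" into "orthogonal spacelike normal vectors" and the implicit fact that such orthogonality forces the hyperplanes to meet; everything else reduces to standard linear algebra over an indefinite inner product. I anticipate no serious obstacle, only the bookkeeping of verifying that the forms restrict as claimed; this is straightforward because the vectors $v_i$ are chosen pairwise orthogonal and spacelike, which diagonalizes the Gram matrix with positive entries.
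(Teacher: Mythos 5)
Your proof is correct, and it takes a genuinely different route from the paper's. You work in the hyperboloid model, replace each hyperplane by a spacelike normal vector, and observe that pairwise orthogonality of the hyperplanes is exactly $\langle v_i, v_j\rangle = 0$. Then both conclusions drop out of signature arithmetic: the normals span a positive definite subspace $U$ (bounding $|\mathcal V|$ by $n$), and for any $k$ of them the orthogonal complement $U^\perp$ has signature $(n-k,1)$, hence contains a timelike direction and meets $\hh^n$ in a totally geodesic $\hh^{n-k}$. The paper instead argues synthetically: assuming all $(k-1)$-fold intersections are nonempty but some $k$-fold intersection is empty, it builds a geodesic triangle with two right angles by dropping common perpendiculars between suitable nonempty intersections, contradicting $\cat 0$ geometry. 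The trade-off is clear. Your argument is shorter, more explicit (it even tells you the intersection is an $\hh^{n-k}$), and avoids the induction, but it is tied to the Minkowski model. The paper's argument is longer but, as the authors note after the lemma, it generalizes verbatim to pairwise orthogonal totally geodesic hypersurfaces in any simply connected complete nonpositively curved manifold, a generality your linear-algebra computation cannot supply. Since the paper only needs the $\hh^n$ case, either proof is adequate for the purposes of the article.
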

\proof
We begin with some preliminary observation about orthogonal subspaces.
Let $V_1,\dots, V_k \in \mathcal V$ be a collection of hyperplanes from $\mathcal V$, and let $N=\cap_{j=1}^k V_j$ be their intersection.
For $x\in N$, let $\tangent{x}{\hh^n}$  denote the tangent space of $\hh^n$ at $x$, and let $v_j \in \tangent{x}{\hh^n}$ be a unit vector orthogonal to $V_j$ (i.e.\ to all vectors in the tangent space $\tangent{x}{V_j}$). 
The fact that $V_i$ and $V_j$ are orthogonal hyperplanes means that $v_i$ and $v_j$ are orthogonal vectors for all $i\neq j$.
Then a direct computation shows that if $\{n_1,\dots,n_m\}$ is an orthonormal basis for the tangent space of $\tangent{x}{N}$, then $\{n_1,\dots,n_m,v_1,\dots,v_k\}$ is an orthonormal basis for $\tangent{x}{\hh^n}$.
This shows in particular that $k\leq n$.

To prove the statement about non--emptiness of intersections, we notice that the case $k=2$ is exactly the hypothesis that any pair of hyperplanes from $\mathcal V$ intersect. 
For $k\geq 3$, we argue that if all the $h$--fold intersections of elements from $\{V_1,\dots, V_k\}$  are non--empty for all $h<k$, then the $k$--fold intersection is non--empty too.

Let $N_j=\cap_{i\neq j} V_i$. By assumption we have  $N_j\neq \varnothing$. 
Assume by contradiction that $V_1\cap \dots\cap V_k=\varnothing$. 
Then for any choice of indices $j_1\neq j_2$ we have that $N_{j_1}\cap N_{j_2}= \varnothing$. 
In particular, $N_2$ and $N_3$ are non--empty disjoint subspaces of $V_1$ (see Figure~\ref{fig:helly}). 
Let $\gamma_1$ be the common perpendicular between them in $V_1$, and let $x_k\in N_k$ be its endpoint for $k=2,3$. 
Now in the tangent space $\tangent{x_2}{\hh^n}$ we consider an orthonormal basis $\{n_1,\dots,n_m,v_1,v_3,\dots,v_k\}$ constructed as above by adding to an orthonormal basis $\{n_1,\dots,n_m\}$ for $\tangent{x_2}{N_2}$ unit vectors $v_1,v_3,\dots,v_k$ orthogonal to 
$V_1,V_3,\dots,V_k$. 
If $w$ denotes a tangent vector at $x_2$ along $\gamma_1$, then a direct computation shows that $w$ is orthogonal to 
$\{n_1,\dots,n_m\}$, because $\gamma_1$ is orthogonal to $N_2$, and it is also orthogonal to $v_1$, because $\gamma_1 \subseteq V_1$.
Therefore $w$ is in the subspace of $\tangent{x_2}{\hh^n}$ generated by $v_3,\dots,v_k$. 
If we define $W_2=\cap_{j=3}^k V_j$, then this means that $\gamma_1$ is orthogonal to $W_2$ at $x_2$.
Arguing in the same way at the point $x_3$, we find that $\gamma_1$ is orthogonal at $x_3$ to the subspace $W_3=\cap_{j=2,j\neq 3}^k V_j$. 
Note that $W_2\cap W_3=\cap_{j=2}^k V_j=N_1$ is non--empty. 
Moreover, as observed above, it is disjoint from $N_2$ and from $N_3$.
Therefore we can connect $x_2$ (respectively $x_3$) to a point $x_1$ in $N_1$ with a geodesic arc $\gamma_2$ contained in $W_2$ (respectively $\gamma_3$ contained in $W_3$).
Since all the spaces involved are totally geodesic, the arcs $\gamma_1,\gamma_2,\gamma_3$ are geodesic arcs in $\hh^n$, so we have obtained a geodesic triangle with two right angles, which leads to the desired contradiction.
\endproof 

\begin{figure}[h]
\centering
\def\svgwidth{.75\columnwidth}
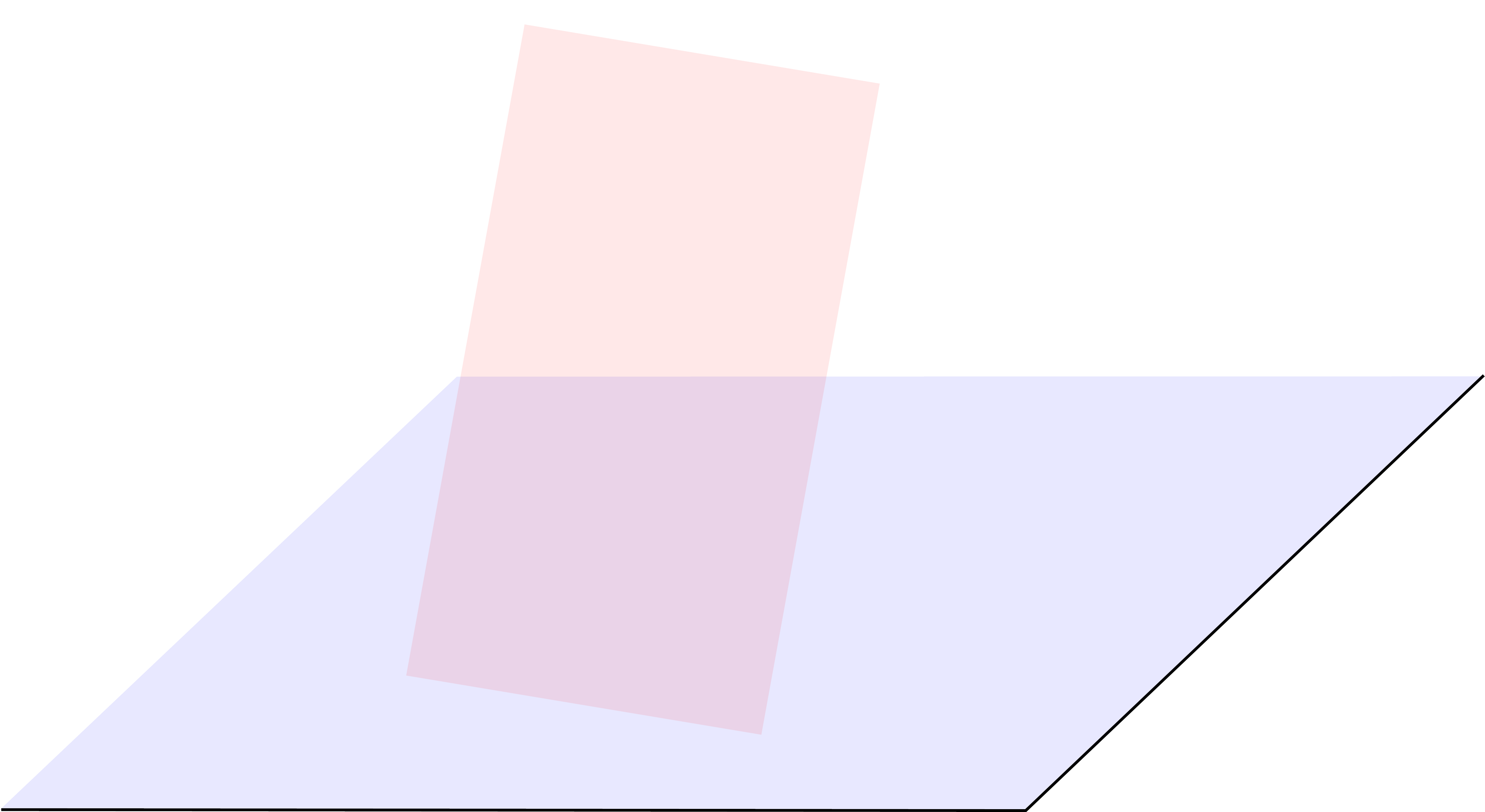
    \caption{The Helly property in Lemma~\ref{lem:helly_hyperbolic_space}.}    
    \label{fig:helly}
\end{figure}

The next statement completes our investigation of the combinatorial geometry of $\dccx$. 
Thanks to Gromov's link condition (see Lemma~\ref{lem:gromov link condition}), it already implies that $\dccx$ is locally $\cat 0$. 
We will show in Theorem~\ref{thm:dual cubical complex is CAT(0)} that it is actually $\cat 0$.

\begin{proposition}\label{prop:flag_links}
Let $\sigma$ be a $k$-cell of $\uchc$, and let $v$ be the dual vertex in $\dccx$. Then the following hold:
\begin{enumerate}
    \item \label{item:downlink} $\lkd {v,\dccx}$ is a flag simplicial complex.
    \item \label{item:uplink} $\lku {v,\dccx}$   is a flag simplicial complex.
    \item \label{item:linkflag} $\lk {v,\dccx}$  is a flag simplicial complex.
\end{enumerate}
\end{proposition}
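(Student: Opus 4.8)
The plan is to prove the three statements of Proposition~\ref{prop:flag_links} in the order $(\downarrow)$, $(\uparrow)$, $(\downarrow\uparrow)$, each building on the previous, and on the combinatorial characterization of simplices in $\lk{v,\dccx}$ provided by Lemma~\ref{lem:simplices_in_link}. Throughout, I will use that the link of a $k$-cell $\sigma$ in $\uchc$ is a flag simplicial complex isomorphic to $\lk{C,X}$ (where $C=\cdXu(\sigma)$), by Lemma~\ref{lem:strat_links_cells}\eqref{item:strat_links_cells_cube} and the non-positive curvature of $X$ via Gromov's link condition (Lemma~\ref{lem:gromov link condition}).

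For \eqref{item:uplink}, the ascending link $\lku{v,\dccx}$ is generated by vertices dual to $(k+1)$-cells of $\uchc$ containing $\sigma$. By Lemma~\ref{lem:simplices_in_link}, a collection $\tau_0,\dots,\tau_m$ of such cells spans a simplex precisely when condition $(\uparrow)$ holds (condition $(\downarrow)$ is automatic, with $\lambda=\sigma$). On the other hand, the $(k+1)$-cells of $\uchc$ containing $\sigma$ are exactly the vertices of $\lk{\sigma,\uchc}$, and a common cell $\mu\supseteq\tau_j$ corresponds to a simplex of $\lk{\sigma,\uchc}$ spanned by the corresponding vertices. Thus $\lku{v,\dccx}$ is isomorphic to $\lk{\sigma,\uchc}$, which is flag. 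I would spell out this identification carefully, noting that the simplicial structure of $\lk{\sigma,\uchc}$ (vertices $\leftrightarrow$ $(k+1)$-cells, simplices $\leftrightarrow$ higher cells, as in Lemma~\ref{lem:strat_links_cells}\eqref{item:strat_links_cells_simplicial}) matches exactly the combinatorial condition $(\uparrow)$.

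For \eqref{item:downlink}, the descending link $\lkd{v,\dccx}$ is generated by vertices dual to $(k-1)$-cells of $\uchc$ contained in $\sigma$; here condition $(\uparrow)$ is automatic with $\mu=\sigma$, so by Lemma~\ref{lem:simplices_in_link} a simplex is detected exactly by condition $(\downarrow)$: the relevant $(k-1)$-cells must have a common subcell. Since $\sigma$ folds to a $k$-face $F$ of $\square^n$ (equivalently a $k$-cube of $X$), its codimension-$1$ cells are in bijection with the codimension-$1$ faces of $F$ via the folding map $\foldutosquare$ (using Lemma~\ref{lem:strat_cells_in_tile_fold_disjoint} and Lemma~\ref{lem:mirrors_in_tile}), hence correspond to the mirrors of $\uchc$ meeting $\sigma$ in a codimension-$1$ cell. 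Developing $\sigma$ into $\hh^n$ as a $\Gamma$-cell (Lemma~\ref{lem:foldingmap_hypcomplex}, Remark~\ref{rem:developing}), the codimension-$1$ cells of $\sigma$ lie in a collection $\mathcal V$ of pairwise orthogonal hyperplanes (the faces of $\hq$ intersect orthogonally, Lemma~\ref{lem:CD cell}), and the common subcell $\lambda$ of $\tau_0,\dots,\tau_m$ exists iff the corresponding hyperplanes have nonempty common intersection. Condition $(\downarrow)$ for pairs is automatic (two codimension-$1$ faces of a $\Gamma$-cell always meet), so the Helly property for orthogonal hyperplanes (Lemma~\ref{lem:helly_hyperbolic_space}) guarantees that pairwise intersection implies $m$-fold intersection; translated back through the developing map and Lemma~\ref{lem:min_max_cells}, this says pairwise-adjacent vertices of $\lkd{v,\dccx}$ span a simplex. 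That is precisely flagness. I expect this to be the main obstacle: one must be careful that the combinatorics of codimension-$1$ cells of $\sigma$ really does reduce to orthogonal hyperplanes in a single $\Gamma$-cell, i.e.\ that every codimension-$1$ cell of $\sigma$ appears on the boundary of the developed image and that distinct cells develop to distinct hyperplanes, and that the Helly conclusion about nonempty intersection of hyperplanes is equivalent to the existence of the lower cell $\lambda$ inside $\sigma$.

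Finally, for \eqref{item:linkflag}, I would combine the two halves. Given pairwise-adjacent vertices $v_0,\dots,v_m$ of $\lk{v,\dccx}$, partition them into those of height $\height v-1$ (dual cells $\tau_j\subseteq\sigma$) and those of height $\height v+1$ (dual cells $\tau_j\supseteq\sigma$). By \eqref{item:downlink} the first group spans a simplex, so by Lemma~\ref{lem:min_max_cells}\eqref{item:lower_cell} has a well-defined lower cell $\lambda\subseteq\sigma$; by \eqref{item:uplink} the second group spans a simplex, so has a well-defined upper cell $\mu\supseteq\sigma$. It remains to check condition $(\downarrow)$ and $(\uparrow)$ of Lemma~\ref{lem:simplices_in_link} for the whole collection, i.e.\ $\lambda\subseteq\tau_j\subseteq\mu$ for all $j$. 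For $j$ in the first group this is clear since $\lambda\subseteq\tau_j\subseteq\sigma\subseteq\mu$; for $j$ in the second group, $\lambda\subseteq\sigma\subseteq\tau_j$ and one needs $\tau_j\subseteq\mu$, which is exactly membership in the upper cell of the second group — but wait, $\mu$ was the upper cell of only the second group, so $\tau_j\subseteq\mu$ automatically, and $\lambda\subseteq\sigma\subseteq\tau_j$ gives $(\downarrow)$. The only subtlety is that I must verify the cross-adjacencies (a height-$(\height v-1)$ vertex adjacent to a height-$(\height v+1)$ vertex) force $\tau_i\subseteq\tau_j$, which follows from the definition of edges in $\dccx$, so $\lambda\subseteq\tau_i\subseteq\tau_j\subseteq\mu$ is consistent. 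Thus $(\downarrow)$ and $(\uparrow)$ hold with these $\lambda,\mu$, and Lemma~\ref{lem:simplices_in_link} gives the simplex, proving $\lk{v,\dccx}$ is flag. I would present this last step with care about the bookkeeping of the two groups and the cross-edges, as that is where an off-by-one or a missed case could slip in.
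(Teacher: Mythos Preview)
Your proposal is correct and follows essentially the same approach as the paper: \eqref{item:uplink} via the identification $\lku{v,\dccx}\cong\lk{\sigma,\uchc}$ and flagness of the latter; \eqref{item:downlink} via the Helly property for orthogonal hyperplanes (Lemma~\ref{lem:helly_hyperbolic_space}); and \eqref{item:linkflag} by combining the lower cell $\lambda$ from the descending part with the upper cell $\mu$ from the ascending part, exactly as the paper does.

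Two small corrections. First, the parenthetical ``two codimension-$1$ faces of a $\Gamma$-cell always meet'' is false (a $\Gamma$-cell has infinitely many boundary faces, most of which are disjoint). Pairwise intersection of the $\tau_j$ is not automatic: it is supplied by the \emph{hypothesis} of pairwise adjacency in $\lkd{v,\dccx}$, which by Lemma~\ref{lem:height_square} forces $\tau_i\cap\tau_j\neq\varnothing$. Once you have that, the supporting hyperplanes are pairwise orthogonal (because intersecting faces of $\hq$ meet orthogonally, Lemma~\ref{lem:CD cell}), and Helly applies. Second, $\sigma$ is a $k$-cell, not a tile, so it develops into $\hh^k$ rather than $\hh^n$; the paper phrases this as ``embed $\sigma$ into a hyperbolic space of dimension $\dim\sigma$''. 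You are right to flag the passage from nonempty intersection of hyperplanes to nonempty intersection of the cells $\tau_j$ themselves as the delicate point; the paper handles it by induction on the number of cells, using the orthogonality structure to show $\tau_0\cap\dots\cap\tau_{p-1}\cap V_p\subseteq\tau_p$.
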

\proof
Throughout this proof, $w_j$ will denote a vertex in $\lk {v,\dccx}$, $v_j$ the corresponding vertex of $\dccx$ adjacent to $v$, and $\tau_j$ the cell of $\uchc$ dual to $v_j$. 
Notice that two vertices $w_i,w_j$ are adjacent in $\lk {v,\dccx}$ precisely when $v,v_i,v_j$ are contained in a square of $\dccx$.  

We first prove \eqref{item:downlink}. 
Let $w_0,\dots,w_p$ be pairwise adjacent vertices in $\lkd {v,\dccx}$. 
Notice that $\tau_0,\dots,\tau_p$ are all cells of codimension 1 in the boundary of $\sigma$.
For each $i\neq j$, $v,v_i,v_j$ are contained in a square of $\dccx$.  
By Lemma~\ref{lem:height_square}, the fourth vertex of the square is dual to a cell contained in $\tau_i\cap\tau_j$. 
This shows that the cells $\tau_j$ intersect pairwise. 
We claim that actually $\tau_0\cap \dots \cap\tau_p\neq \varnothing$. 
To see this, embed $\sigma$ into a hyperbolic space of   dimension $\dim \sigma$ (as in \S\ref{sec:universal_cover}). 
The family of hyperplanes $V_0,\dots,V_p$ supporting the cells $\tau_0,\dots, \tau_p$ is a collection of pairwise orthogonal hyperplanes, and the boundary of the cell $\tau_j$ in $V_j$ is given by subspaces that are orthogonal to the other $V_i$'s.
Note that by Lemma~\ref{lem:helly_hyperbolic_space} we know that $V_0 \cap \dots \cap V_p\neq \varnothing$.
Arguing by induction on $p$, we can assume that $\tau_0\cap \dots \cap \tau_{p-1}\neq \varnothing$, and then we can leverage the orthogonality structure as in the proof of Lemma~\ref{lem:helly_hyperbolic_space} to obtain that $\varnothing \neq \tau_0\cap \dots \cap \tau_{p-1} \cap V_p \subseteq \tau_0 \cap \dots \cap \tau_p$, which proves the claim.
Since the latter intersection is non-empty, by Lemma~\ref{lem:min_max_cells},  it consists of a single cell $\lambda \subseteq \tau_j$. 
We use Lemma~\ref{lem:simplices_in_link} with this cell $\lambda$ and   $\mu=\sigma$ to conclude that $w_0,\dots,w_k$ span a  simplex.   

We argue via a dual argument to prove \eqref{item:uplink}. Let $w_0,\dots,w_p$ be pairwise adjacent vertices in $\lku {v,\dccx}$. 
Notice that  $\tau_0,\dots,\tau_p$ are cells containing $\sigma$ as a cell of codimension 1 in their boundary.
For each $i\neq j$, $v,v_i,v_j$ are contained in a square of $\dccx$.
By Lemma~\ref{lem:height_square}, the fourth vertex of the square is dual to a cell  containing   $\tau_i\cup\tau_j$. 
So  $\tau_i,\tau_j$ are adjacent in $\lk{\sigma,\uchc}$. 
By \eqref{item:strat_links_cells_cube} in Lemma~\ref{lem:strat_links_cells} this link is isomorphic to the link of the corresponding cube in $X$. 
Since $X$ is non-positively curved, this link is a flag simplicial complex.
Therefore there is a cell $\mu$ containing all the cells $\tau_j$; this can actually be taken to be the upper cell provided by Lemma~\ref{lem:min_max_cells}. 
We use Lemma~\ref{lem:simplices_in_link} with this cell $\mu$ and   $\lambda=\sigma$ to conclude that $w_0,\dots,w_k$ span a  simplex. 

Finally, in order to prove \eqref{item:linkflag}, let $w_0,\dots,w_p$ be pairwise adjacent vertices in $\lk {v,\dccx}$, ordered so that for some $m$ we have $w_0\dots,w_m \in \lkd {v,\dccx}$ and $w_{m+1}\dots,w_p \in \lku {v,\dccx}$.
By \eqref{item:downlink} we know that $w_0\dots,w_m$ span a simplex, hence by Lemma~\ref{lem:simplices_in_link} there exists a cell $\lambda$ in $\cap_{j=0}^m \tau_j$.
Similarly, by \eqref{item:uplink} we know that $w_{m+1}\dots,w_p$ span a simplex, hence by Lemma~\ref{lem:simplices_in_link} there exists a cell $\mu$ containing $\tau_{m+1},\dots,\tau_p$. 
Notice that 
$\lambda \subseteq \tau_i\subseteq \sigma \subseteq \tau_j \subseteq \mu$
for all $i=0,\dots,m$ and  $j=m+1,\dots,p$. In particular  we have
$\lambda \subseteq \tau_j \subseteq \mu$ for all $j=0,\dots,p$.
Using Lemma~\ref{lem:simplices_in_link} again we obtain that $w_0,\dots,w_p$ spans a simplex.
\endproof


\subsection{Efficiency}\label{subsec:efficiency}

In this section we study a notion of complexity for edge--paths in $\dccx$, which is based on the height function, and use it to find suitable representatives of homotopy classes of edge--paths and edge--loops.
Recall that if $p$ is an edge--path in the $1$--skeleton of a cubical complex, an \textit{elementary homotopy} of $p$ is a homotopy which is contained in the $2$--skeleton and is obtained by a finite sequence of the following two moves: 
\begin{itemize}
    \item remove a backtracking subpath, i.e.\ replace $(v_1,v_2,v_1)$ with $v_1$;
    \item slide across a square, i.e.\ replace $(v_1,v_2,v_3)$ with $(v_1,v_4,v_3)$ if $v_1,v_2,v_3,v_4$ appear in this order on the boundary of a square (as in Figure~\ref{fig:dcc_square}).
\end{itemize}

An edge--path $p=(v_0,\dots,v_s)$ is said to be \textit{efficient} if $\exists \ k\in \{0,\dots,s\}$ such that the height strictly increases from $v_0$ to $v_k$ and strictly decreases from $v_k$ to $v_s$, i.e.\ $v_k$ is the unique point of maximum for the height along $p$. 
We allow $k=0$ or $k=s$, i.e.\ that the height is strictly monotone  along $p$. 
In any case, $\height{p}=\height{v_k}$, and the cell dual to  $v_k$ contains the cells dual to all the other vertices of $p$. 
This implies that an efficient edge--path is contained in the union of at most two cubes which share at least a vertex.
In particular, an efficient edge--loop  is entirely contained in a single cube. These observations motivate the following definitions and constructions.

If $\tau$ is a tile of $\uchc$, we define the \textit{dual tile} $\dcc \tau $ to be the full subcomplex of $\dccx$ whose vertices are dual to the cells of $\tau$.
If $v$ is the vertex of $\dccx$ which is dual to $\tau$, then $\dcc \tau$ consists of all the cubes of $\dccx$ that contain $v$, i.e.\ $\dcc \tau$ is the combinatorial $1$--neighborhood of $v$.
Notice that $v$ is the only vertex of height $n$ in $\dcc \tau$ (see Figure~\ref{fig:dccx_long_path_tile}).
We say that an edge--path $p$ in $\dccx$ \textit{stays in a tile} if there exists a tile $\tau$ of $\uchc$ such that $p\subseteq \dcc \tau$.

\begin{lemma}\label{lem:efficient}
Let $p$ be an edge--path in $\dccx$. 
If $p$ stays in a tile, then there is an elementary homotopy relative to endpoints between $p$ and an efficient path.
\end{lemma}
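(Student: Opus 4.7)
The plan is to show that an edge--path staying in a single dual tile can be modified by finitely many backtrack removals and square slides into one whose height profile is unimodal. This will be achieved by iteratively eliminating the two obstructions to efficiency: interior \emph{local minima} (vertices $v_i$ with $\height{v_{i-1}}, \height{v_{i+1}} > \height{v_i}$) and \emph{backtracks} (triples $(v_{i-1}, v_i, v_{i+1})$ with $v_{i-1} = v_{i+1}$). As a measure of non-efficiency I would use the potential $\Psi(p) = \sum_{i=0}^{\length{p}}(n - \height{v_i}) \geq 0$ and verify that each elementary move I perform strictly decreases it.

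The key move occurs at a local minimum $v_i$ of $p$. Since adjacent vertices in $\dccx$ have heights differing by exactly one, such a configuration forces $\height{v_{i-1}} = \height{v_{i+1}} = \height{v_i}+1$, hence the dual cells satisfy $\sigma_i \subseteq \sigma_{i-1}, \sigma_{i+1} \subseteq \tau$, the last inclusion thanks to the hypothesis that $p$ lies in $\dcc\tau$. If $\sigma_{i-1} = \sigma_{i+1}$, then $(v_{i-1}, v_i, v_{i+1})$ is a backtrack and can simply be removed, strictly decreasing both length and $\Psi$. Otherwise, Lemma~\ref{lem:min_max_cells}\eqref{item:upper_cell} produces a unique minimal cell $\mu \subseteq \tau$ containing $\sigma_{i-1}$ and $\sigma_{i+1}$. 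A brief combinatorial inspection of the face lattice of $\square^n$ (which controls the face lattice of $\tau$ via Lemma~\ref{lem:strat_links_cells}) shows that two distinct cells of dimension $\height{v_i}+1$ jointly containing a cell of dimension $\height{v_i}$ are jointly contained in a unique cell of dimension $\height{v_i}+2$, so $\height{v_\mu} = \height{v_i}+2$. Applying Lemma~\ref{lem:simplices_in_link} at $v_i$ with lower bound $\lambda = \sigma_i$ and upper bound $\mu$ shows that $v_{i-1}$ and $v_{i+1}$ are joined by an edge in $\lk{v_i, \dccx}$; combining this with Lemma~\ref{lem:height_square} identifies $v_\mu$ as the fourth vertex of the corresponding square. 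The elementary homotopy replacing $(v_{i-1}, v_i, v_{i+1})$ by $(v_{i-1}, v_\mu, v_{i+1})$ is therefore a legitimate square slide; it decreases $\Psi$ by $2$ and, since $\mu \subseteq \tau$, the new path still lies in $\dcc\tau$.

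Since $\Psi$ is bounded below, only finitely many such moves can be performed, and the process terminates at a path $p'$ free of backtracks and interior local minima. Because the height changes by $\pm 1$ along each edge, the absence of local minima forces the height sequence of $p'$ to be unimodal: it strictly increases up to a unique vertex of maximal height and then strictly decreases, which is precisely the definition of efficient. The subtle point deserving care is that each square slide may create new local minima elsewhere along the path, so one cannot reason by a simple count of local minima; however the strict monotonicity of $\Psi$ guarantees termination. The hypothesis that $p$ stays in a tile enters at exactly one place but is indispensable: it supplies $\tau$ itself as a common upper bound for $\sigma_{i-1}$ and $\sigma_{i+1}$, without which the minimal cell $\mu$ might not exist and no square would be available to raise the local minimum.
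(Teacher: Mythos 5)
Your proof is correct, and its core mechanism --- raising interior local minima by sliding across the squares produced by Lemma~\ref{lem:simplices_in_link} inside the tile --- is the same as the paper's. Where you differ is the termination argument: the paper raises all local minima simultaneously (noting that no two can be adjacent), tracks the minimum height $\mathfrak{h}(p)$ of local minima, shows it increases by at least $1$ per round, and treats $\mathfrak{h}(p)=n$ and $\mathfrak{h}(p)=n-1$ as base cases, so that at most $n$ rounds suffice. Your potential $\Psi(p)=\sum_i(n-\height{v_i})$ strictly decreases under each individual square slide or backtrack removal, which lets you raise one local minimum at a time and dispenses with the special cases --- a marginally cleaner piece of bookkeeping, though the underlying idea is identical. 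One small inefficiency in your write-up: the detour through the upper cell $\mu$ of Lemma~\ref{lem:min_max_cells} and the face-lattice computation of $\dim\mu=\height{v_i}+2$ is more work than necessary. Feeding $\lambda=\sigma_i$ and $\mu=\tau$ directly into Lemma~\ref{lem:simplices_in_link} (as the paper does) and then applying Lemma~\ref{lem:height_square} already yields the fourth vertex of the square, with height $\height{v_i}+2$, in one step.
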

\proof
Let $p=(v_0,\dots,v_s)$.
First of all, notice that if $s=0,1$ then $p$ is already efficient. Moreover, by an elementary homotopy relative to endpoints, we can assume that $p$ has no backtracking subpath. 
Since $p$ stays in a tile, $p$ goes through at most one vertex of height $n$ (possibly several times, possibly at the endpoints $v_0,v_s$). 

For $0<j<s$, we say $v_j$ is a local minimum (with respect to the height function along $p$) if $\height {v_{j\pm 1}}=\height {v_j}+1$, and we consider the following quantity
$$\mathfrak{h}(p)=\min\{\height {v_j} \ | \ v_j \text{ is a local minimum} \}.$$

If there is no local minimum, set $\mathfrak{h}(p)=\infty$; in this case $p$ is already efficient. 
So let us assume that there are some local minima, i.e.\ $\mathfrak{h} (p)<\infty$.
Notice that $\mathfrak{h} (p)$ is in general larger than the minimum of the height along $p$. 
If $\mathfrak{h}(p)=n$ then $p$ is constant, hence efficient. 
If $\mathfrak{h}(p)=n-1$, then $p$ has a backtracking subpath, because $p$ goes through at most one vertex of height $n$. By an elementary homotopy relative to endpoints we can remove this local minimum. Repeating this process, we obtain a path $p'$ with $\mathfrak{h}(p')=n$, and we reduce to the previous case. So let us assume in the following that $\mathfrak{h}(p)\leq n-2$.

We now claim that, by deforming $p$ locally at local minima, we can produce an elementary homotopy relative to endpoints to a path $p'$ such that $\mathfrak{h}(p')\geq \mathfrak{h}(p)+1$. 
To prove the claim, let $v_j$ be a local minimum, and let its height be $\height{p_j}=h_j$ for some $0<j<s$.
Consider the subpath $(v_{j-1},v_j,v_{j+1})$, and note that the cells dual to $v_{j-1},v_{j+1}$ meet along the cell dual to $v_j$. Since $p$ stays in a tile, there is a cell containing all these cells, namely the tile itself. 
By Lemma~\ref{lem:simplices_in_link} we get that $(v_{j-1},v_j,v_{j+1})$ is part of a square in $\dccx$, whose fourth vertex is some $v_j'$, of height $\height{v_j'}=h_j+2$. 
Then we can homotope $(v_{j-1},v_j,v_{j+1})$ to the other side $(v_{j-1},v_j',v_{j+1})$ of the square via an elementary homotopy relative to endpoints (see Lemma~\ref{lem:height_square}).
This process can be applied to all local minima at the same time, since no two local minima can be adjacent along $p$. Then we remove all backtracking subpaths, if needed, keeping endpoints fixed.
The result is an elementary homotopy relative to endpoints between $p$ and an edge--path $p'$ with $\mathfrak{h}(p')\geq \mathfrak{h}(p)+1$. It is even possible that $\mathfrak{h}(p')=\infty$ but in any case this proves the claim.

We repeat this process of elevating local minima, and after a finite number of steps we obtain a path $p''$ with $\mathfrak{h}(p'')\geq n-1$ (again, possibly $\mathfrak{h}(p'')=\infty$). Hence, we reduce to the previously discussed cases to conclude that  $p''$ (hence $p$) admits an elementary homotopy relative to endpoints to an efficient path.
\endproof

In the previous lemma we allow $p$ to be an edge--loop, i.e.\ $v_0=v_s$. All the homotopies in it are relative to the base point $v_0=v_s$. In the following statement we consider free homotopies, i.e.\ homotopies that are not required to fix any point.

\begin{corollary}\label{cor:efficient_loop}
Let $p$ be an edge--loop in $\dccx$. If $p$ stays in a tile, then there is an elementary homotopy  between $p$ and a constant path.
\end{corollary}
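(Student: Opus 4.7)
The plan is to reduce the problem to a standard combinatorial fact about a single cube. First I would apply Lemma~\ref{lem:efficient} to $p$, producing an edge--loop $p'$ which is efficient and elementary homotopic to $p$ relative to the basepoint $v_0=v_s$. Since elementary homotopies relative to the basepoint are in particular elementary homotopies, it suffices to contract $p'$ to a constant path via elementary moves.

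Next I would verify that an efficient edge--loop lies in a single cube of $\dccx$. Recall that an efficient path $p'=(v_0,\dots,v_s)$ has a unique maximum--height vertex $v_k$ whose dual cell contains the dual cells of all other $v_j$'s, while the cells dual to $v_0$ and to $v_s$ are contained in every cell along the ascending and descending halves respectively. Hence the ascending sub--path $(v_0,\dots,v_k)$ spans a cube $Q_1$ of $\dccx$ with bottom vertex $v_0$ and top vertex $v_k$, and the descending sub--path $(v_k,\dots,v_s)$ spans a cube $Q_2$ with bottom $v_s$ and top $v_k$ (this uses Lemma~\ref{lem:simplices_in_link} applied to each flag of cells). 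Since $p'$ is a loop we have $v_0=v_s$, so both $Q_1$ and $Q_2$ have the same minimum vertex and the same maximum vertex; by the uniqueness assertion of Lemma~\ref{lem:cube_minmax} this forces $Q_1=Q_2=:Q$, and therefore $p'$ is entirely contained in the single cube $Q$.

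Finally, I would invoke the standard fact that any edge--loop in a combinatorial cube $Q$ is elementary homotopic to a constant. Indeed $Q$ is contractible, its $2$--skeleton is simply connected, and its $2$--cells are exactly its squares; the two allowed moves (backtrack cancellation and slide across a square) are thus precisely the relators in the standard edge--path group presentation of $\pi_1(Q^{(2)})=1$, so any edge--loop in $Q$ reduces to a constant through a finite sequence of elementary moves. Concatenating this with the elementary homotopy from $p$ to $p'$ yields the desired elementary homotopy from $p$ to a constant path.

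The main (mild) obstacle is the single--cube observation in the second paragraph: one must check carefully that the two cubes witnessing the ascending and descending halves of an efficient loop genuinely coincide, which is exactly what the uniqueness of minimum and maximum vertices in Lemma~\ref{lem:cube_minmax} delivers. Everything else is either a direct application of Lemma~\ref{lem:efficient} or a standard fact about cubes.
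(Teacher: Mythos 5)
Your proof is correct, but it takes a genuinely different route from the paper's. The paper does not apply Lemma~\ref{lem:efficient} as a black box; it re-runs the same iterative elevation-of-local-minima argument, re-choosing the basepoint at every stage to be a vertex of maximal height on the current loop. When the iteration terminates there is no local minimum, and since the basepoint itself is a maximum, the loop must have collapsed to a single vertex. This avoids any need to localize an efficient loop inside a single cube, staying entirely inside Lemma~\ref{lem:efficient}'s own mechanism. Your approach instead applies Lemma~\ref{lem:efficient} once with a fixed basepoint, observes the resulting efficient loop lies in a single cube $Q$, and appeals to simple connectedness of $Q$ via the edge-path group presentation. Both work; the paper's is shorter and self-contained, while yours surfaces a geometric fact the paper asserts in the preamble before the corollary ("an efficient edge-loop is entirely contained in a single cube") but never proves.

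One attribution in that extra step is off. You conclude $Q_1=Q_2$ by citing "the uniqueness assertion of Lemma~\ref{lem:cube_minmax}," but that lemma only says a given cube has unique minimum- and maximum-height vertices; it does not say that a cube of $\dccx$ is determined by its minimum and maximum vertices, which is what you actually use. To supply that, note that by Lemma~\ref{lem:cube_minmax} the cells dual to the vertices of any cube with min and max dual to $\lambda\subseteq\mu$ all lie in the interval $[\lambda,\mu]$ of the cell poset of $\uchc$; by Lemma~\ref{lem:strat_links_cells}(1) this interval is isomorphic to the interval between the corresponding cubes of $X$, i.e.\ a Boolean lattice of rank $\dim\mu-\dim\lambda$, which (by Lemma~\ref{lem:dcc_fin_dim}'s height computation) has exactly $2^{\dim Q}$ elements, the number of vertices of the cube. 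Hence the vertex set of any such cube is the whole interval, and the cube is unique. With that correction your argument is sound.
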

\proof
Pick a basepoint $v_0$ on $p$ to be a vertex of maximal height on $p$, and write $p=(v_0,\dots,v_s)$, for $v_0=v_s$. 
Apply the previous argument (from Lemma~\ref{lem:efficient}) to $p$. 
At every iteration we allow ourselves to change the basepoint on $p$ to always be a vertex of maximal height. 
At the end there can be no local minimum, hence the path is constant. 
\endproof

A simple way for an edge-loop to satisfy the condition of Corollary~\ref{cor:efficient_loop} is to be short.
Recall from \S~\ref{subsec:cubes and links} that the length $\length p$ of an edge--path  $p$ is defined to be the number of edges of $p$.

\begin{corollary}\label{cor:short loop}
Let $p$ be an edge--loop in $\dccx$. Then $\length p$ is even. Moreover, if $\length p \leq 4$ then  $p$ stays in a tile, and there is an elementary homotopy between $p$ and a constant path.
\end{corollary}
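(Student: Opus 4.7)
The evenness of $\length p$ is immediate from the height function: every edge of $\dccx$ changes the height by exactly $\pm 1$, so the signed total height change around any closed loop vanishes and the number of edges must be even.

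For the second assertion my plan is a case analysis on $\length p \in \{0,2,4\}$. The first two cases are easy: length $0$ is a constant loop, while a length-$2$ loop $p = (v_0, v_1, v_0)$ is a backtrack between two cells of consecutive dimensions, so any tile $\tau$ of $\uchc$ containing the higher-dimensional of $\sigma_0, \sigma_1$ (which exists by homogeneity of $X$, hence of $\uchc$) places $p \subseteq \dcc\tau$, and Corollary~\ref{cor:efficient_loop} then supplies the elementary homotopy to a constant.

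The substantial case is $\length p = 4$. My plan is to analyze the height profile along $p = (v_0, v_1, v_2, v_3, v_0)$: setting $h = \height p$, and using that heights change by $\pm 1$ along each edge, two vertices of height $h$ cannot be adjacent in $p$, so either there is a unique vertex of maximum height, or exactly two such vertices in opposite positions of $p$. In the first case, a short inspection shows that the dual cells of the other three vertices of $p$ are all faces of the maximum cell $\sigma_k$ (the two neighbours of $v_k$ are codimension-one faces of $\sigma_k$, and the antipodal vertex corresponds to a cell contained in each of them), so $p \subseteq \dcc\tau$ for any tile $\tau \supseteq \sigma_k$. In the second case, suppose the maxima are at $v_1, v_3$; then $\sigma_0, \sigma_2 \subseteq \sigma_1 \cap \sigma_3$, and applying Lemma~\ref{lem:strat_cell_intersection} together with a dimension count will force either $\sigma_1 = \sigma_3$ (so $\sigma_1$ contains all four dual cells of $p$ and any tile containing $\sigma_1$ works) or $\sigma_0 = \sigma_2$ (so $p = (v_0, v_1, v_0, v_3, v_0)$ contains two backtracks at $v_1$ and $v_3$ which can be removed by two elementary homotopy moves, reducing to the length-$2$ case). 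The symmetric configuration with two opposite minima is handled dually.

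The hard part will be the final subcase of the two-opposite-maxima configuration, where $\sigma_0 = \sigma_2$ and $\sigma_1 \neq \sigma_3$: the two top cells $\sigma_1, \sigma_3$ above the common cell $\sigma_0$ need not lie in a single tile of $\uchc$, so $p$ may not literally embed in any $\dcc\tau$ as a whole. The resolution I intend is to first apply a backtrack-removing elementary move to reduce $p$ to a length-$2$ loop that does stay in a tile, and then invoke Corollary~\ref{cor:efficient_loop} on the reduced loop; composing the elementary moves yields the desired elementary homotopy to a constant, completing the proof.
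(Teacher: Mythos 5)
Your evenness argument is the same as the paper's, and the length-$0$ and length-$2$ cases are correct. For length $4$, the paper's proof is shorter: it asserts that a length-$4$ edge-loop is the boundary path of a square in $\dccx$ and invokes Lemma~\ref{lem:height_square} to produce the unique vertex of maximal height, whose dual cell then contains all the others. Your height-profile case analysis is a genuinely different and more elementary route, and it is also more thorough: it handles backtracking loops, which the paper's assertion (``is the boundary path of a square'') simply does not cover. Your use of Lemma~\ref{lem:strat_cell_intersection} to force $\sigma_1=\sigma_3$ or $\sigma_0=\sigma_2$ when the height profile has two opposite maxima is correct; one can also see directly from Lemma~\ref{lem:height_square} that such a profile is incompatible with bounding a square, so a backtrack is forced.

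The one thing to flag is that your final subcase is not a gap in your argument that needs patching, but a counterexample to the ``$p$ stays in a tile'' clause of the corollary as literally stated. Take $\sigma_0=\sigma_2$ an $(n-1)$-cell and $\sigma_1\neq\sigma_3$ two tiles containing it (these exist since $X$ is homogeneous and without boundary); then no tile contains both $\sigma_1$ and $\sigma_3$, so $p=(v_0,v_1,v_0,v_3,v_0)$ lies in no $\dcc{\tau}$. Equivalently, $\{\sigma_1,\sigma_3\}$ frames $\sigma_0$, so by Proposition~\ref{prop:mirrors separate} the unique mirror $M$ through $\sigma_0$ separates them and $\mirrorcomplexitymirror{p}{M}=2>0$. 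Your backtrack removal correctly salvages the nullhomotopy, which is the only consequence the paper relies on (the proof of Theorem~\ref{thm:dual cubical complex is CAT(0)} goes through Lemma~\ref{lem:in tile iff no crossing} and Corollary~\ref{cor:efficient_loop} directly, not this corollary), but the plan as described does not ``complete the proof'' of the corollary as written. The right conclusion of your analysis is that the statement should either drop the ``stays in a tile'' clause or restrict to loops without backtracking subpaths; as it stands, the paper's proof silently assumes the latter.
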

\proof
The first statement follows from the fact that if an edge $e$ has endpoints $v,w$ then $|h(v)-h(w)|=1$, so if an edge--path has odd length then the endpoints have different height.

Suppose now $\length p\leq 4$. 
If $\ell(p)=2$ then $p=(v,w,v)$ for two adjacent vertices $v,w$. In particular the cell dual to $v$ contains the one dual to $w$, or vice versa.
If $\ell(p)=4$ then $p$ is the boundary path of a square. It follows from Lemma~\ref{lem:height_square} that $p$ contains a unique point of maximal height, and that the cell dual to it contains every other cell.
In either case, there is a cell containing all the cells dual to the vertices of $p$. If $\tau$ is a tile of $\uchc$ containing that cell, then $p$ is entirely contained in $\dcc \tau$ by construction. In particular, $p$ stays in a tile, so the statement follows from Corollary~\ref{cor:efficient_loop}.
\endproof

\begin{figure}[h]
\centering
\def\svgwidth{.5\columnwidth}
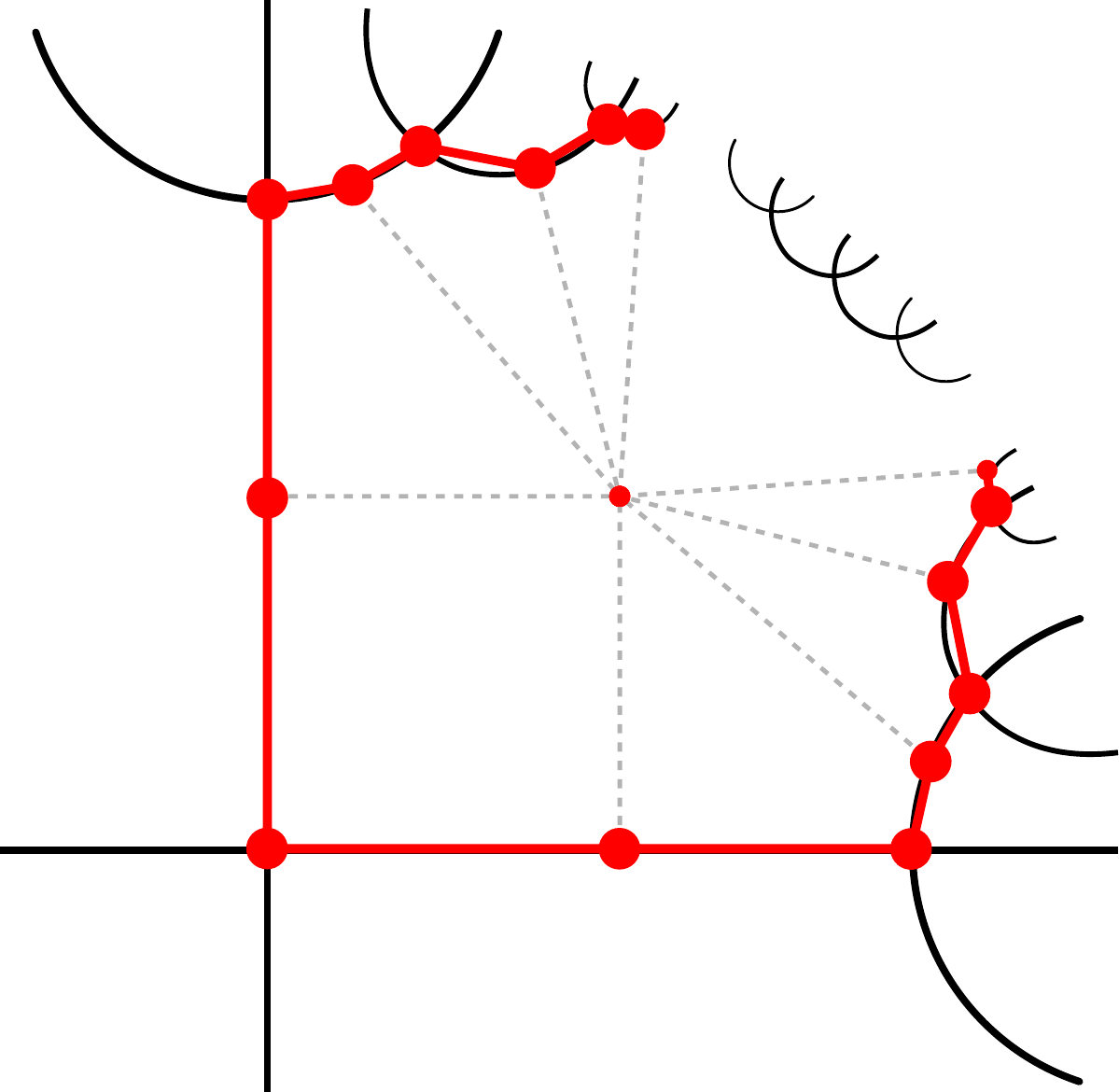
    \caption{A dual tile in $\dccx$, and a long edge--path that stays in a tile.}    
    \label{fig:dccx_long_path_tile}
\end{figure}

\begin{remark}\label{rem:length not enough}
From now on, our main goal in this section will be to show that every edge--loop in $\dccx$ can be written as a product of nullhomotopic edge--loops, i.e.\ $\dccx$ is simply connected.
A naive approach would consist in splitting an edge--loop along mirrors into shorter edge--paths, until they are short enough to be contracted (in the sense of Corollary~\ref{cor:efficient_loop}).
However, there are arbitrarily long edge--paths that stay in a tile (see Figure~\ref{fig:dccx_long_path_tile}). 
Therefore, an inductive argument based on length alone would not suffice, and this idea requires some additional tools which we develop in \S\ref{subsec:mirror complexity}, before returning to the problem of simple connectedness of $\dccx$ in \S\ref{subsec:mirror surgeries}.
\end{remark}

\begin{remark}
Given two cells $\sigma,\sigma'$ contained in the same tile $\tau$, let $\mu=\mu(\sigma,\sigma')$ be their upper cell (i.e.\ the smallest cell that contains both of them, as defined in Lemma~\ref{lem:min_max_cells}).
If $v,v'$ and $w$ are the vertices dual to $\sigma,\sigma'$ and $\mu$ respectively, then an edge--path  of minimal length in $\dccx$ from $v$ to $v'$ can be obtained as an efficient path $p$ in $\dcc \tau$ going through $w$.
Such an efficient edge--path is not unique, but the length of any such path is given by
$$\length p = 2\height w - \height v - \height{v'} = 2 \dim \mu - \dim \sigma - \dim {\sigma '}. $$
It should be noted that if $\mu \subsetneq \tau$ then there are edge--paths from $v$ to $v'$ which are strictly longer than $p$ but still efficient.
\end{remark}


\subsection{Mirror complexity}\label{subsec:mirror complexity}
Here we define an additional notion of complexity for an edge--path, based on the relative position in $\uchc$ between mirrors and the cells dual to the vertices of the edge--path.
We start with the following definition, in analogy to that of a dual tile. If $M$ is a mirror of $\uchc$, we define the \textit{dual mirror} $\dcc M$ to be the full subcomplex of $\dccx$ whose vertices are dual to the cells of $M$.
Since we have not proved yet that $\dccx$ is simply connected, a priori it is not clear that a dual mirror enjoys properties reminiscent of those of a mirror of $\uchc$; for instance, it is not clear yet whether it is convex. 
Nevertheless, we can obtain the following statement about separation (analogous to Proposition~\ref{prop:mirrors separate}).

\begin{lemma}\label{lem:dual mirror separate}
Let $M$ be a mirror of $\uchc$ and let $\dcc M$ be the dual mirror in $\dccx$.
Let $z_1,z_2$ be two points in $\uchc \setminus M$, let $\sigma_1,\sigma_2$ be cells in $\uchc$ such that $z_k\in \sigma_k$, and let $v_k$ be the vertex of $\dccx$ dual to $\sigma_k$.
Then $M$ separates $z_1$ and $z_2$ if and only if $\dcc M$ separates $v_1$ and $v_2$.
In particular, $\dcc M$ separates $\dccx$.
\end{lemma}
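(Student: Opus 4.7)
For the implication $(\Rightarrow)$, I will argue by contrapositive: given an edge--path $p=(u_0,\dots,u_m)$ in $\dccx$ from $v_1=u_0$ to $v_2=u_m$ that avoids $\dcc M$, I will construct a continuous path $\gamma$ in $\uchc$ from $z_1$ to $z_2$ that avoids $M$. Let $\tau_i$ denote the cell of $\uchc$ dual to $u_i$, so that $\tau_0=\sigma_1$ and $\tau_m=\sigma_2$. By hypothesis $\tau_i\not\subseteq M$ for every $i$, and consecutive cells are related by a codimension--one inclusion. By Lemma~\ref{lem:mirrors_in_tile} (which is stated for arbitrary cells, not merely tiles), for each $i$ the intersection $M\cap\tau_i$ is either empty or a single codimension--one face of $\tau_i$ distinct from $\tau_{i\pm 1}$. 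Choosing interior points $x_i\in\tau_i$ for $0<i<m$ and setting $x_0=z_1$, $x_m=z_2$, I will join successive $x_i$'s by the unique geodesic segment inside the larger of the two convex cells $\tau_i,\tau_{i+1}$ (Lemma~\ref{lem:strat_cells_convex}). Each such segment meets the boundary of its containing cell only at its endpoints, all of which lie off $M$, so the concatenation gives the desired $\gamma$.

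For the implication $(\Leftarrow)$, I will again argue by contrapositive: given a continuous path $\gamma$ in $\uchc$ from $z_1$ to $z_2$ that avoids $M$, I will extract an edge--path in $\dccx$ from $v_1$ to $v_2$ that avoids $\dcc M$. The plan is to first perturb $\gamma$ rel endpoints so that, away from the endpoints, it meets only cells of codimension at most one and crosses each such cell transversally; this is possible because $\uchc$ is locally finite and the image of $\gamma$ is compact, so $\gamma$ meets only finitely many tiles. The perturbed path then determines a finite sequence of tiles $T_0,\dots,T_k$ separated by transitional codimension--one cells $F_1,\dots,F_k$, each $F_j$ being a common codimension--one face of $T_{j-1}$ and $T_j$. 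This yields an edge--path $(v_{T_0},v_{F_1},v_{T_1},\dots,v_{F_k},v_{T_k})$ in $\dccx$, which I will prefix by an efficient edge--path from $v_1$ to $v_{T_0}$ inside the dual tile $\dcc{T_0}$ and suffix by an analogous efficient path from $v_{T_k}$ to $v_2$ inside $\dcc{T_k}$ (using the results of \S\ref{subsec:efficiency} together with $\sigma_1\subseteq T_0$ and $\sigma_2\subseteq T_k$). Since $\gamma$ avoids $M$, none of the $F_j$ is contained in $M$, and by assumption $\sigma_1,\sigma_2$ are not contained in $M$ either, so the resulting edge--path avoids $\dcc M$.

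The main technical obstacle I anticipate is the transversality step in the reverse direction: while the general--position perturbation is geometrically intuitive, some care is needed to perform it within the piecewise hyperbolic category while controlling the finitely many cells that $\gamma$ meets. Once the biconditional is established, the ``In particular'' clause follows immediately from Proposition~\ref{prop:mirrors separate}: that result yields two points $z_1,z_2$ lying in distinct components of $\uchc\setminus M$, and the biconditional then places the corresponding dual vertices $v_1,v_2$ in distinct components of $\dccx\setminus\dcc M$, so $\dcc M$ separates $\dccx$.
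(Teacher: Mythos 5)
Your proof follows the same two-step strategy as the paper (edge--path in $\dccx$ $\leftrightarrow$ path in $\uchc$, via general position), and the forward direction is sound, though the justification ``each such segment meets the boundary of its containing cell only at its endpoints'' can fail for the first and last segments when $z_k$ lies in a proper face of $\sigma_k$ (both endpoints of $[z_1,x_1]$ can be on the boundary of the larger cell, and the segment can touch boundary strata at interior times). The clean reason those segments avoid $M$ is that $M\cap\tau$, when nonempty, is a codimension--one face of the convex cell $\tau$, so $\tau\setminus(M\cap\tau)$ is still convex and contains both endpoints, hence the whole geodesic.

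The genuine gap is in the reverse direction: the assertion $\sigma_1\subseteq T_0$ (and likewise $\sigma_2\subseteq T_k$) is not forced by your construction. The tile $T_0$ first entered by the perturbed $\gamma$ need only contain the \emph{point} $z_1$, and a cell $\sigma_1$ containing $z_1$ can easily fail to lie inside $T_0$ --- for instance $\sigma_1$ could itself be a tile adjacent to $T_0$, or a face of some other tile incident to $z_1$. Without this inclusion $v_1\notin\dcc{T_0}$, so the proposed efficient prefix ``inside $\dcc{T_0}$'' is not defined. The fix is short: set $\rho=\sigma_1\cap T_0$, which is a cell by Lemma~\ref{lem:strat_cell_intersection} and is nonempty since $z_1\in\sigma_1\cap T_0$; as $z_1\notin M$ we get $\rho\not\subseteq M$. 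Prefix the edge--path by $v_1\to v_\rho\to v_{T_0}$, descending from $\sigma_1$ to $\rho$ and then ascending from $\rho$ to $T_0$; every cell visited contains $\rho\ni z_1\notin M$, so this prefix avoids $\dcc M$. (Alternatively, arrange the perturbation to pass through the interior of $\sigma_1$ before entering a tile, so that $T_0$ can be chosen with $\sigma_1\subseteq T_0$.) With this repair the rest of your argument, and the ``in particular'' clause via Proposition~\ref{prop:mirrors separate}, goes through.
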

\proof
Suppose $M$ separates $z_1$ and $z_2$, and assume by contradiction that there is an edge--path $p$ in $\dccx$ from $v_1$ to $v_2$ avoiding $\dcc M$.
Then the union of the cells dual to the vertices of $p$ contains a path--connected subspace of $\uchc \setminus M$ that contains both $z_1$ and $z_2$. This is in contradiction with the fact that $M$ separates $z_1$ from $z_2$.

Vice versa, suppose $\dcc M$ separates $v_1$ and $v_2$, and assume by contradiction that there is a path $\gamma$ in $\uchc$ from $z_1$ to $z_2$ avoiding $M$.
By a small perturbation, we can assume that $\gamma$ intersects the strata of $\uchc$ in such a way that the sequence of the minimal cells that it visits gives rise to an edge--path in $\dccx$ (i.e.\ their dimension jumps by 1 at a time along $\gamma$). By construction, such an edge--path connects $v_1$ to $v_2$ in the complement of $\dcc M$, which is not possible.

In particular, it follows that $\dcc M$ separates $\dccx$, because $M$ separates $\uchc$ by Proposition~\ref{prop:mirrors separate}.
\endproof

This provides a correspondence between complementary components of a mirror $M$ in $\uchc$ and complementary components of the dual mirror $\dcc M$ in $\dccx$.

\subsubsection{Crossings}
Let $p=(v_0,\dots,v_s)$ be an edge--path (possibly an edge--loop) in $\dccx$, and let $\sigma_0,\dots, \sigma_s$ be the  cells of $\dccx$ dual to its vertices. 
Let $M$ be a mirror in $\uchc$, and let $\dcc M$ be the dual mirror in $\dccx$.
Recall from Lemma~\ref{lem:dual mirror separate} that $\dccx\setminus \dcc M$ is disconnected.
We say that $p$ \textit{crosses}  $M$ if $p \cap \dcc M \neq \varnothing$ and there are at least two connected components $C_1,C_2$ of $\dccx \setminus \dcc M$ such that $p \cap C_k\neq \varnothing$. 
This means that among the cells $\sigma_0,\dots, \sigma_s$, some are contained in $M$, but at least two of them are such that their interiors are contained in different complementary components of $M$. (Recall that in our setting cells are closed and complementary components of mirrors are open.)
Let $q=(v_j,\dots,v_{j+m})$ be a subpath of $p$.
We say that $q$ is a ($p,M$)--\textit{crossing} if $v_j,\dots,v_{j+m} \in \dcc M$, but $v_{j-1}$ and $v_{j+m+1}$  lie in different connected components of $\dccx \setminus \dcc M$.
(See Figure~\ref{fig:crossings} for some examples.)
We denote by $\mirrorcomplexitymirror{p}{M}$  the number of ($p,M$)--crossings. 
The \textit{mirror complexity} of $p$ is defined by taking into account the family  $\mirrors$ of all mirrors of $\uchc$, i.e.\ by the following formula:
$$
\mirrorcomplexity p= \sum_{M\in \mirrors}\mirrorcomplexitymirror{p}{M}.
$$ 

\begin{figure}[h]
\centering
\def\svgwidth{\columnwidth}
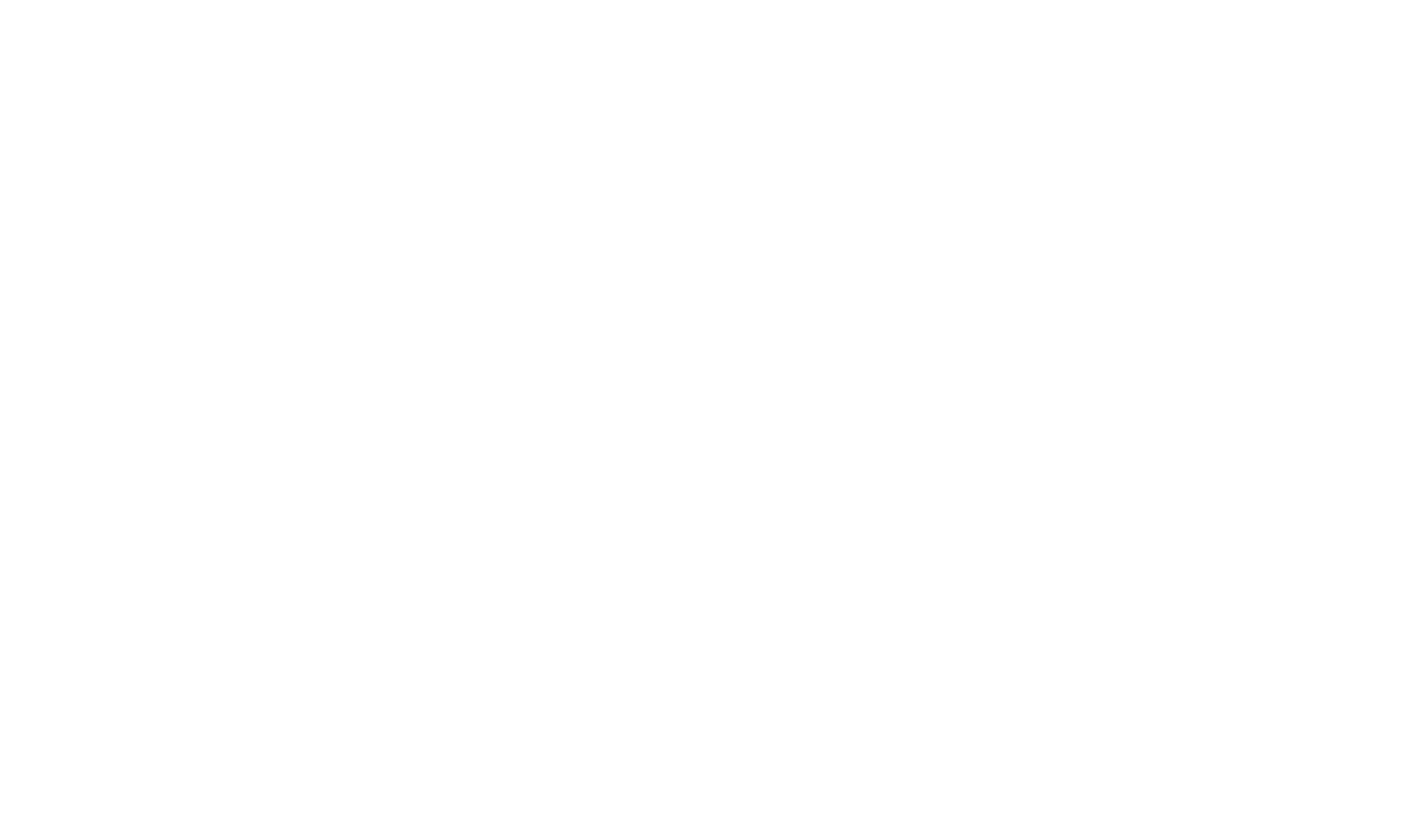
    \caption{An edge--path $p$ in $\dccx$ crossing some mirrors. Mirror crossings are highlighted. We have $\mirrorcomplexitymirror{p}{M}=2$, $\mirrorcomplexitymirror{p}{N_1}=1$, $\mirrorcomplexitymirror{p}{N_2}=3$, and $\mirrorcomplexitymirror{p}{N_3}=3$. In particular, notice that even if $p$ intersects $\dcc{N_1}$ twice, there is only one $(p,N_1)$-crossing.}    
    \label{fig:crossings}
\end{figure}

The relevance of this notion of complexity with respect to Remark~\ref{rem:length not enough} is showcased by the following two lemmas.

\begin{lemma}\label{lem:in tile iff no crossing}
Let $p$ be an edge--path in $\dccx$. Then $p$ stays in a tile if and only if $p$ does not cross any mirror.
\end{lemma}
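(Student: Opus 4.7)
The plan is to prove both implications.

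For the forward implication, suppose $p \subseteq \dcc{\tau}$ for some tile $\tau$. For any mirror $M$, Lemma~\ref{lem:mirrors_in_tile} gives that $M \cap \tau$ is either empty or an $(n-1)$-cell of $\tau$; combining the convexity of $\tau$ (Lemma~\ref{lem:strat_cells_convex}) with the separation property of Proposition~\ref{prop:mirrors separate} shows that $\tau$ lies in the closure of a single complementary component of $M$. Consequently, every $\sigma_i \subseteq \tau$ with $\sigma_i \not\subseteq M$ has interior on one fixed side of $M$, and Lemma~\ref{lem:dual mirror separate} places all corresponding vertices $v_i \notin \dcc{M}$ in a single component of $\dccx \setminus \dcc{M}$, so $\mirrorcomplexitymirror{p}{M}=0$.

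For the backward implication, I proceed by induction on $\length{p}$. Any sub-path $p'$ of $p$ inherits the no-crossing property, since a $(p',M)$-crossing is bounded on both sides by vertices of $p'$ in $\dccx \setminus \dcc{M}$, which are internal to $p$ and thus produce a $(p,M)$-crossing. The base case is trivial. For the inductive step, the hypothesis gives a tile $\tau'$ with $p' = (v_0, \ldots, v_{s-1}) \subseteq \dcc{\tau'}$. If $\sigma_s \subseteq \tau'$, we are done; otherwise the edge $\{v_{s-1}, v_s\}$ must realize $\sigma_{s-1} \subsetneq \sigma_s$ (since $\sigma_s \subseteq \sigma_{s-1} \subseteq \tau'$ would contradict $\sigma_s \not\subseteq \tau'$), and $\sigma_s$ extends $\sigma_{s-1}$ outside $\tau'$ across some mirror $M \supseteq \sigma_{s-1}$, with its interior sitting in the complementary component of $M$ opposite $\tau'$. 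If some earlier $\sigma_i$ (with $i < s$) is not in $M$, then $v_i$ and $v_s$ lie in opposite components of $\dccx \setminus \dcc{M}$ by Lemma~\ref{lem:dual mirror separate}, and tracking the sub-path of $p$ between them must encounter a maximal $\dcc{M}$-run whose two boundary vertices lie in different components of $\dccx \setminus \dcc{M}$, producing a $(p,M)$-crossing that contradicts the hypothesis. Hence every $\sigma_i$ with $i \leq s-1$ lies in the $(n-1)$-face $F = \tau' \cap M$, which is shared with a unique tile $\tau''$ adjacent to $\tau'$ across $M$; then $\tau''$ contains $F$, hence every $\sigma_0, \ldots, \sigma_{s-1}$, and the local hyperbolic structure of $\uchc$ at $\sigma_{s-1}$ (governed by orthogonal mirrors through $\sigma_{s-1}$, cf.\ Lemma~\ref{lem:helly_hyperbolic_space}) forces $\sigma_s$ to be a face of $\tau''$, so $p \subseteq \dcc{\tau''}$.

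The hardest step will be the last one, namely verifying that the particular cell $\sigma_s$ singled out by the path is indeed a face of the canonical adjacent tile $\tau''$, rather than of another tile on the opposite side of $M$ that contains $\sigma_{s-1}$ via branching of mirrors in $\uchc$. I expect to handle this either by uniqueness of the one-dimension-higher extension of $\sigma_{s-1}$ on a given side of $M$ in the local $\hh^n$-model, or by applying the no-crossing hypothesis one more time to the auxiliary mirrors through $\sigma_{s-1}$ that would distinguish $\tau''$ from any alternative tile.
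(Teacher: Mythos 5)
Your forward direction is correct and uses the same core ingredients as the paper's, namely Lemma~\ref{lem:mirrors_in_tile} and Lemma~\ref{lem:dual mirror separate}; the paper argues the contrapositive (a crossing produces two vertices whose dual cells are separated by a mirror in $\uchc$, hence cannot share a tile) where you argue directly, but the two arguments are essentially equivalent.

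For the backward direction you take a genuinely different route. The paper does it in one step by contradiction: pick two vertices of $p$ whose dual cells share no tile, at minimal distance along $p$; the tiles $\tau_1,\tau_2$ can be taken adjacent, $\tau_1\cap\tau_2$ is a cell lying in some mirror $M$, the pair $\{\tau_1,\tau_2\}$ is a framing, and Proposition~\ref{prop:mirrors separate} together with Lemma~\ref{lem:dual mirror separate} yield a $(p,M)$-crossing. You instead induct on $\length p$ and try to grow the ambient tile $\tau'$ one vertex at a time. This has a genuine gap, part of which you flag yourself. The "unique tile $\tau''$ adjacent to $\tau'$ across $M$" through $F=\tau'\cap M$ does not exist in general: $X$ is only assumed to be a pseudomanifold, mirrors branch (compare Remark~\ref{rem:link mirror branch}), and $F$ can bound several tiles on each side of $M$. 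For the same reason there are in general several $(\dim\sigma_{s-1}+1)$-cells extending $\sigma_{s-1}$ on a given side of $M$, so your first proposed repair (uniqueness of the one-dimension-higher extension in the local $\hh^n$-model) is false. There is also an earlier unjustified step: the assertion that the interior of $\sigma_s$ lies in the component of $\uchc\setminus M$ opposite $\tau'$ is not automatic. It requires showing that the one mirror through $\sigma_{s-1}$ but not through $\sigma_s$ actually separates $\tau'$ from some tile containing $\sigma_s$, which is precisely what Lemma~\ref{lem:tiles intersection} gives -- but that lemma is established only later in the paper, so you would need to cite it or re-derive it. Your second repair idea (testing the no-crossing hypothesis against the remaining mirrors through $\sigma_{s-1}$) is the right instinct and is close to how Lemma~\ref{lem:projecting cells to mirrors} uses Lemma~\ref{lem:tiles intersection}, but as written the induction does not close; the paper's direct contradiction avoids the issue altogether, since it never fixes a candidate tile in advance.
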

\proof
Suppose that $p$ stays in a tile, i.e.\ there exists a tile $\tau$ such that $p \subseteq \dcc \tau$. 
Assume by contradiction that $p$ crosses a mirror $M$.
So there are two vertices $v_1, v_2$ of $p$ which are separated by $\dcc M$.
Let $\sigma_k$ be the cell dual to $v_k$.
By Lemma~\ref{lem:dual mirror separate} $M$ separates the interior of $\sigma_1$ from the interior of $\sigma_2$. 
In particular, there is no tile of $\uchc$ that contains both of them,  which contradicts the hypothesis that $p$ stays in a tile.

Vice versa, suppose $p$ does not cross any mirror, and assume by contradiction that there are two vertices $v_1,v_2$ on $p$ such that the dual cells are not contained in the same tile. 
Let  $\tau_1,\tau_2$ be different tiles containing them. 
Up to choosing $v_1,v_2$ closer to each other along $p$, we can assume that the tiles are adjacent, i.e.\ $\tau_1\cap\tau_2\neq \varnothing$.
In particular, $\sigma=\tau_1\cap\tau_2$ is a cell and it is contained in some mirror $M$.
Then $p$ intersects $M$ between $v_1$ and $v_2$.
Moreover the tiles $\tau_1,\tau_2$ provide a framing in the sense of \S\ref{subsec:separation}.  Proposition~\ref{prop:mirrors separate} implies that the interiors of $\tau_1,\tau_2$ are separated by $M$. The same holds for the interiors of the cells dual to $v_1,v_2$.
So by Lemma~\ref{lem:dual mirror separate} we have that $v_1,v_2$ are separated by $\dcc M$, i.e.\ $p$ crosses $M$, a contradiction.
\endproof

\begin{lemma}\label{lem:mirror complexity basics}
Let $p$ be an edge--path in $\dccx$, and let $M$ be a mirror in $\uchc$. Then the following hold.
\begin{enumerate}
    \item \label{item:mirror complexity basics 1} $\mirrorcomplexitymirror{p}{M}=0$ if and only if $p$ does not cross $M$.
    \item \label{item:mirror complexity basics 2} $\mirrorcomplexity p=0$ if and only if $p$ stays in a tile.
    \item \label{item:mirror complexity basics 3} If $p$ is a loop and  $\mirrorcomplexitymirror{p}{M}\geq 1$, then  $\mirrorcomplexitymirror{p}{M}\geq 2$.
    
    \item \label{item:mirror complexity finite} If $p$ has finite length, then $\mirrorcomplexitymirror{p}{M}$ and $\mirrorcomplexity p$ are finite.
\end{enumerate}
\end{lemma}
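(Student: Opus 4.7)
The plan is to handle the four items in order, since each builds on the previous.

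For item \eqref{item:mirror complexity basics 1}, I would essentially unwind the definitions. If $p$ does not cross $M$, then either $p\cap\dcc M=\varnothing$, in which case no subpath is contained in $\dcc M$ at all, or every vertex of $p$ that is not in $\dcc M$ lies in a single component $C$ of $\dccx\setminus\dcc M$; in either case, no subpath $q=(v_j,\dots,v_{j+m})\subseteq \dcc M$ can have $v_{j-1}$ and $v_{j+m+1}$ in different components, so $\mirrorcomplexitymirror{p}{M}=0$. Conversely, if $p$ crosses $M$, pick two vertices $u,w$ of $p$ lying in distinct components of $\dccx\setminus\dcc M$; walking along $p$ from $u$ to $w$, we must pass through $\dcc M$, and the first maximal consecutive subpath of vertices in $\dcc M$ encountered along this walk provides a $(p,M)$-crossing.

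For item \eqref{item:mirror complexity basics 2}, the statement is a direct combination: $\mirrorcomplexity p = 0$ iff $\mirrorcomplexitymirror{p}{M}=0$ for every mirror $M$, iff (by part \eqref{item:mirror complexity basics 1}) $p$ crosses no mirror, iff (by Lemma~\ref{lem:in tile iff no crossing}) $p$ stays in a tile.

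For item \eqref{item:mirror complexity basics 3}, I would use the separation property from Lemma~\ref{lem:dual mirror separate}. Since $\mirrorcomplexitymirror{p}{M}\geq 1$, the loop $p$ contains a $(p,M)$-crossing $q$, and the vertices $v_{j-1}, v_{j+m+1}$ immediately before and after $q$ lie in distinct components of $\dccx\setminus\dcc M$. Traversing $p$ (a loop) from $v_{j+m+1}$ back to $v_{j-1}$ the long way around, we end up in a different component of $\dccx\setminus\dcc M$ than where we started, so we must enter $\dcc M$ and leave it into a different component at least once more; this gives a second, disjoint $(p,M)$-crossing, yielding $\mirrorcomplexitymirror{p}{M}\geq 2$.

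For item \eqref{item:mirror complexity finite}, I would argue combinatorially using the fact that a $k$-cell of $\uchc$ is contained in exactly $n-k$ mirrors (since an $(n-k)$-cell is cut out of a tile by exactly $n-k$ mirrors, by the stratification in \S\ref{subsec:stratification}). For a fixed mirror $M$, distinct $(p,M)$-crossings correspond to disjoint maximal consecutive subpaths of $p$ contained in $\dcc M$, so their number is bounded by $\length p +1$. For $\mirrorcomplexity p$, observe that $\mirrorcomplexitymirror{p}{M}\neq 0$ forces some vertex of $p$ to lie in $\dcc M$, i.e.\ its dual cell must be contained in $M$; since each of the finitely many cells dual to vertices of $p$ lies in at most $n$ mirrors, only finitely many mirrors contribute, and the total is finite. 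The main thing to be careful about is the edge case where $p$ begins or ends inside $\dcc M$, where the defining indices $v_{j-1}$ or $v_{j+m+1}$ may fail to exist; but such terminal subpaths simply do not qualify as crossings, which does not affect any of the bounds above.
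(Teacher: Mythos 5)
Your proof follows the paper's strategy item by item, and in fact fills in more detail than the paper does (the paper dismisses item (1) with ``by definition,'' which elides exactly the small argument you spell out; for item (4) you give a sharp bound of at most $n$ mirrors per cell, whereas the paper invokes local finiteness of $\mirrors$).

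One imprecision worth correcting in item (1): you claim that, walking along $p$ from $u$ to $w$, ``the first maximal consecutive subpath of vertices in $\dcc M$ encountered along this walk provides a $(p,M)$-crossing.'' That is not quite right --- the path may enter $\dcc M$ and exit back into the \emph{same} component of $\dccx\setminus\dcc M$ (this is just an intersection with $\dcc M$, not a crossing), before eventually transitioning. The correct statement is that \emph{some} maximal subpath along the way is a crossing: as the walk alternates between maximal subpaths inside $\dcc M$ and subpaths lying in a single component of $\dccx\setminus\dcc M$, the sequence of components visited starts at the one containing $u$ and ends at the one containing $w$, and since these differ at least one consecutive pair must differ, so the maximal $\dcc M$-subpath between them is a genuine crossing. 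Your item (3) argument already implicitly uses this corrected formulation (you only assert ``at least once more,'' not ``the first one''), so only (1) needs the wording adjusted. Everything else checks out.
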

\proof
For \eqref{item:mirror complexity basics 1}, note that $\mirrorcomplexitymirror{p}{M}$ is by definition the number of ($p,M$)-crossings. For \eqref{item:mirror complexity basics 2}, note that $\mirrorcomplexity p$ is a sum of non-negative numbers, so it is zero if and only if  $\mirrorcomplexitymirror{p}{M}=0$ for every mirror $M$. By \eqref{item:mirror complexity basics 1} this is equivalent to saying that $p$ does not cross any mirror. Then the statement follows from Lemma~\ref{lem:in tile iff no crossing}.
To prove \eqref{item:mirror complexity basics 3}, note that if $p$ is a loop that crosses $M$ at least once, then it must cross it at least twice, because $\dcc M$ separates $\dccx$ by Lemma~\ref{lem:dual mirror separate}. 

Finally, to prove \eqref{item:mirror complexity finite} notice that each $(p,M)$--crossing contributes to at least one vertex of $p$, dual to a cell of $M$. Since $p$ has finite length, there can be only finitely many $(p,M)$--crossings. Then the finiteness of $\mirrorcomplexity p$ follows from the fact that $X$ (hence the collection of mirrors $\mirrors$) is locally finite.
\endproof

\begin{remark}
In this framework, Corollary~\ref{cor:short loop} can be restated by saying that $\length p \leq 4$ implies $\mirrorcomplexity p=0$.
\end{remark}

\subsubsection{Bridges}
Let $p=(v_0,\dots,v_s)$ be an edge--path in $\dccx$, and let $\sigma_0,\dots, \sigma_s$ be the  cells of $\uchc$ dual to its vertices. 
Let $M$ be a mirror in $\uchc$, and let $\dcc M$ be the dual mirror in $\dccx$.
We say that $p$ is a \textit{bridge} if there exists a mirror $M$ of $\uchc$ such that $v_0,v_s\in \dcc M$, but $p\not \subseteq \dcc M$.
In other words, $\sigma_0,\sigma_s\subseteq M$ but some of the other cells $\sigma_1,\dots,\sigma_{s-1}$ are not contained in $M$. 
In this case, we say that $p$ is \textit{supported} by $M$.
We say $p$ is a \textit{minimal bridge} if none of its subpaths is a bridge (see Figure~\ref{fig:minimal_bridge}).

\begin{lemma}\label{lem:minimal bridges exist}
Let $p$ be an edge--path in $\dccx$. If $p$ is a bridge, then there exists a subpath $q \subseteq p$ that is a minimal bridge.
\end{lemma}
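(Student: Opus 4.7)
The statement is a purely combinatorial minimality argument, so the plan is to extract a minimum-length bridge subpath from $p$ and argue it must be minimal.

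First I would observe that the notion of bridge has been defined only in terms of the vertices of the edge--path together with a supporting mirror, so to every edge--path $p = (v_0,\dots,v_s)$ of finite length we can associate the set
\[
\mathcal{S}(p) = \{\,q \subseteq p : q \text{ is a bridge}\,\}
\]
consisting of all subpaths of $p$ that happen to be bridges (possibly supported by different mirrors, depending on the subpath). By hypothesis $p \in \mathcal{S}(p)$, so $\mathcal{S}(p)$ is nonempty. Also, any bridge has length at least $2$, since a bridge needs both endpoints in a dual mirror $\dcc M$ together with at least one intermediate vertex outside $\dcc M$.

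Next I would pick $q \in \mathcal{S}(p)$ of minimum length, which exists because $p$ has finitely many subpaths. I claim this $q$ is a minimal bridge. Indeed, suppose by contradiction that some proper subpath $q' \subsetneq q$ is itself a bridge. Then $q'$ is also a subpath of $p$, hence belongs to $\mathcal{S}(p)$, and its length is strictly less than that of $q$, contradicting the choice of $q$. Thus $q$ is a bridge no proper subpath of which is a bridge, i.e.\ a minimal bridge in the sense of the definition preceding the statement.

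The only potential subtlety is making sure that the mirror supporting the minimal bridge $q$ is allowed to differ from the mirror originally supporting $p$; but this is built into the definition of bridge (which quantifies existentially over mirrors) and does not affect the extraction argument. Consequently there is no real obstacle: the proof is a one-line well-ordering argument on the length of bridge subpaths of $p$.
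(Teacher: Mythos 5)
Your proof is correct and takes essentially the same approach as the paper: both extract an extremal element from the finite collection of bridge subpaths of $p$, the paper minimizing with respect to inclusion and you minimizing with respect to length, which yields the same conclusion.
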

\proof
Let us consider the collection of subpaths of $p$ which  are bridges.
Notice that this collection contains $p$ itself, it is partially ordered by inclusion, and it is finite.
Therefore it contains a minimal element.
\endproof

\begin{figure}[h]
\centering
\def\svgwidth{\columnwidth}
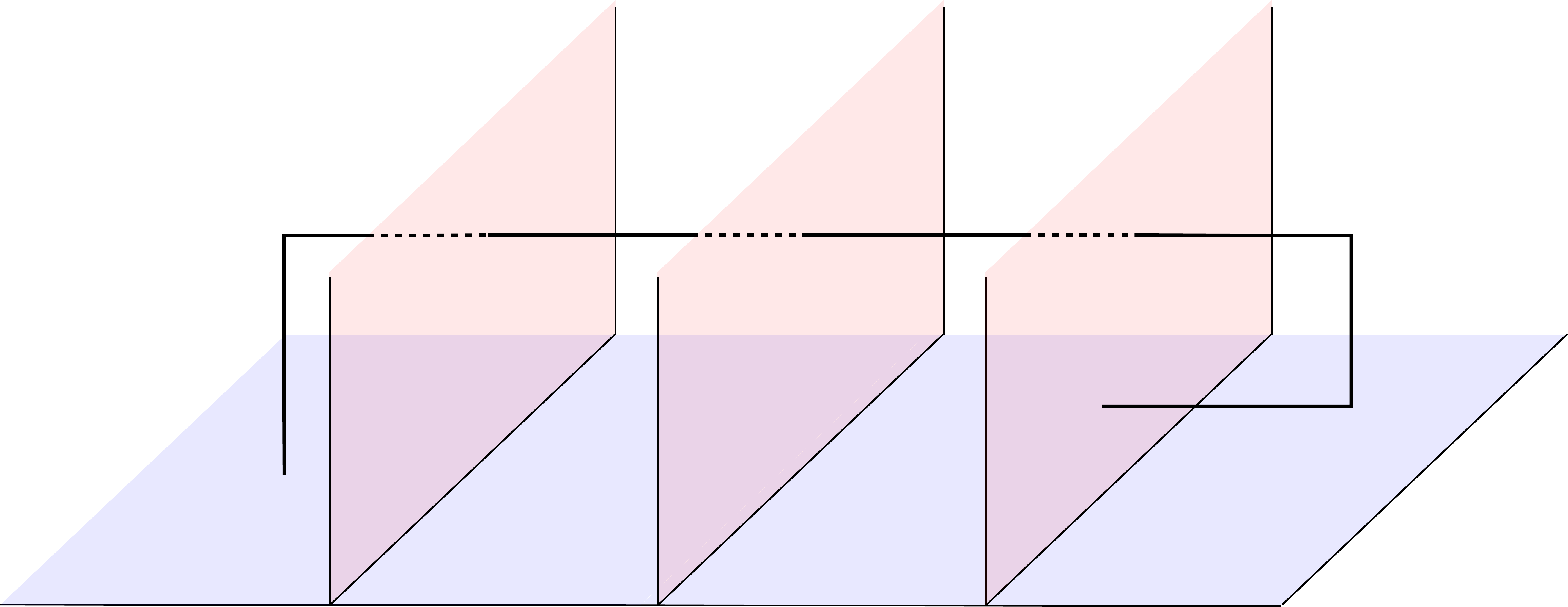
    \caption{A bridge $p$ supported by a mirror $M$, with subpaths $q$ and $q'$ which are bridges supported by the mirrors $N$ and $N'$ respectively. Notice that only $q$ is a minimal bridge.}    
    \label{fig:minimal_bridge}
\end{figure}

\begin{lemma}\label{lem:bridges projection}
Let $p$ be a minimal bridge supported by a mirror $M$, and let $N$ be a mirror such that $\mirrorcomplexitymirror{p}{N}>0$. Then the following hold.
\begin{enumerate}
    \item \label{item:bridge simple crossing} $\mirrorcomplexitymirror{p}{N} =1$.
    \item \label{item:bridge crossing implies intersection} $\dcc M \cap \dcc N \neq \varnothing$ and $M \cap N \neq \varnothing$.
\end{enumerate}
\end{lemma}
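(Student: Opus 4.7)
For part (1), the plan is to derive a contradiction from having two distinct $(p,N)$-crossings by producing a proper subpath of $p$ that is itself a bridge supported by $N$. Given crossings $q_1 = (v_{j_1},\dots,v_{k_1})$ and $q_2 = (v_{j_2},\dots,v_{k_2})$ with $k_1 < j_2$, the definition of a crossing forces $j_1 \geq 1$ and $k_2 \leq s-1$, so the subpath $p'$ from $v_{j_1}$ to $v_{k_2}$ is a proper subpath of $p$. Its endpoints lie in $\dcc N$, but it contains the intermediate vertex $v_{k_1+1}$, which lies in a component of $\dccx \setminus \dcc N$ by the crossing condition on $q_1$ and is therefore not in $\dcc N$. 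Thus $p'$ is a proper subpath which is a bridge, contradicting the minimality of $p$.

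For part (2), I will first exploit minimality of $p$ to obtain strong structural control over the set $I = \{i : v_i \in \dcc N\}$: whenever $v_i, v_j \in \dcc N$ with $(i,j) \neq (0,s)$, minimality forces the entire subpath from $v_i$ to $v_j$ to lie in $\dcc N$, for otherwise that subpath would be a proper bridge supported by $N$. It follows that $I$ is an interval, and having both $v_0$ and $v_s$ in $\dcc N$ would force $p \subseteq \dcc N$, contradicting the existence of a crossing. So the case analysis reduces to two alternatives: either $v_0$ or $v_s$ lies in $\dcc N$, in which case, combined with $v_0, v_s \in \dcc M$, we immediately obtain a vertex in $\dcc M \cap \dcc N$; or else $v_0, v_s \notin \dcc N$, the unique $(p,N)$-crossing $q=(v_j,\dots,v_{j+m})$ coincides with $p \cap \dcc N$, and the subpaths $(v_0,\dots,v_{j-1})$ and $(v_{j+m+1},\dots,v_s)$ lie entirely in $\dccx \setminus \dcc N$. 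Connectedness of each of these subpaths, together with the crossing condition on $q$, then forces $v_0$ and $v_s$ into distinct connected components of $\dccx \setminus \dcc N$.

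In this remaining case I will pass back from $\dccx$ to $\uchc$ via Lemma~\ref{lem:dual mirror separate}. Picking interior points $z_0 \in \sigma_0$ and $z_s \in \sigma_s$ (which lie in $\uchc \setminus N$ since $\sigma_0,\sigma_s \not\subseteq N$), the lemma implies that $z_0$ and $z_s$ belong to different connected components of $\uchc \setminus N$. Since $\sigma_0,\sigma_s \subseteq M$ and $M$ is a closed convex subset of the $\cat 0$ space $\uchc$ by Proposition~\ref{prop:mirrors convex}, the geodesic $[z_0, z_s]$ is contained in $M$, and because its endpoints are on different sides of $N$, it must meet $N$ at some point $z \in M \cap N$. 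In particular $M \cap N \neq \varnothing$.

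The final step, which I expect to be the main technical hurdle, is to upgrade the single common point $z \in M \cap N$ to an entire cell of $\uchc$ contained in both $M$ and $N$, so as to produce a vertex of $\dccx$ in $\dcc M \cap \dcc N$. My plan is to take $\sigma$ to be the unique cell of $\uchc$ whose interior contains $z$, and then to argue via the folding map $\foldutosquare$ that the face $F_\sigma$ of $\square^n$ to which $\sigma$ folds is contained in the codimension-1 faces to which $M$ and $N$ respectively fold; combined with connectedness of $\sigma$ and the definition of $M$ and $N$ as connected components of the relevant preimages, this forces $\sigma \subseteq M$ and $\sigma \subseteq N$. The dual vertex of $\sigma$ is then the desired element of $\dcc M \cap \dcc N$.
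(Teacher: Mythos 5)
Your proof of part (1) is correct and follows essentially the same idea as the paper's: two crossings would force a proper sub-bridge supported by $N$, contradicting minimality. (The paper takes the subpath between the last vertex of the first crossing and the first vertex of the second; you take the slightly longer subpath from the first vertex of the first to the last vertex of the second. Both work, and both are proper subpaths for the reason you give.)

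Your proof of part (2) is also correct, but it takes a genuinely different and longer route than the paper's. The paper, having established that $v_0$ and $v_s$ lie in different components of $\dccx \setminus \dcc N$, observes that $\dcc M$ is connected and contains both endpoints, so $\dcc M$ must meet $\dcc N$; the nonemptiness of $M\cap N$ then falls out immediately, since the cell dual to any vertex of $\dcc M \cap \dcc N$ is contained in both mirrors. You instead pass to $\uchc$ via Lemma~\ref{lem:dual mirror separate}, use convexity of $M$ (Proposition~\ref{prop:mirrors convex}) to produce a geodesic $[z_0,z_s] \subseteq M$ that must cross $N$, and then translate the resulting point $z\in M\cap N$ back into a vertex of $\dcc M\cap \dcc N$. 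This last step is fine — if a mirror meets the interior of a cell of the stratification, that cell folds into the mirror's face, and connectedness then forces the cell into the mirror — but it is extra work; had you used connectedness of $\dcc M$ directly, as the paper does, the implications would run in the more efficient direction ($\dcc M\cap \dcc N\neq\varnothing$ first, then $M\cap N\neq\varnothing$ for free) and the detour through $\uchc$ would be unnecessary. On the other hand, your careful analysis of the set $I=\{i: v_i\in\dcc N\}$ is a real contribution: the paper's assertion that ``in particular the endpoints of $p$ lie in different components of $\dccx\setminus\dcc N$'' silently uses exactly the kind of minimality argument you spell out (that $v_0,v_s\notin\dcc N$ and that the portions of $p$ outside the unique crossing avoid $\dcc N$), so your version makes explicit something the paper leaves implicit. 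The only cosmetic quibble is that your alternative ``$v_0$ or $v_s$ lies in $\dcc N$'' actually cannot occur, by the same minimality reasoning you already used; your argument handles it harmlessly, but it is a vacuous case.
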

\proof
The assumption that $\mirrorcomplexitymirror{p}{N}>0$ means that $p$ crosses $N$ at least once. 
If $p$ crossed $N$ twice, then any subpath between two consecutive $(p,N)$-crossings would be a bridge supported by $N$. But this would contradict minimality, hence $\mirrorcomplexitymirror{p}{N} =1$, which proves \eqref{item:bridge simple crossing}.
In particular the endpoints of $p$ lie in different connected components of $\dccx \setminus \dcc N$. Since they also live on the same dual mirror $\dcc M$, which is connected, and dual mirrors separate by Lemma~\ref{lem:dual mirror separate}, we can conclude that $\dcc M\cap \dcc N\neq \varnothing$. Finally, the cell dual to a vertex in their intersection is contained in $M\cap N$, hence we obtain \eqref{item:bridge crossing implies intersection}.
\endproof

Recall that for a mirror $M$ in $\uchc$ we have a nearest point projection $\pi_M:\uchc \to M$, as discussed in \S\ref{subsec:graph of spaces}. 
If $p$ is a minimal bridge supported on $M$, then we can use $\pi_M$ to induce a projection of $p$ to $\dcc M$, as established by the next results.

\begin{lemma}\label{lem:intersecting mirror projection}
Let $M,N$ be mirrors in $\uchc$ and let $\tau$ be a tile in $\uchc$. 
\begin{enumerate}
    \item  If $M\cap N\neq \varnothing$, then $\pi_M(N)=M\cap N$.
    \item  If $M\cap \tau \neq \varnothing$, then $\pi_M(\tau)=M\cap \tau$.
\end{enumerate}
\end{lemma}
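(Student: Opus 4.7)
The proof hinges on the orthogonality of mirrors and tile faces that is built into the Charney--Davis construction. By Lemma~\ref{lem:CD cell}(3), faces of the hyperbolizing cube $\hq$ (and hence of any tile $\tau \subseteq \uchc$, via developing) intersect orthogonally; the same holds for mirrors in $\uchc$ whenever they meet. Recall also that $M$, $N$, and $\tau$ are convex in $\uchc$ (Proposition~\ref{prop:mirrors convex}, Lemma~\ref{lem:strat_cells_convex}), and that $\uchc$ is CAT$(0)$ (Proposition~\ref{prop:CD complex}), so nearest-point projections onto any of them are well-defined and characterized by the orthogonality of the realizing geodesic.

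For (2), the inclusion $M\cap \tau \subseteq \pi_M(\tau)$ is immediate since $\pi_M$ fixes $M\cap \tau$ pointwise. For the reverse inclusion, note that $M\cap \tau$ is a codimension-$1$ cell of $\tau$ by Lemma~\ref{lem:mirrors_in_tile}. Choose a developing map $\varphi:\tau\hookrightarrow \hh^n$ (Lemma~\ref{lem:foldingmap_hypcomplex}, Remark~\ref{rem:developing}) realizing $\tau$ as a $\Gamma$--cell, with $\varphi(M\cap \tau)$ lying on some hyperplane $V\subseteq \hh^n$. Using orthogonality of the bounding hyperplanes of a $\Gamma$--cell, one checks directly that $\pi_V:\hh^n\to V$ carries the $\Gamma$--cell $\varphi(\tau)$ into its face on $V$: each bounding hyperplane $V_i$ of $\varphi(\tau)$ that meets $V$ is orthogonal to $V$, so projecting along the horofoliation of $V$ preserves the side of $V_i$ containing $\tau$; hyperplanes disjoint from $V$ pose no obstruction by convexity. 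Pulling back through $\varphi$ and using convexity of $\tau$ in $\uchc$, the candidate geodesic $[x,x']$ from $x\in\tau$ to $x'\in M\cap \tau$ is a geodesic in $\uchc$. To conclude $x'=\pi_M(x)$, one verifies that $[x,x']$ meets $M$ orthogonally at $x'$: within $\tau$ this is inherited from $\hh^n$, while for directions of $M$ coming from other tiles containing $x'$, one works in the link $\lk{x',\uchc}$, which is an all-right spherical complex (because all cells of $\uchc$ meet orthogonally), and checks that the incoming direction $\overrightarrow{x}$ is at angle $\geq \pi/2$ from every vertex of $\lk{x',M}$.

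For (1), once again the nontrivial inclusion is $\pi_M(N)\subseteq M\cap N$. Since $M\cap N$ is the non-empty intersection of closed convex subspaces of a CAT$(0)$ space, it is closed and convex, and the nearest-point projection $\pi_{M\cap N}:N\to M\cap N$ is well-defined. For $x\in N$, set $y=\pi_{M\cap N}(x)$. The geodesic $[x,y]$ is entirely contained in $N$ by convexity, and meets $M\cap N$ orthogonally at $y$ within $N$. It remains to show that $[x,y]$ meets $M$ orthogonally at $y$ in $\uchc$, as this forces $y=\pi_M(x)$ by uniqueness. Choosing any tile $\tau'$ containing $y$ and developing, $M\cap \tau'$ and $N\cap \tau'$ correspond to orthogonal hyperplanes $V_M\perp V_N$; the direction at $y$ that is orthogonal to $V_M\cap V_N$ inside $V_N$ is then orthogonal to $V_M$, so the desired orthogonality holds within $\tau'$. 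As in part (2), it is upgraded to global orthogonality against all of $\lk{y,M}$ by the link-theoretic argument.

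The main obstacle is the link-theoretic orthogonality check underpinning both steps: one must certify that the direction from $y$ toward $x$ (a vector in $\lk{y,\uchc}$ shown to be orthogonal to $M$ within one chosen tile) remains orthogonal to the contributions of $M$ coming from every other tile meeting $y$. This is where one really uses that cells of $\uchc$ meet orthogonally, so that $\lk{y,\uchc}$ is an all-right spherical complex in which distinct tiles at $y$ are separated by subcomplexes corresponding to mirrors all meeting $M$ orthogonally, allowing the $\pi/2$--bound on angles to propagate across tiles.
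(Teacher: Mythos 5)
Your proof is correct and rests on the same geometric input as the paper's (orthogonality of $M$ with $N$ and with the faces of $\tau$, together with convexity of all the relevant subspaces of the $\cat 0$ space $\uchc$), but the logical packaging differs. You argue directly: you construct a candidate $y=\pi_{M\cap N}(x)$ (resp.\ the in-tile hyperbolic projection in part (2)) and verify that $[x,y]$ meets $M$ orthogonally at $y$, invoking the angle characterization of nearest-point projection onto a convex set. The paper instead argues by contradiction: assuming $\pi_M(x)\notin N$, it forms the geodesic triangle with vertices $x$, $\pi_M(x)$, and $y=\pi_{M\cap N}(x)$, and observes that the angles at $\pi_M(x)$ and at $y$ are both $\geq \pi/2$, which the $\cat 0$ angle comparison forbids for a nondegenerate triangle. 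The contradiction route is somewhat more economical: it needs the $\pi/2$ bound at $y$ only against the single direction toward $\pi_M(x)$ (which lies in $M$ by convexity), rather than against all of $\lk{y,M}$, and lets the triangle comparison supply the rest. You have correctly identified the crux — that the $\pi/2$ bound, established in one tile, must be known to hold against contributions of $M$ from every other tile through $y$ — and the paper is about equally terse on this point (``since $N$ meets $M$ orthogonally, this is enough \ldots''), so your treatment is not a gap relative to the paper's. For part (2) the paper merely remarks that the argument is analogous, replacing $N$ with $\tau$ and using that $\tau$ develops isometrically onto a convex region of $\hh^n$ bounded by pairwise orthogonal hyperplanes; your explicit developing-map computation in $\hh^n$ reaches the same conclusion, and once you upgrade the in-tile conclusion to a global one via the orthogonality check (which you do), it is correct, just a bit longer than the route the paper sketches.
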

\proof
We start by proving $\pi_M(N) \subseteq M\cap N$.
By contradiction, let $x\in N$ be such that $\pi_M(x)\not \in N$. 
Let $y=\pi_{M\cap N}(x) \in M\cap N$, where $\pi_{M\cap N}$ denotes the nearest point projection to the closed convex subspace $M\cap N$.
Since $\pi_M(x)\not \in N$, we have $y\neq \pi_M(x)$, so we can consider the geodesic triangle with vertices $x,\pi_M(x)$ and $y$. 
By convexity of $M$, the geodesic $[\pi_M(x),y]$ is contained in $M$. By convexity of $N$, the geodesic $[x,y]$ is contained in $N$. Moreover, since $\pi_M$ is the nearest point projection to $M$, the angle between $[x,\pi_M(x)]$ and $[\pi_M(x),y]$ at $\pi_M(x)$ is at least $\frac{\pi}{2}$.  
Analogously, the angle between $[x,y]$ and any geodesic in $M\cap N$ at $y$ is at least $\frac{\pi}{2}$ too, and since $N$ meets $M$ orthogonally, this is enough to ensure that the angle between $[x,y]$ and $[\pi_M(x),y]$ at $y$ is at least $\frac{\pi}{2}$ too.
We obtained a geodesic triangle with two angles at least $\frac \pi 2$, which is impossible in the $\cat 0$ space $\uchc$.
Vice versa, if $x\in M\cap N$, then $x=\pi_M(x)$, so $x\in \pi_M(N)$ already.


The second statement can be obtained via an analogous argument. Indeed, recall that $\tau$ is isometric to a convex subset of $\hh^n$ bounded by orthogonal hyperplanes (see Lemma~\ref{lem:foldingmap_hypcomplex}). In particular, the nearest point projection to a boundary face of $\tau$ is entirely contained in $\tau$.
\endproof

The next lemma is a combinatorial statement about the stratification of $\uchc$ introduced in \S\ref{subsec:stratification}, and will be needed in the following lemma.

\begin{lemma}\label{lem:tiles intersection}
Let $\tau,\tau'$ be non-disjoint tiles of $\uchc$. Let $W_1,\dots, W_q$ be the collection of mirrors of $\uchc$ that separate $\tau$ and $\tau'$. 
Then we have that
\begin{enumerate}
    \item \label{item:tiles sep int} $W_1,\dots, W_q$ coincides with the collection of mirrors of $\uchc$ that contain $\tau \cap \tau'$.
    \item \label{item:tiles intersection cell} $\tau \cap W_1\cap \dots \cap W_q = \tau\cap \tau' = \tau' \cap W_1\cap \dots \cap W_q.$
\end{enumerate}
\end{lemma}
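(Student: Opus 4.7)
My plan is to reduce everything to a single cell, namely $\sigma = \tau \cap \tau'$. By Lemma~\ref{lem:strat_cell_intersection}, $\sigma$ is a cell; let $k=\dim\sigma$. By definition of the stratification in \S\ref{subsec:stratification}, a point in the interior of $\sigma$ lies in exactly $n-k$ mirrors, and these are precisely the mirrors of $\uchc$ that contain $\sigma$. Denote them $M_1,\dots,M_{n-k}$. Part \eqref{item:tiles sep int} then amounts to showing $\{W_1,\dots,W_q\}=\{M_1,\dots,M_{n-k}\}$, and part \eqref{item:tiles intersection cell} amounts to showing $\tau\cap M_1\cap\cdots\cap M_{n-k}=\sigma$ (the argument for $\tau'$ is identical).

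For \eqref{item:tiles sep int}, I would argue by two inclusions. If a mirror $W$ contains $\sigma=\tau\cap\tau'$, then $\{\tau,\tau'\}$ is a framing for $\sigma$ in the sense of \S\ref{subsec:separation}, because by hypothesis $\sigma\subseteq\tau\cap\tau'\subseteq W$ and $\tau,\tau'$ are distinct top-dimensional cells (if $\tau=\tau'$ the statement is trivial with $q=0$). Proposition~\ref{prop:mirrors separate} then gives that $\tau,\tau'$ lie in distinct connected components of $\uchc\setminus W$, so $W\in\{W_1,\dots,W_q\}$. Conversely, if $W$ separates $\tau$ from $\tau'$, then any point $p\in\sigma\subseteq\tau\cap\tau'$ lies in the closure of both components of $\uchc\setminus W$ (being a limit of interior points of both $\tau$ and $\tau'$), which forces $p\in W$. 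Hence $\sigma\subseteq W$, so $W$ is one of the $M_j$.

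For \eqref{item:tiles intersection cell}, the inclusion $\sigma\subseteq\tau\cap M_1\cap\cdots\cap M_{n-k}$ is immediate from $\sigma\subseteq\tau$ and $\sigma\subseteq M_j$ for each $j$. For the reverse inclusion, I would first apply Lemma~\ref{lem:mirrors_in_tile} to see that each $\tau\cap M_j$ is an $(n-1)$-cell of $\tau$ (as $\sigma\subseteq \tau\cap M_j$ so the intersection is non-empty, while $\tau\not\subseteq M_j$ since $\dim\tau=n$). By Lemma~\ref{lem:strat_cells_in_tile_fold_disjoint} these $n-k$ codimension-$1$ cells of $\tau$ are pairwise distinct, because they all contain $\sigma$ yet would need to fold to the same face of $\square^n$ to coincide. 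Then I would fix a developing map $\varphi:\tau\to\hh^n$ as in \S\ref{sec:universal_cover} to realize $\tau$ as a $\Gamma$-cell; under $\varphi$, each $M_j$ is carried into a hyperplane $H_j$, and the hyperplanes $H_1,\dots,H_{n-k}$ are pairwise orthogonal by construction of the hyperbolizing cube (Lemma~\ref{lem:CD cell}). The intersection $H_1\cap\cdots\cap H_{n-k}$ is therefore a single $k$-plane containing $\varphi(\sigma)$, and its intersection with the $\Gamma$-cell $\varphi(\tau)$ is exactly the $k$-face $\varphi(\sigma)$. Pulling back gives $\tau\cap M_1\cap\cdots\cap M_{n-k}=\sigma$.

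The main obstacle I anticipate is the last step of \eqref{item:tiles intersection cell}: certifying that the intersection of the $n-k$ codimension-$1$ faces in $\tau$ collapses to exactly the given $k$-face $\sigma$, as opposed to a larger $k$-dimensional piece of $\tau$. This is where the orthogonality and the rigidity of the $\Gamma$-cell structure is essential, and is the reason for invoking the developing map and Lemma~\ref{lem:CD cell} rather than trying to argue purely combinatorially in $\uchc$.
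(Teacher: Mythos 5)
Your argument for part \eqref{item:tiles sep int} is correct and essentially identical to the paper's: the forward direction is the framing argument via Proposition~\ref{prop:mirrors separate}, and the converse is the contrapositive of the paper's ``connect the two tiles with a path through the intersection avoiding $W_i$'' (your phrasing in terms of $p$ lying in the closure of both components is a clean way to say the same thing). You also correctly note the edge case $\tau=\tau'$, which the paper instead excludes by observing the mirror collection is nonempty since the tiles are distinct.

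For part \eqref{item:tiles intersection cell} you take a genuinely different route. The paper's proof is a short dimension count: since a point in the interior of the $k$-cell $\sigma=\tau\cap\tau'$ lies in exactly $n-k$ mirrors, the collection has size $q=n-k$, and then $\tau\cap W_1\cap\dots\cap W_q$ is (implicitly, by iterating Lemma~\ref{lem:mirrors_in_tile} as spelled out later in the proof of Lemma~\ref{lem:projecting cells to mirrors}) a $k$-cell containing the $k$-cell $\sigma$, so the two coincide. You instead develop $\tau$ into $\hh^n$ as a $\Gamma$-cell, observe the $W_j\cap\tau$ are distinct codimension-one faces via Lemma~\ref{lem:strat_cells_in_tile_fold_disjoint}, and use the pairwise orthogonality of the hyperplanes $H_1,\dots,H_{n-k}$ to cut out a $k$-plane whose intersection with $\varphi(\tau)$ is the $k$-face $\varphi(\sigma)$. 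Both approaches land in the same place; yours is more explicit about why the intersection really is a $k$-cell (which the paper treats somewhat tersely), at the cost of invoking the developing map and the ambient hyperbolic geometry where a purely combinatorial count suffices. The ``obstacle'' you anticipate in the last step is real but handled identically in both arguments: once one knows both sides are $k$-cells with one contained in the other, equality follows, and that dimension count is exactly where the orthogonality (in your version) or the stratification definition plus Lemma~\ref{lem:mirrors_in_tile} (in the paper's) does the work.
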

\begin{proof}
First of all, notice that the collection of mirrors is not empty since $\tau$ and $\tau'$ are different tiles.
We start by proving \eqref{item:tiles sep int}.
 Let $W$ be a mirror containing $\tau\cap \tau'$. Then the two tiles provide a framing for the cell $\tau\cap \tau'$. In particular we get from Proposition~\ref{prop:mirrors separate} that $W$ separates the two tiles, hence $W$ is in the collection $\{W_1,\dots,W_q\}$.
Conversely, if $\tau\cap \tau'$ was not inside one $W_i$, then we could connect the two tiles with a path that goes through the intersection but avoids $W_i$, contradicting the fact that $W_i$ separates them. 

To prove \eqref{item:tiles intersection cell} we argue as follows. By \eqref{item:tiles sep int} we know that  $\tau\cap \tau'\subseteq W_1\cap \dots\cap W_q$, so we have that $\tau\cap \tau'\subseteq \tau \cap W_1\cap \dots\cap W_q$. 
Now note that, by definition of the stratification, if $\tau\cap \tau'$ is a $k$-cell, then it must be contained in $n-k$ mirrors, so $q=n-k$. But then the two sides of the inclusion are cells of the same dimension $k$, so they must be equal. Switching the roles of $\tau$ and $\tau'$ proves the second equality in \eqref{item:tiles intersection cell}.
\end{proof}

\begin{lemma}\label{lem:projecting cells to mirrors}
Let $\tau$ be a tile and $M$ be a mirror in $\uchc$, such that $ M \cap \tau\neq \varnothing$.
Let $\sigma$ be a cell of $\tau$, and let $N_1,\dots,N_r\neq M$ be all the mirrors  containing $\sigma$ and such that $M \cap N_j \neq \varnothing$ for $j=1,\dots,r$. (Possibly $r=0$ if there are no such mirrors.)
Then the following hold.
\begin{enumerate}
    \item \label{item:cell projection} $\tau\cap M \cap N_1\cap \dots \cap N_r$ is an $(n-1-r)$--cell  that contains $\pi_M(\sigma)$.
    \item \label{item:cell projection well def} The cell $\tau\cap M \cap N_1\cap \dots \cap N_r$ only depends on $\sigma$ and $M$.
\end{enumerate}
\end{lemma}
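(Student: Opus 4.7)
My plan is to treat the two parts in turn. The main tools are the realization of the tile $\tau$ as a right-angled convex polytope in $\hh^n$ via a developing map (Lemma~\ref{lem:foldingmap_hypcomplex}), the computation of nearest-point projections between mirrors from Lemma~\ref{lem:intersecting mirror projection}, and, for the uniqueness in part~\eqref{item:cell projection well def}, the combinatorial description of adjacent tiles from Lemma~\ref{lem:tiles intersection}.

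For part~\eqref{item:cell projection}, I develop $\tau$ isometrically into $\hh^n$ so that it becomes a $\Gamma$--cell whose bounding hyperplanes are pairwise orthogonal. Since $M\cap\tau\neq\varnothing$ by hypothesis and each $N_j\cap\tau\supseteq\sigma$ is non-empty, Lemma~\ref{lem:mirrors_in_tile} identifies $M\cap\tau$ and each $N_j\cap\tau$ with codimension-$1$ faces of $\tau$, supported by $r+1$ distinct pairwise orthogonal hyperplanes of $\hh^n$. Hence, as soon as the intersection $\tau\cap M\cap N_1\cap\dots\cap N_r$ is shown to be non-empty, it is automatically a face of $\tau$ of codimension $r+1$, i.e.\ an $(n-1-r)$-cell of $\uchc$. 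Non-emptiness, together with the asserted containment of $\pi_M(\sigma)$, will come from Lemma~\ref{lem:intersecting mirror projection}: since $\sigma\subseteq\tau$ and $M\cap\tau\neq\varnothing$, we get $\pi_M(\sigma)\subseteq\pi_M(\tau)=M\cap\tau$; similarly, since $\sigma\subseteq N_j$ and $M\cap N_j\neq\varnothing$, we get $\pi_M(\sigma)\subseteq\pi_M(N_j)=M\cap N_j$ for each $j$. Combining these yields $\pi_M(\sigma)\subseteq\tau\cap M\cap N_1\cap\dots\cap N_r$.

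For part~\eqref{item:cell projection well def}, the claim is that if $\tau'$ is any other tile with $\sigma\subseteq\tau'$ and $M\cap\tau'\neq\varnothing$, then $\tau\cap M\cap N_1\cap\dots\cap N_r=\tau'\cap M\cap N_1\cap\dots\cap N_r$. Since $\tau\cap\tau'\supseteq\sigma$ is non-empty, Lemma~\ref{lem:tiles intersection} supplies a finite family of mirrors $W_1,\dots,W_q$ which both separate $\tau$ from $\tau'$ and contain $\tau\cap\tau'$, and which satisfy
\[
\tau\cap W_1\cap\dots\cap W_q=\tau\cap\tau'=\tau'\cap W_1\cap\dots\cap W_q.
\]
Each $W_i$ contains $\sigma$, because $\sigma\subseteq\tau\cap\tau'\subseteq W_i$.

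The crux of the argument, and the step I expect to be the main obstacle, is showing that every $W_i$ lies in $\{M,N_1,\dots,N_r\}$: once that is established, $M\cap N_1\cap\dots\cap N_r\subseteq W_1\cap\dots\cap W_q$, and intersecting both sides of the displayed equality with $M\cap N_1\cap\dots\cap N_r$ produces the desired identification. To prove the claim, suppose $W_i\neq M$. If $W_i\cap M$ were empty, then the connected convex set $M$ would lie in a single complementary component of $W_i$; but $\tau$ and $\tau'$ lie in opposite components of $W_i$ and both meet $M$, which is impossible. Hence $W_i\cap M\neq\varnothing$, and since $W_i$ is a mirror through $\sigma$ distinct from $M$ and meeting $M$, it must be one of the $N_j$ by definition, completing the well-definedness argument.
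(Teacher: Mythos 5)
Your proposal is correct. For part~\eqref{item:cell projection} you follow essentially the same route as the paper: the containment $\pi_M(\sigma)\subseteq \tau\cap M\cap N_1\cap\dots\cap N_r$ via both clauses of Lemma~\ref{lem:intersecting mirror projection}, and the identification of the intersection as an $(n-1-r)$-cell via iterated use of Lemma~\ref{lem:mirrors_in_tile} (the non-emptiness needed for the iteration being supplied by the containment just proved).

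For part~\eqref{item:cell projection well def} you take a genuinely different path at the crucial step, namely showing that every separating mirror $W_i$ meets $M$. The paper routes this through the dual complex $\dccx$: it builds efficient edge-paths $p\subseteq\dcc\tau$ and $p'\subseteq\dcc{\tau'}$ from the vertex dual to $\sigma$ to the vertices dual to $M\cap\tau$ and $M\cap\tau'$, concatenates them into a minimal bridge supported on $M$, observes that this bridge must cross $W_i$ since $W_i$ separates $\tau$ from $\tau'$, and invokes Lemma~\ref{lem:bridges projection}\eqref{item:bridge crossing implies intersection}. Your argument bypasses the bridge machinery entirely and works at the level of $\uchc$: if $W_i\cap M=\varnothing$, then the connected mirror $M$ lies in a single open complementary component $C$ of $W_i$; but $\tau\cap M$ and $\tau'\cap M$ are non-empty, disjoint from $W_i$, and contained in $\tau$ and $\tau'$ respectively, so they lie in the open components $C_\tau$ and $C_{\tau'}$ carrying the interiors of $\tau$ and $\tau'$ — forcing $C=C_\tau=C_{\tau'}$, contradicting Proposition~\ref{prop:mirrors separate}. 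Your phrasing that ``$\tau$ and $\tau'$ lie in opposite components'' is slightly loose (the tiles themselves intersect $W_i$), but the precise statement you need — that $\tau\setminus W_i\subseteq C_\tau$ and $\tau'\setminus W_i\subseteq C_{\tau'}$ with $C_\tau\ne C_{\tau'}$ — is exactly what the framing and Proposition~\ref{prop:mirrors separate} give. This is a simpler proof of the key claim for this lemma; the bridge machinery the paper uses is built for the surgery arguments of \S\ref{subsec:mirror surgeries} and turns out not to be necessary here, where convexity, connectedness of $M$, and separation by mirrors suffice.
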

\proof
We start by proving \eqref{item:cell projection}.
It follows from Lemma~\ref{lem:intersecting mirror projection} that $\pi_M(\sigma)\subseteq \tau \cap M\cap N_j$ for each $j=1,\dots,r$.
So,  we obtain that $\pi_M(\sigma)\subseteq \tau \cap M \cap N_1\cap \dots \cap N_r$.
To show that this intersection is a cell, note that $\tau$ is an $n$--cell.
So, by Lemma~\ref{lem:mirrors_in_tile} we have that $M\cap \tau$ is an $(n-1)$--cell and then for each $j=1,\dots,r$ we have that $\tau\cap M \cap N_1\cap \dots \cap N_j$ is an $(n-1-j)$--cell.

To prove \eqref{item:cell projection well def} we argue as follows.
Suppose $\tau'$ is another tile as in the statement, i.e.\ $\sigma \subseteq \tau'$ and $M\cap \tau'\neq \varnothing$.
Let $W_1,\dots, W_q$ be the collection of mirrors of $\uchc$ that separate $\tau$ and $\tau'$. (Note that this collection depends on $\tau$ and $\tau'$, while the collection $N_1,\dots, N_r$ only depends on $\sigma$ and $M$.)
Since $\sigma\subseteq \tau \cap \tau'$, we  also have that $\sigma$ is contained in each $W_i$ thanks to \eqref{item:tiles sep int} in Lemma~\ref{lem:tiles intersection}.
We now claim that each $W_i$ meets $M$. This is clear if $\sigma\subseteq M$. On the other hand, if $\sigma$ is not inside $M$, then we can take an efficient edge--path $p$ in $\dccx$ from the vertex dual to $\sigma$ to the vertex dual to $M\cap \tau$ which is contained in $\dcc \tau$ and meets $\dcc M$ only at the endpoint $M\cap \tau$. Take an analogous path $p'$ in $\dcc{\tau '}$, and concatenate $p$ and $p'$ to obtain a minimal bridge $\widehat p$ supported on $M$. Since $W_i$ separates $\tau$ and $\tau'$, we see that $\widehat p$ crosses $W_i$. So by \eqref{item:bridge crossing implies intersection} in Lemma~\ref{lem:bridges projection} we conclude that $M\cap W_i\neq \varnothing$, which proves the claim.

As a result, we have that the collection $\{W_1, \dots , W_q\}$ is a subcollection of $\{M,N_1, \dots , N_r\}$. (Note that $M$ could be one of the mirrors separating $\tau$ and $\tau'$, but  $N_i\neq M$ by definition.) In particular, using \eqref{item:tiles intersection cell} from \ref{lem:tiles intersection}, we obtain that
\begin{equation*}
    \tau\cap M \cap N_1\cap \dots \cap N_r \subseteq \tau  \cap W_1\cap \dots \cap W_r
    \overset{\ref{lem:tiles intersection}}{=} \tau\cap \tau' \subseteq \tau'
\end{equation*}
Therefore it follows that $ \tau\cap M \cap N_1\cap \dots \cap N_r \subseteq \tau'\cap M \cap N_1\cap \dots \cap N_r $. Reversing the roles of $\tau$ and $\tau'$ provides the other inclusion, and shows that the cell defined in \eqref{item:cell projection} does not depend on the choice of the tile.
\endproof

In the notation and setting of Lemma~\ref{lem:projecting cells to mirrors}, if $v\in \dccx$ is the vertex dual to $\sigma$, then we denote by $\pi_M(v)$ the vertex dual to the cell constructed in \eqref{item:cell projection} of the lemma, and call it the \textit{projection of $v$ to $\dcc M$}. This is well defined by \eqref{item:cell projection well def} in the same lemma. Note that in general $0\leq r\leq n-\dim \sigma $, as $\sigma$ could be contained in some mirrors that are disjoint from $M$. 
Nevertheless, this provides the desired projection to $\dcc M$ for vertices of $\dccx$ which are contained in the cubical $2$--neighborhood of $\dcc M$, i.e.\ the union of all the dual tiles corresponding to all the tiles that intersect $M$ in $\uchc$.

The content of the next two lemmas is that a minimal bridge supported by a mirror $M$ is completely contained in such a neighborhood of $\dcc M$ (see Lemma~\ref{lem:carrier minimal bridge}), so we can define a projection of a minimal bridge to $\dcc M$ (see Lemma~\ref{lem:projection control}). We note that the minimality assumption is necessary, see the difference between $q$ and $q'$ in Figure~\ref{fig:minimal_bridge}.

\begin{lemma}\label{lem:carrier minimal bridge}
Let $p$ be a minimal bridge supported on a mirror $M$. 
Then for each vertex $v$ of $p$ there exists a tile $\tau$ such that $v \in \dcc \tau$ and $\tau \cap M\neq \varnothing$.
\end{lemma}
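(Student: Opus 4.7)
The plan is to induct on the index $i$ of the vertex $v_i$ along $p=(v_0,\ldots,v_s)$. The endpoints are handled directly: since $p$ is a bridge on $M$, the dual cells $\sigma_0,\sigma_s$ are contained in $M$, so any tile containing $\sigma_0$ (respectively $\sigma_s$) meets $M$ along that very cell. Two preliminary consequences of minimality are used for the interior case. First, no interior $v_j$ lies on $\dcc M$: if one did, both subpaths $(v_0,\ldots,v_j)$ and $(v_j,\ldots,v_s)$ have endpoints on $\dcc M$ and minimality forces them to be contained in $\dcc M$, which would imply $p\subseteq \dcc M$ and contradict $p$ being a bridge. Consequently $\sigma_0\subset \sigma_1$ and $\sigma_s\subset \sigma_{s-1}$, i.e.\ the extreme edges ascend in height, since their higher endpoints cannot lie in $M$.

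For the inductive step $P(i)\Rightarrow P(i+1)$, where $P(i)$ is the conclusion of the lemma at $v_i$, consider the edge $(v_i,v_{i+1})$. If $\sigma_{i+1}\subset \sigma_i$ (descending edge), then $\sigma_{i+1}\subseteq \tau_i$ and the same tile $\tau_i$ witnesses $P(i+1)$. If $\sigma_i\subset \sigma_{i+1}$ (ascending edge), there are two sub-cases: either $\sigma_{i+1}\subseteq \tau_i$ and again $\tau_i$ works, or, by Lemma~\ref{lem:strat_cell_intersection} and a dimension count, the intersection $\sigma_{i+1}\cap \tau_i$ equals $\sigma_i$, so $\sigma_{i+1}$ ``sticks out'' of $\tau_i$ into a neighboring tile across some mirror $N\supseteq \sigma_{i+1}$.

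In the sticking-out sub-case, the plan is to argue by contradiction: assume no tile containing $\sigma_{i+1}$ meets $M$. Then on the side of $\dcc N$ containing $v_{i+1}$ there is no dual tile meeting $\dcc M$; in particular, $v_0\in \dcc M$ lies on the opposite side of $\dcc N$ from $v_{i+1}$ (this uses Lemma~\ref{lem:dual mirror separate} together with the fact, from Lemma~\ref{lem:projecting cells to mirrors}, that $N$ intersects $M$ along a codimension-two cell in $\tau_i$). Walking backward along $p$ from $v_{i+1}$ to $v_0$, continuity of the dual cubical path through the separating set $\dcc N$ produces a smallest index $j\leq i$ with $v_j\in \dcc N$. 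The subpath $(v_j,\ldots,v_{i+1})$ has both endpoints on $\dcc N$ but is not contained in $\dcc N$ (by choice of $j$), so it is a proper sub-bridge supported on $N$, contradicting the minimality of $p$.

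The hard part is verifying the existence of such an obstructing mirror $N$ and controlling its intersection pattern with $M$. This relies on assembling the convex-geometric input from Proposition~\ref{prop:mirrors convex} and Proposition~\ref{prop:mirrors separate}, the combinatorial description of tiles containing a fixed cell furnished by Lemma~\ref{lem:projecting cells to mirrors}, and the Helly-type orthogonality behavior of mirrors (Lemma~\ref{lem:helly_hyperbolic_space}) to ensure that $N$ both contains $\sigma_{i+1}$ and meets $M$, so that the proposed subpath is genuinely a bridge on $N$ in the precise sense of the definition, and Lemma~\ref{lem:minimal bridges exist} then delivers the contradiction with minimality.
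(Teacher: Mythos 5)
Your setup is sound up to a point: the treatment of endpoints, the observation that no interior vertex of a minimal bridge can lie on $\dcc M$, the inductive scheme (equivalent to the paper's ``take the first bad vertex''), the dichotomy on ascending versus descending edges, and the easy sub-cases are all correct and match the paper's framing. However, the critical ``sticking-out'' sub-case has a genuine gap, and the mechanism you propose there does not work.

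The core problem is that the plan to produce a proper sub-bridge supported on the new mirror $N$ is not available. You want to find $j<i+1$ with $v_j\in \dcc N$ so that $(v_j,\dots,v_{i+1})$ is a bridge on $N$. But since $\sigma_i\subseteq \sigma_{i+1}\subseteq N$, both $v_i$ and $v_{i+1}$ already lie on $\dcc N$; taking $j$ to be the smallest such index gives no guarantee that the subpath $(v_j,\dots,v_{i+1})$ leaves $\dcc N$ anywhere, so it may fail to be a bridge at all. More fundamentally, the paper's own analysis shows that for each relevant mirror $N$ one only obtains $\mirrorcomplexitymirror{p}{N}=1$ (this is exactly Lemma~\ref{lem:bridges projection}\eqref{item:bridge simple crossing}), and a single crossing does not produce a sub-bridge on $N$. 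The intermediate assertions in your sketch are also ill-posed: ``$v_0$ lies on the opposite side of $\dcc N$ from $v_{i+1}$'' does not parse because $v_{i+1}\in \dcc N$; and ``on the side of $\dcc N$ containing $v_{i+1}$ there is no dual tile meeting $\dcc M$'' is a global statement about a complementary component that does not follow from the purely local assumption that no tile containing $\sigma_{i+1}$ meets $M$. Finally, the citation of Lemma~\ref{lem:projecting cells to mirrors} to deduce that $N$ meets $M$ is circular: that lemma takes $M\cap N_j\neq\varnothing$ as a hypothesis, it does not establish it.

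The paper reaches the contradiction through a different mechanism. It sets $\sigma = \tau_-\cap\tau_+$ (using a tile $\tau_-\ni\sigma_{k-1}$ that does meet $M$ and a tile $\tau_+\ni\sigma_k$ that does not), then shows that \emph{every} mirror $N$ containing $\sigma$ must meet $M$: $\{\tau_-,\tau_+\}$ is a framing for $\sigma$, so by Proposition~\ref{prop:mirrors separate} each such $N$ separates $\tau_-$ from $\tau_+$, forcing a $(p,N)$-crossing, and then Lemma~\ref{lem:bridges projection}\eqref{item:bridge crossing implies intersection} (which requires minimality) yields $M\cap N\neq\varnothing$. With that established, Lemma~\ref{lem:projecting cells to mirrors} applies and one computes $\pi_M(\sigma)\subseteq M\cap\tau_+$, directly contradicting $\tau_+\cap M=\varnothing$. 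So the contradiction is a geometric one via the nearest-point projection $\pi_M$, not a combinatorial one via a smaller sub-bridge. Your approach would need to be replaced by (or rebuilt around) this projection argument.
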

\proof
Let $p=(v_0,\dots,v_s)$, let $\sigma_k$ be the cell of $\uchc$ dual to $v_k$, and assume by contradiction that some vertices do not satisfy the statement. 
Let $v_k$ be the first one. 
Since $p$ is a bridge, its endpoints are on $\dcc M$, so $k\neq 0,s$.
Let $\tau_\pm$ be tiles such that $v_{k- 1}\in \dcc {\tau_-}$ and $v_{k}\in \dcc {\tau_+}$. In particular this means that $\sigma_{k-1}\subseteq \tau_-$ and $\sigma_k\subseteq \tau_+$.
By construction, we can choose $\tau_-$ so that $\tau_-\cap M\neq \varnothing$, while necessarily $\tau_+$ is disjoint from $M$.
Moreover, if we had $\sigma_k \subseteq \sigma_{k-1}$ then we would have $v_k \in \dcc{\tau_-}$, against the choice of $v_k$.
But since  $v_{k-1}$ and $v_k$ are adjacent in $\dccx$, this forces $\sigma_{k-1}\subseteq \sigma_k$.
In particular, the intersection $\tau_-\cap \tau_+$ is non empty: it contains at least the cell $\sigma_{k-1}$.

Consider the cell $\sigma=\tau_-\cap \tau_+$.
For any mirror $N$ containing $\sigma$, we claim that $N$ must  intersect $M$.
Indeed, the tiles $\tau_\pm$ form a framing for $\sigma$ in the sense of \S\ref{subsec:separation}.
By  Proposition~\ref{prop:mirrors separate} we know that $\tau_\pm$ belong to the closure of distinct complementary components of $N$.
In particular, a maximal subpath of $p \cap \dcc N$ whose vertices are dual to cells contained in $\sigma$ gives rise to a $(p,N)$-crossing, hence $\mirrorcomplexitymirror{p}{N}>0$.
By \eqref{item:bridge crossing implies intersection} in Lemma~\ref{lem:bridges projection} we know $M\cap N \neq \varnothing$, as claimed.

Let $N_1,\dots, N_r$ be the collection of all mirrors containing $\sigma$. Since $\sigma$ is a cell of $\tau_-$, we can write $\sigma=\tau_-\cap N_1\cap \dots \cap N_r$.
Using \eqref{item:cell projection} in Lemma~\ref{lem:projecting cells to mirrors}, we have that
$$\pi_M(\sigma)\subseteq \tau_- \cap M  \cap N_1  \cap \dots \cap  N_r =$$
$$=(M \cap \tau_ -) \cap ( \tau_- \cap N_1  \cap \dots \cap  N_r) \subseteq M \cap \sigma \subseteq M \cap \tau_+$$
This contradicts the fact that $\tau_+$ is disjoint from $M$.
\endproof

In the next lemma we finally obtain a projection of a minimal bridge to a supporting mirror. As it might be expected, such a projection is length--decreasing (see Figure~\ref{fig:projection_minimal_bridge}).

\begin{lemma}\label{lem:projection control}
Let $p$ be a minimal bridge supported on a mirror $M$. 
Then there exists an edge--path $p^M\subseteq \dcc M$, such that $p^M$ has the same endpoints as $p$ and $\length{p^M} \leq \length p -2$.
\end{lemma}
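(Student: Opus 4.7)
The plan is to define a vertex-by-vertex projection $w_k := \pi_M(v_k) \in \dcc M$ via Lemma~\ref{lem:projecting cells to mirrors}, and then show that removing consecutive duplicates from the sequence $(w_0, w_1, \ldots, w_s)$ yields the desired edge-path $p^M$. Let $p = (v_0, \ldots, v_s)$ with dual cells $\sigma_0, \ldots, \sigma_s$. Minimality of the bridge forces $\sigma_k \not\subseteq M$ for $0 < k < s$, since otherwise one of $(v_0, \ldots, v_k)$ or $(v_k, \ldots, v_s)$ would itself be a proper sub-bridge. Lemma~\ref{lem:carrier minimal bridge} then ensures that each $v_k$ sits in the dual tile of some tile meeting $M$, which is exactly what is required to apply Lemma~\ref{lem:projecting cells to mirrors} and obtain a well-defined $w_k \in \dcc M$.

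The main technical step is a case analysis for consecutive projections; this is where I expect the bulk of the work. Given adjacent vertices $v_k, v_{k+1}$, one dual cell is a codimension-one face of the other; let $L'$ denote the unique mirror containing the smaller cell but not the larger, and choose a common tile $\tau$ through the larger cell with $\tau \cap M \neq \varnothing$. Three cases arise: if $L' = M$ or if $L' \cap M = \varnothing$, then the two lists of ``relevant mirrors'' (those through the dual cell, distinct from $M$, meeting $M$) coincide, so $w_k = w_{k+1}$; if instead $L' \neq M$ and $L' \cap M \neq \varnothing$, then the list for the smaller cell contains exactly one extra mirror, namely $L'$, so by the dimension formula in Lemma~\ref{lem:projecting cells to mirrors} the two dual cells are nested with dimensions differing by one, and $w_k, w_{k+1}$ are adjacent in $\dcc M$. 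The delicate point here is verifying that in this last case the intersection cells are truly non-empty and have the predicted codimension, which is where Lemma~\ref{lem:projecting cells to mirrors} does the heavy lifting.

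Finally, I would handle the endpoints as follows. Since $\sigma_0 \subseteq M$, every other mirror through $\sigma_0$ automatically intersects $M$ (their intersection already contains $\sigma_0$), so the relevant list for $v_0$ exhausts all mirrors through $\sigma_0$ other than $M$ and the projection cell collapses to $\sigma_0$ itself; thus $w_0 = v_0$, and symmetrically $w_s = v_s$. For the transition $v_0 \to v_1$: since $v_1 \notin \dcc M$, the case $\sigma_1 \subseteq \sigma_0$ is excluded (it would force $\sigma_1 \subseteq M$), so $\sigma_0 \subsetneq \sigma_1$ and the dropped mirror must be $M$ itself, placing us in the trivial case $w_0 = w_1$; the same argument at the other end gives $w_{s-1} = w_s$. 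Combining everything, $(w_0, \ldots, w_s)$ starts and ends with a repeated vertex and all other consecutive pairs are equal or adjacent in $\dcc M$, so removing duplicates yields an edge-path $p^M \subseteq \dcc M$ from $v_0$ to $v_s$ of length at most $s - 2 = \length p - 2$.
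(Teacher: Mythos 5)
Your proposal is correct and follows essentially the same route as the paper's proof: vertex-wise projection $w_k=\pi_M(v_k)$ via Lemma~\ref{lem:projecting cells to mirrors}, a case analysis on the unique mirror that drops when passing between adjacent dual cells (which the paper compresses into "possibly with the addition of $\widehat N_k$" but you spell out as three cases), and the observation that minimality forces the first and last transitions to be degenerate (where the dropped mirror is $M$ itself), giving $w_0=w_1$ and $w_{s-1}=w_s$ and hence the $-2$ in the length bound.
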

\proof
Let $p=(v_0,\dots,v_s)$, and let $\sigma_0,\dots,\sigma_s$ be the cells dual to its vertices. 
Since $p$ is a minimal bridge supported on $M$, by Lemma~\ref{lem:carrier minimal bridge} we know that for each vertex $v_k$ there exists a tile $\tau_k$ of $\uchc$ such that $v_k\in \dcc{\tau_k}$ and $\tau_k\cap M\neq \varnothing$.
Let $w_k=\pi_M(v_k)$ be the projection of $v_k$ to $\dcc M$, constructed in  Lemma~\ref{lem:projecting cells to mirrors}.
We claim that  for each $k$ the vertices $w_{k-1}$ and $w_k$ are either the same vertex or adjacent vertices.

To see this, consider two vertices $v_{k-1}$ and $v_k$ adjacent along $p$. Without loss of generality (i.e., possibly reversing the orientation of $p$) we can assume that $\sigma_{k-1}$ is a cell of codimension $1$ in $\sigma_{k}$.
In particular, we can take $\tau_{k-1}=\tau_k$, and there is exactly one mirror $\widehat N_k$ that contains $\sigma_{k-1}$ but not $\sigma_{k}$.
Let $\{N_1,\dots,N_r\}$ be the collection of all the mirrors that contain $\sigma_k$ and intersect $M$, but are different from $M$.
Then the analogous collection for $\sigma_{k-1}$ consists of the same mirrors, possibly with the addition of $\widehat N_k$.
(Note that since $p$ is a minimal bridge supported on $M$, any mirror containing $\sigma_1,\dots,\sigma_{s-1}$ is guaranteed to be different from $M$, while $\widehat N_k=M$ for $k=1$.)
By \eqref{item:cell projection} in Lemma~\ref{lem:projecting cells to mirrors} we have that  
$\pi_M(\sigma_k)\subseteq \tau_k\cap M \cap N_1\cap \dots \cap N_r$ 
and that either 
$\pi_M(\sigma_{k-1})\subseteq \tau_k\cap M \cap N_1\cap \dots \cap N_r$ 
or 
$\pi_M(\sigma_{k-1})\subseteq \tau_k\cap M \cap N_1\cap \dots \cap N_r \cap \widehat N_k$.
In the first case we have that $\pi_M(\sigma_{k-1})$ and $\pi_M(\sigma_{k})$ are contained in the intersection of the same mirrors, hence $w_{k-1}=w_k$; in the second case $\tau_k\cap M \cap N_1\cap \dots \cap N_r \cap \widehat N_k$ is a codimension-$1$ cell of $\tau_k\cap M \cap N_1\cap \dots \cap N_r$, hence $w_{k-1}$ is adjacent to $w_k$. 
This proves the claim. 

Notice in particular that in the case $k=1$ we have $\widehat N_k=M$, so we have proved that $w_0=w_1$. Analogously, we also have $w_s=w_{s-1}$.
As a result, $(w_0,\dots,w_s)$ is an edge--path in $\dcc M$.
Let $p^M$ be the edge--path obtained from $(w_0,\dots,w_s)$ by removing all backtracking subpaths and repeated vertices.
In particular, since $w_0=w_1$ and $w_s=w_{s-1}$, we have that $\length{p^M}\leq s-2 = \length p -2$.
Moreover,  since $p$ is a bridge supported on $M$, we have that $\sigma_0, \sigma_s\subseteq M$, so that $v_0=w_0,v_s=w_s$, i.e.\ $p$ and $p^M$ have the same endpoints.
\endproof

\begin{figure}[h]
\centering
\def\svgwidth{\columnwidth}
\begingroup%
  \makeatletter%
  \providecommand\color[2][]{%
    \errmessage{(Inkscape) Color is used for the text in Inkscape, but the package 'color.sty' is not loaded}%
    \renewcommand\color[2][]{}%
  }%
  \providecommand\transparent[1]{%
    \errmessage{(Inkscape) Transparency is used (non-zero) for the text in Inkscape, but the package 'transparent.sty' is not loaded}%
    \renewcommand\transparent[1]{}%
  }%
  \providecommand\rotatebox[2]{#2}%
  \newcommand*\fsize{\dimexpr\f@size pt\relax}%
  \newcommand*\lineheight[1]{\fontsize{\fsize}{#1\fsize}\selectfont}%
  \ifx\svgwidth\undefined%
    \setlength{\unitlength}{2544.61889072bp}%
    \ifx\svgscale\undefined%
      \relax%
    \else%
      \setlength{\unitlength}{\unitlength * \real{\svgscale}}%
    \fi%
  \else%
    \setlength{\unitlength}{\svgwidth}%
  \fi%
  \global\let\svgwidth\undefined%
  \global\let\svgscale\undefined%
  \makeatother%
  \begin{picture}(1,0.46359536)%
    \lineheight{1}%
    \setlength\tabcolsep{0pt}%
    \put(0.05159611,0.09543082){\color[rgb]{0,0,1}\makebox(0,0)[lt]{\lineheight{1.25}\smash{\begin{tabular}[t]{l}$\dcc M$\end{tabular}}}}%
    \put(0,0){\includegraphics[width=\unitlength,page=1]{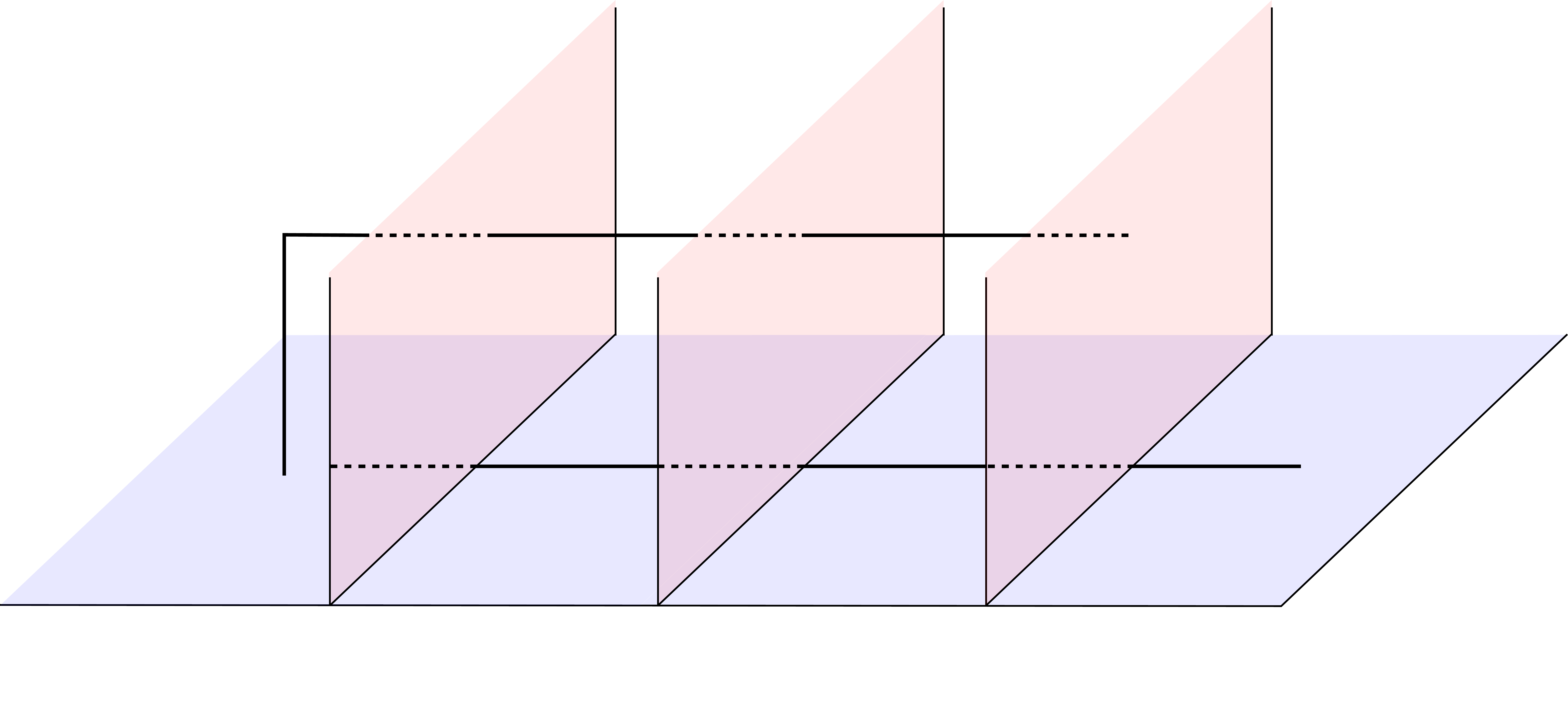}}%
    \put(0.40528107,0.33711708){\color[rgb]{0,0,0}\makebox(0,0)[lt]{\lineheight{1.25}\smash{\begin{tabular}[t]{l}$p$\end{tabular}}}}%
    \put(0.31096457,0.12490456){\color[rgb]{0,0,0}\makebox(0,0)[lt]{\lineheight{1.25}\smash{\begin{tabular}[t]{l}$p^M$\end{tabular}}}}%
    \put(0.88156205,0.34549883){\color[rgb]{0,0,0}\makebox(0,0)[lt]{\lineheight{1.25}\smash{\begin{tabular}[t]{l}$\pi_M$\end{tabular}}}}%
    \put(0,0){\includegraphics[width=\unitlength,page=2]{projection_minimal_bridge.pdf}}%
  \end{picture}%
\endgroup%

    \caption{Projection of the minimal bridge $p$ to $\dcc M$, for a supporting mirror $M$.}    
    \label{fig:projection_minimal_bridge}
\end{figure}


\subsection{Surgeries on edge--loops}\label{subsec:mirror surgeries}
We are now ready to apply the above technology to the study of edge--loops in $\dccx$. 
The goal is to show that $\dccx$ is simply connected. 
The strategy will be to reduce the length and mirror complexity of an edge--loop enough to ensure that it stays in a tile, so that Corollary~\ref{cor:efficient_loop} can be applied.
The following statement is the key surgery step. 
Roughly speaking, we chop an edge--loop $p$ along a mirror $M$ that it crosses, use the projection $p^M$ of $p$ to $M$ to introduce a shortcut along $M$ and obtain two edge--loops $p_1,p_2$ such that $p$ and $p_1p_2$ are elementary homotopic, and finally then check that the lengths have dropped.

\begin{proposition}\label{prop:mirror complexity reduction}
Let $p$ be an edge--loop in $\dccx$. 
If  $\mirrorcomplexity p >0$, then there exist two edge--loops $p_1,p_2$ in $\dccx$ such that $\length{p_1},\length{p_2} < \length p$, and there is an elementary homotopy between $p$ and $p_1p_2$
\end{proposition}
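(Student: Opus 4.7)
The plan is to chop $p$ along a suitably chosen minimal bridge and substitute a shortcut on the supporting dual mirror, using Lemma~\ref{lem:projection control}. Since $\mirrorcomplexity p>0$, there exists a mirror $M$ with $\mirrorcomplexitymirror{p}{M}>0$, which Lemma~\ref{lem:mirror complexity basics}\eqref{item:mirror complexity basics 3} upgrades to $\mirrorcomplexitymirror{p}{M}\ge 2$. The $\mirrorcomplexitymirror{p}{M}$ crossings cyclically partition the edge-loop $p$ into that many arcs, each starting and ending on $\dcc M$ while leaving $\dcc M$ in its interior, i.e.\ each is a bridge supported by $M$. These arcs edge-decompose $p$ together with the crossings themselves, so their lengths sum to at most $\length p$; by the pigeonhole principle, the shortest such arc $\tilde b$ satisfies $\length{\tilde b}\le \length p/2$.

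Next I would apply Lemma~\ref{lem:minimal bridges exist} inside $\tilde b$ to extract a minimal sub-bridge $b\subseteq \tilde b$, supported by some (possibly different) mirror $M'$, with $\length b\le \length{\tilde b}\le \length p/2$. Lemma~\ref{lem:projection control} then produces an edge-path $b^{M'}\subseteq \dcc{M'}$ with the same endpoints as $b$ and $\length{b^{M'}}\le \length b-2$.

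Writing $p$ cyclically as $p=q_1\,b\,q_2$, I would set $p_1:=b\cdot \overline{b^{M'}}$ (a loop based at the starting vertex of $b$) and $p_2:=q_1\cdot b^{M'}\cdot q_2$, the loop obtained from $p$ by substituting $b^{M'}$ for $b$. Then
\[
\length{p_2}=\length p-\length b+\length{b^{M'}}\le \length p -2<\length p,\qquad
\length{p_1}=\length b+\length{b^{M'}}\le 2\length b -2\le \length p-2<\length p.
\]
For the elementary homotopy, inserting the backtracking pair $\overline{b^{M'}}\cdot b^{M'}$ in $p$ between $b$ and $q_2$ yields the loop $q_1\,b\,\overline{b^{M'}}\,b^{M'}\,q_2$, which, read as a free edge-loop, is precisely $p_1\cdot p_2$ after a cyclic reorganization; the inverse move is removal of a backtracking subpath, one of the two elementary moves in the definition.

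The main technical content lies in the length bound $\length b\le \length p/2$: it relies on the pigeonhole argument applied to the at-least-two crossings of $M$ and on the observation that extracting a minimal sub-bridge can only decrease length. The elementary homotopy step is then routine bookkeeping, the only subtlety being that $p_1$ and $p_2$ are naturally based at different vertices, so their concatenation must be read as a free-homotopy class of edge-loops (consistent with the usage in Corollary~\ref{cor:efficient_loop}).
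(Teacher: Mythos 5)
Your proof is correct and follows essentially the same strategy as the paper: use the at-least-two $(p,M)$-crossings to locate a minimal bridge of length at most $\length p/2$, replace it by its projection to a supporting dual mirror (Lemma~\ref{lem:projection control}) via a backtracking insertion, and read off the two shorter loops $p_1$ and $p_2$. The only cosmetic differences are that you reach the bound $\length{b}\le \length p/2$ by pigeonhole over all crossing-to-crossing arcs, whereas the paper simply compares one bridge with its complement, and that the basepoint awkwardness you flag at the end disappears if you take the basepoint of $p$ at an endpoint of the extracted minimal bridge, as the paper implicitly does.
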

\proof
By assumption, there is a mirror $M_0$ that is crossed by $p$, so  $\mirrorcomplexitymirror{p}{M_0}\geq 1$, hence by \eqref{item:mirror complexity basics 3} in Lemma~\ref{lem:mirror complexity basics} we have that $\mirrorcomplexitymirror{p}{M_0}\geq 2$, i.e.\ $p$ crosses $M_0$ at least twice.
It follows from the definitions that any subarc of $p$ between any two $(p,M_0)$-crossings is a bridge supported by $M_0$.

Choose a $(p,M_0)$--crossing and a bridge $q$ supported by $M_0$ (in general this cannot be chosen to be minimal, see Figure~\ref{fig:surgery}). Let $q'$ be the complement of $q$ in $p$, i.e.\ the edge--path such that $p=qq'$. Of course  we have 
\begin{equation}\label{eq:L+L' extra 1}
    \length q+\length{q'}=\length p
\end{equation}
Moreover, without loss of generality we can assume that 
\begin{equation}\label{eq:L<L'}
    \length q \leq \length{q'}.
\end{equation}

By Lemma~\ref{lem:minimal bridges exist} we can find a  minimal bridge $q_1\subseteq q\subseteq p$. In particular we have 
\begin{equation} \label{eq:bridge<L}
    \length{q_1}\leq \length q.
\end{equation}
Let $q_2$ be the complement of $q_1$ in $p$, i.e.\ the edge--path such that $p=q_1q_2$. 
We can compute that   
\begin{equation} \label{eq:retrace}
\length{q_2} = 
\length p - \length{q_1} \overset{\eqref{eq:bridge<L}}{\geq}
\length p - \length q \overset{\eqref{eq:L+L' extra 1}}{=} 
\length{q'}
\overset{\eqref{eq:L<L'}}{\geq}
\length q
\overset{\eqref{eq:bridge<L}}{\geq}
\length{q_1}.
\end{equation}

Let $M$ be a mirror supporting the minimal bridge $q_1$, and let $q_1^M$ be the projection of $q_1$ to $\dcc M$, i.e.\ the edge--path obtained in Lemma~\ref{lem:projection control}.
In particular we have
\begin{equation}\label{eq:shortcut}
    \length{q_1^M} \leq \length{q_1} -2 < \length{q_1}.
\end{equation}

Define the edge--loops $p_1=q_1\overline{q_1^M}$ and $p_2=q_1^Mq_2$, where $\overline{q_1^M}$ denotes the edge--path $q_1^M$ with the opposite orientation. 
There is an elementary homotopy between the edge--loops $p=q_1q_2$ and $p_1p_2=q_1\overline{q_1^M}q_1^M q_2$, obtained by removing the backtracking subpath $\overline{q_1^M}q_1^M$.
We can compute the desired inequality on the length of $p_1$ and $p_2$ as follows: 
\begin{equation*}
\length{p_1}  = 
\length{q_1} + \length{q_1^M} \overset{\eqref{eq:shortcut}}{<}
\length{q_1} + \length{q_1}
  \overset{\eqref{eq:retrace}}{\leq} 
\length{q_1} + \length{q_2} = \length{p}.
\end{equation*}
\begin{equation*}
    \length{p_2} = \length{q_1^M} + \length{q_2} \overset{\eqref{eq:shortcut}}{<}
    \length{q_1} +\length {q_2} = \length p .
\end{equation*}
\endproof

\begin{figure}[h]
\centering
\def\svgwidth{\columnwidth}
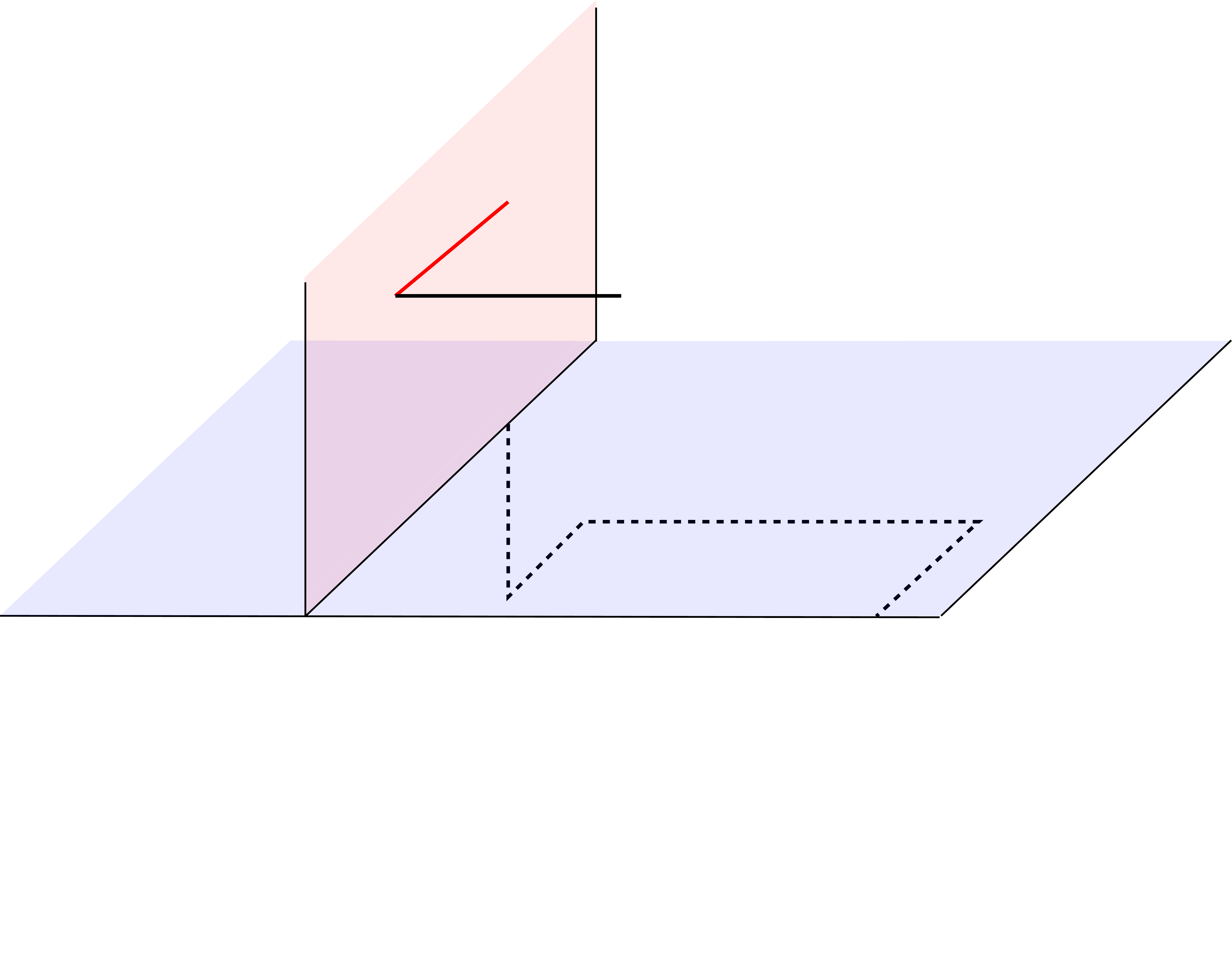
    \caption{The surgery in Proposition~\ref{prop:mirror complexity reduction}. Note that all the minimal bridges in $p$ are supported on $M$ but $p$ does not cross $M$, hence the need to first split $p$ into $q$ and $q'$, and then into $q_1$ and $q_2$.}    
    \label{fig:surgery}
\end{figure}

\begin{remark}
Note that it is possible to have an edge--loop $p$ for which the surgery  from Proposition~\ref{prop:mirror complexity reduction} strictly reduces the mirror complexity for only one subloop. For an example see Figure~\ref{fig:surgery}, where the mirror complexity of the loop $p_2$ is the same as that of the original loop $p$. 
\end{remark}

We are now ready to prove the main result of this section.

\begin{theorem}\label{thm:dual cubical complex is CAT(0)}
The complex $\dccx$ is a connected $\cat 0$ cubical complex.
\end{theorem}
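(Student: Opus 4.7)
The plan is to verify the three properties in order: $\dccx$ is connected, locally $\cat 0$, and simply connected, from which the $\cat 0$ conclusion follows by the Cartan--Hadamard theorem for cubical complexes.

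Connectedness is straightforward: given any two vertices of $\dccx$, dual to cells $\sigma,\tau$ of $\uchc$, we use the fact that $\uchc$ is path connected to produce a path in $\uchc$ from a point of $\sigma$ to a point of $\tau$; after a small perturbation so that it traverses the stratification by jumping at most one dimension at a time, this yields an edge--path in $\dccx$ between the two vertices.

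For local $\cat 0$--ness, Proposition~\ref{prop:flag_links} shows that the link of every vertex of $\dccx$ is a flag simplicial complex, so Gromov's link condition (Lemma~\ref{lem:gromov link condition}) gives that $\dccx$ is locally $\cat 0$.

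The main step is simple connectedness, which I would prove by strong induction on the length of an edge--loop $p$ in $\dccx$, showing every such loop is null--homotopic via elementary homotopies. For the base case, by Corollary~\ref{cor:short loop} every edge--loop of length at most $4$ stays in a tile and is elementary homotopic to a constant; moreover every edge--loop has even length. For the inductive step, consider $p$ with $\length p \geq 6$. If $\mirrorcomplexity p = 0$, then by \eqref{item:mirror complexity basics 2} in Lemma~\ref{lem:mirror complexity basics} the loop $p$ stays in a tile, and Corollary~\ref{cor:efficient_loop} produces an elementary homotopy from $p$ to a constant path. Otherwise $\mirrorcomplexity p > 0$ and Proposition~\ref{prop:mirror complexity reduction} provides two edge--loops $p_1,p_2$ with $\length{p_i} < \length p$ and an elementary homotopy from $p$ to the concatenation $p_1 p_2$; by the inductive hypothesis each $p_i$ is null--homotopic, so $p$ is null--homotopic.

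The hard part is that this induction is already packaged in the lemmas of \S\ref{subsec:mirror complexity}: the key content is that mirror complexity is a workable invariant, with projections to dual mirrors (Lemma~\ref{lem:projection control}) providing the strict length decrease in the surgery. Once that machinery is in hand, the argument above runs cleanly, and combining simple connectedness with the local $\cat 0$ property yields that $\dccx$ is $\cat 0$.
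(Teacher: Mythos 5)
Your proof is correct and follows essentially the same route as the paper: connectedness via perturbed paths in $\uchc$, non-positive curvature from Proposition~\ref{prop:flag_links} and Gromov's link condition, and simple connectedness from the mirror-complexity surgery in Proposition~\ref{prop:mirror complexity reduction} together with Corollary~\ref{cor:efficient_loop}. The only cosmetic difference is that you phrase the final step as strong induction with an explicit base case via Corollary~\ref{cor:short loop}, while the paper uses a minimal-length counterexample; these are logically equivalent.
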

\proof
By construction, the complex $\dccx$ is a cubical complex. Moreover, Gromov's link condition from Lemma~\ref{lem:gromov link condition} implies that $\dccx$ is non--positively curved, since the link of any vertex is a flag simplicial complex by  Proposition~\ref{prop:flag_links}.

Next, $\dccx$ is path--connected because $\uchc$ is path--connected. Indeed, let $v,w$ be vertices in $\dccx$, and let $\sigma_v,\sigma_w$ be the dual cells. Pick any continuous path $\eta$ connecting the two cells in $\uchc$, and keep track of the list of cells that are intersected by $\eta$. By isotoping $\eta$ into lower-dimensional cells, we can ensure that the difference between the dimension of two consecutive cells in this list is exactly $1$. The dual vertices in $\dccx$ give rise to an edge--path from $v$ to $w$.

To conclude, we need to show that $\dccx$ is simply connected.
To do this, we argue that edge--loops are nullhomotopic by induction on their length. 
Let $p$ be an edge--loop in $\dccx$, homotopically non-trivial and of minimal length. 
If $p$ does not cross any mirror, then by Lemma~\ref{lem:in tile iff no crossing} $p$ stays in a tile.
Hence by Corollary~\ref{cor:efficient_loop} there is an elementary homotopy between $p$ and a constant path.
So let us assume that $\mirrorcomplexity p > 0$.
Then by Proposition~\ref{prop:mirror complexity reduction} there exist two edge--loops $p_1,p_2$ in $\dccx$ such that $p$ is homotopic to $p_1p_2$ and  $\length{p_1},\length{p_2} < \length p$.
By minimality, both $p_1$ and $p_2$ are homotopically trivial, and so is $p$.
\endproof


We conclude this section by noting that the action of the hyperbolized group $\hg=\pi_1(\hc)$ on $\uchc$ induces an action on the dual cubical complex $\dccx$.

\begin{lemma}\label{lem:dual action is cocompact}
The group $\hg=\pi_1(\hc)$ acts on $\dccx$ by cubical isometries. Moreover, if $X$ is compact, then the action is cocompact.
\end{lemma}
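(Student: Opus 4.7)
The plan is to first establish that the deck transformation action of $\hg$ on $\uchc$ descends to a well-defined cubical action on $\dccx$, and then count orbits of cubes using the compactness of $X$.

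First, I would note that $\hg$ acts on $\uchc$ by deck transformations, which commute with the covering map $\piuchc : \uchc \to \hc$ and hence with the composition $\cdXu = \cdX \circ \piuchc : \uchc \to X$. Since the cells of the stratification of $\uchc$ introduced in \S\ref{subsec:stratification} are defined as connected components of the preimages of cubes of $X$ under $\cdXu$, and deck transformations are homeomorphisms, it follows that $\hg$ permutes these cells, sending $k$-cells to $k$-cells and preserving inclusion relations. By the very definition of $\dccx$, whose vertices are cells of $\uchc$ and whose higher-dimensional cubes record chains of cell inclusions (via Lemma~\ref{lem:simplices_in_link}), this induces an action of $\hg$ on $\dccx$ by cubical automorphisms. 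With respect to the standard piecewise Euclidean metric on $\dccx$, any cubical automorphism is an isometry, so $\hg$ acts by cubical isometries.

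For the cocompactness statement, assume now that $X$ is compact. Then $\hc$ is compact as well (since it is obtained by gluing finitely many copies of the compact hyperbolizing cube $\hq$), and its stratification has only finitely many cells: each cube of $X$ contributes finitely many components of type $\hqk$ under $\cdX$ (see Remark~\ref{rmk:CD lowerdim faces hyp 2}). Since $\hg$ acts freely and transitively on each fiber of $\piuchc$ and preserves the stratification, the $\hg$-orbits of cells of $\uchc$ are in bijection with the cells of $\hc$, and hence are finite in number. The same argument, applied to chains of cells, shows that the $\hg$-orbits of cubes of $\dccx$ are in bijection with the chains of cells in $\hc$, a finite set. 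Thus $\dccx / \hg$ is obtained by gluing finitely many Euclidean cubes and is therefore compact.

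The argument is essentially formal once one unpacks the definitions; the only point requiring care is the identification of orbits of cubes of $\dccx$ with chains of cells of $\hc$, which hinges on the fact that the stratification of $\uchc$ is genuinely lifted from a stratification of $\hc$, and not just a structure visible upstairs.
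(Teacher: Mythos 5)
Your proof is correct and follows essentially the same route as the paper: first, the deck transformations of $\hg$ on $\uchc$ preserve the stratification (you phrase this via $\cdXu$; the paper via the mirror family, but these are equivalent since both are encoded in $\foldutosquare$), hence induce a combinatorial action on $\dccx$, which is automatically by cubical isometries; second, compactness of $X$ gives compactness of $\hc$, hence finitely many $\hg$-orbits of cells of $\uchc$ and therefore finitely many orbits of vertices and cubes of $\dccx$. One small imprecision worth flagging: cubes of $\dccx$ are parametrized not by \emph{chains} of cells but by pairs $(\lambda,\mu)$ with $\lambda\subseteq\mu$ (the min and max vertices from Lemma~\ref{lem:cube_minmax}, with the intermediate vertices being \emph{all} cells between them, not a totally ordered chain), but this does not affect the finiteness argument since there are still finitely many orbits of such pairs.
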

\proof
The group $\hg$ acts on $\uchc$ preserving the family of mirrors, hence the stratification defined in \S\ref{subsec:stratification}.
The action permutes the cells, so $\hg$ acts on vertices of the dual cubical complex $\dccx$ described in \S\ref{sec:dual cubical complex}. 
Moreover, the action of $\hg$ on $\uchc$ preserves the incidence relations between cells, hence we can extend the action of $\hg$ on vertices to a combinatorial action of $\hg$ on the entire  $\dccx$.
Since $\hg$ acts on $\dccx$ combinatorially, it preserves the standard cubical metric. 

When $X$ is compact, $\hc$ is compact as well,  by \eqref{item:hypcell compact} in Lemma~\ref{lem:cell_is_hyperbolizing_cell}. 
The action of $\hg$ on $\uchc$ has finitely many orbits of cells, so its action on $\dccx$ has finitely many orbits of vertices. Since  $\dccx$ is finite-dimensional (see Lemma \ref{lem:dcc_fin_dim}), the quotient has finitely many cubes, so it is compact.
\endproof


\section{Special cubulation}\label{sec:special cubulation}
The purpose of this section is to study the action of the hyperbolized group $\hg=\pi_1(\hc)$ on the dual cubical complex $\dccx$ (see Lemma~\ref{lem:dual action is cocompact}), and prove that the group $\hg$ is virtually compact special in the sense of \cite{HW08}. 
When $X$ is compact and admissible,  $\hg$ is a Gromov hyperbolic group and $\dccx$ is a $\cat 0$ cubical complex (see \eqref{item:CD hyperbolic} in Proposition~\ref{prop:CD complex}, and Theorem~\ref{thm:dual cubical complex is CAT(0)}). 
Therefore, one could hope to obtain virtual specialness directly from Agol's result from \cite{AG13}.
However, the action of $\hg$ on $\dccx$ is not proper (see Remark~\ref{rem:dual action is not proper}). 
To remedy this, we will use a result of Groves and Manning from \cite[Theorem D]{GM18} that deals with improper actions.
This requires a study of stabilizers of cubes.

In \S \ref{subsec:action} we show that cube stabilizers for the action of $\hg$ on $\dccx$ coincide with cell stabilizers for the action of $\hg$ on $\uchc$. Then in \S\ref{subsec:quasiconvex and vcs} we show that such stabilizers are quasiconvex and virtually compact special.
The complex $X$ is always assumed to be admissible in the sense of \S\ref{sec:strict hyperbolization}. In some statements (such as Theorem~\ref{thm:main_vcs}) we also assume that it is compact.

\begin{remark}[Why we consider the action on $\dccx$ instead of $\widetilde X$]
It is worth noting that when $X$ is admissible, $\widetilde X$ is already a $\cat 0$ cube complex.
Moreover the map $\cdX:\hc\to X$ from Proposition~\ref{prop:CD complex} induces a surjection $\hg \to \pi_1(X)$ that can be used to obtain an action of $\hg$ on $\widetilde X$.
However, this action has a very large kernel. For example, in the case in which $X$ is already simply connected the map 
$\hg \to \pi_1(X)$ is trivial, but $\hg$ is an infinite group; indeed, it retracts to $\hgq$, as discussed in Remark~\ref{rem:what_uchc_isnot}.
\end{remark}

\subsection{Cube stabilizers for the action of  \texorpdfstring{$\hg$}{the hyperbolized group} on \texorpdfstring{$\dccx$}{the dual cubical complex}}\label{subsec:action}

In this section we relate the cube stabilizers for the action of $\hg$ on $\dccx$ to the cell stabilizers for the action of $\hg$ on $\uchc$.

\begin{lemma}\label{lem:vertex stab equals cell stab}
The stabilizer of a vertex in $\dccx$ coincides with the stabilizer of its dual  cell in $\uchc$.
\end{lemma}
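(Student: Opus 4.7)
The plan is to show that this is essentially immediate from the way the action of $\hg$ on $\dccx$ is constructed in Lemma~\ref{lem:dual action is cocompact}. Recall that $\hg$ acts on $\uchc$ by deck transformations, and this action preserves the stratification introduced in \S\ref{subsec:stratification}: it sends mirrors to mirrors (preserving the coloring by the codimension--$1$ faces of $\square^n$), and more generally it sends $k$--cells to $k$--cells while preserving the inclusion relations among cells. The action on $\dccx$ is then \emph{defined} by declaring that $g \in \hg$ sends the vertex dual to a cell $\sigma$ to the vertex dual to the cell $g\cdot \sigma$, and extending combinatorially to higher--dimensional cubes.

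Given this, let $v$ be a vertex of $\dccx$ and let $\sigma$ be the dual cell in $\uchc$. First I would observe that if $g \in \hg$ fixes $\sigma$ setwise, then by the definition of the induced action $g\cdot v$ is the vertex dual to $g\cdot \sigma = \sigma$, which is $v$ itself; hence $g$ stabilizes $v$. Conversely, if $g$ stabilizes $v$, then the vertex dual to $g\cdot \sigma$ equals the vertex dual to $\sigma$; since the correspondence between cells of $\uchc$ and vertices of $\dccx$ is bijective by construction, this forces $g\cdot \sigma = \sigma$, so $g$ stabilizes $\sigma$.

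There is no serious obstacle here: the statement is essentially a tautology extracted from the definition of the action on $\dccx$ as the combinatorial extension of the permutation action on cells of $\uchc$. The only thing worth emphasizing in the write--up is the bijectivity of the correspondence $\sigma \mapsto v$ between cells and vertices, which is built into the definition of $\dccx$ in \S\ref{sec:dual cubical complex}.
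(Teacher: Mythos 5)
Your proof is correct and matches the paper's approach: in fact the paper states this lemma without proof precisely because, as you observe, it is immediate from the way the action of $\hg$ on $\dccx$ is induced by the permutation action on cells of $\uchc$ together with the bijectivity of the cell--to--vertex correspondence.
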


\begin{remark}\label{rem:dual action is not proper}
It follows from Lemma~\ref{lem:vertex stab equals cell stab} that the action of $\hg$ on $\dccx$ is in general not proper.
Namely, the stabilizer of a vertex dual to a cell of dimension at least $2$ is infinite (compare Remark~\ref{rmk:dcc_non_locally_finite} and Figure~\ref{fig:uchq_is_cell}).
\end{remark}

We now proceed to the study of stabilizers of higher--dimensional cubes for the action of $\hg$ on $\dccx$.
Recall that the dual cubical complex $\dccx$ is equipped with a $\hg$-invariant height function: the vertex dual to a $k$-cell has height $k$. We proved in Lemma~\ref{lem:cube_minmax} that every cube in  $\dccx$ has a unique vertex of minimal height.

\begin{lemma}\label{lem:cube stab equals min vertex stab}
The stabilizer of a cube in $\dccx$ coincides with the stabilizer of its vertex of minimal height in $\uchc$. 
\end{lemma}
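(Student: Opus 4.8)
The plan is to show both inclusions. Let $Q$ be a cube of $\dccx$, let $v_{\min}$ be its vertex of minimal height (unique by Lemma~\ref{lem:cube_minmax}), and let $\lambda$ be the cell of $\uchc$ dual to $v_{\min}$; by Lemma~\ref{lem:vertex stab equals cell stab} the stabilizer of $v_{\min}$ in $\hg$ coincides with the stabilizer of $\lambda$. Since $\hg$ acts on $\dccx$ by cubical isometries preserving the height function (see Lemma~\ref{lem:dual action is cocompact} and the remarks preceding this lemma), any element $g\in \hg$ fixing $Q$ setwise must fix the unique minimal-height vertex $v_{\min}$ of $Q$, hence fixes $\lambda$. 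This gives $\stab{\hg}{Q}\subseteq \stab{\hg}{v_{\min}}=\stab{\hg}{\lambda}$.

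For the reverse inclusion, the key point is that the cube $Q$ is canonically determined by its minimal-height vertex $v_{\min}$ together with the data of \emph{which} cells of $\uchc$ are dual to the other vertices of $Q$. More precisely, by Lemma~\ref{lem:cube_minmax} there is also a unique vertex $v_{\max}$ of maximal height, dual to a cell $\mu$ with $\lambda\subseteq \tau \subseteq \mu$ for every cell $\tau$ dual to a vertex of $Q$; and by (the proof of) Lemma~\ref{lem:simplices_in_link} together with Lemma~\ref{lem:strat_links_cells}, the vertices of $Q$ are exactly the cells of $\uchc$ lying between $\lambda$ and $\mu$ in the face poset, so $Q$ is recovered from the pair $(\lambda,\mu)$. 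First I would argue that $\mu$ is itself determined by $\lambda$ and $Q$ in a way that is respected by the group action: if $g\in \hg$ fixes $\lambda$, I want to conclude $g$ fixes $Q$. The cleanest route is to observe that $g$ must permute the vertices of $Q$ adjacent to $v_{\min}$ — these are dual to the codimension-one faces of the cells dual to the neighbours, all containing $\lambda$ — and since $g$ fixes $\lambda$ and preserves heights and incidence of cells in $\uchc$, it permutes precisely the cells $\tau$ with $\lambda \subseteq \tau$ that occur as vertices of $Q$; hence $g$ preserves the corresponding cube $Q$. Here one uses that a cube in a $\cat 0$ cube complex is determined by its vertex set, and that the vertex set of $Q$ is intrinsically characterized via Lemma~\ref{lem:simplices_in_link}.

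To make the last step precise I would proceed as follows. Let $g\in\stab{\hg}{\lambda}$. Then $gv_{\min}=v_{\min}$. The cube $gQ$ is a cube of $\dccx$ containing $v_{\min}$, and by Lemma~\ref{lem:cube_minmax} its minimal-height vertex is $gv_{\min}=v_{\min}$ as well (the action preserves heights). By Lemma~\ref{lem:simplices_in_link}, a cube of $\dccx$ with minimal vertex $v_{\min}$ corresponds to a simplex in $\lku{v_{\min},\dccx}$, i.e.\ to a collection of cells of $\uchc$ strictly containing $\lambda$, with a common upper cell $\mu$ (see Lemma~\ref{lem:min_max_cells}); and such a cube is the \emph{full} cube spanned by all cells of $\uchc$ lying between $\lambda$ and $\mu$, by the argument in the proof of Lemma~\ref{lem:simplices_in_link} using Lemma~\ref{lem:strat_links_cells}. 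Thus $Q$ is determined by the pair $(\lambda,\mu)$, and since $g\mu$ is the upper cell of $g$ applied to the cells dual to the vertices of $Q$, which all contain $\lambda=g\lambda$, one checks $g\mu$ satisfies the defining property of the upper cell of the collection $\{g\tau\}$. But $g$ permutes the cells between $\lambda$ and $\mu$ among themselves — this is the crux — because these are exactly the cells $\tau$ with $\lambda\subseteq\tau\subseteq\mu$, and it remains to see $g\mu=\mu$. I expect this to be the main obstacle: a priori $g$ could send $\mu$ to a different cell $\mu'$ with $\lambda\subseteq\mu'$, giving a different cube $gQ\ne Q$. The resolution is that the set of cells dual to vertices of $Q$ is itself fixed by $g$ as a set — because it is the orbit-data recorded by the cube — so in particular the top cell $\mu$, being the unique maximal element of that set (Lemma~\ref{lem:cube_minmax}), is fixed. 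Hence $g\mu=\mu$, so $gQ=Q$, giving $\stab{\hg}{\lambda}\subseteq\stab{\hg}{Q}$ and completing the proof. I would phrase the final writeup by first establishing the easy inclusion via the height function, then establishing that $Q\leftrightarrow(\lambda,\mu)$ is a bijection equivariant under $\hg$, and concluding.
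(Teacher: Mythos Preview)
Your easy inclusion $\stab{\hg}{Q}\subseteq\stab{\hg}{\lambda}$ is fine and matches the paper's argument exactly.

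The reverse inclusion has a genuine gap. You correctly identify the crux: given $g\in\stab{\hg}{\lambda}$, why should $g\mu=\mu$ (equivalently, why should $gQ=Q$)? But your proposed resolution is circular. You write that ``the set of cells dual to vertices of $Q$ is itself fixed by $g$ as a set --- because it is the orbit-data recorded by the cube.'' This is just a restatement of the goal: the action of $g$ a priori carries the vertex set of $Q$ to the vertex set of $gQ$, and nothing you have said distinguishes $Q$ from any other cube of $\dccx$ with minimal vertex $v_{\min}$. There are in general many cells $\mu'$ of the same dimension as $\mu$ with $\lambda\subseteq\mu'$, each giving a different cube, and your argument provides no reason why $g$ cannot permute these.

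The missing idea is that $\hg$ acts on $\uchc$ \emph{freely} by deck transformations, so the covering map $\piuchc:\uchc\to\hc$ induces an isomorphism on the link of every cell. The paper argues as follows: let $w$ be any vertex of $Q$ and $\tau$ its dual cell; by Lemma~\ref{lem:cube_minmax} we have $\lambda\subseteq\tau$, hence also $\lambda=g\lambda\subseteq g\tau$. Both $\tau$ and $g\tau$ lie in $\lk{\lambda,\uchc}$ and project under $\piuchc$ to the same cell of $\hc$. Since $\piuchc$ is injective on links (and tiles of $\hc$ are embedded, Lemma~\ref{lem:cell_is_hyperbolizing_cell}), this forces $g\tau=\tau$. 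Thus $g$ fixes \emph{every} vertex of $Q$, and in particular $gQ=Q$. Note this actually proves the stronger statement that the stabilizer of $Q$ fixes $Q$ pointwise.
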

\proof
Let $C$ be a cube in $\dccx$, let $v$ be its vertex of minimal height, and let $\sigma$ be the dual cell in $\uchc$.
Let $g\in \hg$ be an element that stabilizes $C$. 
Since the height function is invariant, $g$ must fix $v$, by uniqueness of the vertex of minimal height.

Conversely let $g$ fix $v$. By Lemma~\ref{lem:vertex stab equals cell stab} we get that $g$ stabilizes $\sigma$, i.e.\ $g.\sigma=\sigma$.
Let $w$ be another vertex of $C$ and let $\tau$ be the dual cell. 
By Lemma~\ref{lem:cube_minmax}, we have that $\sigma \subseteq \tau$. 
It follows that $\sigma =g.\sigma \subseteq g.\tau$, so that both $\tau$ and $g.\tau$ appear in the link of $\sigma$ in the combinatorial structure of $\uchc$ (see \eqref{item:strat_links_cells_simplicial} in Lemma~\ref{lem:strat_links_cells}). 
Since the covering projection $\piuchc:\uchc \to \hc$ induces  isomorphisms on links, if $\tau$ and $g.\tau$ were distinct, then in $\hc$ we would see a tile $\pi(\tau)=\pi(g.\tau)$ isometric to a copy of $\hq$ with some identification along the boundary (namely along the subspace corresponding to $\pi(\sigma)$).
However, by \eqref{item:hypcell homeo} in Lemma~\ref{lem:cell_is_hyperbolizing_cell} we know that tiles of $\hc$ are embedded copies of $\hq$, so we must have $g.\tau = \tau$.
By Lemma~\ref{lem:vertex stab equals cell stab}, this means $g.w=w$. Therefore $g$ fixes $C$ pointwise.
\endproof

\begin{remark}
In the proof of Lemma~\ref{lem:cube stab equals min vertex stab} we  established that the stabilizer of a cell in $\dccx$ actually fixes the cell pointwise.
\end{remark}


\subsection{Cell stabilizers are quasiconvex and virtually compact special}\label{subsec:quasiconvex and vcs}

The goal of this section is to study the stabilizers of cells for the action of $\hg$ on $\uchc$ by covering transformations. 
In particular, note that by Lemma~\ref{lem:foldingmap_hypcomplex} stabilizers of tiles (i.e.\ top--dimensional cells) are isomorphic to the fundamental group of the hyperbolizing cube $\hgq=\pi_1(\hq)$.
More precisely, our goal is to show that cell stabilizers for the action of $\hg$ on $\uchc$ are quasiconvex in $\hg$, and virtually compact special.

\subsubsection{Quasiconvexity}

In the following we say that an action of a group on a metric space is \textit{geometric} if it is proper, cocompact and isometric. 
We will make use of the following standard fact.

\begin{lemma}\label{lem:quasiconvex_stabilizers}
Let $Z$ be a proper Gromov hyperbolic geodesic space, and let $Y\subseteq Z$ be a quasiconvex subset. Let $G$ be a finitely generated group acting geometrically on $Z$, and let $H$ be the stabilizer of $Y$ in $G$. 
If $H$ acts cocompactly on $Y$, then $H$ is quasiconvex in $G$.  
\end{lemma}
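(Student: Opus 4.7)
The plan is to reduce quasiconvexity of $H$ in $G$ to quasiconvexity of the orbit $H \cdot y_0$ as a subset of $Z$, for a well-chosen basepoint $y_0 \in Y$, and then deduce the latter from the quasiconvexity of $Y$ in $Z$ together with the cocompactness of the $H$-action on $Y$.

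First, I would pick a basepoint $y_0 \in Y$. Since $G$ acts geometrically on the proper geodesic space $Z$, the \v{S}varc--Milnor Lemma gives that $G$ is finitely generated and that the orbit map $\phi\colon G \to Z$, $g \mapsto g \cdot y_0$, is a $G$-equivariant quasi-isometry from $G$ (equipped with any word metric) to $Z$. A standard consequence is that a subgroup $H \leq G$ is quasiconvex in $G$ if and only if its orbit $H \cdot y_0$ is quasiconvex in $Z$ (geodesics in the word metric correspond, under $\phi$, to quasi-geodesics in $Z$, and in a hyperbolic space quasi-geodesics with common endpoints fellow-travel geodesics). Thus it suffices to show that $H \cdot y_0$ is a quasiconvex subset of $Z$.

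Let $K$ be a quasiconvexity constant for $Y$ in $Z$. Because $H$ acts cocompactly on $Y$, there is a compact set $K_0 \subseteq Y$ with $H \cdot K_0 = Y$; setting $D = \operatorname{diam}(K_0) + d(y_0, K_0)$, the orbit $H \cdot y_0$ is $D$-dense in $Y$. Now let $h_1, h_2 \in H$ and let $\gamma$ be any geodesic in $Z$ from $h_1 \cdot y_0$ to $h_2 \cdot y_0$. Both endpoints lie in $Y$, so by the $K$-quasiconvexity of $Y$ the whole geodesic $\gamma$ is contained in the $K$-neighborhood of $Y$, and hence in the $(K+D)$-neighborhood of $H \cdot y_0$. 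This shows $H \cdot y_0$ is $(K+D)$-quasiconvex in $Z$; pulling back along the quasi-isometry $\phi$ yields the quasiconvexity of $H$ in $G$.

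The proof is essentially bookkeeping and there is no serious obstacle. The only care needed is to verify that $\phi$ is a quasi-isometry (\v{S}varc--Milnor) and to dualize quasiconvexity between the group-theoretic and geometric sides via the orbit map; these are both standard in the hyperbolic setting.
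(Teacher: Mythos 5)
Your proof is correct. The paper states this lemma without proof, treating it as a standard fact, so there is no proof in the paper to compare against; your argument is the expected one. The two key steps—\v{S}varc--Milnor giving a quasi-isometry $\phi\colon G\to Z$ via the orbit map, and the equivalence (in the hyperbolic setting, via the Morse lemma) between quasiconvexity of $H$ in the word metric on $G$ and quasiconvexity of the orbit $H\cdot y_0$ in $Z$—are both standard and correctly invoked. The cocompactness-to-density step is handled carefully, including the case where $y_0$ need not lie in the compact fundamental set $K_0$. One small remark worth keeping in mind: the equivalence in your step 3 genuinely requires hyperbolicity of $Z$ (quasiconvexity of a subgroup is not a quasi-isometry invariant for general groups), which you correctly flag; it is also why the hypothesis that $Z$ is Gromov hyperbolic appears in the statement even though the rest of the argument would go through without it.
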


We apply this lemma to the cases $G=\hg, H=\hgq$ and $G=\Gamma, H=\hgq$. As noted, $\hg$ is a Gromov hyperbolic group when $X$ is compact.
In both cases, before using the lemma we need to ensure that $H$ is a subgroup of $G$. This is not obvious, because a priori $H$ is just defined as the fundamental group of the hyperbolizing cube $\hq$.

\begin{lemma}\label{lem:hypcube_quasiconvex_in_hyperbolized_group}
Let $X$ be compact. Then $\hgq$ is a quasiconvex subgroup of $\hg$.
\end{lemma}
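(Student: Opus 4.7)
The plan is to apply Lemma~\ref{lem:quasiconvex_stabilizers} with $Z=\uchc$, $G=\hg$, and $Y$ equal to a tile of $\uchc$. The whole argument reduces to correctly identifying $\hgq$ inside $\hg$ as a tile stabilizer, and then checking the hypotheses of that lemma.

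First I would fix a tile $\tau_0\subseteq \hc$, which by Remark~\ref{rem:CD hyp complex metric} is isometric to the hyperbolizing cube $\hq$, and any connected component $\tau\subseteq \uchc$ of its preimage under the universal covering $\piuchc:\uchc\to\hc$. By definition $\tau$ is a tile of $\uchc$, and by Lemma~\ref{lem:foldingmap_hypcomplex} the folding map restricts to an isometry between $\tau$ and $\uchq$; in particular $\tau$ is simply connected. It follows that the restriction $\piuchc|_\tau:\tau\to\tau_0$ is (isometric to) the universal covering $\uchq\to\hq$, so its deck group is canonically identified with $\pi_1(\tau_0)=\hgq$. This deck group coincides with the stabilizer of $\tau$ under the action of $\hg$ on $\uchc$, and the induced homomorphism $\hgq=\pi_1(\hq)\to\pi_1(\hc)=\hg$ coming from the inclusion $\tau_0\hookrightarrow\hc$ is injective (since the components of the preimage of $\tau_0$ in $\uchc$ are simply connected). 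This embeds $\hgq$ as a subgroup of $\hg$, realized concretely as $\stab{\hg}{\tau}$.

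Next I would verify the hypotheses of Lemma~\ref{lem:quasiconvex_stabilizers}. Since $X$ is compact and admissible, the space $\hc$ is compact and locally $\cat{-1}$ by \eqref{item:CD npc} of Proposition~\ref{prop:CD complex}, so its universal cover $\uchc$ is a proper, geodesic, $\cat{-1}$ space, which is in particular Gromov hyperbolic; the action of $\hg=\pi_1(\hc)$ on $\uchc$ by deck transformations is geometric. The tile $\tau$ is convex in $\uchc$ by Lemma~\ref{lem:strat_cells_convex}, and since $\uchc$ is $\cat 0$ this implies that $\tau$ is $0$-quasiconvex, hence quasiconvex in the Gromov-hyperbolic sense. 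Finally, $\hgq$ acts on $\tau$ cocompactly, since the quotient $\tau/\hgq$ is homeomorphic to the compact space $\tau_0\cong\hq$.

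With all hypotheses in place, Lemma~\ref{lem:quasiconvex_stabilizers} yields that $\hgq=\stab{\hg}{\tau}$ is a quasiconvex subgroup of $\hg$. The proof is essentially a bookkeeping argument, with no real obstacle beyond making sure that the abstract group $\hgq=\pi_1(\hq)$ is realized inside $\hg$ as the stabilizer of a tile in $\uchc$, so that the geometric hypotheses of Lemma~\ref{lem:quasiconvex_stabilizers} translate into properties already established for the stratification of $\uchc$.
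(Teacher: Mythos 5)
Your proof is correct and takes essentially the same route as the paper: apply Lemma~\ref{lem:quasiconvex_stabilizers} with $Z=\uchc$ (proper $\cat{-1}$, geometric $\hg$-action since $X$ is compact) and $Y$ a tile, which is convex by Lemma~\ref{lem:strat_cells_convex} and has cocompact stabilizer $\hgq$. The only small deviation is in identifying $\hgq$ inside $\hg$: the paper gets injectivity of $\hgq\hookrightarrow\hg$ from the retraction $\hc\to\hq$ of Lemma~\ref{lem:cell_is_hyperbolizing_cell}\eqref{item:hypcell retraction}, whereas you use the simple connectedness of the tile $\tau$ (via Lemma~\ref{lem:foldingmap_hypcomplex}) to realize $\hgq$ directly as the deck group of $\tau\to\tau_0$, hence as $\stab{\hg}{\tau}$ — arguably slightly more explicit, since this identification is exactly what Lemma~\ref{lem:quasiconvex_stabilizers} needs.
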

\proof
By Lemma~\ref{lem:cell_is_hyperbolizing_cell}, we know that a hyperbolized complex   retracts to each of its tiles, each of which is homeomorphic to the hyperbolizing cell. In our setting this means that $\hc$ retracts to $\hq$, so in particular the inclusion $\hq\hookrightarrow \hc$ as a tile induces an injection $\hgq \hookrightarrow \hg$.
Since $X$ is compact, the group $\hg$ acts geometrically on $\uchc$. Moreover, the subgroup $\hgq$ acts geometrically on a tile, which is a closed convex subspace by Lemma~\ref{lem:strat_cells_convex}. Therefore $\hgq$ is quasiconvex by Lemma~\ref{lem:quasiconvex_stabilizers}.
\endproof

\begin{lemma}\label{lem:hypcube_quasiconvex_in_lattice}
The group $\hgq$ is a quasiconvex subgroup of $\Gamma$.
\end{lemma}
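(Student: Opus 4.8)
The plan is to reduce this to a statement about the hyperbolic manifold $\hm=\hh^n/\Gamma$ from which the hyperbolizing cube $\hq$ was built, and then invoke the standard quasiconvexity criterion (Lemma~\ref{lem:quasiconvex_stabilizers}) for the action of $\Gamma$ on $\hh^n$. Recall from \S\ref{sec:construction of CD cell} that $\hq$ is the metric completion of $\hm\setminus\bigcup_{i=1}^n M_i$, obtained by cutting $\hm$ open along the pairwise orthogonal totally geodesic hypersurfaces $M_i=H_i/\stab{\Gamma}{H_i}$. Lifting to the universal cover, a copy of $\uchq$ sits inside $\hh^n$ as a $\Gamma$-cell, i.e.\ the closure of a connected component $C_0$ of the complement of the $\Gamma$-orbit of the hyperplanes $H_1,\dots,H_n$ (see Figure~\ref{fig:uchq_is_cell}). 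The first step is therefore to identify $\hgq=\pi_1(\hq)$ with the subgroup $\stab{\Gamma}{C_0}$ of $\Gamma$ stabilizing this $\Gamma$-cell, acting on $C_0$ by covering transformations; this identification is already implicit in the construction of $\hq$ in \cite{CD95}, and in particular it exhibits $\hgq$ as a genuine subgroup of $\Gamma$ (which, as in the previous lemma, is the point that needs a little care).

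Second, I would observe that $C_0$ is a closed convex subset of $\hh^n$: it is an intersection of a locally finite family of closed half-spaces, hence convex, and the action of $\stab{\Gamma}{C_0}$ on it is geometric. Properness and isometry are clear; cocompactness follows because $\hq=C_0/\stab{\Gamma}{C_0}$ is compact by Lemma~\ref{lem:CD cell}. Third, since $\Gamma$ is a uniform lattice in $\so$, it acts geometrically on $\hh^n$, which is a proper geodesic Gromov hyperbolic space (being $\cat{-1}$). Thus all the hypotheses of Lemma~\ref{lem:quasiconvex_stabilizers} are in place with $Z=\hh^n$, $Y=C_0$, $G=\Gamma$, $H=\hgq=\stab{\Gamma}{C_0}$, and we conclude that $\hgq$ is quasiconvex in $\Gamma$.

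The main obstacle is the one subtle point in applying Lemma~\ref{lem:quasiconvex_stabilizers}: one must check that $\hgq=\stab{\Gamma}{C_0}$, rather than just $\stab{\Gamma}{C_0}\subseteq\Gamma$ being the full stabilizer of the convex set $Y=C_0$ with $\hgq$ a priori a proper subgroup of it. Equivalently, one needs that the covering $C_0\to\hq$ has deck group exactly $\pi_1(\hq)$, i.e.\ that $C_0$ really is the universal cover of $\hq$ and not merely some intermediate cover. This is exactly the content of the isometric identification $\uchq\cong C_0$ recorded in \S\ref{sec:universal_cover} (``It follows from the construction of $\hq$ in \S\ref{sec:construction of CD cell} that the universal cover $\uchq$ of $\hq$ can be isometrically identified with any $\Gamma$-cell''), so this step can be cited rather than reproved. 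Once that identification is granted, the argument is a direct transcription of the proof of Lemma~\ref{lem:hypcube_quasiconvex_in_hyperbolized_group}, with $(\hg,\uchc,\text{tile})$ replaced by $(\Gamma,\hh^n,\Gamma\text{-cell})$ and convexity of tiles (Lemma~\ref{lem:strat_cells_convex}) replaced by convexity of $\Gamma$-cells.
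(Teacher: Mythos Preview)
Your proposal is correct, and the final quasiconvexity step (apply Lemma~\ref{lem:quasiconvex_stabilizers} to $\Gamma$ acting on $\hh^n$ with $\hgq$ stabilizing a convex $\Gamma$--cell) is exactly what the paper does in its last paragraph. Where you differ from the paper is in how you establish the injection $\hgq\hookrightarrow\Gamma$.

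You argue directly from covering space theory: the restriction of the covering $\hh^n\to\hm$ to a $\Gamma$--cell $C_0$ gives the universal cover $C_0\to\hq$ (since $C_0$ is convex, hence simply connected), and its deck group is $\stab{\Gamma}{C_0}$, whence $\hgq\cong\stab{\Gamma}{C_0}\subseteq\Gamma$. This is clean and sufficient; the identification $\uchq\cong C_0$ you cite from \S\ref{sec:universal_cover} is precisely what is needed. The paper instead takes a more constructive detour: it hyperbolizes the standard integral cubulation $Y=\rr^n$, shows that the translation subgroup $T\cong\zz^n$ of a product of dihedral groups acts on $\hcalt{Y}$ with quotient $\hm$, and then uses the retraction of $\hcalt{Y}$ onto a tile (Lemma~\ref{lem:cell_is_hyperbolizing_cell}) to obtain $\hgq\hookrightarrow\pi_1(\hcalt{Y})\hookrightarrow\Gamma$. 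Your route is shorter and more transparent for the lemma at hand; the paper's route has the side benefit of producing the explicit $\zz^n$--cover $\hcalt{Y}\to\hm$ that is exploited in Remark~\ref{rem:hyperbolization can be arithmetic}, and of mirroring the retraction argument used in Lemma~\ref{lem:hypcube_quasiconvex_in_hyperbolized_group}.
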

\proof
First of all we will prove that $\hgq$ naturally injects in $\Gamma$, by showing that there exists a (normal) cover $\hcalt Y$ of $\hm=\hh^n / \Gamma$ which retracts to $\hq$ (see Figure~\ref{fig:hypRnisThom}).
This would provide the desired injection
$$\hgq=\pi_1(\hq) \hookrightarrow \pi_1(\hcalt Y) \hookrightarrow \pi_1(\hm)=\Gamma .$$

To construct this cover, consider the  cubical complex $Y$ given by the standard cubulation of $\rr^n$ with vertices on $\zz^n$. 
Notice that $Y$ admits a standard folding $f:Y\to \square^n$, and that $Y$ is an admissible cubical complex.
Therefore we can consider the hyperbolized complex $\hcalt Y$.
As in the proof of Lemma~\ref{lem:hypcube_quasiconvex_in_hyperbolized_group},   Lemma~\ref{lem:cell_is_hyperbolizing_cell} implies that $\hcalt Y$ retracts onto any of its tiles, hence $\hgq$ injects in $\pi_1(\hcalt Y)$.

We now claim that $\hcalt Y$ is a (normal) covering space of $\hm$. 
For each $i=1,\dots,n$ consider the mirror of $Y$ given by $M_i=\{y_i=0\}$, and the hyperplane of $Y$ given by $H_i=\{y_i=\frac{1}{2}\}$. Let $m_{i}$ and $h_{i}$ be the reflections in $M_i$ and $H_i$ respectively, i.e.
\[ m_i:Y\to Y, m_i(y_1,\dots,y_i,\dots,y_n)= (y_1,\dots,-y_i,\dots,y_n)\]
\[ h_i:Y\to Y, h_i(y_1,\dots,y_i,\dots,y_n)= (y_1,\dots,1-y_i,\dots,y_n)\]
For each $i=1,\dots,n$, the group $D_i=\langle m_i,h_i \rangle$ is an infinite dihedral group of cubical isometries of $Y$. The group $D=\langle m_1,h_1,\dots m_n,h_n\rangle$ is isomorphic to the direct product $D_1\times \dots \times D_n$, and admits a representation into the group  $\igq$ of Euclidean isometries of the standard cube $\square^n$, in which $m_i$ acts trivially and $h_i$ acts as the standard reflection of $\square^n$ in the $i$--th coordinate.
By Lemma~\ref{lem:CD cell} we have an action of $\igq$ on $\hq$ by isometries, hence we  can induce an action of $D$ on $\hq$ by isometries such that the Charney--Davis map $\cd:\hq\to \square^n$ is equivariant.
Moreover, the standard folding $f:Y\to \square^n$  is clearly $D$--equivariant too, because it can be obtained by  reflecting in the mirrors of $Y$.
Since the two maps in the pullback square defining $\hcalt Y$ are $D$--equivariant (see Figure~\ref{fig:hypRnisThom}), we obtain an action of $D$ on $\hcalt Y$ by isometries.

Note that $t_i=h_im_i$ is the unit integer translation of $Y$ in the $i^{th}$ direction. As a result, $D$ contains a (normal) subgroup $T$ isomorphic to the group of integer translations $\zz^n$.
The action of $D$ on $\hcalt Y$ restricts to a free and properly discontinuous action of $T$ on $\hcalt Y$. A fundamental domain for this action is given by a single tile. Each tile is isometric to a hyperbolizing cube $\hq$, and the induced action identifies corresponding cells on opposite mirrors, recovering $\hm$ (see \S~\ref{sec:construction of CD cell} for more details about the construction of $\hq$.)
In particular $\mu_\Gamma:\hcalt Y \to \hcalt Y/T  \cong  \hm$ realizes the desired covering map, which covers the standard universal covering map $\mu:Y=\rr^n\to (S^1)^n$ (see Figure~\ref{fig:hypRnisThom}).

Finally, let us prove that $\hgq$ is quasiconvex in $\Gamma$. We know $\Gamma$  acts geometrically on $\hh^n$, permuting the stratification induced by the coordinate mirrors and their translates. The subgroup $\hgq$ stabilizes a $\Gamma$--cell, i.e.\ the closure of a connected component of the complement of such collection. This is a closed convex subspace, on which $\hgq$ acts geometrically (see \S\ref{sec:universal_cover} for details). In particular $\hgq$ is quasiconvex in $\Gamma$ by Lemma~\ref{lem:quasiconvex_stabilizers}.
\endproof

\begin{figure}[ht]
    \centering
    \begin{tikzpicture}
\node at (-1,1) (A) {$\hcalt Y$};
\node at (-1,-1) (B) {$Y$};
\node at (1,-1) (C) {$\square^n$};
\node at (1,1) (D) {$\hq$};
\node at (-3,-1) (E) {$(S^1)^n$};
\node at (-3,1) (F) {$\hm$};

\draw [->] (A) edge (B)   (A) edge (D) ;
\draw [->]  (B) edge (C) (D) edge (C);
\draw [->]  (A) edge (F) (F) edge (E);
\draw [->]  (B) edge (E);

\node at (0,1.25) {$\foldc$};
\node at (0,-1.25) {$\fold$};
\node at (1.25,0) {$\cd$};
\node at (1.3-2,0) {$\cd_Y$};
\node at (1.3-4,0) {$\cd_0$};
\node at (-2,1.25) {$\mu_\Gamma$};
\node at (-2,-1.25) {$\mu$};

\end{tikzpicture}
    \caption{$\hcalt Y$ as a covering space of $\hm$ that retracts to $\hq$.}
    \label{fig:hypRnisThom}
\end{figure}

\begin{remark}\label{rem:hyperbolization can be arithmetic}
In Lemma~\ref{lem:hypcube_quasiconvex_in_lattice} we have constructed a normal covering space  of $\hm$ by producing an action of $T=\zz^n$ by deck transformations on the hyperbolization $\hcalt Y$ of the standard integral cubulation of $\rr^n$. 
This covering space can also be defined as the covering space of $\hm$ corresponding to the kernel of the homomorphism $\Gamma=\pi_1(\hm)\to \zz^n$ induced by the  collapse map $\cd_0:\hm\to (S^1)^n$ obtained by applying the Pontryagin--Thom construction to $\hm$ with respect to suitable codimension--$1$ submanifolds (see \S\ref{sec:construction of CD cell} for details, and Figure~\ref{fig:hypRnisThom}).
Compact quotients of $\hcalt Y$, provide examples of closed hyperbolized manifolds which are finite covers of $\hm$. These are all genuine arithmetic hyperbolic manifolds.
\end{remark}

\subsubsection{Virtual specialness}\label{subsec:special}
A cubical complex is \textit{special} if it admits a local isometry into the Salvetti complex of a right-angled Artin group (see \cite{HW08}).
A group $G$ is \textit{virtually compact special} if there exist a finite index subgroup $G'\subseteq G$ and a compact special cubical complex $B$ such that $G'=\pi_1(B)$. 
This property passes from a Gromov hyperbolic group to its quasiconvex subgroups, as established in the following statement. This kind of arguments has appeared in the literature (see for instance \cite[Corollary 7.8]{HW08}). We include a proof for the reader's convenience.

\begin{lemma}\label{lem:quasiconvex_in_special}
Let $G$ be a Gromov hyperbolic group, and let $H$ be a quasiconvex subgroup. If $G$ is virtually compact special, then so is $H$. 
\end{lemma}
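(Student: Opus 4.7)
The strategy is to pass to a finite index compact special covering of $G$, intersect with $H$ to get a finite index quasiconvex subgroup of $H$, and then exhibit a compact special cube complex with this intersection as its fundamental group via a convex core construction.

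More precisely, the first step is to invoke the hypothesis: there exists a finite index subgroup $G' \leq G$ and a compact special cube complex $B$ with $G' = \pi_1(B)$. Set $H' = H \cap G'$; then $[H : H'] \leq [G : G'] < \infty$, so it suffices to show $H'$ is itself the fundamental group of a compact special cube complex. Since $H$ is quasiconvex in $G$ and $H'$ has finite index in $H$, the subgroup $H'$ is still quasiconvex in $G$, hence in the finite index subgroup $G'$ (finite index subgroups are quasi-isometrically embedded, and quasiconvexity is preserved under quasi-isometric embeddings between hyperbolic groups).

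The heart of the argument is then to cubulate $H'$ using the geometry of $\widetilde B$. Let $\widetilde B$ be the universal cover, which is a CAT(0) cube complex on which $G'$ acts geometrically. Pick a basepoint $x_0 \in \widetilde B$ and consider the combinatorial convex hull $C \subseteq \widetilde B$ of the orbit $H' \cdot x_0$, i.e.\ the intersection of all combinatorially convex subcomplexes of $\widetilde B$ containing this orbit. By a result of Haglund (see \cite[Corollary 7.8]{HW08} and the surrounding discussion), when $G'$ is word hyperbolic and $H'$ is quasiconvex, the convex hull $C$ is $H'$-cocompact. The quotient $B_{H'} = C / H'$ is therefore a compact cube complex with $\pi_1(B_{H'}) = H'$.

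Finally, one observes that $B_{H'}$ is special. This is because the local combinatorial structure of $C$ at each point is identical to that of $\widetilde B$ (convex subcomplexes inherit the hyperplane structure), so the natural map $B_{H'} \to B$ is a local isometry of cube complexes. Specialness is characterized by the absence of certain pathologies of hyperplanes (self-intersection, one-sidedness, self-osculation, inter-osculation), all of which pull back under local isometries; since $B$ is special, so is $B_{H'}$. Composing with a local isometry of $B$ into a Salvetti complex then gives a local isometry $B_{H'} \to \mathcal{S}$, exhibiting $B_{H'}$ as compact special. The main subtlety to verify carefully is the cocompactness of the convex hull $C$, which is where both the hyperbolicity of $G'$ and the quasiconvexity of $H'$ are used essentially.
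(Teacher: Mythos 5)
Your proposal is correct and follows essentially the same approach as the paper: pass to a finite index special subgroup $G'$, intersect with $H$ to get $H'$, use hyperbolicity plus quasiconvexity to obtain a cocompact convex core in $\widetilde{B}$, and conclude specialness of the quotient via the local isometry into $B$. The only minor discrepancy is your citation for cocompactness of the convex hull: the paper attributes this to Haglund (\cite[Theorem H or Corollary 2.29]{H08}) or Sageev--Wise (\cite[Theorem 1.2]{SW15}), whereas \cite[Corollary 7.8]{HW08} is closer to being the statement of the lemma itself rather than an ingredient.
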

\proof

Let $G'$ be a finite index subgroup of $G$ and $B$ a compact special cubical complex such that $G'=\pi_1(B)$. 
By \cite[Remark 3.4, Lemma 3.13]{HW08} we can assume without loss of generality that $B$ is also non--positively curved.
The universal cover $\widetilde B$  is a  $\cat 0$ cubical complex. It is finite dimensional, uniformly locally finite, and Gromov hyperbolic, because $G'$ acts geometrically on it by covering transformations.

Let $H'=H\cap G'$. This is a finite--index subgroup of $H$, and a quasiconvex subgroup of $G'$. 
Since $G'$ is Gromov hyperbolic and acts geometrically on  $\widetilde B$, it follows that $H'$--orbits are quasiconvex. 
By \cite[Theorem H, or Corollary 2.29]{H08} or \cite[Theorem 1.2]{SW15}, there exists a cocompact convex core for $H'$, i.e.\ a convex subcomplex $\widetilde Y\subseteq \widetilde B$ on which $H'$ acts cocompactly.
Moreover, $H'$ acts by deck transformations, and the quotient $Y=\widetilde Y / H'$ is a compact non--positively curved cubical complex with $\pi_1(Y)=H'$. 
The convex embedding $\widetilde Y \hookrightarrow \widetilde B$ descends to a local isometry $Y\to  B$. Since $ B$ is special, by \cite[Corollary 3.9]{HW08} we obtain that $Y$ is special too. 
Therefore $H$ is   virtually compact special, as desired.
\endproof

\begin{remark}
In the previous proof we have the Gromov hyperbolic group $H'$ acting geometrically on the $\cat 0$ cubical complex $\widetilde Y$, so the fact that $H'$ is virtually compact special also follows from the celebrated theorem of Agol in \cite{AG13}. However here everything happens inside the special group $G'$, so one does not need Agol's result. 
\end{remark}

We now apply the previous lemma to the cell stabilizers for the action  of $\hq$ on $\uchc$, starting with the stabilizer of a tile.

\begin{lemma}\label{lem:hyperbolizing cube is vcs}
The group $\hgq$ is virtually compact special.
\end{lemma}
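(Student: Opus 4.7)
The plan is to combine the two preceding lemmas with a known result about arithmetic lattices in $\mathrm{SO}_0(n,1)$. Specifically, recall that the hyperbolizing lattice $\Gamma$ used to define $\hq$ in \S\ref{sec:CD hyperbolizing cell} is a uniform arithmetic lattice of simple type in $\mathrm{SO}_0(n,1)$. Such lattices are known to be virtually compact special by the work cited as \cite{HW12}. Since $\Gamma$ acts cocompactly and properly by isometries on $\hh^n$, it is a Gromov hyperbolic group.

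Now by Lemma~\ref{lem:hypcube_quasiconvex_in_lattice}, the group $\hgq$ embeds as a quasiconvex subgroup of $\Gamma$. Therefore the hypotheses of Lemma~\ref{lem:quasiconvex_in_special} are satisfied with $G = \Gamma$ and $H = \hgq$, and we conclude that $\hgq$ is virtually compact special. There is essentially no obstacle here; the content has been packaged into the earlier lemmas. The only thing worth double checking is that the injection $\hgq \hookrightarrow \Gamma$ provided by the proof of Lemma~\ref{lem:hypcube_quasiconvex_in_lattice} is the same embedding (up to conjugation) for which quasiconvexity was established, but both are induced by realizing $\hgq$ as the stabilizer of a $\Gamma$--cell in $\hh^n$, so there is nothing to reconcile.
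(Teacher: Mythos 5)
Your proof is correct and matches the paper's argument essentially verbatim: quasiconvexity of $\hgq$ in $\Gamma$ from Lemma~\ref{lem:hypcube_quasiconvex_in_lattice}, virtual compact specialness of the arithmetic lattice $\Gamma$ from \cite{HW12}, and then Lemma~\ref{lem:quasiconvex_in_special}. The extra remark about reconciling the embedding is reasonable diligence but not a point of divergence.
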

\proof
$\hgq$ is a quasiconvex subgroup of $\Gamma$ by Lemma~\ref{lem:hypcube_quasiconvex_in_lattice} and $\Gamma$ is virtually compact special by \cite[Theorem 1.6]{HW12}. Indeed, it is an arithmetic lattice in $\so$ by construction (see \S\ref{sec:CD hyperbolizing cell} or  \cite{CD95} for details).
The statement then follows from  Lemma~\ref{lem:quasiconvex_in_special}.
\endproof

Finally we prove the same result for all cell stabilizers.

\begin{lemma}\label{lem:cell stabilizers is vcs}
Let $X$ be compact. Then the cell stabilizers for the action of $\hg$ on $\uchc$ are quasiconvex and virtually compact special.
\end{lemma}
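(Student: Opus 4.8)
The plan is to reduce the statement about an arbitrary cell stabilizer to the already-established case of a tile stabilizer, which is handled by Lemma~\ref{lem:hyperbolizing cube is vcs}. First I would recall from \S\ref{subsec:action} that the action of $\hg$ on $\dccx$ has cube stabilizers that coincide with cell stabilizers for the action on $\uchc$ (Lemma~\ref{lem:vertex stab equals cell stab} and Lemma~\ref{lem:cube stab equals min vertex stab}), so it suffices to understand $\stab{\hg}{\sigma}$ for a cell $\sigma\subseteq \uchc$. Let $\sigma$ be a $k$-cell, and let $\tau$ be a tile of $\uchc$ containing $\sigma$ (such a tile exists since $X$ is admissible, hence homogeneous). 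The key geometric observation is that $\sigma$ is the intersection of $n-k$ mirrors, and the stabilizer of $\sigma$ permutes the (finitely many, by local finiteness) tiles containing $\sigma$; passing to the finite-index subgroup that fixes one such tile $\tau$, we land inside $\stab{\hg}{\tau}\cong \hgq$.

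More precisely, the steps I would carry out are: (1) observe that $\stab{\hg}{\sigma}$ acts on the finite set of tiles containing $\sigma$, giving a finite-index subgroup $S_0 = \stab{\hg}{\sigma}\cap \stab{\hg}{\tau}$; (2) identify $S_0$ with a subgroup of $\hgq=\stab{\hg}{\tau}$, namely the stabilizer of the face of $\hq$ corresponding to $\sigma$ under the isometry of Lemma~\ref{lem:foldingmap_hypcomplex} (i.e.\ $\foldutocell$ maps $\tau$ isometrically onto $\uchq\subseteq \hh^n$, and $\sigma$ goes to an intersection of $n-k$ orthogonal coordinate hyperplanes); (3) inside $\Gamma$, this subgroup $S_0$ is precisely the stabilizer of a $\Gamma$-sub-cell of codimension $n-k$, which is an intersection of $\Gamma$-translates of the coordinate hyperplanes $H_i$, hence a closed convex (totally geodesic) subspace of $\hh^n$ on which $S_0$ acts cocompactly; (4) apply Lemma~\ref{lem:quasiconvex_stabilizers} with $Z=\hh^n$, $G=\Gamma$, and $Y$ this totally geodesic subspace to conclude $S_0$ is quasiconvex in $\Gamma$; (5) since $\Gamma$ is virtually compact special (\cite[Theorem 1.6]{HW12}), Lemma~\ref{lem:quasiconvex_in_special} gives that $S_0$ is virtually compact special; (6) for quasiconvexity in $\hg$, use that $\sigma$ is a closed convex subspace of $\uchc$ (Lemma~\ref{lem:strat_cells_convex}), that $X$ compact makes $\hg$ act geometrically on $\uchc$ with $\hg$ Gromov hyperbolic (\eqref{item:CD hyperbolic} in Proposition~\ref{prop:CD complex}), and that $\stab{\hg}{\sigma}$ acts cocompactly on $\sigma$, so Lemma~\ref{lem:quasiconvex_stabilizers} applies directly to give quasiconvexity of $\stab{\hg}{\sigma}$ in $\hg$; (7) finally, being virtually compact special is inherited by finite-index overgroups (since $S_0$ has finite index in $\stab{\hg}{\sigma}$ and virtual compact specialness only depends on commensurability — one can also argue that $\stab{\hg}{\sigma}$ is itself a quasiconvex subgroup of the hyperbolic group $\hg$ which contains the virtually-compact-special finite-index subgroup $S_0$, hence is virtually compact special).

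The main obstacle I anticipate is step (2)–(3): carefully identifying $S_0$ with a geometrically meaningful subgroup of $\Gamma$ that stabilizes a totally geodesic subspace of $\hh^n$. This requires being precise about how the combinatorial face structure of $\hq$ (or of a tile $\tau$) sits inside $\hh^n$ via the developing map — a $k$-cell $\sigma$ of $\tau$ develops to the intersection of $\tau$ (a $\Gamma$-cell) with $n-k$ of the bounding hyperplanes, and these hyperplanes are pairwise orthogonal by construction (Lemma~\ref{lem:CD cell}), so their intersection is a genuine totally geodesic $\hh^k$ (nonempty by the Helly property, Lemma~\ref{lem:helly_hyperbolic_space}, though here nonemptiness is automatic since $\sigma$ itself is in it). Once this identification is in place, the rest is a routine assembly of the quasiconvexity criterion and the specialness inheritance lemma. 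A minor technical point worth flagging is verifying that $S_0$ really does act cocompactly on the totally geodesic subspace $Y$: this follows because $S_0$ contains $\stab{\hgq}{\sigma}$ which, viewed in $\Gamma$, is the stabilizer of a codimension-$(n-k)$ $\Gamma$-sub-cell, and the collection of such sub-cells is $\Gamma$-cocompact, so its $\Gamma$-stabilizer is cocompact on it.

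\begin{proof}
By Lemma~\ref{lem:vertex stab equals cell stab} and Lemma~\ref{lem:cube stab equals min vertex stab}, cube stabilizers for the action of $\hg$ on $\dccx$ are exactly cell stabilizers for the action of $\hg$ on $\uchc$, so it suffices to prove that for every cell $\sigma\subseteq \uchc$ the stabilizer $\stab{\hg}{\sigma}$ is quasiconvex and virtually compact special.

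Since $X$ is compact and admissible, $\hg$ acts geometrically on $\uchc$ and is Gromov hyperbolic by \eqref{item:CD hyperbolic} in Proposition~\ref{prop:CD complex}. By Lemma~\ref{lem:strat_cells_convex}, $\sigma$ is a closed convex subspace of $\uchc$. The stabilizer $\stab{\hg}{\sigma}$ acts on $\sigma$, and this action is cocompact: indeed $\sigma$ folds to a cube $C=\cdXu(\sigma)$ of $X$, and the orbit of $\sigma$ under $\hg$ maps onto the (finite) orbit of $C$, so $\hg$ has finitely many orbits of cells folding to $C$, which forces $\stab{\hg}{\sigma}$ to act cocompactly on $\sigma$. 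Applying Lemma~\ref{lem:quasiconvex_stabilizers} with $Z=\uchc$, $G=\hg$, and $Y=\sigma$, we conclude that $\stab{\hg}{\sigma}$ is quasiconvex in $\hg$.

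It remains to prove that $\stab{\hg}{\sigma}$ is virtually compact special. Let $k=\dim\sigma$. Since $X$ is homogeneous, $\sigma$ is contained in some tile $\tau$ of $\uchc$, and by local finiteness there are only finitely many tiles containing $\sigma$. The group $\stab{\hg}{\sigma}$ permutes this finite set, so $S_0=\stab{\hg}{\sigma}\cap \stab{\hg}{\tau}$ has finite index in $\stab{\hg}{\sigma}$. By Lemma~\ref{lem:foldingmap_hypcomplex}, the folding map $\foldutocell$ restricts to an isometry between $\tau$ and $\uchq$, which is identified with a $\Gamma$--cell $C\subseteq \hh^n$ bounded by $\Gamma$--translates of the pairwise orthogonal coordinate hyperplanes $H_1,\dots,H_n$. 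Under this identification, $\sigma$ corresponds to the intersection of $C$ with $n-k$ of these bounding hyperplanes, which lies on a totally geodesic subspace $Y\subseteq \hh^n$ isometric to $\hh^k$ (the hyperplanes being pairwise orthogonal).

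We claim $S_0$ can be identified with a subgroup of $\Gamma$ stabilizing $Y$. Indeed, $\stab{\hg}{\tau}\cong \hgq=\pi_1(\hq)$, and by Lemma~\ref{lem:hypcube_quasiconvex_in_lattice} the group $\hgq$ injects into $\Gamma$ as the stabilizer of the $\Gamma$--cell $C$; under this injection $S_0$ maps into the stabilizer of $Y$ in $\Gamma$. The subspace $Y$ is the intersection of finitely many $\Gamma$--translates of the hyperplanes $H_i$, hence a closed convex totally geodesic subspace of $\hh^n$; since $\Gamma$ is cocompact and the collection of such codimension--$(n-k)$ intersections is $\Gamma$--invariant and locally finite, the $\Gamma$--stabilizer of $Y$ acts cocompactly on $Y$, and a fortiori so does $S_0$ (together with the full $\Gamma$--stabilizer, which contains $S_0$ with finite index by a similar finiteness argument). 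Applying Lemma~\ref{lem:quasiconvex_stabilizers} with $Z=\hh^n$, $G=\Gamma$, and $Y$ as above shows that $S_0$ (equivalently, the full $\Gamma$--stabilizer of $Y$, which we may use instead) is quasiconvex in $\Gamma$. Since $\Gamma$ is Gromov hyperbolic and virtually compact special by \cite[Theorem 1.6]{HW12}, Lemma~\ref{lem:quasiconvex_in_special} gives that $S_0$ is virtually compact special.

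Finally, $S_0$ has finite index in $\stab{\hg}{\sigma}$, so $\stab{\hg}{\sigma}$ is virtually compact special as well (it contains the finite-index subgroup $S_0$, which is virtually compact special). This completes the proof.
\end{proof}
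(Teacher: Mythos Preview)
Your quasiconvexity argument (cells are convex by Lemma~\ref{lem:strat_cells_convex}, the stabilizer acts cocompactly, apply Lemma~\ref{lem:quasiconvex_stabilizers}) is correct and matches the paper exactly.

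For virtual specialness, however, there is a genuine gap in your route through $\Gamma$. You claim that $S_0$ acts cocompactly on the totally geodesic subspace $Y\cong\hh^k$, or alternatively that $S_0$ has finite index in $\stab{\Gamma}{Y}$. Both are false in general: $S_0$ stabilizes only one face of the tiling of $Y$ by faces of $\Gamma$--cells, so it acts cocompactly on that face (a proper unbounded convex subset of $Y$), not on all of $Y$, and it has infinite index in $\stab{\Gamma}{Y}$. Consequently your invocation of Lemma~\ref{lem:quasiconvex_stabilizers} with this choice of $Y$ does not yield quasiconvexity of $S_0$ in $\Gamma$. The argument can be repaired by taking $Y$ to be the face itself rather than the whole $\hh^k$, but this is still more work than needed.

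The paper avoids this entirely by staying one level up, in $\hgq$ rather than $\Gamma$. Two simplifications: first, the argument from the proof of Lemma~\ref{lem:cube stab equals min vertex stab} (tiles of $\hc$ are embedded copies of $\hq$, so an element fixing $\sigma$ must fix any tile $\tau\supseteq\sigma$) shows that $H=\stab{\hg}{\sigma}$ is already contained in $K=\stab{\hg}{\tau}\cong\hgq$ --- no finite-index passage to $S_0$ is required. Second, since $\sigma$ is convex in $\tau$ and $H$ acts cocompactly on it, Lemma~\ref{lem:quasiconvex_stabilizers} (applied inside $\tau\cong\uchq$) gives that $H$ is quasiconvex in $\hgq$. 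Now $\hgq$ is Gromov hyperbolic and virtually compact special by Lemma~\ref{lem:hyperbolizing cube is vcs}, so Lemma~\ref{lem:quasiconvex_in_special} finishes the proof directly.
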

\proof
Let $\sigma$ be a cell in $\uchc$ and let $H$ be the stabilizer of $\sigma$ for the action of $\hg$ on $\uchc$.
Since $\sigma$ is a convex subset of $\uchc$ and $H$ acts geometrically on it, we conclude by Lemma~\ref{lem:quasiconvex_stabilizers} that $H$ is quasiconvex in $\hg$.

Arguing as in the proof of Lemma~\ref{lem:cube stab equals min vertex stab}, if $\tau$ is a tile containing $\sigma$, and $K$ is its stabilizer, then $H \subseteq K$.
Note that the folding map $\hc\to \hq$ provides an isomorphism of $K \cong \hgq$, under which $H$ is isomorphic to a quasiconvex subgroup of $\hgq$ (again by Lemma~\ref{lem:quasiconvex_stabilizers}). 
We know that $\hgq$ is Gromov hyperbolic (by Lemma~\ref{lem:hypcube_quasiconvex_in_hyperbolized_group} or Lemma~\ref{lem:hypcube_quasiconvex_in_lattice}) and virtually compact special (by Lemma~\ref{lem:hyperbolizing cube is vcs}).
So it follows from Lemma~\ref{lem:quasiconvex_in_special} that $H$ is virtually compact special too.
\endproof

\subsection{Specialization}
We are now ready to prove that the fundamental group $\hg$ of the hyperbolized complex $\hc$ is virtually compact special, when the original cubical complex $X$ is admissible and compact.
If the action of $\hg$ on $\dccx$ was proper, this would follow from Theorem~\ref{thm:dual cubical complex is CAT(0)}, Lemma~\ref{lem:dual action is cocompact} and Agol's main result from \cite{AG13}.
However, as observed in Remark~\ref{rem:dual action is not proper}, the action on $\dccx$ is {\bf not} proper.
We will use a result by Groves and Manning (see Theorem D in \cite{GM18}), which is  designed to deal with this situation. We report here their statement for the reader's convenience.

\begin{theorem}{\cite[Theorem D]{GM18}}
Suppose that $G$ is a Gromov hyperbolic group acting cocompactly on a $\cat 0$ cubical complex so that cell stabilizers are quasiconvex and virtually compact special. Then $G$ is virtually compact special.
\end{theorem}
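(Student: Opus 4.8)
The statement is Theorem~D of Groves--Manning \cite{GM18}, invoked in this paper as a black box; what follows is a plan for a direct proof along the lines of \cite{GM18}.

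\textbf{Reduction.} Let $\tilde Y$ denote the $\cat 0$ cubical complex on which $G$ acts cocompactly. By cocompactness there are only finitely many $G$-conjugacy classes of cell stabilizers; discarding the finite ones, let $P_1,\dots,P_k$ be representatives of the conjugacy classes of \emph{infinite} cell stabilizers. Each $P_i$ is Gromov hyperbolic, being quasiconvex in the hyperbolic group $G$, and virtually compact special by hypothesis; hence each $P_i$ is residually finite, all of its quasiconvex subgroups are separable, and it acts geometrically on some $\cat 0$ cubical complex by Agol \cite{AG13} and Haglund--Wise \cite{HW08}. The aim is to replace the (highly non-proper) action of $G$ on $\tilde Y$ by a \emph{proper} cocompact action of $G$ on a $\cat 0$ cubical complex, after which Agol's theorem \cite{AG13} immediately gives virtual compact specialness of $G$.

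\textbf{Dehn filling.} One equips $G$ with the peripheral structure $\{P_1,\dots,P_k\}$ and performs group-theoretic Dehn fillings $P_i \twoheadrightarrow P_i/N_i$, where each $N_i$ is a finite-index normal subgroup extracted from the special structure of $P_i$. For all sufficiently deep fillings the quotient $\bar G$ is still Gromov hyperbolic, and the induced action of $\bar G$ on the \emph{filled} cube complex $\bar Y$ (obtained, roughly, by collapsing the parts of $\tilde Y$ associated to the filled peripherals) is cocompact with \emph{finite} cell stabilizers, hence proper; by Agol \cite{AG13}, $\bar G$ is therefore virtually compact special.

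\textbf{From the fillings back to $G$.} The delicate part --- and the main obstacle --- is to descend from ``$\bar G$ is virtually compact special for every sufficiently deep filling'' to the same statement for $G$ itself. Here one uses that $G$ is fully residually the family of its hyperbolic Dehn fillings (deep fillings are injective on arbitrary finite subsets), together with the separability of quasiconvex subgroups coming from virtual specialness of the $P_i$ and of the $\bar G$, to produce, for a given nontrivial element of $G$, a finite quotient detecting it and compatible with a wall structure; one then assembles from the hyperplane walls of $\tilde Y$, walls imported from the special cubulations of the $P_i$, and this extra separation data a genuine $G$-invariant, proper, cocompact wallspace. Sageev's construction \cite{SA95} yields a $\cat 0$ cubical complex on which $G$ acts properly and cocompactly, and specialness of the quotient follows via the Haglund--Wise canonical completion and retraction \cite{HW08} (or directly from Agol \cite{AG13}). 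This wall-assembly and separability bookkeeping --- a relative cubulation argument in the spirit of Hruska--Wise \cite{HW14}, carried out uniformly across the Dehn fillings --- is the technical heart of \cite{GM18}. In the present paper none of this is redone: the hypotheses of Theorem~D are checked for $\hg$ acting on $\dccx$ (cocompactness in Lemma~\ref{lem:dual action is cocompact}, the $\cat 0$ property in Theorem~\ref{thm:dual cubical complex is CAT(0)}, and quasiconvexity and virtual compact specialness of cube stabilizers via Lemma~\ref{lem:cube stab equals min vertex stab} and Lemma~\ref{lem:cell stabilizers is vcs}), and the conclusion is read off from \cite[Theorem~D]{GM18}.
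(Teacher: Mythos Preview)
Your assessment is correct: the paper does not prove this statement at all. It is quoted verbatim from \cite{GM18} as a black box, with only the remarks that in \cite{GM18} ``virtually special'' implicitly means compact quotient and that local compactness of the cube complex is not assumed. The paper's own contribution lies entirely in verifying the hypotheses for $\hg$ acting on $\dccx$, exactly as you summarize in your final paragraph.

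Your additional sketch of the \cite{GM18} argument is therefore not something to compare against the paper --- there is nothing to compare to. As a rough outline of how Groves--Manning proceed it is in the right spirit (Dehn fillings, properness of the filled action, Agol on the quotient, and a lifting/assembly step), though the actual mechanics in \cite{GM18} are somewhat different in detail and more intricate than your sketch suggests. But since the paper itself makes no attempt to reproduce any of this, your recognition that the theorem is invoked purely as a citation is the correct and complete answer here.
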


Note that when authors of \cite{GM18} say ``virtually special'' they imply that the quotient is compact (see page 3 in \cite{GM18}). Also notice that they explicitly do not assume their complexes to be locally compact (see page 2).

\begin{theorem}\label{thm:main_vcs}
If $X$ is a compact admissible cubical complex and $\Gamma$ is a hyperbolizing lattice, then $\hg$ is virtually compact special Gromov hyperbolic group.
\end{theorem}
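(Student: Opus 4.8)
The plan is to assemble the pieces that have already been established in the excerpt and feed them into the Groves--Manning criterion quoted just above, \cite[Theorem D]{GM18}. That theorem requires three inputs about the action of $\hg$ on $\dccx$: (i) $\hg$ is Gromov hyperbolic; (ii) the action is cocompact by cubical isometries on a $\cat 0$ cubical complex; (iii) cell stabilizers are quasiconvex and virtually compact special. So the proof is essentially a matter of citing the right earlier results in the right order.

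First I would invoke Proposition~\ref{prop:CD complex}\eqref{item:CD hyperbolic}: since $X$ is compact and admissible (in particular locally $\cat 0$), the hyperbolized complex $\hc$ is locally $\cat{-1}$, hence $\hg=\pi_1(\hc)$ is Gromov hyperbolic. Next, Theorem~\ref{thm:dual cubical complex is CAT(0)} gives that $\dccx$ is a connected $\cat 0$ cubical complex, and Lemma~\ref{lem:dual action is cocompact} gives that $\hg$ acts on $\dccx$ by cubical isometries, cocompactly (using compactness of $X$). That settles (i) and (ii). For (iii): by Lemma~\ref{lem:cube stab equals min vertex stab}, the stabilizer of any cube of $\dccx$ equals the stabilizer of its vertex of minimal height, which by Lemma~\ref{lem:vertex stab equals cell stab} is the stabilizer of the dual cell of $\uchc$ for the action of $\hg$ by deck transformations. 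Then Lemma~\ref{lem:cell stabilizers is vcs} states precisely that these cell stabilizers are quasiconvex in $\hg$ and virtually compact special (here again compactness of $X$ is used, so that $\hg$ acts geometrically on $\uchc$).

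Having checked all three hypotheses, I would apply \cite[Theorem D]{GM18} to conclude that $\hg$ is virtually compact special. Combined with the already-noted fact that $\hg$ is Gromov hyperbolic, this is exactly the assertion of the theorem. I would also remark, as the authors do in the surrounding text, that \cite{GM18} does not require local compactness of the cubical complex, which is essential here because $\dccx$ is \emph{not} locally compact (Remark~\ref{rmk:dcc_non_locally_finite}), and that their notion of ``virtually special'' already incorporates compactness of the quotient.

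I do not anticipate a genuine obstacle in this final step — it is a bookkeeping argument that threads together Proposition~\ref{prop:CD complex}, Theorem~\ref{thm:dual cubical complex is CAT(0)}, Lemmas~\ref{lem:dual action is cocompact}, \ref{lem:cube stab equals min vertex stab}, \ref{lem:vertex stab equals cell stab}, \ref{lem:cell stabilizers is vcs}, and \cite[Theorem D]{GM18}. The only point requiring mild care is making sure that every hypothesis of the Groves--Manning theorem is matched verbatim (cocompactness, the $\cat 0$ hypothesis, quasiconvexity \emph{and} virtual compact specialness of \emph{all} cube stabilizers, not just vertex stabilizers); the translation of cube stabilizers to cell stabilizers via the two stabilizer lemmas is what makes this match work, and that is precisely why the dual complex $\dccx$ was built to interface cleanly with the action on $\uchc$.
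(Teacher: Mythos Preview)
Your proposal is correct and follows essentially the same argument as the paper: assemble Gromov hyperbolicity from Proposition~\ref{prop:CD complex}\eqref{item:CD hyperbolic}, the $\cat 0$ cubical complex and cocompact action from Theorem~\ref{thm:dual cubical complex is CAT(0)} and Lemma~\ref{lem:dual action is cocompact}, identify cube stabilizers with cell stabilizers via Lemmas~\ref{lem:cube stab equals min vertex stab} and~\ref{lem:vertex stab equals cell stab}, invoke Lemma~\ref{lem:cell stabilizers is vcs}, and then apply \cite[Theorem D]{GM18}. Your additional remarks about non-local-compactness and the compactness convention in \cite{GM18} mirror the paper's own commentary surrounding the theorem.
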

\proof
First of all, since $X$ is admissible, by Theorem~\ref{thm:dual cubical complex is CAT(0)} the dual cubical complex $\dccx$ is a $\cat 0$ cubical complex.
Moreover, since $X$ is compact, $\hg$ is a Gromov hyperbolic group by \eqref{item:CD hyperbolic} in Proposition~\ref{prop:CD complex}.
By Lemma~\ref{lem:dual action is cocompact} $\hg$ acts on $\dccx$ cocompactly by isometries. 

Let $C$ be a cube in $\dccx$ and let $H$ be its stabilizer.
By Lemma~\ref{lem:cube stab equals min vertex stab} $H$ coincides with the stabilizer of the vertex of minimal height in $C$. 
By Lemma~\ref{lem:vertex stab equals cell stab} this in turn coincides with the stabilizer of the corresponding dual cell in $\uchc$.
Therefore by Lemma~\ref{lem:cell stabilizers is vcs} $H$ is a quasiconvex subgroup of $\hg$ and it is also virtually compact special.
Finally, by \cite[Theorem D]{GM18} the group $\hg$ is virtually compact special.
\endproof

\printbibliography

\end{document}